\theoremstyle{definition}
\newtheorem{theorem}{Theorem}[section]
\newtheorem*{theorem*}{Theorem}
\newtheorem{lemma}[theorem]{Lemma}
\newtheorem{proposition}[theorem]{Proposition}
\newtheorem{corollary}[theorem]{Corollary}
\newtheorem*{conjecture*}{Conjecture}
\newtheorem{question}[theorem]{Question}
\newtheorem{remark}[theorem]{Remark}
\newcommand{\ie}{{\em i.e.}\ }
\newcommand{\eg}{{\em e.g.}\ }
\newcommand{\ko}{\: , \;}
\newcommand{\opname}[1]{\operatorname{\mathsf{#1}}}
\renewcommand{\mod}{\opname{mod}\nolimits}
\newcommand{\proj}{\opname{proj}\nolimits}
\newcommand{\Mod}{\opname{Mod}\nolimits}
\newcommand{\per}{\opname{per}\nolimits}
\newcommand{\add}{\opname{add}\nolimits}
\renewcommand{\Im}{\opname{Im}\nolimits}
\newcommand{\Ker}{\opname{Ker}\nolimits}
\newcommand{\Fac}{\opname{Fac}\nolimits}
\newcommand{\thick}{\opname{thick}\nolimits}
\newcommand{\ind}{\opname{ind}}
\def\Mut{{\mathrm{Mut}}}
\def\fl{{\longrightarrow}\,}
\def\ens#1{\left\{ #1 \right\}}
\def\ccc{\mathbf{c}}
\def\gg{\mathbf{g}}
\renewcommand{\ker}{\opname{ker}\nolimits}
\newcommand{\Hom}{\opname{Hom}}
\newcommand{\Ext}{\opname{Ext}}
\newcommand{\End}{\opname{End}}
\newcommand{\cone}{\opname{cone}}
\newcommand{\std}{\mathrm{std}}
\newcommand{\ca}{{\mathcal A}}
\newcommand{\cb}{{\mathcal B}}
\newcommand{\cc}{{\mathcal C}}
\newcommand{\cd}{{\mathcal D}}
\newcommand{\ce}{{\mathcal E}}
\newcommand{\cf}{{\mathcal F}}
\newcommand{\ch}{{\mathcal H}}
\newcommand{\ci}{{\mathcal I}}
\newcommand{\cm}{{\mathcal M}}
\newcommand{\cp}{{\mathcal P}}
\newcommand{\cs}{{\mathcal S}}
\newcommand{\ct}{{\mathcal T}}
\newcommand{\cu}{{\mathcal U}}
\newcommand{\cx}{{\mathcal X}}
\renewcommand{\hat}[1]{\widehat{#1}}
\newcommand{\del}{\partial}
\newcommand{\tilt}{\opname{tilt}\nolimits}
\newcommand{\silt}{\opname{silt}\nolimits}
\newcommand{\twosilt}{\opname{2-silt}\nolimits}
\newcommand{\rtwosilt}{\opname{r.2-silt}\nolimits}
\newcommand{\tstr}{\opname{t-str}\nolimits}
\newcommand{\intertstr}{\opname{int-t-str}\nolimits}
\newcommand{\rintertstr}{\opname{r.int-t-str}\nolimits}
\newcommand{\cotstr}{\opname{co-t-str}\nolimits}
\newcommand{\intercotstr}{\opname{int-co-t-str}\nolimits}
\newcommand{\rintercotstr}{\opname{r.int-co-t-str}\nolimits}
\newcommand{\smc}{\opname{smc}\nolimits}
\newcommand{\intersmc}{\opname{2-smc}\nolimits}
\newcommand{\rintersmc}{\opname{r.2-smc}\nolimits}
\newcommand{\sttilt}{\opname{s\text{$\tau$}-tilt}\nolimits}
\newcommand{\rsttilt}{\opname{r.s\text{$\tau$}-tilt}\nolimits}
\newcommand{\fftor}{\opname{f-tors}\nolimits}
\newcommand{\rfftor}{\opname{r.f-tors}\nolimits}
\newcommand{\cto}{\opname{c-tilt}\nolimits}
\newcommand{\rcto}{\opname{r.c-tilt}\nolimits}
\newcommand{\mut}{\opname{mut}\nolimits}
\newcommand{\cmat}{\opname{c-mat}\nolimits}
\newcommand{\Cl}{\opname{Cl}\nolimits}
\newcommand{\gmat}{\opname{g-mat}\nolimits}
\newcommand{\sbullet}{\scriptstyle\bullet}
\newcommand{\scirc}{\scriptstyle\circ}
\numberwithin{equation}{section}
\begin{document}
\date{\today}

\title[Ordered Exchange Graphs]{Ordered Exchange Graphs}
\dedicatory{Dedicated to the memory of Dieter Happel}
\thanks{TB was supported by NSERC, Bishop's University and Universit\'e de Sherbrooke. DY was supported by DFG - SPP Darstellungstheorie KO1281/9-1 and JSPS.}
\author{Thomas Br\"ustle}
\address{Thomas Br\"ustle, D\'epartement de Math\'ematiques, Universit\'e de Sherbrooke, Sherbrooke, Canada, J1K 2R1 and Department of Mathematics, Bishops University, Sherbrooke, Canada, J1M 1Z7}
\email{thomas.brustle@usherbrooke.ca and tbruestl@ubishops.ca}

\author{Dong Yang}
\address{Dong Yang, Department of Mathematics, Nanjing University,
22 Hankou Road, Nanjing 210093, P.R.China}

\email{yangdong@nju.edu.cn}

\begin{abstract} The exchange graph of a  cluster algebra encodes the combinatorics of mutations of clusters. Through the recent "categorifications" of cluster algebras using representation theory one obtains a whole variety of exchange graphs associated with objects such as a finite-dimensional algebra or a differential graded algebra concentrated in non-positive degrees. These constructions often come from variations of the concept of tilting, the vertices of the exchange graph being torsion pairs, $t$-structures, silting objects, support $\tau$-tilting modules and so on. All these exchange graphs stemming from representation theory have the additional feature that they are the Hasse quiver of a partial order which is naturally defined for the objects. In this sense, the exchange graphs studied in this article can be considered as a generalization or as a completion of the poset of tilting modules which has been studied by Happel and Unger.
The goal of this article is to axiomatize the thus obtained structure of an ordered exchange graph, to present the various constructions of ordered exchange graphs and to relate them among each other.\\
{\bf MSC 2010}: 13F60, 16G10, 16E35, 18E30.\\
{\bf Keywords}: mutation, left mutation, exchange graph, ordered exchange graph. 
\end{abstract}

\maketitle

\tableofcontents

\section{The goal} 

\begin{figure}
\[\hspace{7pt}\begin{xy} 0;<0.25pt,0pt>:<0pt,-0.25pt>::
(50,0) *+{\framebox(73,33){\parbox{70pt}{\tiny \begin{spacing}{0.7}reachable isoclasses of basic $2$-term silting objects in $\per(\Gamma)$\end{spacing}}}} ="0",
(700,0) *+{\framebox(73,33){\parbox{71pt}{\tiny \begin{spacing}{0.7} reachable intermediate bounded
co-$t$-structures in $\per(\Gamma)$\end{spacing}}}}="1",
(300,250) *+{\framebox(73,40){\parbox{70pt}{\tiny \begin{spacing}{0.7} reachable intermediate bounded
$t$-structures with length heart in $\cd_{fd}(\Gamma)$\end{spacing}}}}="2",
(950,250) *+{\framebox(80,40){\parbox{75pt}{\tiny \begin{spacing}{0.7} reachable isoclasses of intermediate simple-minded collections in $\cd_{fd}(\Gamma)$\end{spacing}}}}="3",
(40,500) *+{\framebox(80,35){\parbox{75pt}{\tiny \begin{spacing}{0.7} reachable isoclasses of basic $2$-term silting objects of $\ch^b(\proj J)$\end{spacing}}}}="4",
(700,500) *+{\framebox(73,33){\parbox{71pt}{\tiny \begin{spacing}{0.7} reachable intermediate bounded
co-$t$-structures in $\ch^b(\proj J)$\end{spacing}}}}="5",
(300,750) *+{\framebox(73,40){\parbox{71pt}{\tiny \begin{spacing}{0.7} reachable intermediate bounded
$t$-structures with length heart in $\cd^b(\mod J)$\end{spacing}}}}="6",
(940,750) *+{\framebox(80,40){\parbox{75pt}{\tiny \begin{spacing}{0.7} reachable isoclasses of intermediate simple-minded collections in $\cd^b(\mod J)$\end{spacing}}}}="7",
(250,1000) *+{\framebox(73,35){\parbox{70pt}{\tiny \begin{spacing}{0.7} reachable isoclasses of basic support $\tau$-tilting modules over $J$\end{spacing}}}}="8",
(750,1000) *+{\framebox(73,30){\parbox{70pt}{\tiny \begin{spacing}{0.7} reachable functorially finite torsion pairs of $\mod J$\end{spacing}}}}="9",
(500,1250) *+{\framebox(85,30){\parbox{82pt}{\tiny \begin{spacing}{0.7} reachable isoclasses of basic cluster-tilting objects in $\cc_{(Q,W)}$\end{spacing}}}}="10",
(150,1700) *+{\framebox(60,20){\parbox{57pt}{\tiny $\gg$-matrices of $Q$}}}="11",
(500,1500) *+{\framebox(60,20){\parbox{55pt}{\tiny {clusters in $\ca_Q$}}}}="12",
(900,1700) *+{\framebox(60,20){\parbox{57pt}{\tiny $\ccc$-matrices of $Q$}}}="13",
(600,750) *+{}="14",
(800,1500) *+{\framebox(45,20){\parbox{40pt}{\tiny {mutation class of $\hat{Q}$}}}}="15",
"0", {\ar "1"}, {\ar "2"}, {\ar "3"}, {\ar "4"}, {\ar@/_109pt/ "10"}, {\ar@/_77pt/ "11"}, 
"1", {\ar "0"},  {\ar@{-->} "5"}, 
"2",  {\ar "3"}, 
"3", {\ar "1"}, {\ar "2"}, {\ar@/^53pt/ "13"},
"4", {\ar@{-->} "5"}, {\ar "6"}, {\ar@{-->} "7"}, {\ar@/_20pt/ "8"},
"5", {\ar@{-->} "4"}, 
"6", {\ar "7"}, {\ar "2"}, 
"7", {\ar@{-->} "5"}, {\ar "6"}, {\ar "3"},
"8", {\ar "9"}, 
"9", {\ar "6"}, {\ar@{-} "14"}, "14", {\ar@{-->} "2"},
"10", {\ar "8"}, {\ar "12"}, {\ar "11"},
"11", {\ar "13"},
"12", {\ar "11"}, {\ar "15"},
"13", {\ar "11"},
"15", {\ar "13"},
 \end{xy}\]
\caption{The diagram}
\label{f:the-map}
\end{figure}
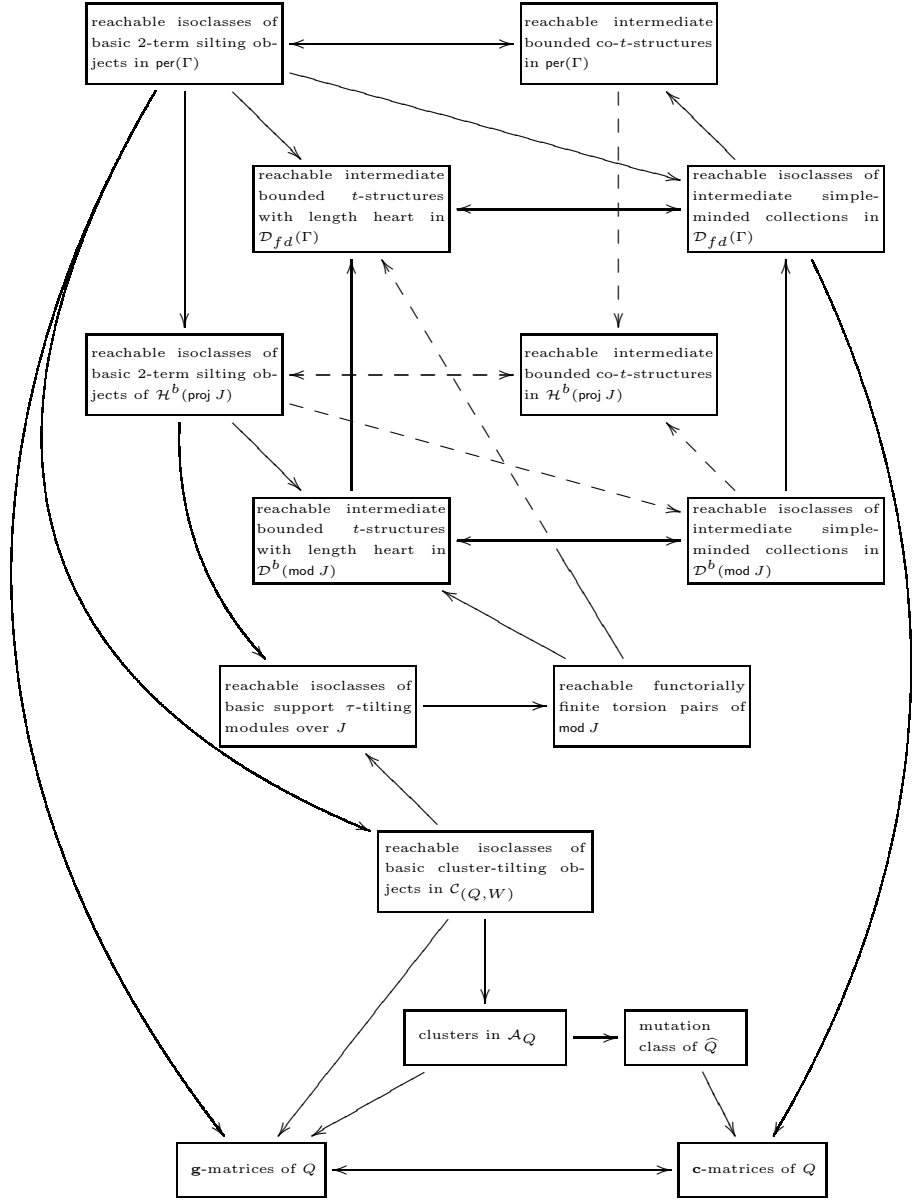

The exchange graph of a quiver $Q$ without loops or oriented 2-cycles encodes the combinatorics of mutations of clusters in the cluster algebra defined by $Q$.
With the interpretation of cluster algebras using representation theory of algebras, a whole variety of ordered exchange graphs has been associated with objects such as a finite-dimensional algebra $J$ or a differential graded (=dg) algebra $\Gamma$ concentrated in non-positive degrees. These constructions often come from variations of the concept of tilting, the vertices of the exchange graph being  torsion pairs, $t$-structures, silting objects, support $\tau$-tilting modules and so on. 
The goal of this article is to present these various constructions of ordered exchange graphs, relate them among each other and also to the combinatorial versions stemming from cluster theory. 
The above "atlas"  in Figure~\ref{f:the-map} shows the main examples and their interdependence.  All the maps in this diagram are bijections. They preserve partial orders and commute with mutations, and hence induce isomorphisms of ordered exchange graphs. 
We use the following notations which are explained in detail in Sections \ref{s:preliminaries}--\ref{s:the-bijection}:
\begin{itemize}
\item $(Q,W)$: a quiver with potential such that $Q$ has no loops or oriented 2-cycles, $W$ is non-degenerate and the algebra $J$ below is finite-dimensional, (Section~\ref{ss:qp})
\item $\Gamma=\hat{\Gamma}(Q,W)$: the (complete) Ginzburg dg algebra of $(Q,W)$, (Section~\ref{ss:ginzburg-algebra})
\item $J=\hat{J}(Q,W)$: the (complete) Jacobian algebra of $(Q,W)$, (Section~\ref{ss:ginzburg-algebra})
\item $\per(\Gamma)$: the perfect derived category of $\Gamma$,
\item $\cd_{fd}(\Gamma)$: the finite-dimensional derived category of $\Gamma$,
\item $\cc_{(Q,W)}$: the Amiot cluster category of $(Q,W)$, (Section~\ref{ss:amiot-cluster-category})
\item $\ch^b(\proj J)$: the bounded homotopy category of finitely generated projective $J$-modules,
\item $\cd^b(\mod J)$: the bounded derived category of finite-dimensional $J$-modules,
\item $\ca_Q$: the cluster algebra with principal coefficients defined by $Q$, (Section~\ref{ss:cluster})
\item $\hat{Q}$: the framed quiver associated to $Q$ (Section~\ref{ss:mut}).
\end{itemize}

The diagram naturally falls into five layers, which, from the top to the bottom, respectively involve derived categories of 3-Calabi--Yau dg algebras, derived categories of finite-dimensional algebras, module categories of finite-dimensional algebras, 2-Calabi--Yau triangulated categories and cluster combinatorics. The first two layers should be viewed as forming a cube (the "dashing" has no mathematical meaning).

This article is written in handbook style. The objects  and maps appearing in Figure~\ref{f:the-map} are defined in Section \ref{s:the-graph} and Section \ref{s:the-bijection}, respectively. Most notions and results presented here are not new, however we decide to give proofs of some results which are only known to the experts and difficult to find in the literature. The literature on cluster theory is quickly expanding, and we do not aim to cover everything in full generality. The reader is encouraged to read the original references. More detailed surveys on some of the objects and maps include \cite{Zelevinsky07,Keller10c,Fomin10,Leclerc10,Reiten10,Keller:derivedcluster}. In the appendix we provide some results on the derived category of a non-positive dg algebra which are used in Sections \ref{s:the-graph} and \ref{s:the-bijection}.

Throughout this article, $k$ denotes an uncountable algebraically closed field and $D=\Hom_k(?,k)$ denotes the $k$-dual. All categories and algebras are assumed to be over $k$. The condition that $k$ is uncountable is needed to guarantee the existence of a non-degenerate potential (see Section~\ref{ss:qp}) and is not necessary for other results. The suspension functor of a triangulated category is denoted by $\Sigma$.

\medskip
\noindent{\it Acknowledgement.} The authors would like to thank Christof Geiss, Osamu Iyama, Bernhard Keller, Yuya Mizuno and Pierre-Guy Plamondon for answering their questions and for very helpful comments on preliminary versions. The second-named author is indebted to Bernhard Keller for sharing his insight on cluster theory.

\section{Preliminaries}\label{s:preliminaries}

In this section, we give the definition of an ordered exchange graph and introduce some basic notions on general categories, derived categories, quivers with potential and their Ginzburg dg algebras as well as Jacobian algebras.

\subsection{Ordered exchange graphs}
A central notion in cluster theory is that of an ordered exchange graph, which we formalize for the purpose of this article as follows:

\subsubsection{Exchange graphs}\label{ss:exchange-graph} Consider a set $V$ with a \emph{compatibility relation} $R$, that is, $R$ is reflexive and symmetric. We say that two elements $x$ and $y$ of $V$ are \emph{compatible} if $(x,y)\in R$.
Assume the following conditions:
\begin{itemize}
\item[(1)] All maximal subsets of pairwise compatible elements, the \emph{clusters}, are finite and have the same cardinality, say $n$;
\item[(2)] Any subset of $n-1$ pairwise compatible elements is contained in precisely two clusters. 
\end{itemize} 
We then define an \emph{exchange graph} to be the graph whose vertices are the clusters and where two clusters are joined by an edge precisely when their intersection has cardinality $n-1$. We refer to the edges of an exchange graph as \emph{mutations}, and we use the same terminology for any graph which is isomorphic to an exchange graph. Note that all those graphs are $n-$regular.

The conditions on the compatibility relation $R$ can be rephrased as follows: consider the (abstract) simplicial complex $\Delta$ whose $l$-simplices are the subsets of $l+1$ pairwise compatible elements of $V$. A simplex of codimension $1$ is called a wall. We assume that
\begin{itemize}
\item[(1)] $\Delta$ is a pure simplicial complex, \ie all maximal simplices are of the same dimension;
\item[(2)] every wall is contained in precisely two maximal simplices.
\end{itemize}
Then the exchange graph is the dual graph of $\Delta$. If in addition the exchange graph is connected, then $\Delta$ is a pseudo-manifold. See~\cite[Section 2.1]{cluster2}.


\subsubsection{Ordered exchange graphs}\label{ss:ordered-exchange-graph} We define an \emph{ordered exchange graph} to be an exchange graph $C$ endowed with a partial order $\le $ on the set of clusters such that:
\begin{itemize}
\item[(i)] (the underlying graph of) the Hasse quiver of the partial order $\le$ coincides with the graph $C$:  a predecessor $x$ of $x'$ with respect to $\le$ is an immediate predecessor precisely when $x'$ and $x$ are related by a mutation,
\item[(ii)] the Hasse quiver has a unique source and at most one sink.
\end{itemize}

The orientation on an ordered exchange graph $C$ induces a colouring of the elements of each cluster $x$:  Since $C$ is an $n-$regular graph, there will be $g$ arrows starting in $x$ and $r=n-g$ arrows ending in $x$. Since edges correspond to mutations of the elements of $x$, we can write the set $x$ as a union of $g $ "green" elements and $r$ "red" elements, where the condition (2) in the definition of an exchange graph ensures that these two sets are disjoint. The source in $C$ is the unique cluster with all of its elements green, and the sink (if it exists) has all elements red.

A maximal path in an ordered exchange graph $C$ can thus be interpreted as a maximal sequence of mutations at green elements. These sequences, called \emph{maximal green sequences} by B. Keller~\cite{Keller11b}, play an important role in finding quantum dilogarithm identities and non-commutative Donaldson--Thomas invariants \cite{Keller11b} and in calculating the complete spectrum of a BPS (Bogomol'nyi--Prasad--Sommerfield) particle in string theory \cite{ACCERV:BPS,CCV,Xie} (this also appears  implicitly in \cite{GMN:WKB}).
\bigskip
 
The main feature of an ordered exchange graph  is the property (i): the arrows in the Hasse quiver of the poset are given by mutations. This property has been established by Happel and Unger  \cite{HappelUnger05} in the context of tilting modules (see Section~\ref{ss:tilting-module}), and the concepts presented in this article can be seen as a way to enlarge the poset of tilting modules such that the Hasse quiver becomes an $n-$regular graph (see Section~\ref{ss:support-tau-tilting}).

Note that not all the structures of an ordered exchange graph (the compatibility relation, the partial order, the mutations) are explicitly visible or known for some of the examples. The purpose of this article is to point out that all examples we discuss carry all those structures, via the bijections we provide. Some structures are more naturally defined for some cases: The compatibility relation is given by mutual vanishing of extension groups in the context of (cluster-)tilting objects, the partial order is given most naturally by inclusion in the context of torsion classes, $t$-structures and co-$t$-structures, and the mutation is the main ingredient in the definition of $\ccc$-matrices, $\gg$-matrices and clusters. However, the partial order on $\ccc$-matrices seems not to be known explicitly.

\subsection{Some notions on categories}

Let $\cc$ be a $k$-linear category. We denote by $\Hom_\cc(M,N)$ or simply $\Hom(M,N)$ the morphism space from $M$ to $N$ in $\cc$. For a subcategory or a set of objects $\cs$ of $\cc$, denote by ${}^\perp\cs$ (respectively, $\cs^\perp$) the \emph{left (respectively, right) orthogonal category} of $\cs$, \ie
\[{}^\perp\cs:=\{M\in \cc\mid \Hom_{\cc}(M,N)=0 \text{ for any } N\in\cs\}\]
(respectively, 
\[\cs^\perp:=\{M\in \cc\mid \Hom_{\cc}(N,M)=0 \text{ for any } N\in\cs\}\text{)}.\]
The \emph{split Grothendieck group} of $\cc$, denoted by $K^{\rm split}_0(\cc)$, is defined as the quotient of the free abelian group generated by isomorphism classes of objects of $\cc$ by the subgroup generated by elements of the form $[M]-[N]-[L]$, where $M\cong N\oplus L$.
The \emph{Grothendieck group} of an abelian (respectively, triangulated) category $\cc$, denoted by $K_0(\cc)$, is the quotient of the free abelian group generated by isomorphism classes of objects of $\cc$ by the subgroup generated by elements of the form $[M]-[N]-[L]$, whenever there is an exact sequence $0\rightarrow N\rightarrow M\rightarrow L\rightarrow 0$ (respectively, a triangle $N\rightarrow M\rightarrow L\rightarrow\Sigma N$).

Assume that $\cc$ is Krull--Schmidt. For an object $M$ in $\cc$, we denote by $|M|$ the number of pairwise non-isomorphic indecomposable direct summands of $M$, and  by $\add(M)=\add_{\cc}(M)$ the smallest full subcategory of $\cc$ which contains $M$ and which is closed under taking finite direct sums and direct summands. An object $M$ of $\cc$ is said to be \emph{basic} if each indecomposable direct summand of $M$ occurs with multiplicity $1$ in a decomposition of $M$ into the direct sum of indecomposable objects.

Let $\ca$ be an abelian category. A full subcategory $\cb$ of $\ca$ is \emph{functorially finite} for any object $X$ of $\ca$ there are objects $L_X$ and $R_X$ of $\cb$ together with morphisms $X\rightarrow L_X$ and $R_X\rightarrow X$ such that the induced morphisms of functors 
\[\Hom_{\cb}(L_X,?)\rightarrow \Hom_{\ca}(X,?)|_\cb~~\text{ and } \Hom_{\cb}(?,R_X)\rightarrow\Hom_{\ca}(?,X)|_\cb\]
are surjective.
The abelian category $\ca$  is called a \emph{length category} if every object has finite length, \ie every object admits a finite filtration such that all the subfactors are simple.

For a subcategory or a set of objects $\cs$ of  a triangulated category $\cc$, we denote by $\thick(\cs)$ the \emph{thick subcategory of $\cc$ generated by $\cs$}, \ie the smallest triangulated subcategory of $\cc$ which contains $\cs$ and which is closed under taking isomorphisms and direct summands. We say that $\cs$ \emph{generates} $\cc$ if $\cc=\thick(\cs)$ holds. For $d\in\mathbb{Z}$, the triangulated category $\cc$ is \emph{$d$-Calabi--Yau} if there is a bifunctorial isomorphism
\[D\Hom(M,N)\stackrel{\sim}{\longrightarrow} \Hom(N,\Sigma^d M)\]
for any $M$ and $N$ in $\cc$.

\subsection{Derived categories}
For an algebra $A$, we will denote by $\Mod A$ the category of (right) $A$-modules, by $\mod A$ the category of finite-dimensional $A$-modules and by $\proj A$ the category of finitely generated projective $A$-modules. Denote by $\ch^b(\proj A)$ the homotopy category of bounded complexes of $\proj A$, by $\cd^b(\mod A)$ the derived category of bounded complexes of $\mod A$ and by $\cd(\Mod A)$ the derived category of complexes of $\Mod A$.

Let $A$ be a dg algebra, \ie a graded algebra endowed with a differential $d$ such that $(A,d)$ is a complex of vector spaces and the following graded Leibniz rule holds
for all homogeneous elements $a$ of degree $p$ and all elements $b$:
\[
d(ab)=d(a)b+(-1)^pad(b).
\]
Consider the derived category $\cd(A)$ of (right) dg $A$-modules, see~\cite{Keller94,Keller06d}. This is a triangulated category. For  a dg $A$-module $M$, we have
\[
\Hom_{\cd(A)}(A,\Sigma^m M)=H^m(M).
\]
This formula will be used without further reference.

We are interested in the following two triangulated subcategories of $\cd(A)$: 

(1) the \emph{perfect derived category} $\per(A)=\thick(A)$, the thick subcategory of $\cd(A)$ generated by $A_A$, the free dg $A$-module of rank $1$; 

(2) the \emph{finite-dimensional derived category} $\cd_{fd}(A)$, which consists of those dg $A$-modules whose total cohomology is finite-dimensional over $k$.

If $A$ is a finite-dimensional algebra, we can view it as a dg algebra concentrated in degree $0$. In this case, we have $\cd(A)=\cd(\Mod A)$, $\cd_{fd}(A)\cong\cd^b(\mod A)$ and $\per(A)\cong \ch^b(\proj A)$.

A dg algebra $A$ is said to be \emph{non-positive} if its degree $i$ component vanishes for all $i>0$.

\subsection{Quivers with potential}\label{ss:qp}

For a finite quiver $Q$, we denote its set of vertices by $Q_0$ and its set of
arrows by $Q_1$. For an arrow $\alpha$, we denote by $s(\alpha)$ its source and by $t(\alpha)$ its target. The trivial path corresponding to a vertex $i$ will
be denoted by $e_i$. The path algebra of $kQ$ is by definition the vector space with basis the set of paths of $Q$ and with multiplication given by concatenation of paths, \ie for two paths $p$ and $q$, we have
\[p\cdot q=\begin{cases} pq & \text{ if } s(p)=t(q)\\
0 &\text{ otherwise.}\end{cases}\]

If $Q$ is a graded quiver, \ie there is an integer associated with each arrow of $Q$, then $kQ$ is naturally a graded algebra. In this case, the {\em complete path algebra $\widehat{kQ}$} is the completion of
the path algebra $kQ$ in the category
of graded vector spaces with respect to the ideal generated by the
arrows of $Q$. Thus, the $n$-th component of
$\widehat{kQ}$ consists of formal combinations
$\sum_{p}\lambda_p p$ of all paths $p$ of degree $n$. Below we will reserve the terminology \emph{quiver} for ungraded quivers and we will sometimes consider them as graded quivers concentrated in degree $0$.

\smallskip

Let $Q$ be a finite quiver.  A {\em potential} on $Q$ is an element of the closure
of the subspace of the complete path algebra $\widehat{kQ}$ generated by all non-trivial cycles of $Q$, \ie an element of the form $\sum_c \lambda_c c$, where $c$ runs over all non-trivial cycles of $Q$. For a potential $W$ on $Q$, the pair $(Q,W)$ is called a \emph{quiver with potential}. For an
arrow $\rho$ and a cycle $c$ of $Q$, we define
$\del_\rho(c)=\sum_{c=u \rho v} vu $, where the sum is taken over all decompositions of
the cycle $c$ (where $u$ and $v$ are possibly trivial paths).
Writing $W=\sum_{c:cycle}\lambda_c c$, we define
$\del_\rho(W)=\sum_{c:cycle}\lambda_c \del_\rho(c)$.

Thanks to the work of Derksen, Weyman and Zelevinsky~\cite{DerksenWeymanZelevinsky08}, the quiver mutation (Section~\ref{ss:mut}) extends to a mutation of quivers with potential. Given a quiver with potential $(Q,W)$ such that $Q$ has no loops or oriented 2-cycles and a vertex $i$ of $Q$ , the mutation at $i$ produces a new quiver with potential $\mu_i(Q,W)$. Unlike the quiver case, however, the new quiver with potential $\mu_i(Q,W)$ may contain oriented 2-cycles. A potential $W$ is said to be \emph{non-degenerate} if for any sequence $(i_1,\ldots,i_k)$ of vertices of $Q$ the quiver of the multi-mutated quiver with potential $\mu_{i_k}\cdots\mu_{i_1}(Q,W)$ does not contain oriented 2-cycles. Recall that the base field $k$ is uncountable. It follows from \cite[Corollary 7.4]{DerksenWeymanZelevinsky08} that non-degenerate potentials exist for any quiver without loops or oriented 2-cycles.

\subsection{The Ginzburg dg algebra and the Jacobian algebra}\label{ss:ginzburg-algebra}
Let $(Q,W)$ be a quiver with potential.  The {\em (complete) Ginzburg dg
algebra $\hat{\Gamma}(Q,W)$} of $(Q,W)$ is constructed as follows \cite{Ginzburg06}: Let $\tilde{Q}$
be the graded quiver with the same vertices as $Q$ and whose arrows
are
\begin{itemize}
\item the arrows of $Q$ (they all have degree~$0$),
\item an arrow $\rho^*: j \to i$ of degree $-1$ for each arrow $\rho:i\to j$ of $Q$,
\item a loop $t_i : i \to i$ of degree $-2$ for each vertex $i$
of $Q$.
\end{itemize}
The underlying graded algebra of $\hat{\Gamma}(Q,W)$ is the
complete path algebra $\widehat{k\tilde{Q}}$. It is endowed with the $\mathfrak{m}$-adic topology, where $\mathfrak{m}$ is the ideal of $\widehat{k\tilde{Q}}$ generated by all arrows of $\tilde{Q}$.
The differential of $\hat{\Gamma}(Q,W)$ is the unique continuous
linear endomorphism homogeneous of degree~$1$ which satisfies the
graded Leibniz rule
\[
d(uv)= (du) v + (-1)^p u dv \ko
\]
for all homogeneous $u$ of degree $p$ and all $v$, and takes the
following values on the arrows of $\tilde{Q}$:
\begin{itemize}
\item $d(\rho)=0$ for each arrow $\rho$ of $Q$,
\item $d(\rho^*) = \del_\rho W$ for each arrow $\rho$ of $Q$,
\item $d(t_i) = e_i \sum_{\rho\in Q_1} (\rho\rho^*-\rho^*\rho) e_i$ for each vertex $i$ of $Q$.
\end{itemize}

\emph{The (complete) Jacobian algebra} $\hat{J}(Q,W)$ of the quiver with
potential $(Q,W)$ is by definition the $0$-th cohomology of the
Ginzburg dg algebra $\hat\Gamma(Q,W)$. Concretely we have
\[\hat{J}(Q,W)=\widehat{kQ}/\overline{\langle \del_\rho W,\rho\in Q_1\rangle},\]
where for an ideal $I$ of $\widehat{kQ}$ we denote by $\bar{I}$ its closure. The quiver with potential $(Q,W)$ is \emph{Jacobi-finite} if the Jacobian algebra $\hat{J}(Q,W)$
is finite-dimensional.

\begin{remark} Let $(Q,W)$ be a quiver with potential and $\Gamma=\hat{\Gamma}(Q,W)$ be its Ginzburg dg algebra.
\begin{itemize}
\item[(a)] It follows from the definition that $\Gamma$ is non-positive.
\item[(b)] According to \cite[Theorem A.17]{KellerYang11}, the dg algebra $\Gamma$ is (topologically) homologically smooth and bimodule $3$-Calabi--Yau. It then follows from~\cite[Lemma 4.1]{Keller10} that the perfect derived category $\per(\Gamma)$ contains the finite-dimensional derived category $\cd_{fd}(\Gamma)$ as a triangulated subcategory; moreover, $\cd_{fd}(\Gamma)$ is $3$-Calabi--Yau as a triangulated category.
\end{itemize}
\end{remark}
\smallskip
\noindent
{\bf Example.} Let $Q$ be the quiver $\xymatrix{1\ar[r]^\alpha & 2}$ of type $\mathbb{A}_2$. Then the above construction yields
\[
\tilde{Q}=\xymatrix@C=3pc{1\ar@<.7ex>[r]^\alpha \ar@(dl,ul)^{t_1}& 2\ar@<.7ex>[l]^{\alpha^*}\ar@(dr,ur)_{t_2}}
\]
There are no non-trivial cycles in $Q$, so the only potential on $Q$ is $0$. The Ginzburg dg algebra $\hat{\Gamma}(Q,0)$ is the complete path algebra $\widehat{k\tilde{Q}}$ endowed with the differential which takes $t_1$ to $-\alpha^*\alpha$, $t_2$ to $\alpha\alpha^*$ and takes $\alpha$ and $\alpha^*$ to $0$. The Jacobian algebra $\hat{J}(Q,0)$ is the path algebra $kQ$.


\section{The ordered exchange graphs}\label{s:the-graph}
We introduce in this section the following sets, thus introducing the various ordered exchange graphs presented in Figure \ref{f:the-map}:
\begin{align*}
&\fftor(A), \sttilt(A), \intertstr(A), \intercotstr(A), \twosilt(A), &\\
&\intersmc(A),\cto(\cc), \mut(Q), \Cl(Q),\cmat(Q), \gmat(Q).\hspace{9pt}& 
\end{align*}
We warn the reader that we will use the notation $\mu^-$ for left mutation with $-$ meaning a decrease with respect to the partial order. This convention is the same as the one used in \cite{Keller:derivedcluster} and opposite to the one used in \cite{AiharaIyama12,KoenigYang12}. We remark that for most of these sets the structure of an ordered exchange graph are defined only after having the bijections in Section~\ref{s:the-bijection}.


\subsection{Tilting modules, $\tilt(A)$}\label{ss:tilting-module}
In this subsection, we recall some results of Happel--Unger and of Riedtmann--Schofield on the "ordered exchange graph" of tilting modules.
 Already in 1987 Ringel observed that the set of tilting modules over a finite-dimensional algebra A carries the structure of a simplicial complex. The study of this complex and its poset structure was initiated in \cite{RiedtmannSchofield91} and further carried out by Happel and Unger \cite{Unger96a,Unger96b,HappelUnger05,HappelUnger1,HappelUnger2}. See also the contributions of Ringel and of Unger in the Handbook of tilting theory \cite{Ringel07,Unger07}.

Let $A$ be a finite-dimensional algebra. An $A$-module $T$ is a \emph{pretilting module} if $\Ext_A^1(T,T)=0$ and the projective dimension of $T$ is at most $1$. It is a \emph{tilting module} if in addition there is a short exact sequence of $A$-modules
\[
\xymatrix{0\ar[r] & A\ar[r] & T^0\ar[r] & T^1\ar[r] & 0}
\]
with $T^0$ and $T^1$ in $\add(T)$.

The following theorem of Bongartz shows that any pretilting module can be completed to a tilting module.

\begin{theorem}{(\cite[Lemma 2.1 and Theorem 2.1]{Bongartz81})}\label{t:bongartz-complement}
Let $T$ be a pretilting $A$-module. Then there is an $A$-module $X$ such that $T\oplus X$ is a tilting module. Consequently, $T$ is a tilting module if and only if $|T|=|A|$.
\end{theorem}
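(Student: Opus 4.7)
The plan is to carry out the classical Bongartz construction, producing $X$ as a suitable universal extension of copies of $T$ by $A$. Set $B = \End_A(T)$ and consider $E = \Ext^1_A(T,A)$ as a right $B$-module. Since $T$ and $A$ are finite-dimensional and $A$ is noetherian, $E$ is finitely generated over $B$; choose elements $\eta_1,\ldots,\eta_n$ generating $E$. Viewed collectively, $(\eta_1,\ldots,\eta_n)$ defines a class $\eta\in\Ext^1_A(T^n,A)$, and I would let $X$ be the middle term of a short exact sequence
\[
0\longrightarrow A\longrightarrow X\longrightarrow T^n\longrightarrow 0
\]
representing $\eta$. The candidate Bongartz complement is this $X$.

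The key step is to verify $\Ext^1_A(T,X)=0$. Applying $\Hom_A(T,-)$ to the sequence produces a long exact sequence whose relevant piece is
\[
\Hom_A(T,T^n)\stackrel{\delta}{\longrightarrow}\Ext^1_A(T,A)\longrightarrow\Ext^1_A(T,X)\longrightarrow\Ext^1_A(T,T^n).
\]
The last term vanishes because $T$ is pretilting. The connecting map $\delta$ sends a morphism $f\colon T\to T^n$ to the pullback $f^*\eta\in E$; under the identification $\Hom_A(T,T^n)\cong B^n$, this map is precisely the $B$-linear surjection $B^n\twoheadrightarrow E$ determined by our choice of generators. Hence $\delta$ is surjective and $\Ext^1_A(T,X)=0$. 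This is the heart of the argument and the only place where the choice of $X$ really does work.

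With $X$ in hand, the remaining verifications are formal. Applying $\Hom_A(-,T)$ and $\Hom_A(-,X)$ to the defining sequence and using $\Ext^{\ge 1}_A(A,-)=0$ together with $\Ext^1_A(T,T)=0$ and $\Ext^1_A(T,X)=0$ yields $\Ext^1_A(X,T)=0$ and $\Ext^1_A(X,X)=0$, so that $T\oplus X$ has no self-extensions in degree one. The projective dimension bound $\mathrm{pd}_A(X)\le 1$ follows from the horseshoe lemma applied to $0\to A\to X\to T^n\to 0$ since $\mathrm{pd}_A(A)=0$ and $\mathrm{pd}_A(T)\le 1$. Finally, the defining sequence itself is the required short exact resolution $0\to A\to T^0\to T^1\to 0$ with $T^0=X$ and $T^1=T^n$ both in $\add(T\oplus X)$. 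Thus $T\oplus X$ is a tilting module.

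For the consequence, I would argue as follows. Any tilting module $U$ satisfies $|U|=|A|$: the short exact sequence $0\to A\to U^0\to U^1\to 0$ with $U^0,U^1\in\add(U)$ together with $\mathrm{pd}_A(U)\le 1$ and $\Ext^1_A(U,U)=0$ shows that $[U]=[U^0]-[U^1]$ lets $U$ generate $K_0(\mathrm{mod}\,A)$ as a triangulated class, so the indecomposable summands of $U$ must be $|A|$ in number. Conversely, if $T$ is pretilting with $|T|=|A|$, apply the first part to obtain a tilting $T\oplus X$; then $|T\oplus X|=|A|=|T|$ forces $X\in\add(T)$, and the sequence $0\to A\to X\to T^n\to 0$ shows $T$ itself is tilting. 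The main obstacle is the surjectivity of $\delta$ in the $\Ext^1_A(T,X)=0$ argument, which is exactly the point where the universal choice of $\eta$ is used.
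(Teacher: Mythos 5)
The construction of the complement $X$ is the classical Bongartz universal-extension argument and is carried out correctly: the identification of the connecting map $\delta\colon\Hom_A(T,T^n)\cong B^n\to\Ext^1_A(T,A)$ with the $B$-linear map $e_i\mapsto\eta_i$ is exactly right, its surjectivity kills $\Ext^1_A(T,X)$, and the remaining vanishings and the projective-dimension bound follow as you say. This matches the content of the cited Lemma 2.1 of Bongartz, which the paper does not reprove.

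There is, however, a gap in your argument for the ``Consequently'' part. You assert that because the sequence $0\to A\to U^0\to U^1\to 0$ expresses $[A]$ (note: the displayed formula should read $[A]=[U^0]-[U^1]$, not $[U]=[U^0]-[U^1]$) in terms of the classes $[U_i]$, the $[U_i]$ generate $K_0(\mod A)$, and you then conclude $|U|=|A|$. Generation only yields $|U|\ge|A|$. The inequality $|U|\le|A|$ requires showing that the classes $[U_1],\ldots,[U_r]$ of the pairwise nonisomorphic indecomposable summands of a tilting module are linearly \emph{independent} in $K_0(\mod A)$, and this is precisely where $\Ext^1_A(U,U)=0$ is used substantively. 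The standard argument (Bongartz's Theorem 2.1) takes minimal projective resolutions $0\to P_i^1\to P_i^0\to U_i\to 0$, observes $[U_i]=[P_i^0]-[P_i^1]$, and, given a hypothetical relation $\sum a_i[U_i]=0$, separates into positive and negative parts and uses the Ext-vanishing between the two resulting modules to compare their minimal projective resolutions and conclude the relation is trivial. Without this step the deduction ``pretilting with $|T|=|A|$ implies tilting'' is not actually established, since it relies on knowing $|T\oplus X|=|A|$ for the constructed tilting module $T\oplus X$.
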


In general $X$ is not basic and it may contain summands from $\add(T)$. But if $T$ is basic, then we can choose a suitable summand $Y$ of $X$ (called the \emph{Bongartz complement} of $T$) such that $T\oplus Y$ becomes a basic tilting module. The next result enables us to define mutation for tilting modules.

\begin{theorem}{(\cite[Theorem 1.1]{HappelUnger89} and \cite[Proposition 1.3]{RiedtmannSchofield91})}\label{t:almost-complete-tilting-has-at-most-two-complements}
Let $T=T'\oplus X$ be a basic tilting $A$-module with $X$ indecomposable. Then there exists at most one indecomposable $A$-module $Y$ such that $Y\not\cong X$ and $T'\oplus Y$ is a tilting module. If such a $Y$ exists, then there is a short exact sequence either of the form
\[
\xymatrix{0\ar[r] & X\ar[r] & E\ar[r] & Y\ar[r] & 0}
\]
with $E$ in $\add(T')$ or of the form
\[
\xymatrix{0\ar[r] & Y\ar[r] & E'\ar[r] & X\ar[r] & 0}
\] 
with $E'$ in $\add(T')$.
\end{theorem}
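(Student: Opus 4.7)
The plan is to build the second complement $Y$, when it exists, from $\add(T')$-approximations of $X$, and then to argue that this construction exhausts all possibilities. Since $T=T'\oplus X$ is tilting, $T'$ is an almost complete tilting module with $|T'|=|A|-1$, so by Theorem~\ref{t:bongartz-complement} any other tilting complement of $T'$ has the form $T'\oplus Z$ for some indecomposable $Z\notin\add(T')$ satisfying $\Ext^1(T'\oplus Z,T'\oplus Z)=0$ together with projective dimension at most $1$.

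First I would take a minimal right $\add(T')$-approximation $f\colon E'\to X$ and split into two cases according to whether $f$ is surjective. If it is, set $Y=\ker(f)$; Wakamatsu's lemma combined with $\Ext^1(T',T')=0$ yields $\Ext^1(T',Y)=0$, and applying $\Hom(-,T')$ to $0\to Y\to E'\to X\to 0$ together with $\Ext^1(X,T')=0$ and $\Ext^2(X,T')=0$ (the latter from the projective dimension bound) gives $\Ext^1(Y,T')=0$. After checking that $Y$ is indecomposable and $Y\notin\add(T')$, Theorem~\ref{t:bongartz-complement} will imply $T'\oplus Y$ is tilting, and the defining sequence of the approximation is the first of the two displayed forms. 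If $f$ is not surjective, I would dualize: take a minimal left $\add(T')$-approximation $g\colon X\to E$, use the tilting resolution $0\to A\to T^0\to T^1\to 0$ with $T^0,T^1\in\add(T)$ to force $g$ to be injective, and set $Y=\cok(g)$ to produce the second type of sequence by the analogous Ext-vanishing argument.

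For uniqueness, suppose $T'\oplus Y$ is tilting with $Y$ indecomposable and $Y\not\cong X$. The Ext-vanishing $\Ext^1(T',Y)=0$ together with the fact that $T$ is tilting forces either a surjection from $\add(T')$ onto $Y$ whose kernel is a twist of $X$, or dually an embedding of $X$ into $\add(T')$ with cokernel $Y$. Lifting these through the minimal approximations above and applying Krull--Schmidt pins $Y$ down to $\ker(f)$ in the first case and to $\cok(g)$ in the second, establishing that at most one such $Y$ can exist.

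The principal obstacle is to carry out the dichotomy cleanly: I will have to show both that exactly one of the two sequence forms occurs for a given $X$, and that any candidate complement $Y$ must fall into one of the two cases rather than be constructed by some completely different means. Both points reduce to careful tracking of the long exact sequences of Ext-groups along the (co)approximations, using repeatedly that every indecomposable summand of $T$ has projective dimension at most $1$ and that $|T|=|A|$; the remaining work is to verify that the resulting $Y$ is indeed indecomposable, which follows from the minimality of the approximation together with the rigidity imposed by $\Ext^1(Y,Y)=0$.
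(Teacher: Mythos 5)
The paper does not prove this theorem; it cites Happel--Unger and Riedtmann--Schofield and uses the result as a black box. So the comparison must be with the classical argument, and judged on that basis your sketch has a genuine gap, concentrated in Case~2.

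Your Case~1 (when the minimal right $\add(T')$-approximation $f\colon E'\to X$ is surjective) is sound: Wakamatsu's lemma gives $\Ext^1(T',Y)=0$, the long exact sequence from $\Hom(-,T')$ gives $\Ext^1(Y,T')=0$ using $\Ext^2(X,T')=0$, and $\pd Y\le 1$ follows from the exact sequence. But in Case~2 you assert that the tilting resolution $0\to A\to T^0\to T^1\to 0$ ``forces'' the minimal left $\add(T')$-approximation $g\colon X\to E$ to be injective. That is not an argument, and in fact $g$ is \emph{not} injective in general. Take $A=k[\mathbb{A}_2]$ as in the paper's running example, $T'=S_1=P_1$, $X=P_2$: here $\Hom(P_2,S_1)=0$, so the minimal left $\add(T')$-approximation is $P_2\to 0$, which is certainly not injective. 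What happens in this example is that $T'=S_1$ has only the single complement $P_2$, so the theorem is vacuous; but your construction runs anyway and produces $Y=0$, which your sketch never guards against. The tilting resolution lives in $\add(T)=\add(T'\oplus X)$, not $\add(T')$, and there is no obvious way to extract from it that $X$ embeds into $\add(T')$. The correct route in the literature is to show that when a second complement $Y$ \emph{does} exist, then $T'$ is sincere and the Bongartz complement $X$ is cogenerated by $T'$; that cogeneration is what forces $g$ to be a monomorphism, and it is a nontrivial intermediate lemma you would need to prove or quote.

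Two further soft spots: you flag indecomposability of $Y$ as ``remaining work,'' but it does not follow merely from minimality of the approximation together with rigidity --- the classical argument again relies on the cogeneration/generation structure and Krull--Schmidt, and this is where the ``at most two complements'' statement really gets its teeth. And the uniqueness paragraph asserts that any candidate complement ``must fall into one of the two cases,'' which in turn requires the dichotomy that for two distinct complements $X$, $Y$ exactly one of $\Ext^1(X,Y)\neq 0$, $\Ext^1(Y,X)\neq 0$ holds; you never establish this, and it is not immediate. So the plan is the right one in spirit --- exchange sequences from minimal approximations --- but as written it proves less than it claims, and the Case~2 injectivity claim needs to be replaced by a real argument rather than a gesture at the tilting coresolution of $A$.
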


For a finite-dimensional algebra $A$, put 
\begin{align*}
\tilt(A) &= \text{the set of isomorphism classes of basic tilting $A$-modules}.
\end{align*}
When there is no confusion, we will identify a module $T$ with its isomorphism class $[T]$. Let $T=T'\oplus X$ be an element of $\tilt(A)$ with $X$ indecomposable. The \emph{left mutation} $\mu^-_X(T)$ of $T$ at $X$ is defined as $T'\oplus Y$, provided that such a $Y$ as in Theorem~\ref{t:almost-complete-tilting-has-at-most-two-complements} exists and that there is a short exact sequence
\[
\xymatrix{0\ar[r] & X\ar[r] & E\ar[r] & Y\ar[r] & 0}
\]
with $E$ in $\add(T')$. In this case, $X$ is the Bongartz complement of $T'$. The quiver whose vertices are elements of $\tilt(A)$ and whose arrows are given by left mutations will be denoted by  $\mathcal{E}_A$. The underlying graph of $\mathcal{E}_A$ is in general not an exchange graph as in Section~\ref{ss:exchange-graph}, because it is not regular, see~\cite[Corollary 1.3]{HappelUnger89}, \cite[Remark 1.3]{RiedtmannSchofield91} and~\cite[Lemma 4.3]{HappelUnger2}. Thanks to the work~\cite{AdachiIyamaReiten12} of Adachi, Iyama and Reiten, it can be completed into an ordered exchange graph by considering support $\tau$-tilting modules (see Section~\ref{ss:support-tau-tilting}).

For an $A$-module $T$, define
\[T^{\perp_1}:=\{X\in\mod A\mid \Ext_A^1(T,X)=0\}.\]
On $\tilt(A)$ define a partial ordering by
\[T\leq T':\Leftrightarrow T^{\perp_1}\subseteq T'^{\perp_1},\]
see~\cite[Remark 2.2 (b)]{RiedtmannSchofield91}. The following was posed as a question by Riedtmann and Schofield and proved by Happel and Unger.

\begin{theorem}{(\cite[Theorem 2.1]{HappelUnger05})}
The Hasse quiver of $(\tilt(A),\leq)$ coincides with $\mathcal{E}_A$.
\end{theorem}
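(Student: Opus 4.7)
The plan is to prove the two containments of arrows separately: (a) every left-mutation arrow of $\mathcal{E}_A$ is a covering relation in $(\tilt(A),\le)$, and (b) every covering relation in $(\tilt(A),\le)$ arises from a left mutation.

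For (a), let $T=T'\oplus X$ with $X$ indecomposable and $\mu^-_X(T)=T'\oplus Y$, fitting into a short exact sequence $0\to X\to E\to Y\to 0$ with $E\in\add(T')$. To show $\mu^-_X(T)\le T$, I apply $\Hom_A(-,Z)$ to the SES for arbitrary $Z\in\mu^-_X(T)^{\perp_1}$: in the resulting long exact sequence the fragment $\Ext^1(E,Z)\to\Ext^1(X,Z)\to\Ext^2(Y,Z)$ has vanishing outer terms (the first since $E\in\add(T')$ and $\Ext^1(T',Z)=0$, the third since $Y$ has projective dimension at most $1$ as a summand of a tilting module), whence $Z\in T^{\perp_1}$. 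The inequality is strict, witnessed by $X\in T^{\perp_1}\setminus\mu^-_X(T)^{\perp_1}$: applying $\Hom_A(-,X)$ to the SES shows $\Ext^1(Y,X)$ is the cokernel of the map $\Hom_A(E,X)\to\Hom_A(X,X)$ induced by $X\hookrightarrow E$, and this cokernel is nonzero since a lift of $\mathrm{id}_X$ through $E$ would split the SES and force $Y\in\add(T')$, contradicting $|\mu^-_X(T)|=|A|$.

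To upgrade strictness to a covering, suppose $\mu^-_X(T)\le T''\le T$ with $T''\in\tilt(A)$. I would show that every indecomposable summand $N$ of $T'$ is a summand of $T''$ via an Ext-projectivity argument: as a summand of both $T$ and $\mu^-_X(T)$, $N$ lies in $\mu^-_X(T)^{\perp_1}\subseteq T''^{\perp_1}$; moreover $N$ is Ext-projective in the torsion class $\Fac(T'')=T''^{\perp_1}$, because $T''^{\perp_1}\subseteq T^{\perp_1}=\Fac(T)$ and $N$ is already Ext-projective in $\Fac(T)$ as a summand of the tilting module $T$. For a tilting module $U$, the Ext-projectives of $\Fac(U)$ are precisely the objects of $\add(U)$, which forces $N\in\add(T'')$. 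Hence $T'$ is a direct summand of $T''$; by Theorem~\ref{t:almost-complete-tilting-has-at-most-two-complements}, $T''$ equals either $T$ or $\mu^-_X(T)$.

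For (b), given a covering $T'<T$, the strict inclusion $\Fac(T')\subsetneq\Fac(T)$ guarantees an indecomposable summand $X$ of $T$ that does not belong to $\Fac(T')$. Writing $T=\overline T\oplus X$, I would invoke Theorem~\ref{t:almost-complete-tilting-has-at-most-two-complements}, combined with Bongartz's theorem (Theorem~\ref{t:bongartz-complement}) applied to $\overline T$, to produce a second complement $Y$ of $\overline T$ fitting into a SES of the left-mutation shape, so that $\mu^-_X(T)=\overline T\oplus Y$ is defined. Comparing torsion classes one checks $T'\le\mu^-_X(T)<T$, and the covering hypothesis then forces $T'=\mu^-_X(T)$.

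The main obstacle I expect is the covering step in (a): the Ext-projectivity argument, while conceptually clean, draws on the full torsion-theoretic machinery developed by Happel and Unger and is the technical heart of the proof. A parallel subtlety in (b) is to show that the selected summand $X$ actually yields $T'\le\mu^-_X(T)$ rather than leaving $T'$ and $\mu^-_X(T)$ incomparable; this is where the choice of $X$ outside $\Fac(T')$ plays a decisive role.
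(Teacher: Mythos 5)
Your part (a) is essentially correct. The arguments that $\mu^-_X(T)^{\perp_1}\subseteq T^{\perp_1}$ and that $X$ witnesses strictness are sound, and the covering step goes through once one grants the torsion-theoretic fact that for a tilting module $U$ the Ext-projectives of $\Fac(U)=U^{\perp_1}$ are exactly $\add(U)$ (take a surjective right $\add(U)$-approximation of an Ext-projective $N\in\Fac(U)$; its kernel lies again in $U^{\perp_1}$, so Ext-projectivity of $N$ splits the sequence). With that, any $T''$ with $\mu^-_X(T)\le T''\le T$ must contain $T'$ as a summand, and Theorem~\ref{t:almost-complete-tilting-has-at-most-two-complements} then forces $T''\in\{T,\mu^-_X(T)\}$.

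Part (b) has a genuine gap, larger than you acknowledge. Selecting an indecomposable summand $X$ of $T$ with $X\notin\Fac(T')$ is fine. But Theorem~\ref{t:almost-complete-tilting-has-at-most-two-complements} only asserts there is \emph{at most} one second complement, and Bongartz's theorem (Theorem~\ref{t:bongartz-complement}) applied to $\overline T$ may simply return $T=\overline T\oplus X$ itself: neither cited result produces a second complement $Y$, nor does either tell you that the exchange sequence has the left-mutation shape $0\to X\to E\to Y\to 0$, i.e.\ that $X$ is the Bongartz complement of $\overline T$. What is actually needed here is the Riedtmann--Schofield/Happel--Unger result that a \emph{faithful} almost complete tilting module has exactly two complements, together with arguments that $\overline T$ is faithful and that $X$ is the larger of the two complements. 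Moreover, the inequality $T'\le\mu^-_X(T)$ is not a routine comparison of torsion classes: it amounts to $\Ext^1(Y,T')=0$, which after applying $\Hom(-,T')$ to the exchange sequence requires every morphism $X\to T'$ to factor through $X\to E$ -- and this is exactly where the hypothesis $X\notin\Fac(T')$ must do the work, which you have not shown. These missing points are the technical core of Happel and Unger's argument; as written, part (b) is an outline rather than a proof.
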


The quiver $\mathcal{E}_A$ is in general not connected, see~\cite[Sections 6 and 7]{HappelUnger2}. However, we have

\begin{proposition}{(\cite[Corollary 2.2]{HappelUnger05})}\label{p:finiteness-and-connectedness}
If $\mathcal{E}_A$ has a finite connected component, then it is connected.
\end{proposition}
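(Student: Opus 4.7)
The plan combines two ideas: that $A_A$ is the unique source of $\mathcal{E}_A$, and that any finite connected component is forced to contain this source. First I would observe that the partial order $(\tilt(A), \le)$ has $A_A$ as its unique maximum, since $A^{\perp_1} = \mod A$ dominates $T^{\perp_1}$ for every tilting module $T$, with equality precisely when $T \cong A_A$. By the preceding theorem of Happel--Unger, $\mathcal{E}_A$ is the Hasse quiver of this partial order, and a unique maximum in a poset translates to a unique source of the associated Hasse quiver.

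Next, suppose $\mathcal{C}$ is a finite connected component of $\mathcal{E}_A$. Viewed as a finite subposet of $\tilt(A)$, it admits a maximal element $T^*$. Any incoming arrow at $T^*$ in $\mathcal{E}_A$ would have its source a tilting module strictly above $T^*$ in the partial order, and that source would be connected to $T^*$ by that edge, hence would lie in the same component $\mathcal{C}$---contradicting the maximality of $T^*$. Thus $T^*$ has no incoming arrows in $\mathcal{E}_A$, i.e.\ it is a source, so by uniqueness $T^* = A_A$. Hence $A_A \in \mathcal{C}$, and $\mathcal{C}$ is precisely the connected component of $A_A$.

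It remains to show that every tilting module $T$ lies in this component. Since $T \le A_A$, the idea is to produce a path of left mutations from $A_A$ down to $T$ staying inside the interval $[T, A_A] \subseteq \tilt(A)$. The conceptual tool for this is the Brenner--Butler tilting equivalence $\Hom_A(T, -) : T^{\perp_1} \rightarrow \mod B$, where $B = \End_A(T)^{op}$; it restricts to an order-preserving bijection between $[T, A_A] \subseteq \tilt(A)$ and an analogous interval in $\tilt(B)$, identifying $A_A$ with a tilting $B$-module and $T$ with $B_B$. An induction on $|A|$, passing to smaller endomorphism algebras at each step, then produces the desired mutation chain, with the finiteness of $\mathcal{C}$ guaranteeing that every sub-interval encountered along the way remains finite so that the inductive hypothesis keeps applying.

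The main obstacle is this last descent step. A priori the Hasse subquiver on $[T, A_A]$ need not be connected, and producing the path of left mutations is precisely where the Brenner--Butler reduction (or an equivalent Bongartz-completion argument) is indispensable. This interval-connectedness of $[T, A_A]$ is the technical heart of the argument in \cite{HappelUnger05}.
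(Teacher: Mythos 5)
The paper itself supplies no proof here---Proposition~\ref{p:finiteness-and-connectedness} is simply cited from \cite[Corollary 2.2]{HappelUnger05}---so there is nothing to compare your argument against line by line. Judged on its own terms, your proposal contains a genuine gap in the very first step, and the closing descent argument is incomplete in a way you acknowledge but also, I think, misdiagnose.

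The gap is your assertion that ``a unique maximum in a poset translates to a unique source of the associated Hasse quiver.'' This is false for posets in general. Take $P=\{m,b\}\cup\{a_0>a_1>a_2>\cdots\}$ with $m>a_0$ and $a_i>b$ for every $i$. Then $m$ is the unique maximum, yet $b$ is not covered by anything (between $b$ and any $a_i$ sits $a_{i+1}$, and between $b$ and $m$ sits $a_0$), so $b$ is also a source of the Hasse quiver; in fact $b$ is an isolated vertex. What you actually need is the tilting-theoretic fact that \emph{every basic tilting module $T\not\cong A_A$ admits an incoming arrow in $\mathcal E_A$}, i.e.\ arises as a left mutation $\mu^-_X(T')$ of some $T'>T$. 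Equivalently, $T\ne A_A$ must have some indecomposable summand $X$ which is \emph{not} the Bongartz complement of $T/X$. This is a substantive consequence of Bongartz completion and Theorem~\ref{t:almost-complete-tilting-has-at-most-two-complements}, not a formal poset observation; it is precisely the analogue, for tilting modules, of what the paper proves for $\sttilt(A)$ in Theorem~\ref{t:support-tau-tilting-is-an-ordered-exchange-graph} via \cite[Theorem 2.32, Definition-Proposition 2.26]{AdachiIyamaReiten12}. Granting it, your argument that the maximal element $T^*$ of a finite component $\mathcal C$ is a source of all of $\mathcal E_A$, hence $T^*=A_A$, is fine.

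On the descent step: the Brenner--Butler reduction you outline does not, as phrased, give a well-founded induction. Passing from $A$ to $B=\End_A(T)$ keeps $|B|=|T|=|A|$ unchanged, and the image interval in $\tilt(B)$ has exactly as many elements as $[T,A_A]$ did, so ``induction on $|A|$'' makes no progress. Something has to strictly decrease---for instance the cardinality of $\mathcal C$ upon restricting to a proper subinterval, or the number of non-projective summands after a Bongartz replacement---and identifying the right decreasing quantity is exactly where the finiteness hypothesis must enter. You correctly flag this as the hard part, but the mechanism you propose does not resolve it. A cleaner route (and closer to the $\sttilt$ case discussed around Proposition~\ref{p:finiteness-and-connectedness-for-support-tau-tilting}) is to exploit finiteness of $\mathcal C$ again via its sinks, together with a dual characterization of when a tilting module is a sink of $\mathcal E_A$.
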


\noindent
{\bf Example.} Let $A$ be the path algebra of the quiver $\xymatrix{1\ar[r] & 2}$ of type $\mathbb{A}_2$ and $\mod A$ be the category of finite-dimensional right $A$-modules. Up to isomorphism, there are precisely three indecomposable objects in $\mod A$: $S_1$, $P_2$ and $S_2$, which respectively correspond to the following representations
\[\xymatrix{k & 0\ar[l]}, \xymatrix{k & k\ar[l]_{id}}, \xymatrix{0 & k\ar[l]}.\]
The Auslander--Reiten quiver of $\mod A$ is
\[\xymatrix@C=1pc@R=1pc{&P_2\ar[dr]&\\ S_1\ar[ur] && S_2}\]
There are precisely two isomorphism classes of basic tilting modules: $A=P_1\oplus P_2$ and $P_2\oplus S_2$. The graph $\ce_A$ is
\[
\xymatrix{A\ar[r] & P_2\oplus S_2}.
\]

\subsection{Torsion pairs, $\fftor(A)$}\label{ss:torsion-pair}

Let $\ca$ be an abelian category. A \emph{torsion pair} $(\ct,\cf)$ of $\ca$ is a pair of full subcategories such that
\begin{itemize}
\item[$\cdot$] $\Hom_{\ca}(M,N)=0$ for all $M\in\ct$ and $N\in\cf$,
\item[$\cdot$] for any $M\in\ca$ there is a short exact sequence
\begin{align}
\xymatrix{0\ar[r] & M'\ar[r] & M\ar[r] & M''\ar[r] & 0}\label{eq:torsion-pair}
\end{align}
with $M'\in\ct$ and $M''\in \cf$.
\end{itemize}
This notion was introduced by Dickson in~\cite{Dickson66}. The subcategories $\ct$ and $\cf$ are respectively called the \emph{torsion class} and the \emph{torsion-free class}. For example, let $\ca$ be the category of fintely generated abelian groups; then the torsion groups form a torsion class, and the torsion-free groups form the corresponding torsion-free class.
 Note that a simple object $M$ in $\ca$ lies either in the torsion class or in the torsion-free class, because it does not admit any non-trivial short exact sequence of the form (\ref{eq:torsion-pair}).
 One observes that $\cf=\ct^\perp$ and $\ct={}^\perp\cf$. Thus a torsion pair is determined by its torsion class (respectively, its torsion-free class). 
 
Let $A$ be a finite-dimensional algebra. A torsion pair of $\mod A$ is said to be \emph{functorially finite} if its torsion class is functorially finite, or equivalently, its torsion-free class is functorially finite (see~\cite[Proposition 1.1]{AdachiIyamaReiten12}). When $A$ is representation-finite, all torsion pairs of $\mod A$ are functorially finite. This is not true for representation-infinite algebras. For example, for the path algebra $A$ of the Kronecker quiver $\xymatrix{\cdot\ar@<.3ex>[r]\ar@<-.3ex>[r]&\cdot}$, the torsion class formed by the preinjective modules is not functorially finite.
Set
\begin{align*}
\fftor(A) &= \text{the set of functorially finite torsion pairs of $\mod A$}.
\end{align*}
On this set, there is a natural partial order
\[(\ct,\cf)\leq (\ct',\cf'):\Leftrightarrow \ct\subseteq \ct', \text{ equivalently},~~ \cf\supseteq\cf'.\]
Clearly, with respect to this order $(\mod A,0)$ is the unique maximal element and $(0,\mod A)$ is the unique minimal element.

\medskip
\noindent{\bf Example.} Let $A$ be the hereditary algebra of type $\mathbb{A}_2$ as in Section~\ref{ss:tilting-module}. The category $\mod A$ has precisely five torsion classes, all of which are functorially finite: $\add(S_1\oplus P_2\oplus S_2)=\mod A$, $\add(P_2\oplus S_2)$, $\add(S_2)$, $\add(S_1)$ and $0$. Thus the Hasse quiver of $\fftor(A)$ is (we depict the torsion pairs in the Auslander--Reiten quiver by marking in black the indecomposable objects in the torsion classes):
\[\begin{xy} 0;<0.35pt,0pt>:<0pt,-0.35pt>::
(300,0) *+{\begin{array}{*{3}{c@{\hspace{3pt}}}}
&\sbullet&\\[-4pt] \sbullet & &\sbullet
\end{array}} ="0",
(100,150) *+{\begin{array}{*{3}{c@{\hspace{3pt}}}}
&\sbullet&\\[-4pt] \scirc & &\sbullet
\end{array}}="1",
(100,300) *+{\begin{array}{*{3}{c@{\hspace{3pt}}}}
&\scirc&\\[-4pt] \scirc & &\sbullet
\end{array}}="2",
(500,225) *+{\begin{array}{*{3}{c@{\hspace{3pt}}}}
&\scirc&\\[-4pt] \sbullet & &\scirc
\end{array}}="3",
(300,400) *+{\begin{array}{*{3}{c@{\hspace{3pt}}}}
&\scirc&\\[-4pt] \scirc & &\scirc
\end{array}}="4",
"0", {\ar "1"}, "0", {\ar "3"},
"1", {\ar "2"},
"2", {\ar "4"},
"3", {\ar "4"},  
\end{xy}\]

\subsection{Support $\tau$-tilting modules, $\sttilt(A)$}\label{ss:support-tau-tilting}

Let $A$ be a finite-dimensional algebra. A finite-dimensional $A$-module $T$ is a \emph{$\tau$-rigid module} if $\Hom_A(T,\tau T)=0$ and a \emph{$\tau$-tilting module} if
in addition $|T|=|A|$ holds. Here $\tau$ denotes the Auslander--Reiten translation of $\mod A$ (see~\cite{AssemSimsonSkowronski06}). An $A$-module $T$ is a \emph{support $\tau$-tilting module} if there is an idempotent $e$ of $A$ such that $T$ is a $\tau$-tilting module over $A/A eA$. Note that $T$ is rigid over $A$ and the idempotent $e$ is unique in the sense that if $e'$ is another such idempotent then $\add(eA)=\add(e'A)$ holds. 
These notions were already studied by Auslander and Smal{\o}~\cite{AuslanderSmalo81} in the 1980's, but we are using here the terminology adopted by Adachi, Iyama and Reiten in \cite{AdachiIyamaReiten12}, which generalises the work of Ingalls and Thomas~\cite{IngallsThomas09} on hereditary algebras.

The notion of $\tau$-tilting modules generalises that of tilting modules (Section~\ref{ss:tilting-module}). 

\begin{theorem}{(\cite{AdachiIyamaReiten12})}\label{t:tilting-and-support-tau-tilting}
Let $T$ be an $A$-module.
\begin{itemize}
\item[(a)] $T$ is a tilting module if and only if $T$ is a faithful $\tau$-tilting module.
\item[(b)] If $A$ is hereditary, then $T$ is a partial tilting module if and only if $T$ is a $\tau$-rigid module and $T$ is a tilting module if and only if $T$ is a $\tau$-tilting module. 
\end{itemize}
\end{theorem}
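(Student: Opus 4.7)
The plan is to derive both parts from the Auslander--Smal{\o} criterion characterizing $\tau$-rigid modules (namely: ``$T$ is $\tau$-rigid if and only if $\Ext^1_A(T, \Fac T) = 0$''), combined with Bongartz's Theorem~\ref{t:bongartz-complement} and a simple closure property. The key lemma is: if the projective dimension of $M$ is at most $1$, then $M^{\perp_1}$ is closed under quotient modules (apply $\Hom(M, -)$ to a short exact sequence and use $\Ext^2(M, -) = 0$). Consequently, under the assumption $\mathrm{proj.dim}\, T \le 1$, the Auslander--Smal{\o} criterion reduces to the condition $\Ext^1(T, T) = 0$, since $T \in T^{\perp_1}$ forces $\Fac T \subseteq T^{\perp_1}$.

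For the forward direction of (a), if $T$ is tilting then $|T| = |A|$ by Theorem~\ref{t:bongartz-complement}, the tilting sequence embeds $A$ into $T^0 \in \add T$ (so $T$ is faithful), and the lemma applied to $\Ext^1(T, T) = 0$ yields $\tau$-rigidity, hence $\tau$-tiltingness. For the reverse direction, if $T$ is faithful $\tau$-tilting, then Auslander--Smal{\o} already gives $\Ext^1(T, T) = 0$; using faithfulness one embeds $A$ into some $T^n$ and composes with a left $\add T$-approximation $f \colon A \to T^0$ to obtain a short exact sequence $0 \to A \to T^0 \to T^1 \to 0$ with $T^1 \in \Fac T$ (injectivity of $f$ follows since the given embedding $A \hookrightarrow T^n$ factors through $f$). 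The crucial step is to show $T^1 \in \add T$: since $T$ is $\tau$-tilting, the Adachi--Iyama--Reiten bijection between $\tau$-tilting modules and functorially finite torsion classes identifies $\add T$ with the indecomposable Ext-projectives of $\Fac T$; the long exact sequence obtained by applying $\Hom(-,X)$ for $X \in \Fac T$ to the defining sequence, combined with $A$ being projective and $\Ext^1(T^0, \Fac T) = 0$, yields $\Ext^1(T^1, \Fac T) = 0$, placing $T^1$ in $\add T$. A splicing argument on $0 \to A \to T^0 \to T^1 \to 0$ with $A$ projective and $T^0, T^1 \in \add T$ then gives $\mathrm{proj.dim}\, T \le 1$, completing the verification that $T$ is tilting.

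Part (b) is essentially formal: for $A$ hereditary every module has projective dimension at most $1$, so the key lemma applies universally, yielding ``$\tau$-rigid $\iff$ $\Ext^1(T, T) = 0$'', i.e., ``$\tau$-rigid $\iff$ partial tilting''. Combined with the characterizations ``$T$ tilting $\iff$ $T$ partial tilting with $|T| = |A|$'' (Theorem~\ref{t:bongartz-complement}) and ``$T$ $\tau$-tilting $\iff$ $T$ $\tau$-rigid with $|T| = |A|$'', the equivalence of tilting and $\tau$-tilting follows immediately.

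The main obstacle is the reverse direction of (a): specifically, identifying the cokernel $T^1$ of the $\add T$-approximation as an object of $\add T$. This requires invoking the full $\tau$-tilting bijection theorem of Adachi--Iyama--Reiten together with a careful analysis of Ext-projectives inside the torsion class $\Fac T$, rather than a direct manipulation of the defining conditions.
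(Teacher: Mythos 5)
The paper cites this theorem directly from Adachi--Iyama--Reiten without proof, so there is no in-paper argument to compare against; I will therefore assess your proof on its own merits.

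Your key lemma (proj.\ dim $\le 1$ forces $M^{\perp_1}$ to be closed under quotients, hence $\Ext^1(T,T)=0$ implies $\Ext^1(T,\Fac T)=0$) is correct, and with it the forward direction of (a) and all of (b) go through cleanly. The construction of the short exact sequence $0 \to A \to T^0 \to T^1 \to 0$ via a left $\add T$-approximation and the identification of $T^1$ as an Ext-projective of $\Fac T$ (hence in $\add T$, by the AIR bijection) are also fine, though the latter invokes a fairly heavy piece of machinery.

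The gap is in the very last step of the reverse direction of (a). You assert that ``a splicing argument on $0 \to A \to T^0 \to T^1 \to 0$'' yields $\mathrm{proj.dim}\,T \le 1$. This cannot work: that sequence is a (co)resolution of $A$, not a projective resolution of any summand of $T$. Neither $T^0$ nor $T^1$ is known to be projective, so nothing can be spliced into a bounded projective resolution of $T$, and the projective dimension bound --- which is part of the definition of pretilting and is genuinely nontrivial here (there exist non-faithful $\tau$-tilting modules of projective dimension $\ge 2$) --- is left unproved. The standard argument at this point is quite different: one uses the Auslander--Reiten criterion that $\mathrm{proj.dim}\,T \le 1$ if and only if $\Hom_A(DA,\tau T)=0$. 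Faithfulness of $T$ gives $DA \in \Fac T$ (dualize an embedding $A \hookrightarrow T^n$), while $\tau$-rigidity gives $\Hom_A(X,\tau T)=0$ for every $X \in \Fac T$; combining these yields $\Hom_A(DA,\tau T)=0$ and hence the required bound. Without this (or an equivalent argument), your proof of the implication ``faithful $\tau$-tilting $\Rightarrow$ tilting'' is incomplete.
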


Adachi, Iyama and Reiten  introduce in~\cite{AdachiIyamaReiten12} the mutation of support $\tau$-tilting modules (for the case when $A$ is hereditary, see also~\cite{Hubery06}), which generalises the mutation of a tilting module as in Section~\ref{ss:tilting-module}. Let $T=T_1\oplus\ldots\oplus T_r$ be a basic support $\tau$-tilting $A$-module with each $T_i$ indecomposable. For $1\leq i\leq r$ such that $T_i\not\in \Fac(\bigoplus_{j\neq i}T_j)$ (the notation $\Fac$ is defined below), the \emph{left mutation of $T$ at $T_i$}, denoted $\mu_{T_i}^-(T)$, is defined as $T_i^*\oplus\bigoplus_{j\neq i}T_j$, where $T_i^*$ is the cokernel of a minimal left $\add(\bigoplus_{j\neq i}T_j)$-approximation of $T_i$
\[\xymatrix{T_i\ar[r]& T'}.\]

For a basic finite-dimensional algebra $A$, we set
\begin{align*}
\sttilt(A) &= \text{the set of isomorphism classes of basic support $\tau$-tilting $A$-modules}.
\end{align*}
On this set there is a partial order
\[T\leq T':\Leftrightarrow \Fac(T)\subseteq \Fac(T'),\]
see \cite[Section 2.4]{AdachiIyamaReiten12}. Here  $\Fac(T)$ is the full subcategory of $\mod A$ consisting of modules $M$ such that there is an epimorphism $T^{\oplus m}\rightarrow M$ for some $m\in\mathbb{N}$.
Clearly, with respect to this order, $A$ is the unique maximal element and $0$ is the unique minimal element. The following theorem was essentially already established in \cite{AdachiIyamaReiten12}.

\begin{theorem}\label{t:support-tau-tilting-is-an-ordered-exchange-graph} Let $A$ be a basic finite-dimensional algebra.
The set $\sttilt(A)$ has the structure of an ordered exchange graph with a sink.
\end{theorem}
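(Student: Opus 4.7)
The plan is to verify the axioms of an ordered exchange graph (Section~\ref{ss:ordered-exchange-graph}) by assembling the structural results of Adachi--Iyama--Reiten cited at the beginning of Section~\ref{ss:support-tau-tilting}. Throughout, I think of a basic support $\tau$-tilting module $T$ as a pair $(T, eA)$, where the idempotent $e$ is the one for which $T$ is a $\tau$-tilting $A/AeA$-module, so that $T$ together with the indecomposable summands of $eA$ accounts for $|A|$ pairwise non-isomorphic indecomposable summands in total.

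First, I set up the underlying compatibility data. Let $V$ be the set of isomorphism classes of indecomposables that appear either as a summand of some $\tau$-rigid $A$-module or as a summand of $eA$ for some idempotent $e$, and declare two such elements \emph{compatible} if their direct sum occurs among the indecomposable summands of some basic support $\tau$-tilting pair. The clusters are then exactly the indecomposable summands of the basic support $\tau$-tilting pairs. Condition~(1) of Section~\ref{ss:exchange-graph} holds because every basic support $\tau$-tilting pair has precisely $|A|$ indecomposable summands, and condition~(2) is the content of the exchange theorem of Adachi--Iyama--Reiten, which asserts that any basic almost complete support $\tau$-tilting pair admits exactly two completions to a basic support $\tau$-tilting pair, related by the left/right mutation at the missing summand.

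Second, I verify axiom~(i). Left mutation $\mu^{-}_{T_i}(T) = T_i^{*}\oplus \bigoplus_{j\neq i} T_j$ at an indecomposable summand $T_i$ is defined via the minimal left $\add(\bigoplus_{j\neq i}T_j)$-approximation, and a direct computation with the $\Fac$ ordering shows $\mu^{-}_{T_i}(T) \leq T$. The key point is the converse: every covering relation in $(\sttilt(A),\leq)$ arises from a left mutation, which is precisely the content of the Hasse quiver theorem of Adachi--Iyama--Reiten. Combining these gives that the Hasse quiver of the partial order coincides with the mutation graph, yielding axiom~(i). Axiom~(ii) is then straightforward: $A$ (with $e=0$) satisfies $\Fac(A) = \mod A$ and so is the unique maximum, while $0$ (with $e=1$, so $A/AeA=0$) satisfies $\Fac(0) = 0$ and so is the unique minimum. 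In particular, the Hasse quiver has a unique source $A$ and a unique sink $0$, and the sink always exists.

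The main obstacle, as I see it, is not mathematical depth but bookkeeping: one must consistently treat the support $\tau$-tilting modules as pairs, since otherwise the counts in conditions~(1) and (2) fail (a support $\tau$-tilting module proper may have fewer than $|A|$ summands, and its almost-complete versions do not in general have two completions without also varying the idempotent). Once this convention is fixed, the verification reduces to citing the appropriate parts of \cite{AdachiIyamaReiten12}, namely the correspondence between support $\tau$-tilting pairs and their mutations, and the identification of the mutation quiver with the Hasse quiver of the $\Fac$-order.
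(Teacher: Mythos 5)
The overall architecture of your argument (identify support $\tau$-tilting modules with pairs, reduce to axioms (1), (2), (i), (ii), and cite the relevant AIR results) matches the paper's. But there is a genuine error in your construction of the underlying set $V$.

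You define $V$ as a set of isomorphism classes of indecomposable modules and then take the cluster of a pair $(T,eA)$ to be the set of indecomposable summands of $T\oplus eA$. The difficulty is that an indecomposable projective $P_i$ can occur as a summand of the $T$-part in one support $\tau$-tilting pair and as a summand of the $P$-part in another, so the map from pairs to such sets is not injective. Concretely, for the $\mathbb{A}_2$ algebra the three distinct pairs $(A,0)$, $(S_1,P_2)$ and $(0,A)$ all have summand set $\{S_1,P_2\}$; for $A=k\times k$ the map collapses all four elements of $\sttilt(A)$ to the single set $\{S_1,S_2\}$. Thus what you call "clusters" are not in bijection with $\sttilt(A)$, axiom~(2) is not actually the statement you cite, and the exchange graph you build is not the one the theorem is about. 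The paper's fix is precisely to keep the two roles disjoint: $V$ is the disjoint union of the formal index set $\{1,\ldots,n\}$ (tracking the $P$-part) and the set of indecomposable $\tau$-rigid modules (tracking the $T$-part), with an explicit compatibility relation that never lets a module $P_i$ and the index $i$ be compatible. Since your $V$ is a union, not a disjoint union, and the projectives are themselves $\tau$-rigid, these two copies are identified and the information is lost. You flag the "bookkeeping" concern yourself but do not actually implement the disjointness needed to resolve it.

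There is also a secondary gap in axiom~(ii). From $\Fac(A)=\mod A$ you correctly conclude that $A$ is the unique maximal element and hence a source, but "unique maximum" does not in general give "unique source": you must also rule out that some other $T$ is a source, i.e.\ show every $T\neq A$ is covered by something. This is automatic in a finite poset, but $\sttilt(A)$ may be infinite. The paper closes this by invoking \cite[Theorem 2.32 and Definition-Proposition 2.26]{AdachiIyamaReiten12} to show that every $T\neq A$ is the left mutation of some $T'$, hence receives an arrow in the Hasse quiver; the dual argument handles the sink. Your "in particular" glosses over exactly this step.
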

\begin{proof} Put $n:=|A|$ and let $P_1,\ldots,P_n$ be a complete set of pairwise non-isomorphic indecomposable projective $A$-modules. 
Let $V$ be the union of $\{1,\ldots,n\}$ and the set of indecomposable $\tau$-rigid $A$-modules. Consider the following compatibility relation $R$ on $V$: 
\begin{itemize}
\item[$\cdot$] for two indecomposable $\tau$-rigid modules $M$ and $N$, the pair $(M,N)$ belongs to $R$ if and only if $\Hom_A(M, \tau N)=0=\Hom_A(N,\tau M)$;
\item[$\cdot$] for an indecomposable $\tau$-rigid module $M$ and $1\leq i\leq n$, the pair $(M,i)$ belongs to $R$ if and only if $\Hom_A(P_i,M)=0$ if and only if the pair $(i,M)$ belongs to $R$;
\item[$\cdot$] for any $1\leq i,j\leq n$, the pair $(i,j)$ belongs to $R$.
\end{itemize}

Now we are ready to prove the desired statement.

First, let $T=T_1\oplus\ldots \oplus T_r$ be a basic support $\tau$-tilting $A$-module with $T_i$ indecomposable for all $i$. Let $e$ be an idempotent of $A$ such that $T$ is a $\tau$-tilting module over $A/AeA$ and that $eA$ is basic. Then $(T,eA)$ is a basic support $\tau$-tilting pair in the sense of \cite[Definition 0.3]{AdachiIyamaReiten12}. It follows that $|eA|=|A|-|A/AeA|=n-r$. Write $eA=P_{i_1}\oplus\ldots\oplus P_{i_{n-r}}$. Then
the support $\tau$-tilting module $T$ can be identified with the $n$-element set $\{T_1,\ldots,T_r,i_1,\ldots,i_{n-r}\}$. So we have the condition (1) in the definition of an exchange graph.

Secondly, the condition (2) in the definition of an exchange graph is satisfied by \cite[Theorem 2.17]{AdachiIyamaReiten12}.

Thirdly, the condition (i) in the definition of an ordered exchange graph is satisfied by \cite[Corollary 2.31]{AdachiIyamaReiten12}.

Finally, let us show that the Hasse quiver has a unique source and a unique sink, so the condition (ii) in the definition of an ordered exchange graph is satisfied. Since $A$ is the unique maximal element of $\sttilt(A)$, it is a source in the Hasse quiver and for any support $\tau$-tilting $A$-module $T$ which is not isomorphic to $A$, we have $A> T$. It follows from \cite[Theorem 2.32 and Definition-Proposition 2.26]{AdachiIyamaReiten12} that  there exists a support $\tau$-tilting module $T'$ and an indecomposable direct summand $M$ of $T'$ such that $\mu_M^-(T')=T$. In particular, $T$ is not a source. Therefore, $A$ is the unique source of $\sttilt(A)$. Dually, one shows that $0$ is the unique sink.
\end{proof}

When $T$ is a tilting module, we have $T^{\perp_1}=\Fac(T)$. So by Theorem~\ref{t:tilting-and-support-tau-tilting} (a), the ordered exchange graph $\sttilt(A)$ completes the graph $\ce_A$ in Section~\ref{ss:tilting-module} to a regular graph. In general $\sttilt(A)$  has less connected components than $\ce_A$. For example, it is easy to check that for the path algebra $A$ of the Kronecker quiver, $\ce_A$ has two connected components while $\sttilt(A)$ is connected.
The following nice result, analogous to Proposition~\ref{p:finiteness-and-connectedness}, is a consequence of Theorem~\ref{t:support-tau-tilting-is-an-ordered-exchange-graph} because each component of a finite Hasse quiver necessarily has a source and a sink since it has no oriented cycles.

\begin{proposition}{(\cite[Corollary 2.35]{AdachiIyamaReiten12})}\label{p:finiteness-and-connectedness-for-support-tau-tilting}
Let $A$ be a basic finite-dimensional algebra. If $\sttilt(A)$  has a finite connected component, then $\sttilt(A) $  itself is connected.
\end{proposition}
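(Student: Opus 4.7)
The plan is as follows. Let $C$ denote a finite connected component of the Hasse quiver of $(\sttilt(A),\leq)$.

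First, I would argue that $C$ must contain both the maximum element $A$ and the minimum element $0$. As a subgraph of the Hasse quiver of a partial order, $C$ is acyclic; being finite and acyclic, it admits at least one source and at least one sink. Since the edges of the Hasse quiver are, by definition of a connected component, confined to the component in which they lie, a source (respectively, sink) of $C$ is automatically a source (respectively, sink) of the entire Hasse quiver of $\sttilt(A)$. By Theorem~\ref{t:support-tau-tilting-is-an-ordered-exchange-graph}, the unique source of the full Hasse quiver is $A$ and the unique sink is $0$, so that $\{A,0\}\subseteq C$.

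Second, to upgrade the containment $\{A,0\}\subseteq C$ to the equality $C=\sttilt(A)$, I would invoke \cite[Corollary 2.35]{AdachiIyamaReiten12}, whose content, in our language, is that the existence of a finite connected component of the support $\tau$-tilting quiver forces the whole set $\sttilt(A)$ to be finite. Once this is known, every connected component of $\sttilt(A)$ is itself finite and acyclic, hence admits a source. By the argument of the first step, each such source must coincide with the unique global source $A$. Since $A$ lies in exactly one component, this forces the Hasse quiver to have a single component, namely $C$, and connectedness follows.

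The main obstacle is the second step. The first step is essentially formal and uses only the uniqueness of the source and the sink established in Theorem~\ref{t:support-tau-tilting-is-an-ordered-exchange-graph}. However, the abstract axioms of an ordered exchange graph do not exclude the coexistence of a finite component with additional infinite components that possess neither a source nor a sink (consider, for instance, the Hasse quiver of a chain of order type $\mathbb{Z}$). Ruling out this pathological possibility is the substantive representation-theoretic content provided by \cite[Corollary 2.35]{AdachiIyamaReiten12}; the rest of the argument is a purely combinatorial consequence of the ordered exchange graph structure.
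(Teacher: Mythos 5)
Your first step is correct and is precisely what the paper's remark makes explicit: a finite component $C$ is a finite acyclic quiver, hence has a source and a sink; edges of a Hasse quiver never cross component boundaries, so a source (respectively sink) of $C$ is a source (respectively sink) of the whole quiver; and Theorem~\ref{t:support-tau-tilting-is-an-ordered-exchange-graph} identifies them as $A$ and $0$. Your closing observation is also on the mark: the abstract axioms of an ordered exchange graph alone do not forbid a finite component carrying the unique source and sink from coexisting with an infinite source-less and sink-less component (a finite oriented $4$-cycle together with a disjoint bi-infinite chain can be realised as an ordered exchange graph), so some genuine representation-theoretic input is unavoidable.

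The difficulty is that your second step contains no argument. The parenthetical ``(\cite[Corollary 2.35]{AdachiIyamaReiten12})'' in the proposition header records the \emph{source} of the result; it is not an auxiliary tool available for its own proof, and the intermediate claim you extract from it (a finite component forces $\sttilt(A)$ to be finite) is, given your first step and the uniqueness of the source, logically equivalent to the proposition itself, so you have only renamed the hard part. Beware also of the most tempting shortcut: one might hope that the mere existence of a finite directed path from $A$ down to $0$, which the finiteness of $C$ certainly provides, already forces $\sttilt(A)$ to be finite. It does not. For the Kronecker algebra $A$ there is a length-two path $A \to S_1 \to 0$ in the Hasse quiver of $\sttilt(A)$ (kill the simple projective, then what remains), yet $\sttilt(A)$ is infinite. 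What is essential about $C$ is not the path from $A$ to $0$ but that $C$ is \emph{closed under mutation} in the $n$-regular exchange graph; converting this into finiteness of $\sttilt(A)$ is exactly the substance of the argument in \cite[Section 2.4]{AdachiIyamaReiten12} culminating in Corollary 2.35, which your proposal black-boxes rather than supplies.
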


Examples of such algebras include representation-finite algebras and preprojective algebras of Dynkin quivers (\cite{Mizuno13}).


\medskip
\noindent{\bf Example.} Let $A$ be the hereditary algebra of type ${\mathbb A}_2$ as in Section~\ref{ss:tilting-module}. There are precisely five isomorphism classes of basic support $\tau$-tilting $A$-modules: $S_1\oplus P_2=A$, $P_2\oplus S_2$, $S_2$, $S_1$ and $0$. The ordered exchange graph $\sttilt(A) $ of basic support $\tau$-tilting $A$-modules is
\[\begin{xy} 0;<0.35pt,0pt>:<0pt,-0.35pt>::
(300,0) *+{A} ="0",
(100,150) *+{P_2\oplus S_2}="1",
(100,300) *+{S_2}="2",
(500,225) *+{S_1}="3",
(300,400) *+{0}="4",
"0", {\ar "1"}, "0", {\ar "3"},
"1", {\ar "2"},
"2", {\ar "4"},
"3", {\ar "4"},  
\end{xy}\]

\subsection{$t$-structures, $\intertstr(A)$}\label{ss:t-str}

Let $\cc$ be a triangulated category.
A \emph{$t$-structure} on
$\cc$  is a pair $(\cc^{\leq
0},\cc^{\geq 0})$ of  strict (that is, closed under
isomorphisms) and full subcategories of $\cc$ such that
\begin{itemize}
\item[$\cdot$] $\Sigma\cc^{\leq 0}\subseteq\cc^{\leq 0}$ and
$\Sigma^{-1}\cc^{\geq 0}\subseteq\cc^{\geq 0}$;
\item[$\cdot$] $\Hom(M,\Sigma^{-1}N)=0$ for $M\in\cc^{\leq 0}$
and $N\in\cc^{\geq 0}$,
\item[$\cdot$] for each $M\in\cc$ there is a triangle in $\cc$ 
\begin{align}
\xymatrix{M'\ar[r] &
M\ar[r] & M''\ar[r] &\Sigma M'}\label{eq:triangle-t-str}
\end{align} 
with $M'\in\cc^{\leq
0}$ and $M''\in\Sigma^{-1}\cc^{\geq 0}$.
\end{itemize}
The above triangle (\ref{eq:triangle-t-str}) is canonical and yields endofunctors $\sigma^{\leq 0}$ and $\sigma^{\geq 1}$ of $\cc$ such that $\sigma^{\leq 0}M=M'$ and $\sigma^{\geq 1}M=M''$. For $i\in\mathbb{Z}$, let $\sigma^{\leq i}=\Sigma^{-i}\sigma^{\leq 0}\Sigma^i$ and $\sigma^{\geq i}=\Sigma^{-i+1}\sigma^{\geq 1}\Sigma^{i-1}$.

The notion of $t$-structures was introduced by Beilinson, Bernstein and Deligne in~\cite{BeilinsonBernsteinDeligne82} when studying  perverse sheaves over stratified topological spaces.
The two subcategories $\cc^{\leq 0}$ and $\cc^{\geq 0}$ are often
called the \emph{aisle} and the \emph{co-aisle} of the $t$-structure. 
One observes that $\cc^{\geq 0}=\Sigma (\cc^{\leq 0})^\perp$ and $\cc^{\leq 0}=\Sigma^{-1} {}^\perp\cc^{\geq 0}$. Thus a $t$-structure is determined by its aisle (respectively, by its co-aisle). 
The \emph{heart} $\ch = \cc^{\leq 0}~\cap~\cc^{\geq 0}$ of a $t$-structure is
always abelian. There is a family of cohomological functors $H^i=\sigma^{\leq i}\sigma^{\geq i}:\cc\rightarrow\cc^{\leq 0}\cap\cc^{\geq 0}$ (called the \emph{cohomology functors}). The
$t$-structure $(\cc^{\leq 0},\cc^{\geq 0})$ is said to be
\emph{bounded} if
\[\bigcup_{n\in\mathbb{Z}}\Sigma^n \cc^{\leq
0}=\cc=\bigcup_{n\in\mathbb{Z}}\Sigma^n\cc^{\geq 0}.\]
A bounded $t$-structure is one of the two ingredients of a Bridgeland stability condition~\cite{Bridgeland07}.
A typical
example of a $t$-structure is the pair $(\cd_\std^{\leq 0},\cd_\std^{\geq 0})$
for the bounded derived category $\cd^b(\mod A)$ of a finite-dimensional
algebra $A$, where $\cd_\std^{\leq 0}$ consists of complexes with
vanishing cohomologies in positive degrees, and $\cd_\std^{\geq 0}$
consists of complexes with vanishing cohomologies in negative
degrees. This is a bounded $t$-structure of
$\cd^b(\mod A)$ whose heart is $\mod A$, which is a
length category. We shall refer to this $t$-structure as the \emph{standard $t$-structure} on $\cd^b(\mod A)$. Recall that a finite-dimensional algebra $A$ can be viewed as a dg algebra concentrated in degree $0$, and in this case, the finite-dimensional derived category $\cd_{fd}(A)$ is equivalent to the bounded derived category $\cd^b(\mod A)$. More generally, for a non-positive dg algebra $A$, the finite-dimensional derived category $\cd_{fd}(A)$ admits a standard bounded $t$-structure $(\cd^{\leq 0}_\std,\cd^{\geq 0}_\std)$ whose heart is $\mod H^0(A)$, see Appendix~\ref{ss:non-pos-dg}.

A bounded $t$-structure is determined by its heart in the sense that its aisle (respectively, its co-aisle) is the smallest full subcategory of $\cc$ which contains the heart and which is closed under the shift functor (respectively, a quasi-inverse of the shift functor) and under taking extensions and direct summands. Moreover,

\begin{lemma}\label{l:grothendieck-gp-and-bbd-t-str}
Let $(\cc^{\leq 0},\cc^{\geq 0})$ be a bounded $t$-structure on $\cc$ with heart $\ca$. Then $\cc$ is idempotent complete (that is, any idempotent in $\cc$ has a kernel) and the embedding $\ca\rightarrow\cc$ induces an isomorphism of Grothendieck groups $K_0(\ca)\rightarrow K_0(\cc)$. In particular, if $\ca$ is a length category, then $K_0(\cc)$ is a free abelian group and isomorphism classes of simple objects in $\ca$ form a basis of $K_0(\cc)$.
\end{lemma}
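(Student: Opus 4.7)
The plan is to prove the three assertions separately: idempotent completeness of $\cc$, the isomorphism $K_0(\ca)\to K_0(\cc)$, and the basis claim when $\ca$ is a length category. For idempotent completeness, I would induct on the cohomological amplitude $\#\{i\in\mathbb{Z}\mid H^i(X)\neq 0\}$, which is finite by boundedness of the $t$-structure. When this amplitude is at most one, $X$ is isomorphic to a shift of an object of $\ca$, and the full faithfulness of the inclusion $\ca\hookrightarrow\cc$ identifies endomorphism rings, so any idempotent on $X$ splits inside the abelian category $\ca$. For the inductive step, given an idempotent $e\colon X\to X$ whose cohomology is concentrated in degrees $[a,b]$ with $a<b$, functoriality of the truncation functors produces a morphism of triangles
\[
\xymatrix@R=14pt{
\sigma^{\le a}X\ar[r]\ar[d]_{\sigma^{\le a}(e)} & X\ar[r]\ar[d]^{e} & \sigma^{\ge a+1}X\ar[r]\ar[d]^{\sigma^{\ge a+1}(e)} & \Sigma\sigma^{\le a}X\ar[d] \\
\sigma^{\le a}X\ar[r] & X\ar[r] & \sigma^{\ge a+1}X\ar[r] & \Sigma\sigma^{\le a}X.
}
\]
The outer vertical maps are idempotents of strictly smaller amplitude and split by induction. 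A compatible direct-sum decomposition of the two outer triangles, assembled via an octahedral-axiom argument, then yields a splitting of $e$ itself.

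For the $K_0$-isomorphism, I would construct an inverse $\phi\colon K_0(\cc)\to K_0(\ca)$ by the Euler characteristic formula $\phi([X])=\sum_{i\in\mathbb{Z}}(-1)^i[H^i(X)]$, which is a finite sum by boundedness. Well-definedness is verified by breaking the long exact cohomology sequence associated to a triangle $X'\to X\to X''\to\Sigma X'$ into short exact sequences in $\ca$, whose alternating sum yields the relation $\phi([X])=\phi([X'])+\phi([X''])$. The inclusion $\ca\hookrightarrow\cc$ sends any short exact sequence in the heart to a distinguished triangle of $\cc$ by the definition of the heart, hence descends to a homomorphism $K_0(\ca)\to K_0(\cc)$. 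The composition $K_0(\ca)\to K_0(\cc)\to K_0(\ca)$ is the identity on generators. For the reverse composition, I would iterate the truncation triangles $\sigma^{\le i-1}X\to\sigma^{\le i}X\to\Sigma^{-i}H^i(X)\to$ (a finite chain by boundedness), using at each step the standard identity $[\Sigma Y]=-[Y]$ in the Grothendieck group of a triangulated category, to obtain $[X]=\sum_i(-1)^i[H^i(X)]$ in $K_0(\cc)$.

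For the final statement, if $\ca$ is a length category then Jordan--H\"older provides every object with a composition series whose simple subquotients, counted with multiplicity, depend only on the isomorphism class. The assignment $[A]\mapsto$ (multiplicity vector of composition factors) defines a homomorphism $K_0(\ca)\to\bigoplus_S\mathbb{Z}$, indexed by isomorphism classes of simples, which is inverse to the map sending a simple class to the corresponding standard basis vector. Hence $K_0(\ca)$ is free abelian on the simples, and the isomorphism from the previous step transfers this basis to $K_0(\cc)$. The main obstacle will be the octahedral step in the idempotent-splitting induction: propagating the splittings of $\sigma^{\le a}(e)$ and $\sigma^{\ge a+1}(e)$ to a splitting of $e$ on the middle term of the truncation triangle is not automatic and will require either a careful octahedral-axiom manipulation or an appeal to the characterization of splittings via cones of idempotents.
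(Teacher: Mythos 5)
Your overall plan mirrors the standard argument, and the $K_0$ and length--category parts are carried out correctly: the Euler characteristic map is well defined by the long exact cohomology sequence, the two compositions are checked exactly as you say (the identity on $K_0(\ca)$ directly, and on $K_0(\cc)$ via the telescope of truncation triangles and $[\Sigma Y]=-[Y]$), and the Jordan--H\"older argument for the last assertion is routine. Note, though, that the paper itself does not prove this lemma: for idempotent completeness it simply cites \cite[Remark 1.15 i)]{Huybrechts11} (which traces back to Le--Chen), and calls the remaining claims ``known and easy to check.'' So you are attempting more than the paper does.

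The genuine gap is where you suspect it is. In your inductive step, after splitting the idempotents $\sigma^{\le a}(e)$ and $\sigma^{\ge a+1}(e)$ to get decompositions $\sigma^{\le a}X\cong A_1\oplus A_2$ and $\sigma^{\ge a+1}X\cong B_1\oplus B_2$, the commutativity $\Sigma\sigma^{\le a}(e)\circ\delta=\delta\circ\sigma^{\ge a+1}(e)$ does force the connecting map $\delta$ to be block diagonal, and then the truncation triangle decomposes as the direct sum of two triangles with middle terms $X_1$, $X_2$. This gives \emph{an} isomorphism $X\cong X_1\oplus X_2$, but it does \emph{not} by itself yield a splitting of $e$: under this isomorphism $e$ becomes an idempotent
$\tilde e=\left(\begin{smallmatrix}a & b\\ c & d\end{smallmatrix}\right)$ whose only a priori constraints are that it is compatible with the truncation morphisms (so, for instance, $cu_1=0$, $v_2c=0$, $au_1=u_1$, etc.), and these do not force $b=c=0$ or $a=1$, $d=0$. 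One must still argue that such an $\tilde e$ is conjugate to the projection onto $X_1$ --- typically by showing that the off-diagonal entries are killed by the truncation maps in a way that makes them ``nilpotent of order two'' and then conjugating by a unipotent, or by invoking the uniqueness of the middle map in the morphism of triangles under a suitable $\Hom$-vanishing hypothesis (which does not hold in general for $\Hom(\sigma^{\ge a+1}X,\Sigma\sigma^{\le a}X)$). This is precisely the content of the Le--Chen lemma behind the citation, and it is not a formal consequence of what you have written; as it stands, this step is a hole in the proof rather than merely ``the main obstacle.''
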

\begin{proof} See~\cite[Remark 1.15 i)]{Huybrechts11} for a proof of the idempotent completeness of $\cc$. The other statements are known and easy to check.
\end{proof}

Fix a bounded $t$-structure $(\cc^{\leq 0},\cc^{\geq 0})$. A $t$-structure $(\cc'^{\leq 0},\cc'^{\geq 0})$ is \emph{intermediate} with respect to $(\cc^{\leq 0},\cc^{\geq 0})$ if we have
$\Sigma\cc^{\leq 0}\subseteq\cc'^{\leq 0}\subseteq\cc^{\leq 0}$, or equivalently, $\Sigma \cc^{\geq 0}\supseteq \cc'^{\geq 0}\supseteq \cc^{\geq 0}$. Intermediate $t$-structures are bounded and they can be characterised in terms of the position of their hearts.
\begin{lemma}\label{l:intermediate-t-str}
Let $(\cc^{\leq 0},\cc^{\geq 0})$ and $(\cc'^{\leq 0},\cc'^{\geq 0})$ be two bounded $t$-structures on $\cc$. Then $(\cc'^{\leq 0},\cc'^{\geq 0})$ is intermediate with respect to $(\cc^{\leq 0},\cc^{\geq 0})$ if and only if $\cc'^{\leq 0}\cap\cc'^{\geq 0}\subseteq \cc^{\leq 0}\cap\Sigma \cc^{\geq 0}$.
\end{lemma}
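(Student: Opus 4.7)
The plan is to prove both directions using the standard description of a bounded $t$-structure in terms of its heart, namely that any object admits a finite filtration (via iterated truncations $\sigma'^{\leq i}$, $\sigma'^{\geq i}$) whose successive factors are shifts $\Sigma^{-i} H'^i(M)$ of objects of the heart $\ca':=\cc'^{\leq 0}\cap\cc'^{\geq 0}$.

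The forward direction is immediate: if $(\cc'^{\leq 0},\cc'^{\geq 0})$ is intermediate with respect to $(\cc^{\leq 0},\cc^{\geq 0})$, then by definition $\cc'^{\leq 0}\subseteq\cc^{\leq 0}$ and (using the equivalent formulation in terms of co-aisles) $\cc'^{\geq 0}\subseteq\Sigma\cc^{\geq 0}$, whence $\ca'=\cc'^{\leq 0}\cap\cc'^{\geq 0}\subseteq\cc^{\leq 0}\cap\Sigma\cc^{\geq 0}$.

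For the backward direction, assume $\ca'\subseteq\cc^{\leq 0}\cap\Sigma\cc^{\geq 0}$. First I would show $\cc'^{\leq 0}\subseteq\cc^{\leq 0}$. Take $M\in\cc'^{\leq 0}$; by boundedness of $(\cc'^{\leq 0},\cc'^{\geq 0})$ there is some $a\leq 0$ with $\Sigma^a M\in\cc'^{\geq 0}$, so $H'^i(M)=0$ except for $a\leq i\leq 0$. Iterating the truncation triangles yields a finite filtration of $M$ with factors $\Sigma^{-i}H'^i(M)\in\Sigma^{-i}\ca'$ for $a\leq i\leq 0$. Since $-i\geq 0$ and $\Sigma\cc^{\leq 0}\subseteq\cc^{\leq 0}$, each factor lies in $\Sigma^{-i}\cc^{\leq 0}\subseteq\cc^{\leq 0}$; as aisles are closed under extensions, $M\in\cc^{\leq 0}$. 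Dually, for $M\in\cc'^{\geq 0}$, boundedness provides a finite filtration with factors $\Sigma^{-i}H'^i(M)\in\Sigma^{-i}\ca'\subseteq\Sigma^{1-i}\cc^{\geq 0}$ for $0\leq i\leq b$, and since $\Sigma^{-1}\cc^{\geq 0}\subseteq\cc^{\geq 0}$ we obtain $\Sigma^{1-i}\cc^{\geq 0}\subseteq\Sigma\cc^{\geq 0}$; closure of the co-aisle $\Sigma\cc^{\geq 0}$ under extensions then gives $M\in\Sigma\cc^{\geq 0}$, completing the proof.

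The only potential snag is bookkeeping with the signs of shifts and the direction of the inclusions $\Sigma\cc^{\leq 0}\subseteq\cc^{\leq 0}$ versus $\Sigma^{-1}\cc^{\geq 0}\subseteq\cc^{\geq 0}$; once these are sorted the argument is essentially a one-step induction on the length of the cohomological filtration, which terminates by boundedness. Nothing delicate beyond this is required.
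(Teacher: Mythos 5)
Your proof is correct. The paper in fact states this lemma without proof, so there is no explicit argument to compare against, but your approach — using the finite cohomological filtration furnished by boundedness, noting the factors $\Sigma^{-i}H'^i(M)$ lie in shifts of $\ca'$, and then invoking closure of aisles and co-aisles under extensions — is the standard and natural one. In particular you correctly handle both inclusions needed for "intermediate" (namely $\cc'^{\leq 0}\subseteq\cc^{\leq 0}$ and $\cc'^{\geq 0}\subseteq\Sigma\cc^{\geq 0}$), which between them are equivalent to $\Sigma\cc^{\leq 0}\subseteq\cc'^{\leq 0}\subseteq\cc^{\leq 0}$.
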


In~\cite{HappelReitenSmaloe96}, Happel, Reiten and Smal{\o} introduced an operation, called the \emph{Happel--Reiten--Smal{\o} tilt}, to produce new $t$-structures from given ones. Precisely, let $(\cc^{\leq 0},\cc^{\geq 0})$ be a bounded $t$-structure on $\cc$ with cohomology functors $H^i$ ($i\in\mathbb{Z}$) and let $(\ct,\cf)$ be a torsion pair of its heart.
Define two full subcategories of $\cc$ by
\begin{align*}
\cc'^{\leq 0}&=\{X\in\cc\mid H^i(X)=0 \text{ for } i>0,~ H^0(X)\in\ct\},\\
\cc'^{\geq 0}&=\{X\in\cc\mid H^i(X)=0 \text{ for } i<-1,~H^{-1}(X)\in\cf\}.
\end{align*}
By~\cite[Proposition 2.1]{HappelReitenSmaloe96} and~\cite[Proposition 2.5]{Bridgeland05}, $(\cc'^{\leq 0},\cc'^{\geq 0})$ is a $t$-structure on $\cc$. This $t$-structure is clearly intermediate with respect to $(\cc^{\leq 0},\cc^{\geq 0})$. Moreover, the heart of this new $t$-structure is the extension closure of $\ct$ and $\Sigma \cf$ in $\cc$, \ie the smallest subcategory of $\cc$ which contains $\ct$ and $\Sigma\cf$ and which is closed under extensions; moreover, $(\Sigma\cf,\ct)$ is a torsion pair of this heart.
Clearly the Happel--Reiten--Smal{\o} tilt plays the role of mutations in our context. More precisely,  assume in addition that heart $\ca$ of $(\cc^{\leq 0},\cc^{\geq 0})$ is a length category. For a  simple object  $S$ of $\ca$, let $\cs$ denote the extension closure of $S$ in $\ca$. Then the pair $(^{\perp_{\ca}}S,\cs)$  is a torsion pair of $\ca$. The \emph{left mutation} $\mu^-_S(\cd^{\leq 0},\cd^{\geq 0})$ of $(\cd^{\leq 0},\cd^{\geq 0})$ at $S$ is the Happel--Reiten--Smal{\o} tilt at the torsion pair $(^{\perp_{\ca}}S,\cs)$. We point out that this mutation is different from the mutation defined by Zhou and Zhu in~\cite{ZhouZhu11b} for the more general notion of torsion pairs of triangulated categories.

On the set of $t$-structures on $\cc$ there is a natural partial order
\[(\cc^{\leq 0},\cc^{\geq 0})\leq (\cc'^{\leq 0},\cc'^{\geq 0}):\Leftrightarrow \cc^{\leq 0}\subseteq \cc'^{\leq 0}, \text{ equivalently, } \cc^{\geq 0}\supseteq\cc'^{\geq 0}.\]
For a fixed bounded $t$-structure $(\cc^{\leq 0},\cc^{\geq 0})$, the set of intermediate $t$-structures inherits a partial order, with respect to which $(\cc^{\leq 0},\cc^{\geq 0})$ is the unique maximal element and $(\Sigma\cc^{\leq 0},\Sigma \cc^{\geq 0})$ is the unique minimal element.

For a  non-positive dg algebra $A$ we set
\begin{align*}
\intertstr(A) &= \text{the set of intermediate bounded $t$-structures on $\cd_{fd}(A)$ with length}\\
& \text{heart with respect to the standard $t$-structure $(\cd_\std^{\leq 0},\cd_\std^{\geq 0})$}.
\end{align*}
Here by "a $t$-structure with length heart", we mean a $t$-structure whose heart is a length category. 
The notion of mutation and the poset structure introduced above turn the set $\intertstr(A)$ into an ordered exchange graph.
We illustrate this structure using the same algebra $A$ from Section~\ref{ss:tilting-module}.

\medskip
\noindent{\bf Example.} Let $A$ be the hereditary algebra of type $\mathbb{A}_2$ as in Section~\ref{ss:tilting-module}.
There are five intermediate $t$-structures on $\cd^b(\mod A)$ with respect to the standard $t$-structure. We depict them in the Auslander--Reiten quiver of $\cd^b(\mod A)$, for example, the standard $t$-structure is depicted as
\[\xymatrix@R=0.15pc@C=0.5pc{
&&&\circ\ar[rdd] && \circ\ar[rdd] &&\circ \ar[rdd]&& 
\framebox(8,8){\parbox{5pt}{$\bullet$}}\ar[rdd] && \bullet\ar[rdd] && \bullet\ar[rdd] && \bullet\ar[rdd] &&\bullet \\
\cdots &&&&&&&&&&&&&& &&&&\cdots\\
&&\circ\ar[ruu]&&\circ\ar[ruu]&&\circ\ar[ruu]&& \framebox(8,8){\parbox{5pt}{$\bullet$}}\ar[ruu] && \framebox(8,8){\parbox{5pt}{$\bullet$}}\ar[ruu] && \bullet \ar[ruu]&&\bullet \ar[ruu]&&\bullet\ar[ruu]
}\]
where the objects in black are in the aisle and the objects in boxes belong to $\mod A$. The ordered exchange graph $\intertstr(A)$ is
\[\begin{xy} 0;<0.35pt,0pt>:<0pt,-0.35pt>::
(300,0) *+{\begin{array}{*{11}{c@{\hspace{-0pt}}}}
\scirc&&\scirc&&\framebox(4,4){\parbox{4pt}{$\sbullet$}}&&\sbullet&&\sbullet&&\sbullet\\[-8pt]
&\scirc&&\framebox(4,4){\parbox{4pt}{$\sbullet$}}&&\framebox(4,4){\parbox{4pt}{$\sbullet$}}&&\sbullet&&\sbullet
\end{array}} ="0",
(100,150) *+{\begin{array}{*{11}{c@{\hspace{-0pt}}}}
\scirc&&\scirc&&\framebox(4,4){\parbox{4pt}{$\sbullet$}}&&\sbullet&&\sbullet&&\sbullet\\[-8pt]
&\scirc&&\framebox(4,4){\parbox{4pt}{$\scirc$}}&&\framebox(4,4){\parbox{4pt}{$\sbullet$}}&&\sbullet&&\sbullet
\end{array}}="1",
(100,300) *+{\begin{array}{*{11}{c@{\hspace{-0pt}}}}
\scirc&&\scirc&&\framebox(4,4){\parbox{4pt}{$\scirc$}}&&\sbullet&&\sbullet&&\sbullet\\[-8pt]
&\scirc&&\framebox(4,4){\parbox{4pt}{$\scirc$}}&&\framebox(4,4){\parbox{4pt}{$\sbullet$}}&&\sbullet&&\sbullet
\end{array}}="2",
(500,225) *+{\begin{array}{*{11}{c@{\hspace{-0pt}}}}
\scirc&&\scirc&&\framebox(4,4){\parbox{4pt}{$\scirc$}}&&\sbullet&&\sbullet&&\sbullet\\[-8pt]
&\scirc&&\framebox(4,4){\parbox{4pt}{$\sbullet$}}&&\framebox(4,4){\parbox{4pt}{$\scirc$}}&&\sbullet&&\sbullet
\end{array}}="3",
(300,400) *+{\begin{array}{*{11}{c@{\hspace{-0pt}}}}
\scirc&&\scirc&&\framebox(4,4){\parbox{4pt}{$\scirc$}}&&\sbullet&&\sbullet&&\sbullet\\[-8pt]
&\scirc&&\framebox(4,4){\parbox{4pt}{$\scirc$}}&&\framebox(4,4){\parbox{4pt}{$\scirc$}}&&\sbullet&&\sbullet
\end{array}}="4",
"0", {\ar "1"}, "0", {\ar "3"},
"1", {\ar "2"},
"2", {\ar "4"},
"3", {\ar "4"},  
\end{xy}\]

\subsection{Co-$t$-structures, $\intercotstr(A)$}
Let $\cc$ be a triangulated category. A \emph{co-$t$-structure} on
$\cc$  is a pair $(\cc_{\geq 0},\cc_{\leq
0})$ of strict and full subcategories of $\cc$ such that
\begin{itemize}
\item[$\cdot$] both $\cc_{\geq 0}$ and $\cc_{\leq
0}$ are additive and closed under taking direct summands,
\item[$\cdot$] $\Sigma^{-1}\cc_{\geq 0}\subseteq\cc_{\geq 0}$ and
$\Sigma\cc_{\leq 0}\subseteq\cc_{\leq 0}$;
\item[$\cdot$] $\Hom(M,\Sigma N)=0$ for $M\in\cc_{\geq 0}$
and $N\in\cc_{\leq 0}$,
\item[$\cdot$] for each $M\in\cc$ there is a triangle in $\cc$
\begin{align}
\xymatrix{M'\ar[r] &
M\ar[r]& M''\ar[r] & \Sigma M'}\label{eq:triangle-co-t-str}
\end{align} with $M'\in\cc_{\geq
0}$ and $M''\in\Sigma\cc_{\leq 0}$.
\end{itemize}
The above triangle (\ref{eq:triangle-co-t-str}) is not canonical.
This notion was introduced by Pauksztello in \cite{Pauksztello08} and  independently by Bondarko as \emph{weight structures} in~\cite{Bondarko10}.
The \emph{co-heart} is defined as the intersection $\cc_{\geq
0}~\cap~\cc_{\leq 0}$. Note that the co-heart is usually not an abelian category. 
As for $t$-structures, the subcategories $\cc_{\geq 0}$ and $\cc_{\leq 0}$ are called the \emph{aisle} and \emph{co-aisle} of the co-$t$-structure.  
One easily observes that $\cc_{\geq 0}=\Sigma {}^\perp(\cc_{\leq 0})$ and $\cc_{\leq 0}=\Sigma \cc_{\geq 0}^{\perp}$, thus a co-$t$-structure is determined by its aisle (respectively, by its co-aisle).
The co-$t$-structure $(\cc_{\geq 0},\cc_{\leq
0})$ is said to be \emph{bounded}~\cite{Bondarko10} if
$$\bigcup_{n\in\mathbb{Z}}\Sigma^n \cc_{\leq
0}=\cc=\bigcup_{n\in\mathbb{Z}}\Sigma^n\cc_{\geq 0}.$$ 
A bounded co-$t$-structure is determined by its co-heart in the sense that its aisle (respectively, its co-aisle) is the smallest full subcategory of $\cc$ which contains the co-heart and which is closed under a quasi-inverse of the shift functor (respectively, the shift functor) and under taking extensions and direct summands.
A bounded co-$t$-structure is one of the two ingredients of a J{\o}rgensen--Pauksztello co-stability condition~\cite{JorgensenPauksztello11}.
A typical
example of a co-$t$-structure is the pair $(\cp^\std_{\geq 0},\cp^\std_{\leq 0})$
for the homotopy category $\ch^b(\proj A)$ of a finite-dimensional algebra $A$, where $\cp^\std_{\geq 0}$ consists of
complexes which are homotopy equivalent to a complex bounded below at
$0$, and $\cp^\std_{\leq 0}$ consists of complexes which are homotopy
equivalent to a complex bounded above at $0$. The co-heart of this
co-$t$-structure is $\proj A$. We shall refer to this co-$t$-structure as the \emph{standard} co-$t$-structure on $\ch^b(\proj A)$. Recall that viewing $A$ as a dg algebra concentrated in degree $0$, we have $\per(A)\cong\ch^b(\proj A)$. More generally, for a non-positive dg algebra $A$, the perfect derived category $\per(A)$ admits a standard co-$t$-structure $(\cp^\std_{\geq 0},\cp^\std_{\leq 0})$, see Appendix~\ref{ss:non-pos-dg}.

Fix a bounded  co-$t$-structure $(\cc_{\geq 0},\cc_{\leq 0})$. A co-$t$-structure $(\cc'_{\geq 0},\cc'_{\leq 0})$ is \emph{intermediate} with respect to $(\cc_{\geq 0},\cc_{\leq 0})$ if we have $\Sigma \cc_{\geq 0}\supseteq \cc'_{\geq 0}\supseteq \cc_{\geq 0}$, or equivalently, $\Sigma \cc_{\leq 0}\subseteq\cc'_{\leq 0}\subseteq\cc_{\leq 0}$. Intermediate co-$t$-structures are bounded and they can be characterised in terms of the position of their co-hearts.
\begin{lemma}\label{l:intermediate-co-t-str}
Let $(\cc_{\geq 0},\cc_{\leq 0})$ and $(\cc'_{\geq 0},\cc'_{\leq 0})$ be two bounded co-$t$-structures on $\cc$. Then $(\cc'_{\geq 0},\cc'_{\leq 0})$ is intermediate with respect to $(\cc_{\geq 0},\cc_{\leq 0})$ if and only if $\cc'_{\geq 0}\cap\cc'_{\leq 0}\subseteq \Sigma\cc_{\geq 0}\cap \cc_{\leq 0}$.
\end{lemma}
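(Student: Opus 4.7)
The plan is to derive both directions directly from the characterization recalled just before the lemma: a bounded co-$t$-structure is determined by its co-heart, in that its aisle (respectively co-aisle) is the smallest full subcategory of $\cc$ containing the co-heart and closed under $\Sigma^{-1}$ (respectively $\Sigma$), extensions, and direct summands. The ``only if'' direction is immediate: if $(\cc'_{\geq 0},\cc'_{\leq 0})$ is intermediate with respect to $(\cc_{\geq 0},\cc_{\leq 0})$, then by definition $\cc'_{\geq 0}\subseteq\Sigma\cc_{\geq 0}$ and $\cc'_{\leq 0}\subseteq\cc_{\leq 0}$, and intersecting yields the asserted containment of co-hearts.

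For the ``if'' direction, suppose $\cc'_{\geq 0}\cap\cc'_{\leq 0}\subseteq\Sigma\cc_{\geq 0}\cap\cc_{\leq 0}$. I would first verify that $\Sigma\cc_{\geq 0}$ inherits from $\cc_{\geq 0}$ the closure properties required of an aisle: closure under direct summands and under extensions (both inherited from $\cc_{\geq 0}$, which has these properties by the characterization applied to $(\cc_{\geq 0},\cc_{\leq 0})$ itself), and closure under $\Sigma^{-1}$, since the relation $\Sigma^{-1}\cc_{\geq 0}\subseteq\cc_{\geq 0}$ yields $\cc_{\geq 0}\subseteq\Sigma\cc_{\geq 0}$ and therefore $\Sigma^{-1}(\Sigma\cc_{\geq 0})=\cc_{\geq 0}\subseteq\Sigma\cc_{\geq 0}$. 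Dually $\cc_{\leq 0}$ is closed under $\Sigma$, extensions, and direct summands. Since by hypothesis both classes contain the co-heart of $(\cc'_{\geq 0},\cc'_{\leq 0})$, the minimality clause of the characterization, applied to the co-heart, forces $\cc'_{\geq 0}\subseteq\Sigma\cc_{\geq 0}$ and $\cc'_{\leq 0}\subseteq\cc_{\leq 0}$.

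To upgrade these one-sided containments to full intermediateness I still need $\cc_{\geq 0}\subseteq\cc'_{\geq 0}$ (equivalently $\Sigma\cc_{\leq 0}\subseteq\cc'_{\leq 0}$). This I would obtain formally from $\cc'_{\leq 0}\subseteq\cc_{\leq 0}$ via the orthogonality identity $\cc_{\geq 0}=\Sigma\,{}^\perp(\cc_{\leq 0})$ recorded at the start of the subsection: taking left perpendiculars reverses inclusions, and therefore
\[
\cc_{\geq 0}=\Sigma\,{}^\perp(\cc_{\leq 0})\subseteq \Sigma\,{}^\perp(\cc'_{\leq 0})=\cc'_{\geq 0}.
\]
The only point of genuine care is that the extension-closure of $\Sigma\cc_{\geq 0}$ really comes from the co-heart characterization rather than from the bare axioms of a co-$t$-structure listed at the start of the subsection; invoking the stated characterization lets me side-step verifying extension closure from first principles via the octahedral axiom. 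Otherwise the argument is symmetric in the two co-$t$-structures and purely formal.
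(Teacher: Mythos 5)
Your proof is correct. The paper states Lemma~\ref{l:intermediate-co-t-str} without proof, so there is nothing in the source to compare against; the argument you give is a clean and natural way to fill that gap, relying precisely on the two facts the paper records just before: that a bounded co-$t$-structure is the smallest extension-, summand- and shift-closed subcategory generated (in the appropriate sense) by its co-heart, and that a co-$t$-structure is determined by either the aisle or the co-aisle via $\cc_{\geq 0}=\Sigma\,{}^\perp(\cc_{\leq 0})$. Both directions are verified: the forward implication is an intersection of the two defining inclusions, and the converse is obtained by first squeezing $\cc'_{\geq 0}$ below $\Sigma\cc_{\geq 0}$ and $\cc'_{\leq 0}$ below $\cc_{\leq 0}$ via minimality of the aisle/co-aisle generated by the co-heart, then recovering the remaining inclusion $\cc_{\geq 0}\subseteq\cc'_{\geq 0}$ from the inclusion-reversing property of left perpendiculars. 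Your remark that invoking the co-heart characterization conveniently absorbs the extension-closure of the aisle (which otherwise would need the octahedral axiom from the bare co-$t$-structure axioms) is well taken and is exactly the kind of point worth flagging.
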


On the set of co-$t$-structures on $\cc$ there is a natural partial order
\[(\cc_{\geq 0},\cc_{\leq 0})\leq (\cc'_{\geq 0},\cc'_{\leq 0}):\Leftrightarrow \cc_{\geq 0}\supseteq \cc'_{\geq 0},  \text{ equivalently, } \cc_{\leq 0}\subseteq \cc'_{\leq 0}.\]
For a fixed bounded co-$t$-structure $(\cc_{\geq 0},\cc_{\leq 0})$, this order relation induces a partial order on 
the set of intermediate co-$t$-structures, with respect to which $(\cc_{\geq 0},\cc_{\leq 0})$ is the unique maximal element and $(\Sigma\cc_{\geq 0},\Sigma \cc_{\leq 0})$ is the unique minimal element. 

For a  non-positive dg algebra $A$ we set
\begin{align*}
\intercotstr(A) &= \text{the set of intermediate bounded co-$t$-structures on $\per(A)$ }\\
 &    \text{with respect to the standard co-$t$-structure $(\cp^\std_{\geq 0},\cp^\std_{\leq 0})$. }
\end{align*}

\medskip
\noindent
{\bf Example.} Let $A$ be the hereditary algebra of type $\mathbb{A}_2$ as in Section~\ref{ss:tilting-module}. There are five intermediate co-$t$-structures on $\ch^b(\proj A)$ (which, in this case, is equivalent to $\cd^b(\mod A)$) with respect to the standard one. We depict them in the Auslander--Reiten quiver of $\ch^b(\proj A)$, for example, the standard co-$t$-structure is depicted as
\[\xymatrix@R=0.15pc@C=0.5pc{
&&&\bullet\ar[rdd] && \bullet\ar[rdd] &&\bullet\ar[rdd] && 
\framebox(8,8){\parbox{5pt}{$\bullet$}}\ar[rdd] && \circ\ar[rdd] && \circ\ar[rdd] && \circ\ar[rdd] &&\circ \\
\cdots &&&&&&&&&&&&&& &&&&\cdots\\
&&\bullet\ar[ruu]&&\bullet\ar[ruu]&&\bullet\ar[ruu]&& \framebox(8,8){\parbox{5pt}{$\bullet$}}\ar[ruu] && \circ\ar[ruu] && \circ\ar[ruu] &&\circ\ar[ruu] &&\circ\ar[ruu]
}\]
where the objects in black are in the aisle and the objects in boxes are projective. The Hasse quiver of $\intercotstr(A)$ is
\[\begin{xy} 0;<0.35pt,0pt>:<0pt,-0.35pt>::
(300,0) *+{\begin{array}{*{11}{c@{\hspace{-0pt}}}}
\sbullet&&\sbullet&&\framebox(4,4){\parbox{4pt}{$\sbullet$}}&&\scirc&&\scirc&&\scirc\\[-8pt]
&\sbullet&&\framebox(4,4){\parbox{4pt}{$\sbullet$}}&&\scirc&&\scirc&&\scirc
\end{array}} ="0",
(100,150) *+{\begin{array}{*{11}{c@{\hspace{-0pt}}}}
\sbullet&&\sbullet&&\framebox(4,4){\parbox{4pt}{$\sbullet$}}&&\scirc&&\scirc&&\scirc\\[-8pt]
&\sbullet&&\framebox(4,4){\parbox{4pt}{$\sbullet$}}&&\sbullet&&\scirc&&\scirc
\end{array}}="1",
(100,300) *+{\begin{array}{*{11}{c@{\hspace{-0pt}}}}
\sbullet&&\sbullet&&\framebox(4,4){\parbox{4pt}{$\sbullet$}}&&\sbullet&&\scirc&&\scirc\\[-8pt]
&\sbullet&&\framebox(4,4){\parbox{4pt}{$\sbullet$}}&&\sbullet&&\scirc&&\scirc
\end{array}}="2",
(500,225) *+{\begin{array}{*{11}{c@{\hspace{-0pt}}}}
\sbullet&&\sbullet&&\framebox(4,4){\parbox{4pt}{$\sbullet$}}&&\scirc&&\scirc&&\scirc\\[-8pt]
&\sbullet&&\framebox(4,4){\parbox{4pt}{$\sbullet$}}&&\scirc&&\sbullet&&\scirc
\end{array}}="3",
(300,400) *+{\begin{array}{*{11}{c@{\hspace{-0pt}}}}
\sbullet&&\sbullet&&\framebox(4,4){\parbox{4pt}{$\sbullet$}}&&\sbullet&&\scirc&&\scirc\\[-8pt]
&\sbullet&&\framebox(4,4){\parbox{4pt}{$\sbullet$}}&&\sbullet&&\sbullet&&\scirc
\end{array}}="4",
"0", {\ar "1"}, "0", {\ar "3"},
"1", {\ar "2"},
"2", {\ar "4"},
"3", {\ar "4"},  
\end{xy}\]

\subsection{Silting objects, $\twosilt(A) $}

Let $\cc$ be a triangulated category. An object $M$ of $\cc$ is called a \emph{presilting object} if $\Hom_{\cc}(M,\Sigma^i M)$ vanishes for all $i>0$, a \emph{silting object} if
in addition $M$ generates $\cc$, \ie $\cc=\thick(M)$, and a \emph{tilting object} if further $\Hom_{\cc}(M,\Sigma^i M)$ vanishes also for all $i<0$. Tilting objects play an essential role in the Morita theory of derived categories of algebras (\cite{Happel87,ClineParshallScott86,Rickard89,Keller94}) and
the notion of silting objects, generalising that of tilting objects, was introduced by Keller and Vossieck in~\cite{KellerVossieck88} to study $t$-structures on the bounded derived category of finite-dimensional representations over a Dynkin quiver. A typical example of a silting object is the free module $A_A$ of rank $1$ in $\ch^b(\proj A)$ for a finite-dimensional algebra $A$ (more generally, the free dg module $A_A$ of rank $1$ in $\per(A)$ for a non-positive dg algebra $A$, see Appendix~\ref{ss:non-pos-dg}). In contrast, the bounded derived category $\cd^b(\mod A)$ rarely has silting objects. In fact, we have

\begin{proposition}
Let $A$ be a finite-dimensional algebra. Then $\cd^b(\mod A)$ has a silting object if and only if $A$ has finite global dimension.
\end{proposition}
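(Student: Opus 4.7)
For the ``if'' direction, suppose $A$ has finite global dimension $d$. Then every finite-dimensional $A$-module admits a projective resolution of length at most $d$, so each object of $\cd^b(\mod A)$ is quasi-isomorphic to a bounded complex of finitely generated projective modules. Hence the canonical inclusion $\per(A)=\ch^b(\proj A)\hookrightarrow\cd^b(\mod A)$ is a triangle equivalence. The free module $A$, viewed as a stalk complex in degree $0$, satisfies $\Hom_{\cd^b(\mod A)}(A,\Sigma^i A)=H^i(A)=0$ for $i\ne 0$ and generates $\per(A)=\cd^b(\mod A)$ as a thick subcategory; it is therefore a (tilting, hence) silting object.

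For the ``only if'' direction, let $M$ be a silting object of $\cd^b(\mod A)$. I will reduce the claim to showing that $M\in\per(A)$. Since $\per(A)\subseteq\cd^b(\mod A)$ always, and equality of these two categories is equivalent to $A$ having finite global dimension, and since $\cd^b(\mod A)=\thick(M)$ by definition of silting, it suffices to prove that $M$ itself lies in $\per(A)$: then $\thick(M)\subseteq\per(A)$, forcing equality.

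The key step is therefore to show that any silting object in $\cd^b(\mod A)$ is perfect. The plan is to exploit the fact that the vanishing $\Hom(M,\Sigma^i M)=0$ for $i>0$ makes the dg endomorphism algebra $B=\RHom_A(M,M)$ a non-positive dg algebra with $H^0(B)=\End_{\cd^b(\mod A)}(M)$. Using a standard Keller-type Morita theorem for algebraic triangulated categories, one obtains a triangle equivalence $\per(B)\xrightarrow{\sim}\thick(M)=\cd^b(\mod A)$ sending $B$ to $M$. By the results on non-positive dg algebras recalled in the appendix, $\per(B)$ carries a standard bounded co-$t$-structure with co-heart $\add(B)$. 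Transporting this structure gives a bounded co-$t$-structure on $\cd^b(\mod A)$ with co-heart $\add(M)$, so every object of $\cd^b(\mod A)$ admits a finite filtration whose successive cones are shifts of summands of $M$. Applying this to $A\in\cd^b(\mod A)$ and combining with the standard $t$-structure (whose heart $\mod A$ is a length category), one shows that each simple $A$-module is built from $M$ in finitely many triangles, from which one extracts a bounded projective resolution for each simple, \ie finite global dimension of $A$. Equivalently, one argues that $M\in\per(A)$ by observing that the image of $B$ under the above equivalence must be a compact object of $\cd(\Mod A)$.

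The main obstacle is the careful construction of the equivalence $\per(B)\simeq\cd^b(\mod A)$ from the silting object $M$, together with the verification that this equivalence is compatible with the enveloping unbounded derived category $\cd(\Mod A)$ so that compactness can be transferred. Once this Morita-type identification is in place, the remaining argument is a formal consequence of the interaction between the standard bounded $t$-structure and the transported bounded co-$t$-structure on $\cd^b(\mod A)$.
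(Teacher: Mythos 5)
Your ``if'' direction is correct. The ``only if'' direction has gaps. Your reduction to showing $M\in\per(A)$ is sound, but neither of your two routes to that fact goes through as written. The claim that ``the image of $B$ under the above equivalence must be a compact object of $\cd(\Mod A)$'' is circular: compactness of $M$ in $\cd(\Mod A)$ is equivalent to $M\in\per(A)$, which is exactly what you want to prove, and the triangle equivalence $\per(B)\simeq\thick(M)$ does not by itself extend to one between $\cd(B)$ and $\cd(\Mod A)$ from which compactness of $B$ could be transferred. The co-$t$-structure route is likewise incomplete: a finite filtration of a simple module $S$ whose successive cones are shifts of summands of $M$ does not let you ``extract a bounded projective resolution'' of $S$, since those summands are complexes of a priori unbounded projective dimension --- boundedness is again the very issue at stake.

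What the silting hypothesis does give directly is eventual $\Hom$-vanishing by d\'evissage: from $\Hom(M,\Sigma^iM)=0$ for $i>0$ together with $\thick(M)=\cd^b(\mod A)$ one deduces, for each pair $X,Y\in\cd^b(\mod A)$, that $\Hom(X,\Sigma^iY)=0$ for $i\gg 0$. Applying this to simple modules $X=S$, $Y=S'$ and using that $A$ has only finitely many isomorphism classes of simples gives a uniform $d$ with $\Ext^i_A(S,S')=0$ for $i>d$, hence the global dimension of $A$ is at most $d$. This is precisely the paper's proof; it never leaves $\cd^b(\mod A)$ and needs neither the dg endomorphism algebra, Keller's Morita theorem, nor a co-$t$-structure. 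Your extra machinery is not wrong in itself, but it does not close the argument: the step you call ``formal'' is where the content lives, and a direct d\'evissage makes it transparent.
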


Indeed, assume that $\cd^b(\mod A)$ has a silting object. Then it follows by d\'evissage that for any objects $M$ and $N$ of $\cd^b(\mod A)$, the space $\Hom(M,\Sigma^i N)$ vanishes for sufficiently large $i$. Since $A$ has only finitely many isomorphism classes of simple modules, there is an integer $d$ such that $\Hom(S,\Sigma^i S')=\Ext^i(S,S')$ vanishes for any simple modules $S$ and $S'$ and for any $i>d$. It follows that the global dimension of $A$ is no greater than $d$.

Triangulated categories with silting objects are quite special in terms of their Grothendieck groups.

\begin{theorem}{(\cite[Theorem 2.37]{AiharaIyama12})}\label{t:generators-of-grothendieckgroup-silting-case}
Let $\cc$ be a Krull--Schmidt triangulated category with a silting object $M$. Then the embedding $\add(M)\rightarrow \cc$ induces an isomorphism $K^{\rm split}_0(\add(M))\rightarrow K_0(\cc)$ of Grothendieck groups. In particular, $K_0(\cc)$ is a free abelian group of rank $|M|$.
\end{theorem}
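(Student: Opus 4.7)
My plan is to exploit the well-known bijective correspondence between silting objects in a Krull--Schmidt triangulated category and bounded co-$t$-structures whose co-heart has the form $\add(N)$. Specifically, I would first invoke that $M$ determines a bounded co-$t$-structure $(\cc_{\geq 0}, \cc_{\leq 0})$ on $\cc$ with co-heart precisely $\add(M)$, where $\cc_{\geq 0}$ is the smallest full subcategory of $\cc$ containing $\Sigma^{-i}M$ for all $i \geq 0$ closed under extensions and direct summands, and $\cc_{\leq 0}$ is defined dually. Its existence rests on the silting condition $\Hom(M, \Sigma^i M) = 0$ for $i > 0$ combined with the generation hypothesis $\thick(M) = \cc$; the latter also gives the required boundedness.

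For the surjectivity of the natural map $K^{\rm split}_0(\add M) \to K_0(\cc)$, I would argue by induction on the co-$t$-structure \emph{amplitude} of $X \in \cc$, i.e.\ the smallest integer $a+b$ with $X \in \Sigma^{-a} \cc_{\geq 0} \cap \Sigma^b \cc_{\leq 0}$. The co-$t$-structure approximation triangle $N \to X \to N' \to \Sigma N$ with $N \in \cc_{\geq 0}$ and $N' \in \Sigma \cc_{\leq 0}$, both of strictly smaller amplitude, yields $[X] = [N] + [N']$ in $K_0(\cc)$; iterating, $X$ acquires a finite filtration whose subquotients are shifts of objects in $\add(M)$. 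Since $[\Sigma Y] = -[Y]$ holds in any triangulated Grothendieck group, each shift only changes a sign, so $[X]$ becomes a $\Z$-linear combination of classes of indecomposable summands of $M$, hence lies in the image.

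The main obstacle is injectivity. The plan is to construct a left inverse $\psi\colon K_0(\cc) \to K^{\rm split}_0(\add M)$ by assigning to each $X$ a signed sum $\psi(X) = \sum_j (-1)^{k_j} [N_j]$, where $N_j \in \add M$ and the integers $k_j$ come from a filtration as in the surjectivity argument. The delicate point is well-definedness on $K_0(\cc)$: I would first show that any two such filtrations of a fixed object yield the same element of $K^{\rm split}_0(\add M)$, using the uniqueness up to (non-canonical) isomorphism of co-$t$-structure truncations together with the Krull--Schmidt property to match indecomposable summands at each level; then I would establish additivity on distinguished triangles $X' \to X \to X'' \to \Sigma X'$ by splicing filtrations of $X'$ and $X''$ into a filtration of $X$ via repeated application of the octahedral axiom. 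Once $\psi$ is defined and satisfies $\psi([M_i]) = [M_i]$ for each indecomposable summand $M_i$, the composition $K^{\rm split}_0(\add M) \to K_0(\cc) \to K^{\rm split}_0(\add M)$ is the identity, yielding injectivity. The ``in particular'' statement is then immediate, as $\add(M)$ is Krull--Schmidt with exactly $|M|$ isoclasses of indecomposable objects, so $K^{\rm split}_0(\add M)$ is free of rank $|M|$.
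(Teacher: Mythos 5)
The paper does not prove this theorem itself — it cites Aihara--Iyama — so the comparison is to the cited source. Your surjectivity argument is sound: generation of $\cc$ by $M$ together with additivity of $K_0$ on triangles shows every class $[X]$ is a $\Z$-combination of $[\Sigma^k M_i]$, and $[\Sigma^k M_i]=(-1)^k[M_i]$ gives the image.

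The injectivity argument, however, has a genuine gap exactly at the step you flag as delicate. You claim well-definedness of $\psi$ follows from ``the uniqueness up to (non-canonical) isomorphism of co-$t$-structure truncations together with the Krull--Schmidt property to match indecomposable summands at each level.'' This is false: unlike $t$-structure truncations, co-$t$-structure truncations of a fixed object need not be unique even up to isomorphism. The paper itself notes immediately after the definition that the approximation triangle ``is not canonical'', but the failure is stronger than mere non-functoriality. For instance, over $A=k[x]/(x^2)$ with the standard co-$t$-structure on $\ch^b(\proj A)$, the two-term complex $X=(A\xrightarrow{\,x\,}A)$ (degrees $-1,0$) admits a weight decomposition with co-heart factors $A$ and $\Sigma A$, and \emph{also} one with factors $A^{\oplus 2}$ and $\Sigma A^{\oplus 2}$; the indecomposable summands at each level manifestly do not match. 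It so happens that both filtrations yield the same signed sum in $K_0^{\rm split}(\add M)$ — but showing this in general is precisely the non-trivial content of Bondarko's well-definedness theorem for the weight complex (equivalently, the substance of Aihara--Iyama's proof), and requires a careful octahedral-axiom argument establishing that any two weight Postnikov towers of an object produce weakly homotopy-equivalent complexes of $\add(M)$-objects. Your proposal asserts this where it needs to be proved. Once that is supplied, the remainder of the plan (additivity via splicing filtrations, and $\psi([M_i])=[M_i]$) does go through, so the strategy is salvageable, but as written the proof of injectivity is incomplete in its most essential step.
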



There is the following question on whether any presilting object can be completed to a silting object.

\begin{question}\label{q:completion-of-presilting} Let $\cc$ be a Krull--Schmidt triangulated category.
Suppose that $\cc$ has a silting object $M$ and let $N$ be a presilting object of $\cc$. Is there an object $N'$ such that $N\oplus N'$ is a silting object?
\end{question}

If we replace "silting" by "tilting", this question is known to have a negative answer in general, see for example~\cite[Section 8]{Rickard89}, \cite[Section 2]{RickardSchofield89} and~\cite[Example 4.4]{LiuVitoriaYang12}. In general, Question~\ref{q:completion-of-presilting} is open. It has a positive answer for $\cc=\ch^b(\proj A)$, where $A$ is a representation-finite symmetric algebra, see~\cite{AbeHoshino06,Aihara10}, or $A$ is a piecewise hereditary algebra\footnote{The key point of a proof is that in this case (pre)silting objects are closely related to (complete) exceptional sequences, see~\cite[Section 3]{AiharaIyama12}.}. 
Another direction is to consider presilting objects $N$ which are \emph{2-term} with respect to the given silting object $M$, \ie there is a triangle
\[\xymatrix{M^{-1}\ar[r] & M^0\ar[r] & N\ar[r] &\Sigma M^{-1}}\]
with $M^{-1}$ and $M^0$ in $\add(M)$. The following result is due to Derksen--Fei~\cite[Section 5]{DerksenFei09}, Aihara~\cite[Proposition 2.16]{Aihara10}, Wei~\cite[Section 6]{Wei11} and Iyama--J{\o}rgensen--Yang~\cite{IyamaJorgensenYang14} in various generalities. The idea is to form an analogue of the Bongartz complement for tilting modules (Theorem~\ref{t:bongartz-complement}).

\begin{proposition}\label{l:partial-silting} Let $\cc$ be a Hom-finite Krull--Schmidt triangulated category with a silting object $M$ and let $N$ be a presilting object of $\cc$. Assume that $N$ is $2$-term with respect to $M$.
Then $N$ can be completed to a silting object. Moreover, $N$ is a silting object if and only if $|N|=|M|$.
\end{proposition}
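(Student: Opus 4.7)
The plan is to mimic Bongartz's construction from Theorem~\ref{t:bongartz-complement}, producing a complement $M'$ assembled out of the silting object $M$ such that $N\oplus M'$ is silting. Since $\cc$ is Hom-finite, the space $V:=\Hom_\cc(N,\Sigma M)$ is finite-dimensional; pick a basis $\xi_1,\dots,\xi_n$ and let $\xi=(\xi_1,\dots,\xi_n)\colon N^{\oplus n}\to \Sigma M$ be the resulting tautological evaluation map. Define $M'$ by completing $\xi$ to a triangle
\[
M \to M' \to N^{\oplus n} \xrightarrow{\xi} \Sigma M.
\]

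To see that $N\oplus M'$ is silting, first note that it generates $\cc$: since $N\in\thick(M)$ by the 2-term triangle for $N$, and $M\in\thick(N\oplus M')$ by the defining triangle of $M'$, we have $\thick(N\oplus M')=\thick(M)=\cc$. The main work is verifying $\Hom(X,\Sigma^i Y)=0$ for $X,Y\in\{N,M'\}$ and all $i>0$. The case $X=Y=N$ is the presilting assumption on $N$. Applying $\Hom(M,-)$ and $\Hom(-,M)$ to the 2-term triangle for $N$, the silting property of $M$ yields $\Hom(M,\Sigma^i N)=0$ for all $i>0$ and $\Hom(N,\Sigma^i M)=0$ for all $i\geq 2$. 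Feeding these vanishings into the long exact sequences attached to the defining triangle of $M'$ gives $\Hom(M',\Sigma^i N)=0$ and $\Hom(N,\Sigma^i M')=0$ for $i\geq 2$, as well as $\Hom(M,\Sigma^i M')=0$ for all $i>0$, and then $\Hom(M',\Sigma^i M')=0$ for $i\geq 2$. The delicate remaining case is $i=1$: for $\Hom(N,\Sigma M')$ it suffices, via the long exact sequence, that the connecting map $\Hom(N,N^{\oplus n})\to \Hom(N,\Sigma M)$ induced by $\xi$ be surjective; but this is immediate, since the element $(0,\dots,\id_N,\dots,0)$ with $\id_N$ in the $j$-th slot is sent to $\xi_j$, and the $\xi_j$ span $\Hom(N,\Sigma M)$. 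The vanishings $\Hom(M',\Sigma M')=0$ and $\Hom(M',\Sigma N)=0$ then drop out of the remaining long exact sequences. This surjectivity, the Bongartz analogue, is the key technical point; everything else is a routine diagram chase.

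For the second assertion, one direction is immediate: if $N$ is silting, then Theorem~\ref{t:generators-of-grothendieckgroup-silting-case} forces $|N|=|M|$, both being equal to the rank of $K_0(\cc)$. Conversely, suppose $|N|=|M|$. By the first part, $N$ completes to a silting object $N\oplus M'$, whose basic version then has exactly $|M|$ pairwise non-isomorphic indecomposable summands, again by Theorem~\ref{t:generators-of-grothendieckgroup-silting-case}. Since $N$ alone already contributes $|N|=|M|$ such summands, every indecomposable summand of $M'$ must be isomorphic to one of $N$, i.e.\ $M'\in\add(N)$. Hence $\thick(N)=\thick(N\oplus M')=\cc$, so $N$ itself is silting.
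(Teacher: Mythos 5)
Your proof is correct, and it is exactly the Bongartz-complement strategy that the paper points to: the universal map $\xi\colon N^{\oplus n}\to\Sigma M$ built from a basis of $\Hom(N,\Sigma M)$, the triangle $M\to M'\to N^{\oplus n}\to\Sigma M$, and the observation that surjectivity of $\xi_*\colon\Hom(N,N^{\oplus n})\to\Hom(N,\Sigma M)$ is the decisive point for killing $\Hom(N,\Sigma M')$. The remaining diagram chases and the counting argument via the Grothendieck-group rank for the second claim are all sound, so this matches the argument the paper cites from Derksen--Fei, Aihara, Wei and Iyama--J{\o}rgensen--Yang.
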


Assume that $\cc$ is a Hom-finite Krull--Schmidt triangulated category. Let $M=M_1\oplus\ldots\oplus M_n$ be a basic silting object, where $M_1,\ldots,M_n$ are indecomposable. For $1 \le i\le n$,
the \emph{left mutation} of $M$ at the direct summand $M_i$ is the object
$\mu^-_i(M)=\mu^-_{M_i}(M)=M_i^*\oplus\bigoplus_{j\neq i}M_j$ where $M_i^*$ is the cone of
the minimal left $\add(\bigoplus_{j\neq i}M_j)$-approximation of $M_i$
$$\xymatrix{M_i\ar[r]& E.}$$
The object $\mu^-_i(M)$ is again a silting object. Silting mutation was defined and studied by Buan, Reiten and Thomas in~\cite{BuanReitenThomas11} for bounded derived categories of finite-dimensional hereditary algebras and independently by Aihara and Iyama in~\cite{AiharaIyama12} for the general case. It was shown that the Brenner--Butler-tilting module is the left mutation of the free module of rank $1$, see~\cite[Theorem 2.53]{AiharaIyama12} and~\cite[Proposition 7.4]{KoenigYang12}.

For a  non-positive dg algebra $A$ we say a silting object is $2$-term if it is $2$-term with respect to $A$, and we set
\begin{align*}
\twosilt(A) &= \text{the set of isomorphism classes of basic $2$-term silting objects of $\per(A)$}.
\end{align*}

\medskip
\noindent
{\bf Example.}  Let $A$ be the hereditary algebra of type $\mathbb{A}_2$ as in Section~\ref{ss:tilting-module}. There are precisely five isomorphism classes of basic 2-term silting objects in $\ch^b(\proj A)$ with respect to $A$: $S_1\oplus P_2=A$, $S_2\oplus P_2$, $S_2\oplus \Sigma S_1$, $S_1\oplus \Sigma P_2$ and $\Sigma S_1\oplus \Sigma P_2=\Sigma A$. The ordered exchange graph $\twosilt(A)$ is
\[\begin{xy} 0;<0.35pt,0pt>:<0pt,-0.35pt>::
(300,0) *+{A} ="0",
(100,150) *+{S_2\oplus P_2}="1",
(100,300) *+{S_2\oplus \Sigma S_1}="2",
(500,225) *+{S_1\oplus \Sigma P_2}="3",
(300,400) *+{\Sigma A}="4",
"0", {\ar "1"}, "0", {\ar "3"},
"1", {\ar "2"},
"2", {\ar "4"},
"3", {\ar "4"},  
\end{xy}\]

\subsection{Simple-minded collections, $\intersmc(A)$} Let $\cc$ be a triangulated category.
A collection $\{X_1,\ldots,X_n\}$ of objects of $\cc$ is said to be
\emph{simple-minded} (cohomologically Schurian
in~\cite{Al-Nofayee09}) if the following conditions hold for
$i,j=1,\ldots,n$
\begin{itemize}
\item[$\cdot$] $\Hom(X_i,\Sigma^p X_j)=0,~~\forall~p<0$,
\item[$\cdot$] $\Hom(X_i,X_j)=\begin{cases} k & \text{if\ }i=j,\\
                                           0 & \text{otherwise},
                                           \end{cases}$
\item[$\cdot$] $X_1,\ldots,X_n$ generate
$\cc$.
\end{itemize}
This notion was first used by Rickard in~\cite{Rickard02} to help constructing derived equivalences of symmetric algebras from stable equivalences. Spherical collections in algebraic geometry~\cite{SeidelThomas01} are examples of simple-minded collections. In representation theory, a typical example of a simple-minded collection is a complete collection of pairwise non-isomorphic simple modules over a finite-dimensional algebra $A$, considered as objects in $\cd^b(\mod A)$.

The following result is a counterpart of Theorem~\ref{t:generators-of-grothendieckgroup-silting-case}.

\begin{proposition} Assume that $\cc$ is Krull--Schmidt and Hom-finite and has a simple-minded collection $\{X_1,\ldots,X_n\}$. Then the Grothendieck group $K_0(\cc)$ of $\cc$ is free of rank $n$. In particular, any two simple-minded collections of $\cc$ have the same cardinality.
\end{proposition}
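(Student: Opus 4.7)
The strategy is to package the simple-minded collection into a bounded $t$-structure on $\cc$ whose heart is a length category with the $X_i$ as simple objects, and then to read off the Grothendieck group from Lemma~\ref{l:grothendieck-gp-and-bbd-t-str}. This is essentially the route taken by Al-Nofayee~\cite{Al-Nofayee09}.

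First I would construct the $t$-structure. Let $\cc^{\leq 0}$ be the smallest full subcategory of $\cc$ containing $\Sigma^i X_j$ for all $i\geq 0$, $1\leq j\leq n$, closed under extensions and direct summands, and let $\ca$ be the analogous extension closure of just $\{X_1,\dots,X_n\}$. The Hom-vanishing $\Hom(X_i,\Sigma^p X_j)=0$ for $p<0$ immediately yields that $\Hom(\cc^{\leq 0},\Sigma^{-1}\cc^{\geq 0})=0$, once one sets $\cc^{\geq 0}:=\Sigma(\cc^{\leq 0})^\perp[1]$ in the standard way. The nontrivial point is the existence of truncation triangles; this is obtained by d\'evissage using the generation hypothesis $\thick(X_1,\dots,X_n)=\cc$ together with the negative-$\Ext$ vanishing, so that every object of $\cc$ can be built from finitely many shifts $\Sigma^i X_j$ lying in a bounded range of degrees. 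This also gives boundedness of the resulting $t$-structure.

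Next I would identify the heart. By construction the heart equals $\ca$, the extension closure of $\{X_1,\dots,X_n\}$. The Schurian condition $\Hom(X_i,X_j)=\delta_{ij}k$ forces each $X_j$ to be simple in $\ca$ and forces $X_1,\dots,X_n$ to be pairwise non-isomorphic; any object of $\ca$ admits, by definition of extension closure, a finite filtration whose subfactors are among the $X_j$, so $\ca$ is a length category and $\{X_1,\dots,X_n\}$ is a complete set of isomorphism classes of simple objects in $\ca$. Now Lemma~\ref{l:grothendieck-gp-and-bbd-t-str} applies: the inclusion $\ca\hookrightarrow\cc$ induces an isomorphism $K_0(\ca)\stackrel{\sim}{\to} K_0(\cc)$, and $K_0(\ca)$ is the free abelian group with basis $\{[X_1],\dots,[X_n]\}$. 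Hence $K_0(\cc)$ is free of rank $n$. The ``in particular'' clause is then immediate, since the cardinality $n$ of any simple-minded collection equals the rank of $K_0(\cc)$, which is an invariant of $\cc$.

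The main obstacle is the first step, i.e.\ verifying that a simple-minded collection genuinely produces a bounded $t$-structure. The subtlety lies in producing the truncation triangles for an arbitrary object of $\cc$, which requires combining the generation hypothesis with the negative Hom-vanishing in an inductive argument on the number of building shifts. Once this is in place, the rest of the proof is essentially a bookkeeping exercise in Grothendieck groups.
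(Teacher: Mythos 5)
Your proposal follows exactly the route the paper indicates: construct a bounded $t$-structure on $\cc$ from the simple-minded collection (as in the map $\smc(A)\to\tstr(A)$ of Section~\ref{ss:silting-smc-tstr-cotstr}), identify the heart as a length category with simple objects $X_1,\dots,X_n$, and invoke Lemma~\ref{l:grothendieck-gp-and-bbd-t-str}; the paper itself only sketches this and defers to \cite{KoenigYang12}. One small slip: your definition ``$\cc^{\geq 0}:=\Sigma(\cc^{\leq 0})^\perp[1]$'' double-counts the shift --- it should simply be $\cc^{\geq 0}=\Sigma(\cc^{\leq 0})^\perp$ as in Section~\ref{ss:t-str}.
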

The idea of the proof is to show that a simple-minded collection is a complete collection of pairwise non-isomorphic simple objects of the heart of a bounded $t$-structure and then use Lemma~\ref{l:grothendieck-gp-and-bbd-t-str} (see for example~\cite[Section 5.5]{KoenigYang12}). 
The construction of the $t$-structure will be given in Section~\ref{ss:silting-smc-tstr-cotstr}.

Assume that $\cc$ is Krull--Schmidt and Hom-finite. 
Let $\{X_1,\ldots,X_n\}$ be a simple-minded collection in $\cc$ and fix
$i=1,\ldots,n$. Let $\cx_i$ denote the extension closure of $X_i$ in
$\cc$. Assume that for any $j$ the object $\Sigma^{-1}X_j$ admits a
minimal left approximation $g_j:\Sigma^{-1}X_j\rightarrow X_{ij}$ in
$\cx_i$. The \emph{left mutation}
$\mu^-_i(X_1,\ldots,X_n)$ of $X_1,\ldots,X_n$ at $X_i$ is a new
collection $\{X_1',\ldots,X_n'\}$ such that $X_i'=\Sigma X_i$ and $X_j'$
($j\neq i$) is the cone of the above left approximation $g_j$.
This was introduced by
Kontsevich and Soibelman for spherical
collections in~\cite[Section 8.1]{KontsevichSoibelman08} and by Koenig and the second-named author for the general case in~\cite{KoenigYang12}. Under suitable conditions on $\cc$ (\eg these conditions are satisfied when $\cc=\cd^b(\mod A)$, where $A$ is a finite-dimensional algebra), the new collection $\mu^-_i(X_1,\ldots,X_n)$ is simple-minded (see \cite[Section 7.2]{KoenigYang12}).

Let $A$ be a non-positive dg algebra such that $H^0(A)$ is finite-dimensional. Let $\{S_1,\ldots,S_n\}$ be a complete collection of pairwise non-isomorphic simple $H^0(A)$-modules. Then $\{S_1,\ldots,S_n\}$ is a simple-minded collection in $\cd_{fd}(A)$, see Appendix~\ref{ss:non-pos-dg}.
A simple-minded collection $\{X_1,\ldots,X_n\}$ is \emph{2-term}  if $H^p(X_i)$ vanishes for any $p\neq 0,-1$ and any $i=1,\ldots,n$, equivalently, $\{X_1,\ldots,X_n\}$ is contained in the extension closure of $\mod ^0(A)$ and $\Sigma\mod H^0(A)$. We put
\begin{align*}
\intersmc(A) &= \text{the set of isoclasses of 2-term simple-minded collections of $\cd_{fd}(A)$}.
\end{align*}

\medskip
\noindent
{\bf Example.} Let $A$ be the hereditary algebra of type $\mathbb{A}_2$ as in Section~\ref{ss:tilting-module}. There are precisely five isomorphism classes of 2-term simple-minded collections in $\cd^b(\mod A)$: $\{S_1,S_2\}$, $\{\Sigma S_1,P_2\}$, $\{S_2,\Sigma P_2\}$, $\{S_1\oplus \Sigma S_2\}$ and $\{\Sigma S_1,\Sigma S_2\}$. The ordered exchange graph of 2-term simple-minded collections in $\cd^b(\mod A)$ is
\[\begin{xy} 0;<0.35pt,0pt>:<0pt,-0.35pt>::
(300,0) *+{\{S_1,S_2\}} ="0",
(100,150) *+{\{\Sigma S_1,P_2\}}="1",
(100,300) *+{\{S_2,\Sigma P_2\}}="2",
(500,225) *+{\{S_1,\Sigma S_2\}}="3",
(300,400) *+{\{\Sigma S_1,\Sigma S_2\}}="4",
"0", {\ar "1"}, "0", {\ar "3"},
"1", {\ar "2"},
"2", {\ar "4"},
"3", {\ar "4"},  
\end{xy}\]

\subsection{Cluster-tilting objects, $\cto(\cc)$}\label{ss:cluster-tilting}

Let $\cc$ be a Hom-finite 2-Calabi--Yau Krull--Schmidt triangulated category. A typical example of such a triangulated category is the \emph{cluster category $\cc_Q$} of an acyclic quiver $Q$ (that is, a quiver without oriented cycles), defined as the orbit category $\cd^b(\mod kQ)/\tau^{-1}\circ\Sigma$ (\cite{BuanMarshReinekeReitenTodorov06,Keller05}), where $\tau$ is the Auslander--Reiten translation of $\cd^b(\mod kQ)$. An object $M$ of $\cc$ is called a \emph{rigid object} if $\Hom(M,\Sigma M)=0$ holds, and a \emph{cluster-tilting object} if further the following equality holds
\[\add(M)=\{X\in\cc\mid \Hom(M,\Sigma X)=0\}.\]
For an acyclic quiver $Q$, the free $kQ$-module of rank $1$, considered as an object in $\cc_Q$, is a cluster-tilting object.
A cluster-tilting object $M$ generates $\cc$ in two steps, namely, for any object $X$ of $\cc$, there is a triangle
\[\xymatrix{M^{-1}\ar[r] & M^0\ar[r] & X\ar[r] & \Sigma M^{-1}}\]
with $M^{-1}$ and $M^0$ in $\add(M)$, see \cite[Proposition 2.1 (b)]{KellerReiten07}.

Combining~\cite[Theorem 2.6]{ZhouZhu11} and~\cite[Corollary 3.7.2]{ZhouZhu11}, we obtain the following result.

\begin{proposition}{(\cite{ZhouZhu11})}\label{p:cto-and-number-of-summands} Assume that $\cc$ has a cluster-tilting object $M$.
A rigid object $N$ of $\cc$ is a cluster-tilting object if and only if $|N|=|M|$.
\end{proposition}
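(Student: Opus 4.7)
The plan is to establish both implications by invoking two cornerstones of the Iyama--Yoshino / Dehy--Keller / Zhou--Zhu theory of cluster-tilting objects in $2$-Calabi--Yau categories: (a) the invariance of the number of indecomposable summands via the index, and (b) the completability of rigid objects to cluster-tilting objects.

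For the ``only if'' direction, suppose $N$ is a cluster-tilting object. Since $M$ is cluster-tilting, every $X\in\cc$ fits into a triangle $M_X^{-1}\to M_X^{0}\to X\to \Sigma M_X^{-1}$ with $M_X^{-1},M_X^{0}\in\add(M)$, and this triangle allows one to define the index $\ind_M(X)=[M_X^{0}]-[M_X^{-1}]\in K_0^{\rm split}(\add M)$. The key input, due to Dehy--Keller, is that this index is well-defined on rigid objects and sends the indecomposable summands of any basic cluster-tilting object to a $\mathbb{Z}$-basis of $K_0^{\rm split}(\add M)\cong\mathbb{Z}^{|M|}$. Applying this to $N$ gives $|N|=|M|$ immediately.

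For the ``if'' direction, suppose $N$ is a basic rigid object with $|N|=|M|$. I would invoke the Bongartz-type completion theorem for rigid objects in Hom-finite $2$-Calabi--Yau triangulated categories possessing a cluster-tilting object (the content of \cite[Theorem 2.6]{ZhouZhu11}): there exists a (possibly zero) rigid object $N''$ such that $N\oplus N''$ is a basic cluster-tilting object. Combining this with the ``only if'' direction just proved, we get $|N|+|N''|=|N\oplus N''|=|M|=|N|$, forcing $N''=0$, so $N$ itself is cluster-tilting.

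The main obstacle lies entirely in justifying the completion theorem. Its proof requires constructing $N''$ as a sum of indecomposables filling in the remaining $|M|-|N|$ slots, using functorial finiteness of $\add(N)$ together with the Auslander--Reiten-type duality $D\Hom(X,\Sigma Y)\cong \Hom(Y,\Sigma X)$ afforded by the $2$-Calabi--Yau property to control $\Ext^1$-vanishing at each step. In the present paper this is outsourced to \cite{ZhouZhu11}, so the argument reduces to concatenating their two cited results; but conceptually this is exactly the content that makes the cluster-tilting exchange graph a regular graph, and it is the nontrivial heart of the proposition.
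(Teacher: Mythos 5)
Your proof is correct and follows the same logical skeleton the paper relies on: split into two implications, use a Bongartz-type completion to reduce the ``if'' direction to the ``only if'' direction, and establish the ``only if'' direction by showing all cluster-tilting objects have the same number of indecomposable summands. The paper itself offers no argument at all, merely attributing the statement to the combination of \cite[Theorem 2.6]{ZhouZhu11} (the completion theorem, which you also use) and \cite[Corollary 3.7.2]{ZhouZhu11} (the counting invariance), so the only substantive difference is that you source the counting invariance from the Dehy--Keller index argument rather than from Zhou--Zhu. Both routes are standard and valid; the Dehy--Keller route has the small advantage of making the mechanism explicit (indices of indecomposable summands of a cluster-tilting object form a $\mathbb{Z}$-basis of $K_0^{\rm split}(\add M)$, hence $|N|=\operatorname{rank} K_0^{\rm split}(\add M)=|M|$), whereas the paper's citation is a black box. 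One small point worth stating if you write this up: the proposition does not assume $N$ is basic, so you should note that replacing $N$ by its basic version changes neither $|N|$ nor the property of being cluster-tilting, which reduces to the basic case your argument handles.
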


The mutation of cluster-tilting objects was introduced by Buan, Marsh, Reineke, Reiten and Todorov~\cite{BuanMarshReinekeReitenTodorov06} for the case when $\cc=\cc_Q$ is the cluster category of an acyclic quiver $Q$ and by Iyama and Yoshino~\cite{IyamaYoshino08} for general $\cc$. Let $M=M_1\oplus\ldots\oplus M_n$ be a cluster-tilting object of $\cc$, where $M_1,\ldots,M_n$ are pairwise non-isomorphic indecomposable objects. Fix $i=1,\ldots,n$. Thanks to the 2-Calabi--Yau property of $\cc$, there is an indecomposable object $M_i^*$, unique up to isomorphism, such that $M_i^*$ is not isomorphic to $M_i$ and $\mu_i(M)=\mu_{M_i}(M)=M_i^*\oplus\bigoplus_{j\neq i}M_j$ is again a cluster-tilting object. $M_i^*$ can be obtained through the following triangles (called \emph{exchange triangles})
\[ \xymatrix{M_i\ar[r]^f & E\ar[r] & M_i^*\ar[r] & \Sigma M_i} \text{ and }\xymatrix{M_i^*\ar[r] & E'\ar[r]^{f'} &M_i\ar[r] & \Sigma M_i^*} \]
where $f$ and $f'$ are respectively left and right $\add(\bigoplus_{j\neq i}M_j)$-approximations of $M_i$. The object $\mu_i(M)$ is the \emph{mutation} of $M$ at the summand $M_i$. Note that this mutation is not naturally oriented. We put
\begin{align*}
\cto(\cc) &= \text{the set of isomorphism classes of basic cluster-tilting objects of $\cc$}.
\end{align*}
This has the structure of an exchange graph by Proposition~\ref{p:cto-and-number-of-summands} and \cite[Theorem 5.3]{IyamaYoshino08}.  The underlying set of this exchange graph consists of indecomposable rigid objects of $\cc$ and the compatibility relation $R$ is given by $(M,N)\in R$ if and only if $\Hom_\cc(M,\Sigma N)=0$.

\medskip
\noindent
{\bf Example.} Let $Q$ be the quiver of type $\mathbb{A}_2$ and $A=kQ$, as in Section~\ref{ss:tilting-module}. The cluster category $\cc_Q$ of $Q$ has precisely five indecomposable objects: $S_1$, $P_2$, $S_2$, $\Sigma S_1$ and $\Sigma P_2$, and it has five basic cluster-tilting objects: $S_1\oplus P_2=A$, $S_2\oplus P_2$, $S_2\oplus\Sigma S_1$, $\Sigma P_2\oplus\Sigma S_1=\Sigma A$ and $\Sigma P_2\oplus S_1$. The exchange graph $\cto(\cc_Q)$ is
\[\begin{xy} 0;<0.35pt,0pt>:<0pt,-0.35pt>::
(200,0) *+{A} ="0",
(0,145.3) *+{S_2\oplus P_2}="1",
(76.3,380.5) *+{S_2\oplus \Sigma S_1}="2",
(400,145.3) *+{S_1\oplus \Sigma P_2}="3",
(323.7,380.5) *+{\Sigma A}="4",
"0", {\ar@{-} "1"}, "0", {\ar@{-} "3"},
"1", {\ar@{-} "2"},
"2", {\ar@{-} "4"},
"3", {\ar@{-} "4"},  
\end{xy}\]

In \cite{IyamaYoshino08}, Iyama and Yoshino introduce a technique called \emph{2-Calabi--Yau reduction}. Precisely, let $\cc$ be a $2$-Calabi--Yau triangulated category and $M$ be a basic rigid object. Consider the category $(\Sigma^{-1}M)^\perp$, which consists of objects $N$ such that $\Hom(M,\Sigma N)$ vanishes, and consider the full subgraph $\cto_M(\cc)$ of $\cto(\cc)$ whose vertices are the basic cluster-tilting objects containing $M$ as a direct summand.

\begin{theorem}(\cite[Theorems 4.7 and 4.9]{IyamaYoshino08})\label{t:cy-reduction}
Keep the notation as above.
\begin{itemize}
\item[(a)] The additive quotient $\bar{\cc}=(\Sigma^{-1}M)^\perp/\add(M)$ has the natural structure of a $2$-Calabi--Yau triangulated category.
\item[(b)] There is an isomorphism of exchange graphs induced by the canonical quotient functor $(\Sigma^{-1}M)^\perp\rightarrow\bar{\cc}$:
\[
\xymatrix{\cto_M(\cc)\ar[r] & \cto(\bar{\cc})}.
\]
\end{itemize}
\end{theorem}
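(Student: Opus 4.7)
The plan is to follow the strategy of Iyama--Yoshino, treating (a) as the construction of a triangulated structure on $\bar{\cc}$ and (b) as a bookkeeping argument using Proposition~\ref{p:cto-and-number-of-summands}.

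For part (a), I would first establish the shift functor on $\bar{\cc}$. Given $X\in(\Sigma^{-1}M)^\perp$, choose a left $\add(M)$-approximation $X\to M_X$ and complete it to a triangle
\[\xymatrix{X\ar[r] & M_X\ar[r] & X\langle 1\rangle\ar[r] & \Sigma X}\]
in $\cc$. A short Hom-long-exact-sequence argument, using that $M$ is rigid and that the approximation is surjective on $\Hom_\cc(M,-)$ composed with $\Sigma^{-1}$, shows $X\langle 1\rangle\in(\Sigma^{-1}M)^\perp$. The 2-CY property implies $(\Sigma^{-1}M)^\perp={}^\perp(\Sigma M)$, so the dual construction with right $\add(M)$-approximations produces a quasi-inverse $\langle -1\rangle$. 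Standard verifications show $\langle 1\rangle$ is well-defined up to isomorphism modulo $\add(M)$ and descends to an autoequivalence of $\bar{\cc}$. Triangles in $\bar{\cc}$ are then declared to be those sequences $X\to Y\to Z\to X\langle 1\rangle$ isomorphic, as diagrams in $\bar{\cc}$, to the lower row of a commutative diagram built from the approximation triangle by the octahedral axiom in $\cc$. The verification of the axioms (TR1)--(TR4) is the main technical obstacle: each axiom requires constructing suitable morphisms between approximation triangles, and the good-morphism arguments of \cite{IyamaYoshino08} must be reproduced with care to ensure everything is well-defined modulo $\add(M)$.

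Once $\bar{\cc}$ is triangulated, the 2-CY property is essentially formal. For $X,Y\in(\Sigma^{-1}M)^\perp$, one applies $\Hom_\cc(Y,-)$ to the approximation triangle defining $X\langle 1\rangle$ and then the one defining $X\langle 2\rangle$. Since $Y\in{}^\perp(\Sigma M)=(\Sigma^{-1}M)^\perp$ (by 2-CY of $\cc$), the contribution from the terms in $\add(M)$ vanishes, giving $\Hom_{\bar{\cc}}(Y,X\langle 2\rangle)\cong\Hom_\cc(Y,\Sigma^2 X)$, which is dual to $\Hom_\cc(X,Y)$; the same vanishing on the other side identifies this with $D\Hom_{\bar{\cc}}(X,Y)$, and bifunctoriality is inherited from $\cc$.

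For part (b), let $\pi:(\Sigma^{-1}M)^\perp\to\bar{\cc}$ denote the quotient. If $T=M\oplus T'\in\cto_M(\cc)$, then $T'\in(\Sigma^{-1}M)^\perp$ and rigidity in $\bar{\cc}$, $\Hom_{\bar{\cc}}(\pi T',\pi T'\langle 1\rangle)=0$, follows from the definition of $\langle 1\rangle$ together with $\Hom_\cc(T',\Sigma T)=0$. The number of non-isomorphic indecomposable summands of $\pi T'$ equals $|T|-|M|$, which by Theorem~\ref{t:generators-of-grothendieckgroup-silting-case}-type rank considerations (or directly by \cite[Theorem~5.3]{IyamaYoshino08}) equals the rank of $\bar{\cc}$, so Proposition~\ref{p:cto-and-number-of-summands} applied inside $\bar{\cc}$ upgrades $\pi T'$ to a cluster-tilting object. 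Conversely, given a basic cluster-tilting object $\bar U\in\cto(\bar{\cc})$, lift it to $U\in(\Sigma^{-1}M)^\perp$ with no summand in $\add(M)$; then $U\oplus M$ is rigid in $\cc$ (again using $(\Sigma^{-1}M)^\perp={}^\perp(\Sigma M)$) and of the correct cardinality, hence cluster-tilting by Proposition~\ref{p:cto-and-number-of-summands}. These assignments are mutually inverse on isomorphism classes, and they identify the compatibility relations because $\Hom_\cc(X,\Sigma Y)$ and $\Hom_{\bar{\cc}}(\pi X,\pi Y\langle 1\rangle)$ agree for $X,Y\in(\Sigma^{-1}M)^\perp$ by the argument used in (a). Thus $\pi$ induces a bijection on vertices preserving edges, i.e.\ an isomorphism of exchange graphs. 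The main obstacle throughout is bookkeeping in part (a); the bijection in (b) is essentially forced once the triangulated and 2-CY structure is in place.
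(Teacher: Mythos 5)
The paper does not prove this theorem itself; it is quoted directly from Iyama--Yoshino, so there is no in-text argument to compare against. Your sketch follows the strategy of the original source, and the construction of $\langle 1\rangle$ in part (a) and the bookkeeping in part (b) are, modulo the TR-axiom verification you explicitly defer, essentially the right shape.

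The $2$-Calabi--Yau step, however, contains a genuine gap. You assert that applying $\Hom_\cc(Y,-)$ to the two approximation triangles ``gives $\Hom_{\bar{\cc}}(Y,X\langle 2\rangle)\cong\Hom_\cc(Y,\Sigma^2 X)$.'' This is false in general: if it held, dualising and using $2$-CY in $\cc$ (and, ultimately, in $\bar{\cc}$) would force $\Hom_{\bar{\cc}}(X,Y)\cong\Hom_\cc(X,Y)$, i.e.\ that no morphism $X\to Y$ factors through $\add(M)$. Tracing the long exact sequences, $\Hom_\cc(Y,\Sigma M_X)$ does vanish because $Y\in{}^\perp(\Sigma M)$, but $\Hom_\cc(Y,\Sigma^2 M_X)\cong D\Hom_\cc(M_X,Y)$ does not, so the comparison map $\Hom_\cc(Y,\Sigma X\langle 1\rangle)\to\Hom_\cc(Y,\Sigma^2 X)$ is only injective. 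What the vanishing actually yields is $\Hom_{\bar{\cc}}(Y,X\langle 2\rangle)\cong\Hom_\cc(Y,\Sigma X\langle 1\rangle)$, which is the \emph{kernel} of $\Hom_\cc(Y,\Sigma^2 X)\to\Hom_\cc(Y,\Sigma^2 M_X)$. One then applies $2$-CY duality in $\cc$ to both of these groups and observes that $\Hom_\cc(M_X,Y)\to\Hom_\cc(X,Y)$ has image exactly the ideal of maps factoring through $\add(M)$ (this is where the approximation property of $X\to M_X$ is used), which identifies the kernel with $D\Hom_{\bar{\cc}}(X,Y)$. The identity you want is correct, but the route you describe---through a direct isomorphism with $\Hom_\cc(Y,\Sigma^2 X)$ followed by a symmetric ``same vanishing on the other side''---does not close, since $\Hom_\cc(X,Y)\to\Hom_{\bar{\cc}}(X,Y)$ is a genuine quotient. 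The $2$-CY property is therefore not quite the formality you suggest; the subquotient structure must be matched on both sides via duality.
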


\subsection{Framed quivers, $\mut(Q)$}\label{ss:mut}
A \emph{cluster quiver} is a finite quiver without loops or oriented two-cycles. Let $(Q,F)$ be an \emph{ice quiver}, \ie $Q$ is a cluster quiver and $F \subset Q_0$ is a (possibly empty) subset of vertices called the \emph{frozen vertices} such that there are no arrows between them. For a non-frozen vertex $i$, the \emph{mutation $\mu_i(Q,F)$ of $(Q,F)$ at $i$} is the new ice quiver $(Q',F)$, where $Q'$ is obtained from $Q$ by applying the following modifications:
			
\begin{enumerate}
\item For any pair of arrows $h \xrightarrow{a} i \xrightarrow{b} j$ in $Q$, add an arrow $h \xrightarrow{[ab]} j$;
\item Any arrow $h \xrightarrow{a} i$ in $Q$ is replaced by an arrow $h \xleftarrow{a^*} i$, and 
any arrow $i \xrightarrow{b} j$ in $Q$ is replaced by an arrow $i \xleftarrow{b^*} j$;
\item A maximal collection of pairwise disjoint oriented 2-cycles is removed;
\item All arrows between frozen vertices are removed.
\end{enumerate}

It is easy to check that $\mu_i(\mu_i(Q,F))=(Q,F)$. A non-frozen vertex $i \in Q_0$ is called \emph{green} if 
$$\ens{j' \in F \ | \ \exists\, i \fl j' \in Q_1} = \emptyset.$$
It is called \emph{red} if 
$$\ens{j' \in F \ | \ \exists\, j' \fl i \in Q_1} = \emptyset.$$
We warn the reader that we have adopted the conventions opposite to those in~\cite{BDP,Keller11b} to keep coherent with mutations of the categorical objects. When we mutate at a green or red vertex, the above step (4) is redundant because in this case no arrows between frozen vertices are produced in steps (1)--(3).

Two ice quivers are called \emph{mutation-equivalent} if one can be obtained from the other by applying a finite number of successive mutations at non-frozen vertices. Since mutations are involutive, this defines an equivalence relation on the set of ice quivers. The equivalence class of an ice quiver $(Q,F)$ is called its \emph{mutation class} and is denoted by $\Mut(Q,F)$.

\smallskip

The \emph{framed quiver} associated to $Q$ is the quiver $\hat{Q}$ obtained from $Q$ by adding an extra vertex $j'$ together with an arrow $j' \to j$ for each vertex $j$ of $Q$.
We identify the set $Q_0$ of vertices of the quiver $Q$ with $\{1, \ldots, n \}$ and write the set $\hat{Q}_0$ of vertices of the quiver $\hat{Q}$ as  $\hat{Q}_0 = Q_0 \cup Q'_0$ where $Q'_0 = \{1', \ldots, n' \}$. 
We consider $\hat{Q}$ as an ice quiver with frozen vertices $F=Q'_0$.
The ice quiver with the same frozen vertices $F=Q'_0$ and an arrow $j \to j'$ for each vertex $j$ of $Q$ is denoted by  ${\check{Q}}$. By definition all vertices of $\hat{Q}$ are green and all vertices of $\check{Q}$ are red.

\begin{proposition}{(\cite[Theorem 1.6]{BDP})}\label{theorem:greenorred}
Let $Q$ be a cluster quiver and $R \in \Mut(\hat Q)$. Then any non-frozen vertex in $R_0$ is either green or red.
\end{proposition}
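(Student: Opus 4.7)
The plan is to reformulate the statement as the sign-coherence of $\ccc$-vectors. For $R \in \Mut(\hat Q)$ and a non-frozen vertex $i$ of $R$, introduce the integer vector $c_i(R) \in \Z^{Q_0}$ by
\[
c_i^j(R) \;=\; \#\{j' \to i \text{ in } R_1\} \;-\; \#\{i \to j' \text{ in } R_1\}, \qquad j \in Q_0.
\]
Unwinding the definitions in Section~\ref{ss:mut}, the vertex $i$ is green if and only if $c_i(R)$ has all non-negative entries, and red if and only if all entries are non-positive. Thus the proposition is equivalent to asserting that each $c_i(R)$ is \emph{sign-coherent}, in the sense that its entries are either all $\ge 0$ or all $\le 0$.

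First I would verify, by directly inspecting the four-step ice-quiver mutation rule (1)--(4) of Section~\ref{ss:mut}, that if $R' = \mu_k(R)$ at a non-frozen vertex $k$, then the vectors $c_i(R')$ are obtained from the $c_i(R)$ by exactly the Fomin--Zelevinsky mutation formula for the columns of a $\ccc$-matrix. The only thing to monitor is that new 2-cycles between a non-frozen and a frozen vertex produced in step (1) are cancelled in step (3), and that step (4) is vacuous because no arrow between two frozen vertices is ever created when mutating at a non-frozen vertex. Since at the initial ice quiver one has $c_i(\hat Q) = e_i$, matching the initial $\ccc$-matrix $I_n$, induction on the length of a mutation sequence reaching $R$ then identifies the collection $(c_i(R))_{i \notin F}$ with the $\ccc$-matrix obtained by the same sequence; in particular, each $c_i(R)$ is a $\ccc$-vector of $Q$ in the usual sense.

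The conclusion is immediate from the sign-coherence theorem for $\ccc$-vectors. In the skew-symmetric case this is the Derksen--Weyman--Zelevinsky theorem, proved via decorated representations of the Jacobian algebra of a quiver with potential; categorically, it can also be obtained from Plamondon's identification of $\ccc$-vectors with signed dimension vectors of certain summands of cluster-tilting objects in the Amiot cluster category $\cc_{(Q,W)}$ associated to a non-degenerate potential $W$, together with the non-negativity of dimension vectors. Granting sign-coherence, each $c_i(R)$ lies in $\N^{Q_0}$ or $-\N^{Q_0}$, so $i$ is green or red.

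The main obstacle is clearly the sign-coherence theorem itself, which is a deep result; by contrast, the translation between ice-quiver mutation at a non-frozen vertex and the Fomin--Zelevinsky mutation of $\ccc$-matrices is a routine bookkeeping exercise. Morally, the proposition is just a combinatorial repackaging of sign-coherence.
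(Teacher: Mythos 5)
Your proof is correct and coincides with the argument the paper itself indicates: in Section~\ref{ss:cmatrix} the paper identifies green (respectively red) non-frozen vertices of $R \in \Mut(\hat Q)$ with non-negative (respectively non-positive) columns of the $\ccc$-matrix and then invokes the Derksen--Weyman--Zelevinsky sign-coherence theorem, remarking explicitly that this gives a proof of Proposition~\ref{theorem:greenorred}. Your reformulation of green/red in terms of the vectors $c_i(R)$, the bookkeeping that these match the $\ccc$-vectors starting from $c_i(\hat Q)=e_i$, and the appeal to sign-coherence are exactly this route.
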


Two ice quivers $(Q,F)$ and $(\tilde{Q},F)$ sharing the same set of frozen vertices are called \emph{isomorphic as ice quivers} if there is an isomorphism of quivers $\phi: Q \fl \tilde{Q}$ fixing $F$. In this case, we write $(Q,F) \simeq (\tilde{Q},F)$ and we denote by $[(Q,F)]$ the isomorphism class of the ice quiver $(Q,F)$. 

The ordered exchange graph of a cluster quiver $Q$ is the oriented graph $\mut(Q)$ whose vertices are the isomorphism classes $[R]$ of ice quivers $R \in \Mut(\hat{Q})$ and where there is an arrow $[R] \to [R']$ in $\mut (Q)$ if and only if there exists a green vertex $i \in R_0$ such that $\mu_i(R) \simeq R'$.

\medskip
\noindent
{\bf Example.} Let $Q$ be the quiver of type ${\mathbb A}_2$ as in Section~\ref{ss:tilting-module}. There are precisely five  isomorphism classes $[R]$ of ice quivers $R \in \Mut(\hat{Q})$. The ordered exchange graph $\mut(Q)$ is
\[\begin{xy} 0;<0.35pt,0pt>:<0pt,-0.35pt>::
(300,0) *+{
\begin{tikzpicture}
[inner sep=0.5mm]
\draw  [thick,->] (0.2,0) -- (0.8,0);
\draw  [thick,->] (0,0.8) -- (0,0.2);
\draw  [thick,->] (1,0.8) -- (1,0.2);
  \node [ blue] at (0,1) {$1'$};
 \node [ blue] at (1,1) {$2'$};
 \node [ green] at (0,0) {$1$};
 \node [ green] at (1,0) {$2$};
 \end{tikzpicture}
} ="0",
(100,150) *+{\begin{tikzpicture}
[inner sep=0.5mm]
\draw  [thick,->] (0.8,0) -- (0.2,0);
\draw  [thick,->] (0,0.2) -- (0,0.8);
\draw  [thick,->] (1,0.8) -- (1,0.2);
\draw  [thick,->] (0.2,0.8) -- (0.8,0.2);
  \node [ blue] at (0,1) {$1'$};
 \node [ blue ] at (1,1) {$2'$};
 \node [ red ] at (0,0) {$1$};
 \node [ green ] at (1,0) {$2$};
 \end{tikzpicture}
}="1",
(100,350) *+{\begin{tikzpicture}
[inner sep=0.5mm]
\draw  [thick,->] (0.2,0) -- (0.8,0);
\draw  [thick,->] (1,0.2) -- (1,0.8);
\draw  [thick,->] (0.8,0.8) -- (0.2,0.2);
\draw  [thick,->] (0.8,0.2) -- (0.2,0.8);
  \node [ blue ] at (0,1) {$1'$};
 \node [ blue ] at (1,1) {$2'$};
 \node [ green ] at (0,0) {$1$};
 \node [ red ] at (1,0) {$2$};
 \end{tikzpicture}
}="2",
(500,250) *+{\begin{tikzpicture}
[inner sep=0.5mm]
\draw  [thick,->] (0.8,0) -- (0.2,0);
\draw  [thick,->] (1,0.2) -- (1,0.8);
\draw  [thick,->] (0,0.8) -- (0,0.2);
  \node [ blue ] at (0,1) {$1'$};
 \node [ blue ] at (1,1) {$2'$};
 \node [ green ] at (0,0) {$1$};
 \node [ red ] at (1,0) {$2$};
 \end{tikzpicture}
}="3",
(300,500) *+{\begin{tikzpicture}
[inner sep=0.5mm]
\draw  [thick,->] (0.2,0) -- (0.8,0);
\draw  [thick,->] (0,0.2) -- (0,0.8);
\draw  [thick,->] (1,0.2) -- (1,0.8);
  \node [ blue ] at (0,1) {$1'$};
 \node [ blue ] at (1,1) {$2'$};
 \node [ red ] at (0,0) {$1$};
 \node [ red ] at (1,0) {$2$};
 \end{tikzpicture}
}="4",
"0", {\ar "1"}, "0", {\ar "3"},
"1", {\ar "2"},
"2", {\ar_{\phi} "4"},
"3", {\ar "4"},  
\end{xy}\]
where the isomorphism of ice quivers $\phi$ interchanges the vertices $1$ and $2$.
We indicated in the diagram above also the colour-coding of green and red vertices.
More examples can easily be computed using Bernhard Keller's java applet \cite{Keller:javaapplet} or  the \verb\Quiver Mutation Explorer\ \cite{DupontPerotin:QME}.

\begin{proposition}{(\cite[Corollary 1.12]{BDP})}\label{corol:uniquesource}
			Let $Q$ be a cluster quiver. Then:
			\begin{enumerate}
				\item $\mut(Q)$ has a unique source, which is $[\hat{Q}]$.
				\item $\mut(Q)$ has a sink if and only if $[{\check Q}]$ is a vertex in $\mut(Q)$ and in this case $[{\check Q}]$ is the unique sink.
			\end{enumerate}
		\end{proposition}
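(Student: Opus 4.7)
The plan is to characterize sources and sinks of $\mut(Q)$ combinatorially via the red/green coloring of non-frozen vertices, and then deduce uniqueness from sign-coherence of $c$-vectors.

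First I would verify: $[R]$ is a source of $\mut(Q)$ if and only if every non-frozen vertex of $R$ is green, and dually $[R]$ is a sink if and only if every non-frozen vertex of $R$ is red. Outgoing arrows at $[R]$ correspond by definition to green non-frozen vertices of $R$. For incoming arrows, any arrow $[R']\to[R]$ arises from a green vertex $i$ of a representative $R'\simeq\mu_i(R)$; step~(2) in the definition of ice quiver mutation reverses every arrow between $i$ and the frozen vertices, so $i$ is green in $R'$ if and only if $i$ is red in $R$. Combined with Proposition~\ref{theorem:greenorred} this yields the characterization. Since $\hat{Q}$ has only arrows $j'\to j$ between frozen and non-frozen vertices, $[\hat{Q}]$ is a source; dually $[\vv{Q}]$ is a sink whenever $\vv{Q}\in\Mut(\hat{Q})$.

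For uniqueness I would pass to $c$-matrices. To each $R\in\Mut(\hat{Q})$ attach the matrix $C(R)\in M_n(\mathbb{Z})$ whose $j$-th column records, for each frozen vertex $i'$, the signed number of arrows between $j$ and $i'$ in $R$ (positive if $i'\to j$, negative if $j\to i'$). Then $C(\hat{Q})=I_n$, and tracking the evolution of $C$ under mutation shows $C(R)\in\GL_n(\mathbb{Z})$ for every $R\in\Mut(\hat{Q})$. The sign-coherence theorem of Derksen--Weyman--Zelevinsky asserts that each column of $C(R)$ has all entries of a single sign, which by the characterization above agrees with the color of the corresponding vertex. Hence if $R$ is a source then $C(R)$ is non-negative and lies in $\GL_n(\mathbb{Z})$, so it is a permutation matrix; combined with the fact that within $\Mut(\hat{Q})$ the full ice quiver is determined up to isomorphism by its $c$-matrix (recovered inductively along any mutation sequence from $[\hat{Q}]$), this forces $R\simeq\hat{Q}$ and proves~(1). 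A dual argument with non-positive $C(R)$ and $-I_n$ in place of $I_n$ proves~(2). The hard part will be the sign-coherence input together with the reconstruction of $R$ from $C(R)$: both rest on nontrivial cluster-combinatorial results of Derksen--Weyman--Zelevinsky and form the technical heart of Br\"ustle--Dupont--P\'erotin's original argument.
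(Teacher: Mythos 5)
The paper itself cites \cite[Corollary 1.12]{BDP} without reproducing a proof, so I review your argument on its own terms. Your reduction of (1) and (2) to the claim that every non-frozen vertex of a source is green (resp.\ red for a sink), via the observation that mutating at a green vertex turns it red in the result, is correct, and passing to $c$-matrices and sign-coherence is the right strategy.

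The genuine gap is the step ``$C(R)$ is non-negative and lies in $\GL_n(\mathbb{Z})$, so it is a permutation matrix.'' This is false as a statement about integer matrices: $\left[\begin{array}{cc} 1 & 1 \\ 0 & 1 \end{array}\right]$ is non-negative, unimodular and column sign-coherent, yet not a permutation. Concluding that a non-negative $c$-matrix in $\Mut(\hat Q)$ must be the identity up to column permutation requires substantially more than unimodularity plus column sign-coherence. Two standard ways to close it: (i) use the fan property of the $\gg$-vector cones --- the non-negative orthant is the $\gg$-cone of the initial seed, $C\ge 0$ forces the $\gg$-cone $\{x\mid C^{\mathrm{tr}}x\ge 0\}$ to contain the orthant, and since distinct maximal $\gg$-cones have disjoint interiors the two coincide, whence $G=C^{-\mathrm{tr}}$ permutes $e_1,\ldots,e_n$ and $C$ is a permutation; or (ii) argue categorically in the spirit of this paper: under $\rintersmc(\Gamma)\to\cmat(Q)$, an all-non-negative $c$-matrix corresponds to a $2$-term simple-minded collection whose members, by Remark~\ref{r:smc-are-stalk}, each lie in $\mod J$ (rather than $\Sigma\mod J$); this forces the heart to be $\mod J$, the collection to be the simple $J$-modules, and $C=I_n$. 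Both routes ultimately rest on Derksen--Weyman--Zelevinsky, as you say, but the elementary ``non-negative unimodular $\Rightarrow$ permutation'' shortcut simply does not exist. A secondary issue: the injectivity of $R\mapsto C(R)$ is not ``recovered inductively along any mutation sequence from $[\hat Q]$'' --- that is circular, since the mutation sequence is exactly what one would need to read off from $C(R)$. Cite the Nakanishi--Zelevinsky tropical duality (or the bijections in Theorem~\ref{t:silt-smc-cto-cluster-mut-gmat-cmat}) for that injectivity rather than sketching an inductive reconstruction.
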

		
Note that $\mut(Q)$ has a sink when the quiver $A$ is acyclic, but for general quivers it is not known when 	$\mut(Q)$ admits a sink. For instance for the following quiver the ordered exchange graph  $\mut(Q)$ admits no sink:
			$$\xymatrix@=1em{
					& 2 \ar@<+2pt>[rd] \ar@<-2pt>[rd] \\
				1 \ar@<+2pt>[ru] \ar@<-2pt>[ru] && 3 \ar@<+2pt>[ll] \ar@<-2pt>[ll]
			}$$
Moreover, there is the following general result.

\begin{proposition}{(\cite[Proposition7.1+Proposition 1.13]{BDP})}
Let $Q$ be a cluster quiver. If there is a non-degenerate potential $W$ on $Q$ such that the Jacobian algebra of $(Q,W)$ is infinite-dimensional, then $\mut(Q)$ does not have a sink.
\end{proposition}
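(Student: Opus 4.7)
The plan is to argue by contrapositive: assume that $\mut(Q)$ has a sink, and I will show that every non-degenerate potential $W$ on $Q$ yields a finite-dimensional Jacobian algebra $\hat{J}(Q,W)$.

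First, by Proposition~\ref{corol:uniquesource}(2) the hypothesis is equivalent to $[\vv{Q}]\in\mut(Q)$, that is, to the existence of a maximal green sequence $\mathbf{i}=(i_1,\ldots,i_N)$ of vertices of $Q$ transforming $\hat{Q}$ into a quiver isomorphic to $\vv{Q}$. Fix a non-degenerate potential $W$ on $Q$ and put $\Gamma=\hat{\Gamma}(Q,W)$. The isomorphism of ordered exchange graphs asserted in Figure~\ref{f:the-map} identifies $\mut(Q)$ with the set of reachable basic $2$-term silting objects of $\per(\Gamma)$, sending the source $[\hat{Q}]$ to $\Gamma$ and the sink $[\vv{Q}]$ to $\Sigma\Gamma$ (which are the unique source and sink of the target graph). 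Consequently $\mathbf{i}$ lifts to a finite chain of left $2$-term silting mutations
\[\Gamma=T_0 \longrightarrow T_1 \longrightarrow \cdots \longrightarrow T_N=\Sigma\Gamma\]
in $\per(\Gamma)$, realised by exchange triangles $M_j\to E_j\to M_j^*\to\Sigma M_j$ at each step $T_{j-1}\to T_j$.

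Second, I would transport this chain through the further bijection with $\intersmc(\Gamma)$ to obtain a finite chain of left mutations of $2$-term simple-minded collections in $\cd_{fd}(\Gamma)$ starting at the family $\{S_1,\ldots,S_n\}$ of simple $H^0(\Gamma)=J$-modules and ending at $\{\Sigma S_1,\ldots,\Sigma S_n\}$. By the Happel--Reiten--Smal\o\ description of intermediate bounded $t$-structures (Section~\ref{ss:t-str}), this is equivalently a finite chain of torsion pairs in the length category $\mod J$ interpolating from $(\mod J,0)$ down to $(0,\mod J)$, each step a Happel--Reiten--Smal\o\ tilt at a simple module. In particular every simple-minded object arising in the chain, as well as the $M_j$ and $M_j^*$, lies in $\cd_{fd}(\Gamma)$ and is therefore finite-dimensional over $k$.

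The main obstacle is the last step: converting this categorical finiteness into the numerical bound $\dim_k J<\infty$. My strategy is to recover the free module $\Gamma\in\per(\Gamma)$ as a finite iterated extension of shifts of the finitely many objects $M_j,M_j^*$ arising along the chain, and then use the identification $J=\Hom_{\per(\Gamma)}(\Gamma,\Gamma)=H^0(\Gamma)$ to conclude finite-dimensionality. This is the content of \cite[Proposition~7.1]{BDP}; its technical core is to track how the simple composition factors of $J$ propagate through the successive Happel--Reiten--Smal\o\ tilts, so that the $\ccc$- and $\gg$-matrix bookkeeping of $\mathbf{i}$ (Section~\ref{ss:mut}) explicitly exhibits a finite $k$-basis of $J$. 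The contrapositive statement then follows.
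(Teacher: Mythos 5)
Your overall strategy is reasonable, and the intended punch line (a maximal green sequence produces a finite filtration of the regular module, hence $\dim_k J<\infty$) is essentially the idea behind \cite[Proposition 7.1]{BDP}. However, as written the argument contains a circularity that you need to address. You invoke the isomorphism of ordered exchange graphs from Figure~\ref{f:the-map} to identify $\mut(Q)$ with $\rtwosilt(\Gamma)$ and then with $\rintersmc(\Gamma)$, $\rintertstr(\Gamma)$, etc. But every one of those bijections is established in this paper \emph{under the standing hypothesis that $(Q,W)$ is Jacobi-finite}: Sections~\ref{ss:silting-smc-tstr-cotstr}--\ref{ss:from-cto-to-cluster} begin with the assumption that $J=\hat{J}(Q,W)$ is finite-dimensional, and it is exactly this hypothesis that guarantees $\per(\Gamma)$ and $\cd_{fd}(\Gamma)$ are Hom-finite and Krull--Schmidt, makes $\Gamma$ live in $\cd_{fd}(\Gamma)$, makes $\cc_{(Q,W)}$ Hom-finite, and so on. You are therefore assuming the conclusion in order to have the tools you then use to derive it.

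A related symptom of the same problem is the assertion that the silting summands $M_j$ and $M_j^*$ lie in $\cd_{fd}(\Gamma)$. They do not in general: they are $2$-term objects of $\per(\Gamma)$, and for instance $\Gamma_i=e_i\Gamma$ itself has $H^0(\Gamma_i)=e_i J$, which can be infinite-dimensional. Membership of $\Gamma$ (and hence of its summands) in $\cd_{fd}(\Gamma)$ is equivalent to what you are trying to prove, not something you may take for granted. To make the proof non-circular you must either (i) set up the $2$-term silting and mutation theory on $\per(\Gamma)$ directly from the Keller--Yang derived equivalences \cite{KellerYang11}, together with Nagao's or Plamondon's framework \cite{Nagao10,Plamondon11,Plamondon11b}, none of which requires Jacobi-finiteness, and then carry out the ``finite filtration of $J$ by bricks'' argument there; or (ii) bypass the categorical layer altogether and work, as BDP effectively do, with the Derksen--Weyman--Zelevinsky decorated-representation mutation and the $\ccc$-vectors along $\mathbf{i}$, showing that the bricks with dimension vectors given by the positive $\ccc$-vectors filter every object of $\mod J$, and in particular $J$ itself. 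As it stands, outsourcing the ``technical core'' to \cite[Proposition 7.1]{BDP} does not rescue the argument, because the framework within which you set up the chain of silting mutations is precisely what is unavailable when $J$ is infinite-dimensional.
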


\subsection{Clusters, $\Cl(Q)$}\label{ss:cluster}
Introduced and further investigated by Fomin and Zelevinsky in \cite{cluster1,cluster2, cluster4}, cluster algebras are commutative rings equipped with a distinguished set of generators, the cluster variables which are grouped into overlapping sets of  variables, the clusters.  The cluster variables are defined recursively by iterated mutations of an initial cluster $\{x_1,\ldots, x_n\}$. The dynamics of this mutation process are encoded by the exchange matrices. This is the central notion in cluster theory. The representation theory side of the  theory has been much motivated by attempting to prove various conjectures of Fomin and Zelevinsky on cluster algebras. 	
Due to the separation formulas~\cite[Theorem 3.7, Proposition 3.13 and Corollary 6.3]{cluster4}, the cluster algebras with principal coefficients govern the combinatorics of all cluster algebras (see~\cite{CerulliKellerLabardiniPlamondon12} for a stronger result in the skew-symmetric case). In this article we will only be concerned with skew-symmetric cluster algebras with principal coefficients, which are defined for skew-symmetric matrices with integer entries, or equivalently, for cluster quivers.

Let $n\in\mathbb{Z}$. Consider the field $\mathbb{Q}(x_1,\ldots,x_{2n})$ of rational functions in variables $x_1,\ldots,x_{2n}$. A \emph{seed} is a pair $(\underline{u},(Q,F))$, where $\underline{u}=\{u_1,\ldots,u_{2n}\}$ is a set of elements which freely generate $\mathbb{Q}(x_1,\ldots,x_{2n})$, and $(Q,F)$ is an ice quiver with $Q_0=\{1,\ldots,2n\}$ and $F=\{n+1,\ldots,2n\}$. For $1\leq i\leq n$, the \emph{mutation of $(\underline{u},(Q,F))$ in direction $i$} is defined to be the seed 
$(\underline{u}',\mu_i(Q,F))$
where the mutated cluster is 
\[
\underline{u}'= (\{u_1,\ldots, u_{2n}\} \backslash \{u_i\}) \cup \{u_i'\}
\]
and $u_i'$ is defined by the {\em exchange relation}
\[
u_i' = \frac{1}{u_i} \left(\prod_{\alpha:i\to j} u_j  \quad + \quad \prod_{\beta:l\to i} u_l \right).
\]

Let $Q$ be a cluster quiver with $Q_0=\{1,\ldots,n\}$. Let $\hat{Q}$ be the framed quiver associated to $Q$ defined in \ref{ss:mut} (here we will use the convention $i'=n+i$ for any $1\leq i\leq n$). Fix the \emph{initial seed} $(\{x_1,\ldots,x_{2n}\},\hat{Q})$. A \emph{cluster} of $Q$ is a set $\underline{u}$ appearing in a seed $(\underline{u},R)$ of $Q$, that is, a seed obtained from the initial seed $(\{x_1,\ldots,x_{2n}\},\hat{Q})$ by iterated mutations. Elements of clusters are called \emph{cluster variables}. Notice that the cluster variables $x_{n+1},\ldots,x_{2n}$ are never mutated, so we will call them the \emph{frozen cluster variables}. The \emph{cluster algebra of $Q$ with principal coefficients}, denoted by $\ca_Q$, is the $\mathbb{Z}$-subalgebra of $\mathbb{Q}(x_1,\ldots,x_{2n})$ generated by all cluster variables of $Q$.

The following remarkable result was conjectured by Fomin and Zelevinky~\cite[Conjecture 4.14 (2)]{FominZelevinsky03} (for all cluster algebras) and proved by Cerulli, Keller, Labardini and Plamondon~\cite[Corollary 3.6]{CerulliKellerLabardiniPlamondon12} (for all skew-symmetric cluster algebras) by using additive categorification, see also ~\cite[Theorem 3]{GekhtmanShapiroVainshtein08} and~\cite[Theorem 4.1]{BuanMarshReinekeReitenTodorov06}.

\begin{theorem}\label{t:cluster-determine-seed} Let $Q$ be a cluster quiver.
A seed $(\underline{u},R)$ of $Q$ is determined by its cluster $\underline{u}$.
\end{theorem}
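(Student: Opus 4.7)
\emph{Proof proposal.}
The plan is to categorify the statement via the Amiot (or Plamondon) cluster category. Since $k$ is uncountable, we may choose a non-degenerate potential $W$ on $Q$, and pass to the cluster category $\cc_{(Q,W)}$ with its distinguished basic cluster-tilting object $\Gamma$. The main tool is the Caldero--Chapoton-type cluster character $X_? : \obj(\cc_{(Q,W)}) \to \ca_Q$ (constructed by Palu, and extended by Plamondon to the non-Jacobi-finite case), which sends the summands of $\Gamma$ to the initial non-frozen cluster variables $x_1,\dots,x_n$, sends reachable indecomposable rigid objects to cluster variables, and intertwines the mutation of cluster-tilting objects with the mutation of seeds.

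Given this setup, I would first argue that the cluster character restricts to a bijection between reachable basic cluster-tilting objects of $\cc_{(Q,W)}$ and clusters of $\ca_Q$. Surjectivity is by construction: every cluster is obtained by iterated mutation starting from $\{x_1,\dots,x_n\}$, matching iterated mutation of cluster-tilting objects starting from $\Gamma$. Injectivity reduces, via Proposition~\ref{p:cto-and-number-of-summands}, to showing that $M \mapsto X_M$ is injective on reachable indecomposable rigid objects; the standard route is to extract the $\gg$-vector of $M$ (its index with respect to $\Gamma$) as the leading term of the Laurent expansion of $X_M$ in the principal grading, and then to invoke the fact that reachable rigid objects have linearly independent $\gg$-vectors, so in particular distinct objects give distinct cluster variables.

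Second, I would reconstruct the full seed $(\underline{u},R)$ from the cluster $\underline{u}$. Let $T = T_1 \oplus \cdots \oplus T_n$ be the (unique up to isomorphism) basic cluster-tilting object corresponding to $\underline{u}$. The non-frozen part of $R$ is the Gabriel quiver of $\End_{\cc_{(Q,W)}}(T)\op$, a well-defined invariant of the isomorphism class of $T$, and the arrows connecting non-frozen to frozen vertices are determined by the $\gg$-vectors of $T_1,\dots,T_n$ with respect to $\Gamma$. Since both pieces of data depend only on the unordered set $\{T_1,\dots,T_n\}$, and hence only on $\underline{u}$, the ice quiver $R$ is determined by the cluster.

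The main obstacle is the $\gg$-vector injectivity used in the second paragraph, which is by no means a formal consequence of the definitions: it encodes the linear independence of $\gg$-vectors of summands of reachable cluster-tilting objects, one of the deeper structural results in cluster theory (due to Derksen--Weyman--Zelevinsky via decorated representations and $E$-invariants in the Jacobi-finite case, and to Plamondon in general). All remaining steps are relatively formal once this injectivity is in hand, and together they reduce the cluster-determines-seed statement to an intrinsic statement about indecomposable summands of cluster-tilting objects in $\cc_{(Q,W)}$.
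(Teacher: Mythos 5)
The paper does not prove this statement itself; it cites \cite[Corollary 3.6]{CerulliKellerLabardiniPlamondon12}, whose argument runs exactly along the lines you sketch (non-degenerate potential, Amiot--Plamondon cluster category, cluster character intertwining mutations, indices / $\gg$-vectors for injectivity, recovery of the exchange matrix from $\End_{\cc_{(Q,W)}}(T)$ and the $\ccc$-matrix). So in substance your proposal matches the proof the paper is pointing to.

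One small but real imprecision: "reachable rigid objects have linearly independent $\gg$-vectors" is a statement about the summands of a \emph{single} cluster-tilting object (the $\gg$-matrix is invertible), and it does not by itself give the injectivity you need. What you actually want to invoke is Theorem~\ref{t:index-determine-rigid-object} (Dehy--Keller, Plamondon): a reachable rigid object is determined up to isomorphism by its index. Since $X_M$ is homogeneous of degree $\ind(M)$ in the principal grading, that theorem does give that $M\mapsto X_M$ is injective on reachable indecomposable rigid objects, and the rest of your reconstruction of $R$ from $T$ (Gabriel quiver of $\End(T)\op$ for the non-frozen part, $\ccc$-matrix $=G^{-\mathrm{tr}}$ via Nakanishi--Zelevinsky for the frozen arrows) goes through as you describe. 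So the route is right; just replace the linear-independence appeal with the index-determines-rigid-object theorem.
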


Thanks to this theorem, mutation of seeds induces a well-defined mutation of clusters.
The set of all clusters of $\hat{Q}$, joined by edges corresponding to mutation is called the {\em cluster exchange graph}, denoted $\Cl(Q)$. 

\medskip
\noindent
{\bf Example.} Let $Q$ be the quiver of type ${\mathbb A}_2$ as in Section~\ref{ss:tilting-module}. The cluster algebra $\mathcal A_Q$ with principal coefficients
 of $Q$ has precisely five non-frozen cluster variables: 
 $$x_1, \; x_2, \; \frac{x_2+x_3}{x_1},\; \frac{x_2+x_3+x_1x_3x_4}{x_1x_2}\; \mbox{ and } \;\frac{1+x_1x_4}{x_2}$$ 
 which, together with the frozen cluster variables, are grouped into five clusters: 
\[
\{x_1,x_2,x_3,x_4\},~~\{\frac{x_2+x_3}{x_1},x_2,x_3,x_4\},~~\{\frac{x_2+x_3}{x_1},\frac{x_2+x_3+x_1x_3x_4}{x_1x_2},x_3,x_4\},
\]
\[
\{x_1,\frac{1+x_1x_4}{x_2},x_3,x_4\}~~~~\mbox{and} ~~~~\{\frac{x_2+x_3+x_1x_3x_4}{x_1x_2},\frac{1+x_1x_4}{x_2}, x_3,x_4\}
\] The cluster exchange graph $\Cl(Q)$ is as follows:
\[\begin{xy} 0;<0.35pt,0pt>:<0pt,-0.35pt>::
(350,0) *+{\left\{x_1,x_2,x_3,x_4\right\} } ="0",
(0,170.6) *+{\left\{\frac{x_2+x_3}{x_1},x_2,x_3,x_4\right\}}="1",
(113.3,380) *+{\left\{\frac{x_2+x_3}{x_1},\frac{x_2+x_3+x_1x_3x_4}{x_1x_2},x_3,x_4\right\}  }="2",
(700,170.6) *+{\left\{x_1,\frac{1+x_1x_4}{x_2},x_3,x_4\right\} }="3",
(586.7,380) *+{\left\{\frac{x_2+x_3+x_1x_3x_4}{x_1x_2},\frac{1+x_1x_4}{x_2},x_3,x_4\right\}  }="4",
"0", {\ar@{-}"1"}, "0", {\ar@{-} "3"},
"1", {\ar@{-} "2"},
"2", {\ar@{-} "4"},
"3", {\ar@{-} "4"},  
\end{xy}\]

One can see from this simple example the following general fact, referred to as the Laurent  phenomenon.

\begin{proposition}\label{Laurent}{(\cite[Proposition 3.6]{cluster4})} Let $Q$ be a cluster quiver. Then each cluster variable of 
${\mathcal A}_Q$ belongs to $\mathbb{Z}[x_1^{\pm 1},\ldots,x_n^{\pm 1},x_{n+1},\ldots,x_{2n}]$.
\end{proposition}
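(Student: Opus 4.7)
The plan is to proceed by induction on the minimum length $N$ of a mutation sequence
$\mu_{i_N}\circ\cdots\circ\mu_{i_1}$ that produces, starting from the initial seed $(\{x_1,\dots,x_{2n}\},\hat Q)$, a seed containing the given cluster variable $y$. For $N=0$ the variable is one of the $x_i$ and there is nothing to prove, while for $N=1$ the exchange relation
\[
x_i' \;=\; \frac{1}{x_i}\Bigl(\prod_{\alpha: i\to j}x_j \;+\; \prod_{\beta: l\to i}x_l\Bigr)
\]
immediately exhibits $x_i'$ as an element of $\mathbb Z[x_1^{\pm 1},\dots,x_n^{\pm 1},x_{n+1},\dots,x_{2n}]$, since the frozen indices never appear in a denominator.

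For the inductive step, suppose $y$ first appears after mutation at direction $k$ applied to a seed $(\underline u, R)$ that lies at distance $N-1$ from the initial seed. The exchange relation for $y$ expresses it as a Laurent polynomial in the variables of $\underline u$, and each element of $\underline u$ is, by induction, a Laurent polynomial in the initial variables. Composing these two expressions shows that $y$ lies in $\mathbb Z[x_1^{\pm 1},\dots,x_n^{\pm 1},x_{n+1},\dots,x_{2n}][\underline u^{-1}]$, but \emph{a priori} it could carry spurious denominators involving other variables of $\underline u$ once the inner Laurent expansions are substituted. The heart of the proof is to rule out these spurious denominators.

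To do this I would invoke the Caterpillar Lemma: if two seeds $\Sigma_0$ and $\Sigma_1$ are adjacent via a mutation in direction $k$, then any cluster variable reachable from $\Sigma_1$ without re-using direction $k$ is simultaneously Laurent in the cluster of $\Sigma_0$ and in the cluster of $\Sigma_1$. Granting this lemma, one argues as follows. Fix a path from the initial seed to $(\underline u,R)$ and consider its last occurrence of direction $k$, say at a seed $\Sigma_0$ with subsequent mutation producing $\Sigma_1$. From $\Sigma_1$ onwards the path never mutates at $k$, so every variable of $\underline u$ — \emph{and} the new variable $y$ — is Laurent in the cluster of $\Sigma_0$ as well as in that of $\Sigma_1$. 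Applying the inductive hypothesis to $\Sigma_0$ (which has strictly smaller distance from the initial seed) and combining the two Laurent expressions, one concludes that $y$ is Laurent in the initial variables: the variable that was inverted at the last step (the old $x_k$ of $\Sigma_0$) disappears from any genuine denominator, because $y$ is also Laurent in a cluster that does not contain it.

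The main obstacle is establishing the Caterpillar Lemma itself. This is a local statement about four seeds forming a square of mutations and the propagation of Laurent-ness across a sequence of such squares; the proof compares the exchange relations at $\Sigma_0$ and $\Sigma_1$, verifies that the transition formulas between them preserve joint Laurent-ness, and then glues these local comparisons along the path. The combinatorics of the mutation rule (steps (1)--(4) in Section~\ref{ss:mut}), especially the interaction between 2-cycle cancellation and the monomials appearing in the exchange relations, is the delicate point, and the computation has to be carried out in the ambient field $\mathbb Q(x_1,\dots,x_{2n})$ rather than in the cluster algebra to avoid circularity. Once the Caterpillar Lemma is in place, the induction above closes and the proposition follows.
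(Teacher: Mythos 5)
The paper does not actually prove this proposition; it cites it to Fomin--Zelevinsky (Proposition~3.6 of \emph{Cluster Algebras IV}) and uses it as a black box, so there is no in-paper argument to compare against. Your sketch follows the original Fomin--Zelevinsky route (induction on mutation distance via a Caterpillar Lemma), which is indeed the strategy underlying the cited result, so the high-level plan is the right one.

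There is, however, a genuine gap in the way you state the key lemma. You formulate the Caterpillar Lemma as: any cluster variable reachable from $\Sigma_1$ without reusing direction $k$ is simultaneously Laurent in the clusters of $\Sigma_0$ and of $\Sigma_1$. As stated this is essentially circular: Laurent-ness in the cluster of $\Sigma_1$ for such variables is just an instance of the Laurent phenomenon one is trying to prove, and the restriction of never mutating at $k$ again does not by itself make that instance any more accessible. The actual Caterpillar Lemma of Fomin and Zelevinsky does something sharper: it reduces the global Laurent claim to a \emph{finite, local} verification at each spine vertex, namely that the exchange binomial there is a primitive Laurent polynomial, that it does not involve the variable being exchanged, and that consecutive exchange binomials along the spine are coprime in the ambient Laurent ring. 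These coprimality hypotheses are what drive the cancellation of the ``spurious denominators'' you rightly identify as the crux; ``propagation of joint Laurent-ness across squares of mutations'' is the \emph{conclusion} one is trying to reach, not a self-contained local statement one can verify by inspecting four seeds. Your sketch never isolates, let alone checks, the coprimality conditions for the quiver exchange relations — and that check is nontrivial here because the frozen (principal coefficient) vertices enter the exchange binomials and must be shown not to spoil primitivity or coprimality. Without that ingredient the inductive step as described does not close.
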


Much of combinatorics of clusters is encoded in $\ccc$-matrices and $\gg$-matrices (see~\cite{cluster4}), which are introduced in the next two subsections.

\subsection{c-matrices, $\cmat(Q)$}\label{ss:cmatrix}

For an ice quiver $(Q,F)$ with $Q_0=\{1,\ldots,n+m\}$ and $F=\{n+1,\ldots,n+m\}$, we define a matrix $B=B(Q,F)=(b_{ij})_{1\leq i\leq n+m,1\leq j\leq n}$ by setting $b_{ij}$ to be the number of arrows from $i$ to $j$ minus the number of arrows from $j$ to $i$. We will call $B$ the \emph{matrix} of $(Q,F)$.

Let $Q$ be a cluster quiver and $\hat{Q}$ be the framed quiver associated to $Q$. For $R\in\Mut(\hat{Q})$,  the \emph{$\ccc$-matrix} $C=(c_{ij})_{1\leq i,j\leq n}$ of $R$ is the lower half of the matrix $B(R)=(b_{ij})_{1\leq i\leq 2n,1\leq i\leq n}$, namely, $c_{ij}=b_{n+i,j}$. Columns of $\ccc$-matrices are called \emph{$\ccc$-vectors}. For instance, we have $\c(\hat Q) = I_n$ and $\c(\check Q) = -I_n$. For more details on $\ccc$-vectors, we refer the reader to \cite{cluster4} where they were introduced and to \cite{NZ:tropicalduality,Najera:cvectors,ST:acyclic,Nagao10,Keller:derivedcluster} where they were studied.

With this terminology, for a quiver $R \in \Mut(\hat Q)$ the vertex $i \in Q_0$ is green if and only if the $i$-th column $\ccc_i(R)$ of the $\ccc$-matrix satisfies $\ccc_i(R) \ge 0$,  and the vertex is red if and only if $\ccc_i(R) \le 0.$
The \emph{sign-coherence for $\ccc$-vectors}, conjectured by Fomin and Zelevinsky in~\cite{cluster4} and established in \cite{DerksenWeymanZelevinsky10}, ensures that each $\ccc$-vector satisfies  either $\ccc_i(R) \ge 0$ or $\ccc_i(R) \le 0$ (this gives a proof of Proposition~\ref{theorem:greenorred}).
In this sense, the $\ccc$-vectors behave like root systems from Lie theory. In fact, it is shown in \cite{Najera:cvectors} that the $\ccc$-vectors are root systems when the quiver $Q$ is acyclic:

\begin{theorem}\label{realroot}{(\cite[Theorem 1]{Najera:cvectors})}\label{t:c-vector-and-root} Let $Q$ be an acyclic quiver. Then for each $R \in \Mut(\hat Q)$, the 	$\ccc$-vectors $\ccc_i(R)$ are real Schur roots for the quiver $Q$, and all real Schur roots occur as a $\ccc$-vector for some  $R \in \Mut(\hat Q)$.
\end{theorem}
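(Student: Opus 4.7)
The plan is to transport the claim through the atlas of Figure~\ref{f:the-map}, so that $\ccc$-vectors become (signed) classes in $K_0$ of simple objects in hearts of intermediate bounded $t$-structures on $\cd^b(\mod kQ)$, and then to invoke the classical identification, for an acyclic hereditary algebra, of exceptional modules with real Schur roots.

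For an acyclic quiver $Q$ one may take the potential $W=0$, so the Jacobian algebra is $J=kQ$ itself. Chasing the atlas, a reachable $\ccc$-matrix $\ccc(R)$ corresponds to a reachable 2-term simple-minded collection $\{X_1,\ldots,X_n\}$ in $\cd^b(\mod kQ)$, with the $j$-th column $\ccc_j(R)$ equal to $[X_j]\in K_0(\cd^b(\mod kQ))$ expanded in the basis of simples $S_1,\ldots,S_n$ of $kQ$. By sign-coherence of $\ccc$-vectors (Proposition~\ref{theorem:greenorred}), each $X_j$ is concentrated in a single cohomological degree, so either $X_j=M_j\in\mod kQ$ or $X_j=\Sigma M_j$ for some $M_j\in\mod kQ$. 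Since $kQ$ has global dimension at most $1$, the simple-minded axioms translate for each $M_j$ into $\End_{kQ}(M_j)=k$ and $\Ext^1_{kQ}(M_j,M_j)=0$; that is, $M_j$ is an exceptional module, so $\ccc_j(R)=\pm\dimv M_j$ is a real Schur root.

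For the converse, start with a real Schur root and realise it as $\dimv M$ for an exceptional module $M$. By the theorem of Crawley-Boevey (and independently of Ringel), $M$ fits into some complete exceptional sequence $(E_1,\ldots,E_n)$ of $\mod kQ$, and the braid group $B_n$ acts transitively on the set of complete exceptional sequences, with base point the standard sequence $(S_1,\ldots,S_n)$ of simples. Placing each $E_i$ either in cohomological degree $0$ or in degree $-1$ produces a 2-term simple-minded collection in $\cd^b(\mod kQ)$ containing $M$ (up to shift); the task is then to exhibit this collection as reachable from $\{S_1,\ldots,S_n\}$. Once this is done, the atlas produces a corresponding $\ccc$-matrix $\ccc(R)$ with $\pm\dimv M$ as a column, and the freedom to place $M$ in degree $0$ rather than $-1$ fixes the sign to $+$.

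The main obstacle is the reachability step. I would address it by matching the mutation of 2-term simple-minded collections defined in Section~\ref{s:the-graph} with the braid-group mutation of complete exceptional sequences of $\mod kQ$, so that Crawley-Boevey's transitivity becomes connectedness of the ordered exchange graph $\intersmc(kQ)$. For hereditary $kQ$ this matching is cleanest via the bijection with $\sttilt(kQ)$ from Section~\ref{ss:support-tau-tilting} and the Ingalls--Thomas description of support $\tau$-tilting $kQ$-modules in terms of thick subcategories generated by exceptional collections; under this dictionary, braid-group transitivity of exceptional sequences and connectedness of $\sttilt(kQ)$ translate into each other, completing the argument.
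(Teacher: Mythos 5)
The paper does not prove Theorem~\ref{t:c-vector-and-root}: it is quoted from \cite[Theorem 1]{Najera:cvectors} without an argument, so there is no internal proof to compare against. Taking your sketch on its own terms, the plan of pushing $\ccc$-vectors through Figure~\ref{f:the-map} to $2$-term simple-minded collections in $\cd^b(\mod kQ)$ is sensible, but two of the concrete steps are not valid as written.

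In the forward direction, the assertion that the simple-minded axioms plus heredity force $\Ext^1_{kQ}(M_j,M_j)=0$ fails: those axioms constrain $\Hom(X_j,\Sigma^p X_j)$ only for $p\le 0$, yielding $\End(M_j)=k$ but saying nothing about $\Hom(M_j,\Sigma M_j)$, and bricks with non-trivial self-extensions do exist over hereditary algebras (quasi-simple modules in tubes, for instance). What one actually needs is that $\Ext^1_{kQ}(M_j,M_j)\cong\Ext^1_{\ca}(X_j,X_j)$ vanishes, i.e.\ that the Gabriel quiver of $\End_{\ch^b(\proj kQ)}(M)$ has no loop at the vertex corresponding to $X_j$; for reachable $M$ this follows from identifying $\End(M)$ with a quotient of the (loop-free) cluster-tilted algebra $\End_{\cc_{(Q,0)}}(\pi M')$ via Theorem~\ref{t:generalised-cluster-category} and the appendix, and is a genuine extra input, not a formal consequence of the simple-minded axioms. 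A lesser issue: sign-coherence of $\dimv H^0(X_j)-\dimv H^{-1}(X_j)$ does not by itself force one of the two cohomologies to vanish; the paper's Remark~\ref{r:smc-are-stalk} establishes the stalk-complex property directly via the Happel--Reiten--Smal{\o} tilt and \emph{deduces} sign-coherence from it (and over a hereditary algebra one can argue even more simply that a $2$-term complex with two nonzero cohomologies splits, contradicting $\End(X_j)=k$). In the converse direction, "placing each $E_i$ either in cohomological degree $0$ or in degree $-1$" does not produce a $2$-term simple-minded collection: a complete exceptional sequence allows $\Hom(E_i,E_j)\neq 0$ for $i<j$, no choice of shifts by $0$ or $1$ kills these, and the simple-minded axioms require $\Hom(X_i,X_j)=0$ for all $i\neq j$. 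The relationship between exceptional sequences and the objects of Figure~\ref{f:the-map} runs through the silting/$\tau$-tilting side (perpendicular categories, Ingalls--Thomas) and is not a shifting recipe; that dictionary has to be made precise before one can even pose the reachability question, so this direction remains a plan rather than a proof.
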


Further, for an acyclic quiver, Speyer and Thomas give a combinatorial description of which  collections of roots form $\ccc$-matrices, see~\cite[Theorem 1.4]{ST:acyclic}.

The ordered exchange graph of $\ccc$-matrices, $\cmat(Q)$, of a cluster quiver $Q$ is given by all $\ccc$-matrices (up to column permutations) obtained through iterated mutations starting from the identity matrix $I_n$, with arrows corresponding to mutations at green vertices.
By definition,  $\cmat(Q)$ has $I_n$ as the unique source, and if it has a sink, it is necessarily the matrix $-I_n$.
If $R\to R'$ is an arrow in the ordered exchange graph $\mut(Q)$ corresponding to the mutation at $i$, then the $\ccc$-matrix $C'$ of $R'$ is obtained from the $\ccc$-matrix $C$ of $R$ by the following formula:
\[
c'_{jl}  =  \left\{ \begin{array}{ll} c_{jl} & \mbox{if } l=i \\ c_{jl} + [c_{ji}]_+ [b_{il}]_+ - [-c_{ji}]_+ [-b_{il}]_+ & \mbox{otherwise.} \end{array}  \right. 
\]
Here $B(R)=(b_{jl})_{1\leq j\leq 2n,1\leq l\leq n}$ is the matrix of $R$ and $[x]_+=\max(x,0)$ for an integer $x$.

\medskip
\noindent
{\bf Example.} Let $Q$ be the quiver of type ${\mathbb A}_2$ as in Section~\ref{ss:tilting-module}. The ordered exchange graph $\cmat(Q)$ is (where $\phi$ interchanges the indices $1$ and $2$)
\[\begin{xy} 0;<0.35pt,0pt>:<0pt,-0.35pt>::
(300,0) *+{\left[ \begin{array}{rr} 1 & 0 \\ 0 & 1 \end{array} \right]
} ="0",
(100,150) *+{\left[ \begin{array}{rr} -1 & 1 \\ 0 & 1 \end{array} \right]
}="1",
(100,350) *+{\left[ \begin{array}{rr} 0 & -1 \\ 1 & -1 \end{array} \right]
}="2",
(500,250) *+{\left[ \begin{array}{rr} 1 & 0 \\ 0 & -1 \end{array} \right]
}="3",
(300,500) *+{\left[ \begin{array}{rr} -1 & 0 \\ 0 & -1 \end{array} \right]
}="4",
"0", {\ar "1"}, "0", {\ar "3"},
"1", {\ar "2"},
"2", {\ar_\phi "4"},
"3", {\ar "4"},  
\end{xy}\]

\subsection{$\gg$-matrices, $\gmat(Q)$}\label{ss:gmatrix}

Let $Q$ be a cluster quiver and $\ca_Q$ be the corresponding cluster algebra with principal coefficients defined in Section~\ref{ss:cluster}. 

The ring $\mathbb{Z}[x_1^{\pm 1},\ldots,x_n^{\pm 1},x_{n+1},\ldots,x_{2n}]$ is $\mathbb{Z}^n$-graded with 
\[\deg(x_i)=e_i, \qquad \deg(x_{n+i})=-B(Q)e_i,\]
where $e_1,\ldots,e_n$ is the standard basis of $\mathbb{Z}^n$ and $B(Q)$ is the matrix of $Q$. Recall that the Laurent phenomenon states that each cluster variable is an element of the ring $\mathbb{Z}[x_1^{\pm 1},\ldots,x_n^{\pm 1},x_{n+1},\ldots,x_{2n}]$. Moreover, by \cite[Proposition 6.1]{cluster4}, each non-frozen cluster variable of $\ca_Q$ is homogeneous, and its degree is by definition its \emph{$\gg$-vector}. In this way, with each cluster we can associate the matrix of g-vectors of its non-frozen cluster variables, called its \emph{$\gg$-matrix}. Note that $\gg$-matrices are defined up to column permutations.

Assume that $Q$ is acyclic. For two vertices $i$ and $j$ of $Q$, let $p_{ji}$ be the number of paths from $i$ to $j$. Let $\alpha_1,\ldots,\alpha_n$ be the simple roots of the root system of $Q$. The following result is `dual' to Theorem~\ref{t:c-vector-and-root}.

\begin{theorem}{(\cite[Theorem 5.2]{Keller:derivedcluster})} The linear map taking $e_j$ to $\sum_{i=1}^np_{ji}\alpha_i$, $1\leq j\leq n$, is a bijection from the set of $\gg$-vectors of $Q$ minus the set $\{-e_1,\ldots,-e_n\}$ to the set of positive real  Schur roots.
\end{theorem}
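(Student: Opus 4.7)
The plan is to categorify the statement via the cluster category $\cc_Q=\cd^b(\mod kQ)/\tau^{-1}\circ\Sigma$ together with the initial basic cluster-tilting object $T=kQ=\bigoplus_{j=1}^n P_j$. By Caldero--Keller, extended by Fu--Keller and Palu to the general $2$-Calabi--Yau setting, the Caldero--Chapoton character $M\mapsto X_M$ gives a bijection between isomorphism classes of indecomposable rigid objects of $\cc_Q$ and non-frozen cluster variables of $\ca_Q$; moreover, under this bijection the $\gg$-vector of $X_M$ coincides with the index $\ind_T(M)\in K_0^{\rm split}(\add T)\cong\Z^n$ of $M$ with respect to $T$. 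This identification of $\gg$-vectors with indices (the $\gg$-vector formula for cluster characters with principal coefficients) is the main nontrivial categorical input; everything else reduces to linear algebra and classical representation theory of acyclic quivers.

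Next I would classify the indecomposable rigid objects of $\cc_Q$. Since every indecomposable object of $\cc_Q$ is either an indecomposable $kQ$-module or a shifted indecomposable projective $\Sigma P_j$, the indecomposable rigid objects split into two disjoint families: the indecomposable rigid $kQ$-modules, and the shifted projectives $\Sigma P_j$ for $j=1,\ldots,n$. The defining triangle $P_j\to 0\to\Sigma P_j\to\Sigma P_j$ immediately yields $\ind_T(\Sigma P_j)=-e_j$, so the second family contributes exactly the set $\{-e_1,\ldots,-e_n\}$ of $\gg$-vectors removed in the statement.

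For the first family, let $M$ be an indecomposable rigid $kQ$-module with minimal projective presentation $P^{-1}\to P^0\to M\to 0$, so that $\ind_T(M)=[P^0]-[P^{-1}]$ in $K_0^{\rm split}(\add T)$. Under the linear map $e_j\mapsto\dimv P_j=\sum_i p_{ji}\alpha_i$, the index $\ind_T(M)$ is sent to $\dimv P^0-\dimv P^{-1}=\dimv M$; indeed this map is nothing but the Cartan isomorphism $K_0^{\rm split}(\proj kQ)\iso K_0(\mod kQ)$ induced by the inclusion $\proj kQ\hookrightarrow\mod kQ$, which is an isomorphism because $kQ$ has global dimension at most one. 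It then remains to invoke the classical results of Kac and Schofield for acyclic quivers, namely that $M\mapsto\dimv M$ gives a bijection between isomorphism classes of indecomposable rigid $kQ$-modules and the set of positive real Schur roots of $Q$. Composing all these bijections yields the claimed identification between $\{\gg\text{-vectors}\}\setminus\{-e_1,\ldots,-e_n\}$ and the set of positive real Schur roots.
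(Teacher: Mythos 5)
The paper states this result without proof, merely citing Keller's survey, so there is no internal argument to compare against; nonetheless, your categorical proof via the Caldero--Chapoton bijection between non-frozen cluster variables and indecomposable rigid objects of $\cc_Q$, the index/$\gg$-vector formula, the computation $\ind_T(\Sigma P_j)=-e_j$, and the translation of $\ind_T(M)=[P^0]-[P^{-1}]$ into $\dimv M$ by the Cartan isomorphism of global dimension one, followed by the Kac--Schofield classification of indecomposable rigid $kQ$-modules by positive real Schur roots, is correct and is essentially the argument Keller himself gives. The only step worth making explicit is that, since $k$ is algebraically closed, an indecomposable $kQ$-module $M$ with $\Ext^1_{kQ}(M,M)=0$ automatically has $\End_{kQ}(M)=k$, so indecomposable rigid modules coincide with exceptional modules and the positive real Schur root correspondence applies exactly as you use it.
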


The $\gg$-matrix of the initial cluster $\{x_1,\ldots,x_n,x_{n+1},\ldots,x_{2n}\}$ is the identity matrix $I_n$. The exchange graph of $\gg$-matrices, $\gmat(Q)$, of a cluster quiver $Q$ consists of g-matrices of all clusters of $\ca_Q$ (up to column permutations). It has a unique source $I_n$ and its arrows correspond to mutations. Suppose that $\underline{u}$ and $\underline{u}'$ are clusters of $\ca_Q$ related by a mutation at $i$. Recall that $\underline{u}$ determines a seed $(\underline{u},R)$.The $\gg$-matrices $G=(g_1,\ldots,g_n)$ of $\underline{u}$ and $G'=(g'_1,\ldots,g'_n)$ are related by the following formula (\cite[Proposition 6.6]{cluster4})
\[
g'_j=\left\{\begin{array}{ll} g_j & \text{ if } j\neq i\\
-g_j+\sum_{m=1}^n[b_{mi}]_+g_m-\sum_{l=1}^n[-b_{n+l,i}]_+B(Q)e_l &\text{ if } j=i,
\end{array}\right.
\]
where $(b_{ml})_{1\leq m\leq 2n,1\leq l\leq n}$ is the matrix of $R$.

\medskip
\noindent
{\bf Example.} Let $Q$ be the quiver of type ${\mathbb A}_2$ as in Section~\ref{ss:tilting-module}. The ordered exchange graph $\gmat(Q)$ is (where $\phi$ interchanges the indices $1$ and $2$)
\[\begin{xy} 0;<0.35pt,0pt>:<0pt,-0.35pt>::
(200,0) *+{\left[ \begin{array}{rr} 1 & 0 \\ 0 & 1 \end{array} \right]} ="0",
(0,145.3) *+{\left[ \begin{array}{rr} -1 & 0 \\ 1 & 1 \end{array} \right]}="1",
(76.3,380.5) *+{\left[ \begin{array}{rr} -1 & -1 \\ 1 & 0 \end{array} \right]}="2",
(400,145.3) *+{\left[ \begin{array}{rr} 1 & 0 \\ 0 & -1 \end{array} \right]}="3",
(323.7,380.5) *+{\left[ \begin{array}{rr} -1 & 0 \\ 0 & -1 \end{array} \right]}="4",
"0", {\ar@{-} "1"}, "0", {\ar@{-} "3"},
"1", {\ar@{-} "2"},
"2", {\ar_{\phi}@{-} "4"},
"3", {\ar@{-} "4"},  
\end{xy}\]

\section{The bijections}\label{s:the-bijection}

In this section we define the bijections appearing in Figure~\ref{f:the-map}. We split the diagram into smaller ones.

\subsection{The correspondences between silting objects, simple-minded collections, $t$-structures and co-$t$-structures}
\label{ss:silting-smc-tstr-cotstr}
For a non-positive dg algebra $A$, set
\begin{align*}
\silt(A) &= \text{the set of isoclasses of basic silting objects of $\per(A)$},\\
\smc(A) &= \text{the set of isoclasses of simple-minded collections of $\cd_{fd}(A)$},\\
\tstr(A) &= \text{the set of bounded $t$-structures on $\cd_{fd}(A)$ with length heart},\\
\cotstr(A) &= \text{the set of bounded co-$t$-structures on $\per(A)$}.
\end{align*}

Let $(Q,W)$ be a quiver with potential and let $\Gamma=\hat{\Gamma}(Q,W)$ and $J=\hat{J}(Q,W)$ be respectively the corresponding Ginzburg dg algebra and the Jacobian algebra (see Section~\ref{ss:ginzburg-algebra}). We assume that $(Q,W)$ is Jacobi-finite, \ie $J$ is finite-dimensional. It follows  that $\per(\Gamma)$ and $\cd_{fd}(\Gamma)$ are Hom-finite and Krull--Schmidt, see~\cite[Proposition 2.4]{Amiot09} and~\cite[Lemma 2.17]{KellerYang11}. Moreover, we have $\per(J)\cong \ch^b(\proj J)$ and $\cd_{fd}(J)\cong\cd^b(\mod J)$. 
Recall that $\per(A)$ has a canonical silting object $A_A$, which has a standard decomposition $A=e_1A\oplus\ldots \oplus e_nA$ into a direct sum of indecomposable objects, where $\{1,\ldots,n\}=Q_0$ is the vertex set of $Q$ and $e_1,\ldots,e_n$ are the trivial paths. By Theorem~\ref{t:generators-of-grothendieckgroup-silting-case}, any silting object $M$ of $\per(A)$ satisfies the property $|M|=n$. Similarly, any simple-minded collection of $\cd_{fd}(A)$ has cardinality $n$.

Below we say that a map commutes with mutations (respecitvely, preserves partial orders) if it commutes with existing mutations (respectively, preserves existing partial orders).

\begin{theorem}\label{t:knky}{(\cite{KoenigYang12,KellerNicolas11})} Let $A=\Gamma$ or $J$. Then there is a commutative diagram of bijections which commute with mutations and preserve partial orders
\[
\xymatrix{
\silt(A)\ar[rr]\ar[d]\ar[drr]&&\cotstr(A)\ar[ll]\ar[dll]\\\
\tstr(A)\ar[rr]\ar[urr]&&\smc(A)\ar[ll]\ar[u]
}
\]
\end{theorem}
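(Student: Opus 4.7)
The plan is to construct the four bijections in the diagram individually and then verify that they commute, respect mutations, and preserve the partial orders. A preliminary observation is that both $A=\Gamma$ and $A=J$ are non-positive dg algebras with $H^0(A)$ finite-dimensional, so the appendix machinery for non-positive dg algebras applies: $\per(A)$ is Hom-finite Krull--Schmidt with canonical silting object $A$ and standard bounded co-$t$-structure, while $\cd_{fd}(A)$ has a standard bounded $t$-structure with length heart $\mod H^0(A)$. These standard choices provide the unique maximal elements in each of the four posets and serve as the base-point from which the bijections can be extended by induction or by uniformizing constructions.

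For $\silt(A)\to\cotstr(A)$ I would take the aisle generated by $\add(M)$ under $\Sigma^{-1}$, extensions and direct summands, and recover the co-aisle by right-orthogonality; in the reverse direction the co-heart of a bounded co-$t$-structure admits a basic additive generator that one shows is silting. For $\silt(A)\to\tstr(A)$, setting $B=\End_{\per(A)}(M)$ (finite-dimensional), I would put
\[
\cd^{\leq 0}_M = \{X\in\cd_{fd}(A)\mid \Hom(M,\Sigma^i X)=0~\text{for all } i>0\}
\]
and $\cd^{\geq 0}_M$ symmetrically; the functor $\Hom_{\per(A)}(M,-)$ should identify the heart with $\mod B$, thereby exhibiting it as a length category. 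For $\tstr(A)\to\smc(A)$ I would send a bounded $t$-structure with length heart $\ca$ to a complete set of pairwise non-isomorphic simples of $\ca$ (finite by Lemma~\ref{l:grothendieck-gp-and-bbd-t-str}); conversely, extension-closing a simple-minded collection $\{X_1,\dots,X_n\}$ in appropriate directions produces the aisle and co-aisle of a bounded $t$-structure whose heart is length by construction.

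Given these three bijections, the fourth edge (silting to simple-minded collection) is obtained by composition; I would then check that it is characterised intrinsically by the Koszul-type biorthogonality $\Hom(M_i,\Sigma^j X_k)=\delta_{ik}\delta_{j0}\cdot k$. Commutativity of the square then reduces to matching the two descriptions of the heart attached to a silting object $M$ — on one side via $\Hom(M,-)$, on the other via extension-closing the dual simples $\{X_k\}$. Compatibility with mutations can be verified summand-by-summand: a left silting mutation triangle
\[
\xymatrix{M_i\ar[r] & E\ar[r] & M_i^*\ar[r] & \Sigma M_i}
\]
should transport to the Happel--Reiten--Smal{\o} tilt at the simple corresponding to $M_i$, to the K\"onig--Yang mutation of the simple-minded collection, and to the analogous triangle-based co-$t$-structure mutation. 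Preservation of the partial orders is then a direct translation of the defining inclusions, once one records that $M\geq N$ in the silting order (i.e.\ $\Hom(M,\Sigma^{>0}N)=0$) corresponds to the aisle of $M$ containing that of $N$ on both the $t$- and co-$t$-side.

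The main obstacle will be concentrated in two places. First, I would need to show that the inverse constructions are genuinely well-defined: that every bounded co-$t$-structure on $\per(A)$ has a silting co-heart generator, rather than a merely presilting one, and that every simple-minded collection generates a bounded $t$-structure whose heart is length — this requires exploiting the non-positivity of $A$ to control extensions and to ensure the aisle truly contains the heart as a length abelian category. Second, and most crucially, the commutativity of the square rests on the biorthogonality statement above, which is the Koszul-duality heart of the matter; the structural correspondences between mutations and partial orders all flow from it. Both difficulties are exactly what the truncation-functor machinery of K\"onig--Yang and Keller--Nicol\'as is designed to resolve, and I would rely on their techniques via the appendix on non-positive dg algebras.
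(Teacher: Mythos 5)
Your proposal matches the paper's own treatment essentially verbatim: the paper defines the same four maps (aisle/co-aisle from $\add(M)$, the $t$-structure via $\Hom(M,\Sigma^i -)$ vanishing, simples of a length heart, extension closures of a simple-minded collection) and characterises the composite $\silt(A)\to\smc(A)$ by the biorthogonality relation~(\ref{eq:silting-smc-are-dual}) exactly as you propose. The paper defers the verifications (well-definedness, bijectivity, compatibility with mutations and orders) to \cite{KoenigYang12,KellerNicolas11}, which is also where you would send the reader for the two difficulties you correctly isolate.
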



This theorem holds more generally for homologically smooth non-positive dg algebras (\cite{KellerNicolas11}) and for finite-dimensional algebras (\cite[Theorems 6.1, 7.12 and 7.13]{KoenigYang12}). The study of this diagram was initiated by Keller and Vossieck~\cite{KellerVossieck88}. They showed that the left vertical map is bijective for the case when $A$ is the path algebra of a Dynkin quiver. Parts of this diagram have also appeared in~\cite{KellerNicolas12,RickardRouquier10}.

\begin{remark}
\begin{itemize}
\item[(a)] The naturally defined mutations on the sets $\silt(A)$, $\smc(A)$ and $\tstr(A)$ induce a mutation operation on $\cotstr(A)$.  Let $(\cp_{\geq 0},\cp_{\leq 0})$ be a bounded co-$t$-structure on $\per(A)$ and let $M=M_1\oplus\ldots\oplus M_n$ be the corresponding basic silting object, where $M_1,\ldots,M_n$ are indecomposable. The \emph{left mutation} $\mu_i^-(\cp_{\geq 0},\cp_{\leq 0})$ of $(\cp_{\geq 0},\cp_{\leq 0})$ is the pair $(\cp'_{\geq 0},\cp'_{\leq 0})$, where
$\cp'_{\leq 0}$ is the smallest full subcategory of $\per(A)$ which contains $\Sigma\cp_{\leq 0}$ and $M_j$ and $\cp'_{\geq 0}$ is the left orthogonal category of $\Sigma\cp'_{\leq 0}$ in $\per(A)$.

\item[(b)]
The naturally defined  partial orders on $\tstr(A)$ and $\cotstr(A)$ induce partial orders on $\silt(A)$ and $\smc(A)$. Precisely,
for two silting objects $M$ and $M'$ of $\per(A)$, we have
\[M\geq M'\Leftrightarrow \Hom_{\cc}(M,\Sigma^i M')=0 \text{ for all } i>0,\]
see \cite[Definition 2.10]{AiharaIyama12}; 
for two simple-minded collections $\{X_1,\ldots,X_n\}$ and $\{X'_1,\ldots,X'_n\}$ in $\cd_{fd}(A)$, we have
\[\{X_1,\ldots,X_n\}\geq \{X'_1,\ldots,X'_n\}\Leftrightarrow \Hom_{\cc}(X_i,\Sigma^m X'_j)=0 \text{ for all }i,j\text{ and } m<0,\]
see~\cite[Section 7]{KoenigYang12}.
\end{itemize}
\end{remark}

Let us define the bijections in Theorem~\ref{t:knky} as in \cite{KellerNicolas11,KoenigYang12}. 

$\silt(A)\longrightarrow\smc(A)$: Let $M=M_1\oplus\ldots\oplus M_n$ be a basic silting object in $\per(A)$, where $M_1,\ldots,M_n$ are indecomposable. The corresponding simple minded collection $\{X_1,\ldots,X_n\}$ is the unique collection (up to isomorphism) in $\cd_{fd}(A)$ satisfying for any $i,j=1,\ldots,n$
\begin{align}
\Hom(M_i,\Sigma^m X_j)=\begin{cases} k & \text{ if } i=j \text{ and } m=0,\\ 0 & \text{ otherwise}.\end{cases} \label{eq:silting-smc-are-dual}
\end{align}
More precisely, it follows from Keller's Morita theorem for triangulated categories (\cite[Theorem 4.3]{Keller94}) that there is a non-positive dg algebra $B$ together with a triangle equivalence $\cd(B)\rightarrow \cd(A)$ taking $B$ to $M$. The collection $\{X_1,\ldots,X_n\}$ is the image of a complete collection of pairwise non-isomorphic simple $H^0(B)$-modules under this equivalence.

$\silt(A)\longrightarrow\tstr(A)$: Let $M$ be a silting object in $\per(A)$. The corresponding $t$-structure $(\cd^{\leq 0},\cd^{\geq 0})$ on $\cd_{fd}(A)$ is defined as
\begin{align*}
\cd^{\leq 0}&=\{X\in\cd_{fd}(A)\mid \Hom(M,\Sigma^m X)=0 \text{ for } m>0\},\\
\cd^{\geq 0}&=\{X\in\cd_{fd}(A)\mid\Hom(M,\Sigma^m X)=0\text{ for } m<0\}.
\end{align*}
The object $M$ can be characterised as an additive generator of the category
\[\{N\in\per(A)\mid \Hom(N,\Sigma X)=0 \text{ for any }X\in\cd^{\leq 0}\}.\]
A nice consequence of the bijectivity of this map is that the heart of any element in $\tstr(A)$ is equivalent to the category of finite-dimensional modules over a finite-dimensional algebra, namely, the endomorphism algebra of the corresponding silting object. 

$\silt(A)\longrightarrow\cotstr(A)$: Let $M$ be a silting object in $\per(A)$. The corresponding co-$t$-structure $(\cp_{\geq 0},\cp_{\leq 0})$ on $\per(A)$ is defined as
\begin{align*}
\cp_{\geq 0} & = \text{the smallest full subcategory of $\per(A)$ which contains $\{\Sigma^m M\mid m\leq 0\}$}\\
& \text{and which is closed under taking extensions and direct summands},\\
\cp_{\leq 0} & = \text{the smallest full subcategory of $\per(A)$ which contains $\{\Sigma^m M\mid m\geq 0\}$}\\
& \text{and which is closed under taking extensions and direct summands}.
\end{align*}
We point out that this map is well-defined and bijective in a much more general setting, see~\cite{Bondarko10,AiharaIyama12,KellerNicolas11,MendozaSaenzSantiagoSouto10}.

$\tstr(A)\longrightarrow\smc(A)$: Let $(\cd^{\leq 0},\cd^{\geq 0})$ be a bounded $t$-structure on $\cd_{fd}(A)$ with length heart. The corresponding simple-minded collection is a complete collection of pairwise non-isomorphic simple objects of the heart $\cd^{\leq 0}\cap\cd^{\geq 0}$.

$\tstr(A)\longrightarrow\cotstr(A)$: Let $(\cd^{\leq 0},\cd^{\geq 0})$ be a bounded $t$-structure on $\cd_{fd}(A)$ with length heart. The corresponding co-$t$-structure $(\cp_{\geq 0},\cp_{\leq 0})$ is defined as
\begin{align*}
\cp_{\geq 0}&=\{N\in\per(A)\mid \Hom(N,\Sigma X)=0 \text{ for any }X\in\cd^{\leq 0}\},\\
\cp_{\leq 0}&=\{N\in\per(A)\mid \Hom(N,\Sigma^{-1} X)=0 \text{ for any }X\in\cd^{\geq 0}\}.
\end{align*}


$\smc(A)\longrightarrow\tstr(A)$: Let $\{X_1,\ldots,X_n\}$ be a simple-minded collection of $\cd_{fd}(A)$. The corresponding $t$-structure $(\cd^{\leq 0},\cd^{\geq 0})$ on $\cd_{fd}(A)$ is defined as
\begin{align*}
\cd^{\leq 0} & = \text{the smallest full subcategory of $\cd_{fd}(A)$ which contains $\{\Sigma^m X_i\mid 1\leq i\leq n,$}\\ 
&\text{$~m\geq 0\}$ and which is closed under taking extensions and direct summands},\\
\cd^{\geq 0} & = \text{the smallest full subcategory of $\cd_{fd}(A)$ which contains $\{\Sigma^m X_i\mid 1\leq i\leq n,$}\\
& \text{$~m\leq 0\}$ 
and which is closed under taking extensions and direct summands}.
\end{align*}

$\smc(A)\longrightarrow\cotstr(A)$: Let $\{X_1,\ldots,X_n\}$ be a simple-minded collection of $\cd_{fd}(A)$. The corresponding co-$t$-structure $(\cp_{\geq 0},\cp_{\leq 0})$ on $\per(A)$ is defined as
\begin{align*}
\cp_{\geq 0} & = \{N\in\per(A)\mid \Hom(N,\Sigma^m\bigoplus_{i=1}^n X_i)=0 \text{ for } m>0\},\\
\cp_{\leq 0} & = \{N\in\per(A)\mid \Hom(N,\Sigma^m\bigoplus_{i=1}^n X_i)=0 \text{ for } m<0\}.
\end{align*}
This map was not directly defined in \cite{KoenigYang12}. We take it as the composition of $\smc(A)\rightarrow\tstr(A)$ and $\tstr(A)\rightarrow\cotstr(A)$.


$\cotstr(A)\longrightarrow\silt(A)$: Let $(\cp_{\geq 0},\cp_{\leq 0})$ be a bounded co-$t$-structure on $\per(A)$. The corresponding silting object $M$ of $\per(A)$ is an additive generator of the coheart, \ie $\add(M)=\cp_{\geq 0}\cap\cp_{\leq 0}$.

$\cotstr(A)\longrightarrow\tstr(A)$: Let $(\cp_{\geq 0},\cp_{\leq 0})$ be a bounded co-$t$-structure on $\per(A)$. The corresponding $t$-structure $(\cd^{\leq 0},\cd^{\geq 0})$ is defined as
\begin{align*}
\cd^{\leq 0}&=\{X\in\cd_{fd}(A)\mid \Hom(N,\Sigma X)=0 \text{ for any } N\in\cp_{\geq 0}\},\\
\cd^{\geq 0}&=\{X\in\cd_{fd}(A)\mid\Hom(N,\Sigma^{-1} X)=0\text{ for any } N\in\cp_{\leq 0}\}.
\end{align*}

\medskip

Next, consider the subsets $\twosilt(A)$, $\intersmc(A)$, $\intertstr(A)$ and $\intercotstr(A)$ of $\silt(A)$, $\smc(A)$, $\tstr(A)$ and $\cotstr(A)$.
Set
\begin{align*}
\cf_A&=\{\cone(f)\mid f \text{ is a morphism in }\add_{\per(A)}(A)\}\subseteq \per(A).
\end{align*}
By definition a $2$-term silting object of $\per(A)$ is exactly a silting object belonging to $\cf_A$. 

\begin{corollary}\label{c:knky-for-intermediate}
There is a commutative diagram of bijections which commute with mutations and which preserve partial orders
\[
\xymatrix{
\twosilt(A)\ar[rr]\ar[d]\ar[drr]&&\intercotstr(A)\ar[ll]\ar[dll]\\\
\intertstr(A)\ar[rr]\ar[urr]&&\intersmc(A)\ar[ll]\ar[u]
}
\]
\end{corollary}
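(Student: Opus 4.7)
The plan is to show that the four bijections of Theorem~\ref{t:knky} restrict to bijections between the corresponding ``intermediate'' subsets. The key observation is that, in each of the four settings, the defining property (being $2$-term or intermediate) is equivalent to the assertion that a natural subcategory (the co-heart, the heart, or the collection itself) is contained in a specific ``strip'' of the relevant standard structure. Once these strips are identified under the bijections of Theorem~\ref{t:knky}, commutativity of the restricted diagram together with preservation of partial orders is automatic, and compatibility with mutations is reduced to a single closure statement on one corner.

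First I would verify the restriction of $\silt(A)\to\cotstr(A)$ to a bijection $\twosilt(A)\to\intercotstr(A)$. Under this bijection, a silting object $M$ corresponds to the co-$t$-structure with co-heart $\add(M)$. By Lemma~\ref{l:intermediate-co-t-str}, the resulting co-$t$-structure is intermediate with respect to the standard one $(\cp^\std_{\geq 0},\cp^\std_{\leq 0})$ if and only if $\add(M)\subseteq \Sigma\cp^\std_{\geq 0}\cap\cp^\std_{\leq 0}$. From the explicit description of the standard co-$t$-structure on $\per(A)$ recalled in Appendix~\ref{ss:non-pos-dg}, together with the non-positivity of $A$, one checks that $\Sigma\cp^\std_{\geq 0}\cap\cp^\std_{\leq 0}$ is precisely the subcategory of objects occurring as cones of morphisms in $\add_{\per(A)}(A)$, \ie it coincides with $\cf_A$. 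Hence $M\in\cf_A$ (\ie $M$ is $2$-term) if and only if the corresponding co-$t$-structure is intermediate.

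Next, I would establish the restriction of $\smc(A)\to\tstr(A)$ to $\intersmc(A)\to\intertstr(A)$. Under this bijection, a simple-minded collection $\{X_1,\ldots,X_n\}$ corresponds to the $t$-structure whose heart is the extension closure of the $X_i$. By Lemma~\ref{l:intermediate-t-str}, this $t$-structure is intermediate relative to the standard one if and only if its heart is contained in $\cd^{\leq 0}_\std\cap\Sigma\cd^{\geq 0}_\std$. Using the cohomology functors, this intersection is exactly the full subcategory of objects whose cohomology is concentrated in degrees $0$ and $-1$, which in turn equals the extension closure of $\mod H^0(A)$ and $\Sigma\mod H^0(A)$. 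Since this subcategory is closed under extensions, containment of the heart in it is equivalent to containment of each generator $X_i$, which is precisely the definition of a $2$-term simple-minded collection from Section~\ref{ss:silting-smc-tstr-cotstr}. Combining these two steps with the commutativity of the diagram in Theorem~\ref{t:knky} then produces the entire commutative diagram of bijections of the corollary.

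Partial orders on the four subsets are inherited from those on $\silt(A)$, $\smc(A)$, $\tstr(A)$ and $\cotstr(A)$, so their preservation is automatic. For compatibility with mutations, by Theorem~\ref{t:knky} it suffices to verify on any one corner that the intrinsic mutation stays within the restricted set; I would do so on $\twosilt(A)$, where it is known (compare \cite[Proposition 2.16]{Aihara10}, or in the Jacobi-finite setting the support $\tau$-tilting interpretation of \cite{AdachiIyamaReiten12} via the bijection established later in the paper) that the left mutation $\mu^-_i(M)$ of a basic $2$-term silting object is again $2$-term. The main technical point I expect to be delicate is the identification $\Sigma\cp^\std_{\geq 0}\cap\cp^\std_{\leq 0}=\cf_A$, since objects of $\per(A)$ are not literally complexes of $H^0(A)$-modules in the dg setting; this identification rests on the non-positivity of $A$ together with the structural results for $\per(A)$ developed in Appendix~\ref{ss:non-pos-dg}.
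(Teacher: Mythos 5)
Your two explicit verifications are essentially the same as the paper's: you use Lemma~\ref{l:intermediate-co-t-str} together with the identification $\cf_A=\Sigma\cp^\std_{\geq 0}\cap\cp^\std_{\leq 0}$ for the pair $\twosilt(A)\leftrightarrow\intercotstr(A)$, and the dual argument via Lemma~\ref{l:intermediate-t-str} for the pair $\intersmc(A)\leftrightarrow\intertstr(A)$. However, your claim that ``commutativity of the diagram in Theorem~\ref{t:knky} then produces the entire commutative diagram'' has a genuine gap. You have verified that the bijections restrict on the two edges $\silt(A)\leftrightarrow\cotstr(A)$ and $\smc(A)\leftrightarrow\tstr(A)$. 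But these are \emph{opposite} edges of the four-vertex square, so they do not form a spanning tree of the vertex set $\{\silt,\cotstr,\tstr,\smc\}$; the graph they span has two disconnected components $\{\silt,\cotstr\}$ and $\{\tstr,\smc\}$. Commutativity therefore cannot propagate the restrictions across the gap: knowing that $\phi_{sc}$ takes $\twosilt$ onto $\intercotstr$ and that $\phi_{mt}$ takes $\intersmc$ onto $\intertstr$ tells you nothing about, say, $\phi_{ct}:\cotstr(A)\to\tstr(A)$. (Abstractly: a commutative square of bijections in which two opposite arrows respect chosen subsets need not have the other two arrows respecting them.) To close the gap you must verify one additional edge connecting the two halves; the paper does this with $\cotstr(A)\leftrightarrow\tstr(A)$, which it observes is easy because these maps are taking perpendicular subcategories, and intermediacy is a condition on aisles/co-aisles that is manifestly stable under taking perpendiculars. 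With that third pair, the set of verified edges becomes a spanning tree $\silt - \cotstr - \tstr - \smc$ and commutativity genuinely finishes the job.

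A secondary issue: your last paragraph on compatibility with mutations is both unnecessary and partially wrong. The paper's standing convention (stated just before Theorem~\ref{t:knky}) is that ``commutes with mutations'' means ``commutes with \emph{existing} mutations,'' so once the bijections are shown to restrict, compatibility with mutations on the restricted sets is automatic from Theorem~\ref{t:knky}; no closure statement is required. Moreover, the closure statement you invoke is false as stated: the left silting mutation $\mu^-_i(M)$ of a $2$-term silting object is in general not $2$-term --- for each indecomposable summand $M_i$, exactly one of $\mu^-_i(M)$, $\mu^+_i(M)$ is $2$-term (this is essentially the content of the $n$-regularity of the $\sttilt$ exchange graph in \cite{AdachiIyamaReiten12}). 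The references you cite support completion of presilting objects and $\tau$-tilting mutation, not preservation of $2$-term-ness under $\mu^-$.
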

\begin{proof} It suffices to check that the following six maps restrict.

$\cotstr(A)\longleftrightarrow \tstr(A)$: These two maps are defined by taking shifts of orthogonal subcategories. It is easy to check that they restrict to intermediate objects.

$\silt(A)\longleftrightarrow \cotstr(A)$: Let $(\cp^\std_{\geq 0},\cp^\std_{\leq 0})$ denote the  standard co-$t$-structure on $\per(A)$. One checks that $\cf_A=\Sigma \cp^\std_{\geq 0}\cap\cp^\std_{\leq 0}$ holds. By Lemma~\ref{l:intermediate-co-t-str}, a co-$t$-structure is intermediate if and only if its co-heart is contained in $\cf_A$, namely, the corresponding silting object is $2$-term. Thus the two maps $\silt(A)\leftrightarrow \cotstr(A)$ restrict to bijections $\twosilt(A)\leftrightarrow\intercotstr(A)$.

$\smc(A)\longleftrightarrow\tstr(A)$: Dual to $\silt(A)\leftrightarrow\cotstr(A)$.
\end{proof}

The proof of Corollary~\ref{c:knky-for-intermediate} works more generally for $A$ being a homologically smooth non-positive dg algebra or a finite-dimensional algebra.

\begin{theorem}\label{t:the-top-two-floor} There is a commutative diagram of bijections which commute with mutations and preserve partial orders
\[\xymatrix@C=0.45pc@R=1.5pc{
\twosilt(\Gamma)\ar[rrr]\ar[dr] \ar[ddd]\ar[drrrr]& &&
\intercotstr(\Gamma)\ar[lll]\ar@{-->}[ddd]
\\
&\intertstr(\Gamma)\ar[rrr]
&&&
\intersmc(\Gamma)\ar[lll]\ar[ul]
\\
\\
\twosilt(J)\ar@{-->}[rrr]\ar[dr] \ar@{-->}[drrrr]& &&
\intercotstr(J)\ar@{-->}[lll]
\\
&\intertstr(J)\ar[rrr]\ar[uuu]&&&
\intersmc(J)\ar[lll]\ar@{-->}[ul]\ar[uuu]
}\]
\end{theorem}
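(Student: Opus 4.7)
The plan is to view the theorem as a cube, where the top and bottom square faces are instances of Corollary~\ref{c:knky-for-intermediate} applied respectively to the non-positive dg algebra $\Gamma=\hat\Gamma(Q,W)$ and to the finite-dimensional algebra $J=\hat J(Q,W)$ viewed as a dg algebra concentrated in degree $0$. Thus, once the four vertical bijections are defined, the top and bottom faces automatically commute with mutations and preserve partial orders, and the work reduces to (a) defining the four vertical maps, (b) showing they are bijections, and (c) checking that the four side faces commute.

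The central observation enabling the vertical maps is that the natural surjection of dg algebras $\pi\colon\Gamma\twoheadrightarrow H^0(\Gamma)=J$ induces a triangulated restriction functor $\pi_*\colon\cd(J)\to\cd(\Gamma)$ which sends $\cd^b(\mod J)$ into $\cd_{fd}(\Gamma)$. Because $\Gamma$ is non-positive with $H^0(\Gamma)=J$, both $\cd_{fd}(\Gamma)$ and $\cd^b(\mod J)$ carry standard bounded $t$-structures with heart $\mod J$, and I would first establish (relying on the appendix) that $\pi_*$ is fully faithful on the ``2-term region'' -- i.e.\ on the extension closure of $\mod J$ and $\Sigma\mod J$. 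With this in hand, the map $\intersmc(J)\to\intersmc(\Gamma)$ sends a 2-term simple-minded collection to its image under $\pi_*$, and the map $\intertstr(J)\to\intertstr(\Gamma)$ sends an intermediate bounded $t$-structure to the unique intermediate $t$-structure in $\cd_{fd}(\Gamma)$ whose heart coincides with $\pi_*(\text{heart})$. Both maps are bijections because Lemma~\ref{l:intermediate-t-str} identifies intermediate hearts (and thereby their simple objects) with subcategories of the 2-term region, and $\pi_*$ restricts to an equivalence there. Commutativity of the back face $\intertstr\leftrightarrow\intersmc$ is then immediate, since in both categories the SMC associated to an intermediate length heart is the set of its simple objects, and $\pi_*$ preserves simples and extensions.

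For the map $\twosilt(\Gamma)\to\twosilt(J)$ I would use the derived tensor product $-\otimes^{\boldmath{L}}_\Gamma J\colon\per(\Gamma)\to\per(J)$, which sends $\Gamma$ to $J$ and hence 2-term objects to 2-term objects. To show bijectivity, rather than attempting a direct argument I would transport the already-established bijection on $\intersmc$ through the silting$\leftrightarrow$SMC duality formula~(\ref{eq:silting-smc-are-dual}): if $M\in\twosilt(\Gamma)$ corresponds to $\{X_1,\dots,X_n\}\in\intersmc(\Gamma)$, then one checks using adjunction $\Hom_{\per(J)}(M\otimes^{\boldmath{L}}_\Gamma J,\Sigma^m\pi_*^{-1}X_j)\cong\Hom_{\per(\Gamma)}(M,\Sigma^m X_j)$ that $M\otimes^{\boldmath{L}}_\Gamma J$ satisfies the dual characterization of the SMC $\pi_*^{-1}\{X_1,\dots,X_n\}$ in $\per(J)$. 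Consequently the front square $\twosilt\leftrightarrow\intersmc$ commutes and $\twosilt(\Gamma)\to\twosilt(J)$ is a bijection. The fourth vertical map on $\intercotstr$ is then defined as the composition $\intercotstr(\Gamma)\to\twosilt(\Gamma)\to\twosilt(J)\to\intercotstr(J)$ via the horizontal bijections of the top and bottom faces, which by construction makes the remaining two side faces commute.

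The main obstacle will be step (b) for the silting map: showing that the functor $-\otimes^{\boldmath{L}}_\Gamma J$, which fails to be fully faithful in general, nevertheless restricts to a bijection on isomorphism classes of basic 2-term silting objects. The strategy above sidesteps a direct proof by leveraging the already-proven bijection on SMCs together with the adjunction $(-\otimes^{\boldmath{L}}_\Gamma J,\pi_*)$; the delicate point is verifying carefully that this adjunction together with the defining property~(\ref{eq:silting-smc-are-dual}) of the silting/SMC correspondence forces $M\otimes^{\boldmath{L}}_\Gamma J$ to be \emph{the} 2-term silting object of $\per(J)$ attached to $\pi_*^{-1}\{X_1,\dots,X_n\}$, and in particular that it lies in $\per(J)$ with the correct number of indecomposable summands. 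Once this is established, order preservation of all vertical maps follows from the characterization $M\ge M'\iff\Hom(M,\Sigma^{>0}M')=0$ together with the compatibility of Hom-spaces under adjunction.
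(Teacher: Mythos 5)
Your overall architecture matches the paper's: the top and bottom squares come from Corollary~\ref{c:knky-for-intermediate}, and the vertical maps are induced by the projection $p\colon\Gamma\to J$ (induction $p_*$ on $\per$ for silting and co-$t$-structures, restriction $p^*$ on $\cd_{fd}$ for simple-minded collections and $t$-structures). However, the load-bearing step of your argument is wrong, and the order in which you try to prove things cannot be made to work without supplying essentially what the paper's appendix already supplies.

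The critical claim is that restriction ``$\pi_*$'' (the paper's $p^*$) restricts to an \emph{equivalence} between the $2$-term regions of $\cd^b(\mod J)$ and $\cd_{fd}(\Gamma)$. This is false, and not merely unproved. An object $Z$ of the $2$-term region is determined by $A=H^{-1}(Z)$, $B=H^0(Z)\in\mod J$ together with the connecting class $\delta\in\Hom(B,\Sigma^2 A)$ of the triangle $\Sigma A\to Z\to B\to\Sigma^2 A$. In $\cd^b(\mod J)$ this class lives in $\Ext^2_J(B,A)$, whereas in $\cd_{fd}(\Gamma)$ it lives in $\Hom_{\cd_{fd}(\Gamma)}(B,\Sigma^2 A)$, which is typically strictly larger: for $Q$ acyclic and $W=0$ one has $\Ext^2_J=0$ while $\Hom_{\cd_{fd}(\Gamma)}(S_i,\Sigma^2 S_j)\cong D\Hom(S_j,\Sigma S_i)\neq 0$ by the $3$-Calabi--Yau property whenever there is an arrow $j\to i$. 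Thus $p^*$ is not essentially surjective onto the $2$-term region, so it cannot be an equivalence, and your deduction of the SMC and $t$-structure bijections collapses. (It is also not clear how ``equivalence of $2$-term regions'' would by itself give the generation condition required for simple-mindedness.)

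Because the SMC bijection is not available as a starting point, the subsequent transport to $\twosilt$ via adjunction and~(\ref{eq:silting-smc-are-dual}) -- which you yourself flag as the delicate point -- is not just delicate but genuinely incomplete: the adjunction identity $\Hom(p_*M,\Sigma^m Y_j)\cong\Hom(M,\Sigma^m p^*Y_j)$ does not by itself tell you that $p_*M$ is a basic $2$-term silting object with $n$ indecomposable summands, which is exactly what is needed to conclude $p_*M\cong N$. The paper resolves all of this in the opposite order: it proves first, by direct analysis in the appendix (Propositions~\ref{p:induction-silting} and~\ref{p:induction-silting-mutation}), that induction $p_*$ on $\per$ is full on $\cf_\Gamma$ with square-zero kernel and induces a bijection on isomorphism classes of $2$-term silting objects; with the silting bijection in hand, adjunction plus~(\ref{eq:silting-smc-are-dual}) immediately shows $p^*$ carries a $2$-term SMC of $\cd^b(\mod J)$ to the SMC dual to the lifted silting object, giving the well-definedness and bijectivity on $\intersmc$. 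The remaining vertical maps and commutativity then follow formally. If you want to salvage your proposal, the appendix-level input on $p_*$ is unavoidable; you cannot bootstrap from a putative equivalence on the $2$-term region of $\cd_{fd}$ because no such equivalence exists.
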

The following proof of this theorem still works if the pair $(\Gamma,J)$ is replaced by $(A,H^0(A))$, where $A$ is a homologically smooth non-positive dg algebra such that $H^0(A)$ is finite-dimensional.

\begin{proof} 
The top square (respectively, the bottom square) is obtained by applying Corollary~\ref{c:knky-for-intermediate} to $\Gamma$ (respectively, $J$).

Next, we will define the vertical maps. We state the bijectivity and compatibility with mutations for some of them: that for others follow from the commutativity of the diagram.

Consider the canonical projection $p:\Gamma\rightarrow J$.
Let $p_*:\per(\Gamma)\rightarrow\per(J)$ (respectively, $p^*:\cd_{fd}(J)\rightarrow\cd_{fd}(\Gamma)$) be the induction (respectively, restriction) along the projection $p$. The functor $p^*$ is extensively studied by King and Qiu in~\cite{KingQiu11} for the case of a Dynkin quiver with trivial potential.

$\twosilt(\Gamma)\longrightarrow\twosilt(J)$: The assignment $M\mapsto p_*(M)$ defines a bijection from $\twosilt(\Gamma)$ to $\twosilt(J)$ which commutes with mutations, by applying Propositions~\ref{p:induction-silting} and~\ref{p:induction-silting-mutation} to the dg algebra $A=\Gamma$. More explicitly, observe that $p_*$ induces an additive equivalence $\add_{\per(\Gamma)}(\Gamma)\rightarrow\add_{\per(J)}(J)=\proj J$. Since $M$ belongs to $\twosilt(\Gamma)$, there is a triangle
\[\xymatrix{P\ar[r]^f & Q\ar[r] & M\ar[r] & \Sigma P}\]
with $P,Q\in\add_{\per(\Gamma)}(\Gamma)$. Then $p_*(M)$ is the cone of the morphism $p_*(f):p_*(P)\rightarrow p_*(Q)$, which is a morphism $\proj J$.

$\intercotstr(\Gamma)\longrightarrow\intercotstr(J)$: Consider the assignment induced by the projection: $(\cp_{\geq 0},\cp_{\leq 0})\mapsto (H_{\geq 0},H_{\leq 0})$, where
\begin{align*}
H_{\geq 0} & = \text{ the smallest full subcategory of $\per(J)$ which contains $p_*(\cp_{\geq 0})$}\\
&\text{and which is closed under extensions and direct summands},\\
H_{\leq 0} & = \text{ the smallest full subcategory of $\per(J)$ which contains $p_*(\cp_{\leq 0})$}\\
&\text{and which is closed under extensions and direct summands},
\end{align*}
which clearly preserves partial orders.

Let $(\cp_{\geq 0},\cp_{\leq 0})$ be an intermediate co-$t$-structure on $\per(\Gamma)$ and let $M$ be the corresponding silting object. Let $(H_{\geq 0},H_{\leq 0})$ be defined as above. Let $(H'_{\geq 0},H'_{\leq 0})$ be the co-$t$-structure on $\per(J)$ corresponding to the silting object $p_*(M)$. Then $H_{\geq 0}\supseteq H'_{\geq 0}$ and $H_{\leq 0}\supseteq H_{\leq 0}'$. It follows that both equalities hold, $(H_{\geq 0},H_{\leq 0})$ is a bounded co-$t$-structure on $\per(J)$ and it corresponds to $p_*(M)$. This shows the well-definedness of the map $\intercotstr(\Gamma)\rightarrow\intercotstr(J)$ as well as the commutativity of the inner square of the diagram.

$\intersmc(J)\longrightarrow\intersmc(\Gamma)$: Consider the assignment induced by the projection: $\{X_1,\ldots,X_n\}\mapsto\{p^*(X_1),\ldots,p^*(X_n)\}$.

Let $\{X_1,\ldots,X_n\}$ be a $2$-term simple-minded collection in $\cd^b(\mod J)$. We want to show that $\{p^*(X_1),\ldots,p^*(X_n)\}$ is a $2$-term simple-minded collection in $\cd_{fd}(\Gamma)$. 

Let $M=M_1\oplus\ldots\oplus M_n$ be the basic $2$-term silting object corresponding to $\{X_1,\ldots,X_n\}$, where $M_1,\ldots,M_n$ are indecomposable. Then as we have shown, there is a $2$-term basic silting object $N=N_1\oplus\ldots\oplus N_n$ such that $p_*(N_i)=M_i$ for all $i$. Then
\begin{align*}
\Hom(N_i,\Sigma^m p^*(X_j))&=\Hom(p_*(N_i),\Sigma^m X_j)\\
&=\Hom(M_i,\Sigma^m X_j)\\
&=\begin{cases} k & \text{ if } i=j \text{ and } m=0,\\ 0 & \text{ otherwise}.\end{cases}
\end{align*}
It follows by (\ref{eq:silting-smc-are-dual}) that $\{p^*(X_1),\ldots,p^*(X_n)\}$ is the simple-minded collection corresponding to the silting object $N$. This shows the well-definedness of the map $\intersmc(J)\rightarrow\intersmc(\Gamma)$ as well as the commutativity of the diagonal square of the diagram.

$\intertstr(J)\longrightarrow\intertstr(\Gamma)$: The assignment $(\cc^{\leq 0},\cc^{\geq 0})\mapsto (\cd^{\leq 0},\cd^{\geq 0})$,
where 
\begin{align*}
\cd^{\leq 0} & = \text{ the smallest full subcategory of $\cd_{fd}(J)$ which contains $p^*(\cc^{\leq 0})$}\\
&\text{and which is closed under extensions and direct summands},\\
\cd^{\geq 0} & = \text{ the smallest full subcategory of $\cd_{fd}(J)$ which contains $p^*(\cc^{\geq 0})$}\\
&\text{and which is closed under extensions and direct summands},
\end{align*}
defines a bijection $\intertstr(J)\longrightarrow\intertstr(\Gamma)$. The proof for the well-definedness of this map and the commutativity of the right square of the diagram is similar to the case for $\intercotstr(\Gamma)\rightarrow\intercotstr(J)$.
\end{proof}

\subsection{The Amiot cluster category}\label{ss:amiot-cluster-category}

Let $(Q,W)$ be a Jacobi-finite quiver with potential, $\Gamma=\hat{\Gamma}(Q,W)$ and $J=\hat{J}(Q,W)$. Recall from Section~\ref{ss:ginzburg-algebra} that $\per(\Gamma)$ contains $\cd_{fd}(\Gamma)$ as a triangulated subcategory. The \emph{Amiot cluster category} of $(Q,W)$ is defined as the triangle quotient 
\[\cc_{(Q,W)}:=\per(\Gamma)/\cd_{fd}(\Gamma).\]
Let $\pi:\per(\Gamma)\rightarrow\cc_{(Q,W)}$ be the canonical projection functor.

\begin{theorem}\label{t:generalised-cluster-category}
The following statements hold
\begin{itemize}
\item[(a)] {{(\cite[Theorem 3.5]{Amiot09})}} The Amiot cluster category $\cc_{(Q,W)}$
is Hom-finite and 2-Calabi--Yau. Moreover, the object $\pi(\Gamma)$ is a cluster-tilting object object in $\cc_{(Q,W)}$. Its endomorphism algebra is isomorphic to $J$.
\item[(b)] {{(\cite[Proposition 2.9]{Amiot09})}} The functor $\pi:\per(\Gamma)\rightarrow\cc_{(Q,W)}$ induces an additive equivalence
\[\cf_\Gamma\stackrel{\sim}{\longrightarrow}\cc_{(Q,W)}.\]
\item[(c)] {{(\cite[Proposition 2.12]{Amiot09})}} For $X,Y\in\cf_\Gamma$, there is a short exact sequence
\[\hspace{-7pt}\xymatrix@C=0.5pc{0\ar[r] &\Hom_{\per(\Gamma)}(X,\Sigma Y)\ar[r] & \Hom_{\cc_{(Q,W)}}(\pi(X),\Sigma \pi(Y))\ar[r] & D\Hom_{\per(\Gamma)}(Y,\Sigma X)\ar[r] &0}.\]
\end{itemize}
\end{theorem}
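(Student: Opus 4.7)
The plan is to follow Amiot's original strategy, with the 3-Calabi--Yau duality on $\cd_{fd}(\Gamma)$ as the central technical engine. Recall from the remark in Section~\ref{ss:ginzburg-algebra} that $\Gamma$ is homologically smooth and bimodule $3$-Calabi--Yau, which yields a Serre-type duality
\[
\Hom_{\per(\Gamma)}(X,Y)\cong D\Hom_{\per(\Gamma)}(Y,\Sigma^3 X)
\]
whenever $X\in\per(\Gamma)$ and $Y\in\cd_{fd}(\Gamma)$ (or vice versa). The other key ingredient is that $H^0(\Gamma)=J$ is finite-dimensional, which will allow us to truncate perfect complexes into $\cf_\Gamma$ modulo $\cd_{fd}(\Gamma)$.

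I would prove (b) first. For essential surjectivity I would show that every $X\in\per(\Gamma)$ fits in a triangle $X'\to X\to X''\to \Sigma X'$ with $X'\in \cf_\Gamma$ and $X''\in\cd_{fd}(\Gamma)$; this is obtained by truncating $X$ with respect to the standard co-$t$-structure $(\cp^\std_{\geq 0},\cp^\std_{\leq 0})$ on $\per(\Gamma)$ and observing, via Appendix~\ref{ss:non-pos-dg}, that the pieces of $X$ lying outside the two-term range concentrate their cohomology in finitely many degrees where $J$ (and hence all $H^i$) is finite-dimensional. For fully faithfulness, I would invoke the calculus of right fractions in the Verdier quotient $\per(\Gamma)/\cd_{fd}(\Gamma)$: a morphism $\pi(X)\to \pi(Y)$ is represented by a roof $X\xleftarrow{s}Z\to Y$ with $\cone(s)\in\cd_{fd}(\Gamma)$, and the Calabi--Yau duality together with the fact that $\cf_\Gamma$ sits in a narrow range of degrees lets one lift such a roof to an honest morphism $X\to Y$.

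The heart of the argument is (c), which I expect to be the main obstacle. Apply $\Hom_{\per(\Gamma)}(X,\Sigma\,-\,)$ to the triangle $Y\to Z\to C\to \Sigma Y$ defining a roof $Y\to Z\xleftarrow{s}X$ with $C=\cone(s)\in\cd_{fd}(\Gamma)$, and analyse the resulting long exact sequence. The injection $\Hom_{\per(\Gamma)}(X,\Sigma Y)\hookrightarrow \Hom_{\cc_{(Q,W)}}(\pi(X),\Sigma\pi(Y))$ comes from the fact that, for $X,Y\in\cf_\Gamma$, no nonzero map $X\to\Sigma Y$ in $\per(\Gamma)$ factors through an object of $\cd_{fd}(\Gamma)$; this again uses that $\cf_\Gamma$ lies in a narrow degree range. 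The surjection onto $D\Hom_{\per(\Gamma)}(Y,\Sigma X)$ is the delicate step: I would identify morphisms in the quotient modulo the image of $\Hom_{\per(\Gamma)}(X,\Sigma Y)$ with elements of $\Hom_{\per(\Gamma)}(X,\Sigma^2 C)$ (as $C$ varies over $\cd_{fd}(\Gamma)$), then apply the $3$-Calabi--Yau duality $\Hom_{\per(\Gamma)}(X,\Sigma^2 C)\cong D\Hom_{\per(\Gamma)}(C,\Sigma X)$, and finally pass to the colimit using the triangle $Y\to Z\to C\to \Sigma Y$ to recover $D\Hom_{\per(\Gamma)}(Y,\Sigma X)$.

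Part (a) falls out of (b) and (c). Hom-finiteness follows from (c), since both outer terms are finite-dimensional: $\Hom_{\per(\Gamma)}(X,\Sigma Y)$ is finite-dimensional for $X,Y\in\cf_\Gamma$ because these objects are built from finitely many shifts of $\Gamma$ and $\End_{\per(\Gamma)}(\Gamma)=J$ is finite-dimensional, while $\Hom_{\per(\Gamma)}(Y,\Sigma X)$ is finite-dimensional for the same reason. The $2$-Calabi--Yau property of $\cc_{(Q,W)}$ is read off by comparing the formula in (c) applied to $(X,Y)$ and to $(Y,X)$: the two outer terms swap under Serre duality, yielding $D\Hom_{\cc_{(Q,W)}}(\pi(X),\pi(Y))\cong \Hom_{\cc_{(Q,W)}}(\pi(Y),\Sigma^2\pi(X))$. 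Finally, $\pi(\Gamma)$ is cluster-tilting: applying (c) with $X=Y=\Gamma$ gives $\Hom_{\cc_{(Q,W)}}(\pi(\Gamma),\Sigma\pi(\Gamma))=0$ since $H^1(\Gamma)=0$ (as $\Gamma$ is non-positive) and $H^{-1}(\Gamma)$ is also zero, and $\Hom_{\cc_{(Q,W)}}(\pi(\Gamma),\pi(\Gamma))\cong \End_{\per(\Gamma)}(\Gamma)=H^0(\Gamma)=J$, with the cluster-tilting defining property $\add(\pi(\Gamma))=\{X\mid \Hom(\pi(\Gamma),\Sigma X)=0\}$ following from the generation statement used for essential surjectivity in (b).
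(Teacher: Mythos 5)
The paper does not actually prove this theorem: parts (a), (b) and (c) are cited to Amiot's paper \cite{Amiot09} (her Theorem 3.5, Proposition 2.9 and Proposition 2.12 respectively), and the paper only adds the remark that her proofs, written for the non-complete Ginzburg dg algebra, carry over to the complete setting. So the comparison here is really against Amiot's proof rather than anything the present paper supplies.

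Your reconstruction has the right global shape --- use the $3$-Calabi--Yau duality between $\per(\Gamma)$ and $\cd_{fd}(\Gamma)$ as the engine, establish (b) and (c), then read off (a) --- but the essential surjectivity step you propose for (b) does not work. You assert that every $X\in\per(\Gamma)$ fits in a single triangle $X'\to X\to X''\to\Sigma X'$ with $X'\in\cf_\Gamma$ and $X''\in\cd_{fd}(\Gamma)$. This fails already for $X=\Sigma^2\Gamma$: for any $X'\in\cf_\Gamma$ one computes $\Hom_{\per(\Gamma)}(X',\Sigma^2\Gamma)=0$ (using $\Hom(\Gamma,\Sigma^i\Gamma)=H^i(\Gamma)=0$ for $i>0$ applied to the defining triangle of $X'$), so the first map of any such triangle is zero and $X''\cong\Sigma^2\Gamma\oplus\Sigma X'$, which lies in $\cd_{fd}(\Gamma)$ only if $\Gamma$ itself has finite-dimensional total cohomology --- generically not the case. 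The underlying misstep is the claim that truncating via the standard co-$t$-structure produces pieces concentrated in finitely many cohomological degrees: while each individual $H^i(X)$ is finite-dimensional for $X\in\per(\Gamma)$ because $J$ is finite-dimensional, a perfect object can have nonzero cohomology in arbitrarily many negative degrees, and the co-$t$-structure weight truncations do not control cohomological amplitude. Amiot's actual proof of her Proposition 2.9 proceeds by an inductive chain of reductions, repeatedly passing through triangles whose third terms lie in $\cd_{fd}(\Gamma)$ and using the smoothness inclusion $\cd_{fd}(\Gamma)\subset\per(\Gamma)$; the conclusion is that $\pi(X)\cong\pi(X')$ for some $X'\in\cf_\Gamma$, but this isomorphism in the quotient is realised by a zigzag, not by a single triangle in $\per(\Gamma)$ connecting $X$ directly to an element of $\cf_\Gamma$.
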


These results were proved by Amiot~\cite{Amiot09} for non-complete Ginzburg dg algebras and non-complete Jacobian algebras, but her proof works also in the complete setting. When $Q$ is acyclic, the Amiot cluster category $\cc_{(Q,0)}$ is triangle equivalent to the cluster category $\cc_Q$, thanks to the theorem ~\cite[Theorem 2.1]{KellerReiten08} of Keller and Reiten.

\subsection{The correspondences between silting objects, support $\tau$-tilting objects and cluster-tilting objects}\label{ss:silting-sttilting-cto}
Let $(Q,W)$ be a Jacobi-finite quiver with potential, $\Gamma=\hat{\Gamma}(Q,W)$, $J=\hat{J}(Q,W)$ and $p:\Gamma\rightarrow J$ be the canonical projection of dg algebras. Let $\cc_{(Q,W)}$ be the Amiot cluster category of $(Q,W)$ and $\pi:\per(\Gamma)\rightarrow\cc_{(Q,W)}$ be the canonical projection functor. Recall that
$\sttilt(J)$  denotes the set of isomorphism classes of basic support $\tau$-tilting modules over $J$ (Section~\ref{ss:support-tau-tilting}) and
$\cto(\cc_{(Q,W)})$ denotes the set of isomorphism classes of basic cluster-tilting objects in $\cc_{(Q,W)}$ (Section~\ref{ss:cluster-tilting}).

\begin{theorem}\label{t:silt-support-cluster}
There is a commutative diagram of bijections which commute with mutations
{\rm
\[\xymatrix@C=0.45pc@R=1.5pc{
\twosilt(\Gamma)\ar[rr] \ar[ddr]\ar@/_20pt/[rdddd]&&
\twosilt(J)\ar[ldd]
\\
\\
&\sttilt(J)
\\
\\
&\cto(\cc_{(Q,W)})\ar[uu]\ar@/_20pt/[ruuuu]
}
\]
}
\end{theorem}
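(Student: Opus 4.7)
The plan is to construct the three primary arrows of the diagram, deduce the fourth by composition, and then verify bijectivity, commutativity and preservation of mutations. The diagonal arrow $\twosilt(\Gamma)\to\twosilt(J)$ is already provided by Theorem~\ref{t:the-top-two-floor} as the induction functor $p_*$ along $p:\Gamma\to J$, where it is shown to be a bijection commuting with mutations. For the map $\twosilt(J)\to\sttilt(J)$ I would follow Adachi--Iyama--Reiten~\cite{AdachiIyamaReiten12}: given a basic $2$-term silting object $M\in\per(J)\cong\ch^b(\proj J)$ presented by $P^{-1}\xrightarrow{f}P^0$, send it to $H^0(M)=\cok(f)\in\sttilt(J)$, with inverse assigning to $T\in\sttilt(J)$ the cone of its minimal projective presentation; bijectivity and compatibility with mutations are proved in loc.~cit.

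The substantive construction is $\gamma:\twosilt(\Gamma)\to\cto(\cc_{(Q,W)})$, defined via the canonical projection $\pi$. By Theorem~\ref{t:generalised-cluster-category}(b), $\pi$ restricts to an additive equivalence $\cf_\Gamma\stackrel{\sim}{\to}\cc_{(Q,W)}$, and every $2$-term silting object lies in $\cf_\Gamma$, so the map is well defined on isoclasses. The key step is Theorem~\ref{t:generalised-cluster-category}(c), which for $M\in\twosilt(\Gamma)$ yields a short exact sequence
\[0\to\Hom_{\per(\Gamma)}(M,\Sigma M)\to\Hom_{\cc_{(Q,W)}}(\pi(M),\Sigma\pi(M))\to D\Hom_{\per(\Gamma)}(M,\Sigma M)\to 0.\]
Both outer terms vanish by the silting property of $M$, so $\pi(M)$ is rigid. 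Combined with $|\pi(M)|=|M|=n=|\pi(\Gamma)|$ (additivity of $\pi|_{\cf_\Gamma}$ together with Theorem~\ref{t:generators-of-grothendieckgroup-silting-case}) and Proposition~\ref{p:cto-and-number-of-summands}, this identifies $\pi(M)$ as a basic cluster-tilting object. Injectivity of $\gamma$ is immediate from the equivalence $\cf_\Gamma\cong\cc_{(Q,W)}$. For surjectivity I would lift a given basic cluster-tilting object $T$ to the unique (up to isomorphism) $\tilde T\in\cf_\Gamma$ with $\pi(\tilde T)\cong T$, and reverse the argument: rigidity of $T$ together with $3$-Calabi--Yau duality in $\cd_{fd}(\Gamma)$ forces both outer terms of the sequence to vanish, whence $\Hom_{\per(\Gamma)}(\tilde T,\Sigma\tilde T)=0$, and the $2$-term presentation of $\tilde T$ together with $|\tilde T|=n$ yields the silting generation property.

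The map $\sttilt(J)\to\cto(\cc_{(Q,W)})$ is then forced to be the composite, so commutativity of the diagram holds by construction. For the mutation statement I would observe that each of the three notions of left mutation is built from a minimal left $\add$-approximation triangle inside an appropriate additive subcategory: triangles in $\cf_\Gamma$ for $\twosilt(\Gamma)$, triangles in $\add(\Gamma/eA)$ for $\sttilt(J)$ as in \cite{AdachiIyamaReiten12}, and exchange triangles in $\cc_{(Q,W)}$ as in \cite{IyamaYoshino08}. Since $p_*$ induces the additive equivalence $\add_{\per(\Gamma)}(\Gamma)\to\proj J$ and $\pi|_{\cf_\Gamma}$ induces $\cf_\Gamma\cong\cc_{(Q,W)}$, each of the three bijections transports minimal approximation triangles to minimal approximation triangles, so the exchange relations match.

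The main obstacle is the surjectivity direction in the construction of $\gamma$: lifting a cluster-tilting object in the triangle quotient $\cc_{(Q,W)}=\per(\Gamma)/\cd_{fd}(\Gamma)$ to a $2$-term silting object of $\per(\Gamma)$ and verifying the generation condition $\thick(\tilde T)=\per(\Gamma)$. The point is that $\pi$ is only an additive, not a triangle, equivalence on $\cf_\Gamma$, so one cannot transport triangles freely; the argument proceeds via the $2$-term structure, the Calabi--Yau reduction of Theorem~\ref{t:cy-reduction}, and the Jacobi-finiteness hypothesis on $(Q,W)$, following the strategy of Amiot~\cite{Amiot09} and its refinements in~\cite{KellerYang11}.
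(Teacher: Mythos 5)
Your proposal follows essentially the same route as the paper: the bijection $\twosilt(\Gamma)\to\cto(\cc_{(Q,W)})$ is established exactly as in the paper's proof, via the short exact sequence of Theorem~\ref{t:generalised-cluster-category}(c) to match rigidity with the presilting condition, together with Proposition~\ref{p:cto-and-number-of-summands} and a count of indecomposable summands, and the other maps and commutativities are handled through the same $p_*$, $H^0$, and $\Hom(\pi(\Gamma),-)$ constructions. One caveat: your closing sentence misattributes the surjectivity of $\gamma$ to Calabi--Yau reduction and the strategy of Amiot and Keller--Yang; what is actually used there (and what the paper uses) is the Bongartz-type completion for $2$-term presilting objects, namely Lemma~\ref{l:2-term-partial-silting-and-vanishing-of-ext1} and Proposition~\ref{l:partial-silting} of the appendix, which your earlier phrase ``the $2$-term presentation of $\tilde T$ together with $|\tilde T|=n$ yields the silting generation property'' already correctly invokes in substance.
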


The map $\twosilt(\Gamma)\rightarrow\twosilt(J)$ was defined in Theorem~\ref{t:the-top-two-floor}.

$\twosilt(\Gamma)\longrightarrow\sttilt(J)$: This map is defined as taking the $0$-th cohomology: $M\mapsto H^0(M)=\Hom_{\per(\Gamma)}(\Gamma,M)$, see~\cite{IyamaJorgensenYang14}.

$\twosilt(J)\longrightarrow\sttilt(J)$: This map is defined as taking the $0$-th cohomology: $M\mapsto H^0(M)=\Hom_{\ch^b(\proj J)}(J,M)$. For the well-definedness, bijectivity and compatibility of mutations of this map, see
\cite[Theorem 3.2 and Corollary 3.9]{AdachiIyamaReiten12}.
The commutativity of the top triangle follows from the fact that the equivalence $p_*:\add_{\per(\Gamma)}(\Gamma)\rightarrow\proj J$ is just $H^0$. 

$\cto(\cc_{(Q,W)})\longrightarrow\sttilt(J)$: Recall from Theorem~\ref{t:generalised-cluster-category} (a) that $\pi(\Gamma)$ is a cluster-tilting object of $\cc_{(Q,W)}$ with endomorphism algebra $J$. So there is a functor $\Hom_{\cc_{(Q,W)}}(\pi(\Gamma),?):\cc_{(Q,W)}\rightarrow \mod J$. The desired map takes a cluster-tilting object $M$ to $\Hom_{\cc_{(Q,W)}}(\pi(\Gamma),M)$, see~\cite[Theorem 4.1]{AdachiIyamaReiten12}.

$\twosilt(\Gamma)\longrightarrow\cto(\cc_{(Q,W)})$: The assignment $M\mapsto \pi(M)$ defines a bijection $\twosilt(\Gamma)\rightarrow\cto(\cc_{(Q,W)})$, which is known to the experts. We give a proof for the convenience of the reader. Recall that a silting object of $\per(\Gamma)$ is $2$-term if and only if it belongs to $\cf_\Gamma$.

\begin{proposition}
The equivalence $\pi:\cf_{\Gamma}\stackrel{\sim}{\longrightarrow}\cc_{(Q,W)}$ induces a bijection from $\twosilt(\Gamma)$ to $\cto(\cc_{(Q,W)})$. Moreover, this bijection commutes with mutations, that is, if $M$ is a basic 2-term silting object in $\per(\Gamma)$ and $N$ is an indecomposable direct summand of $M$ such that $\mu^-_N(M)$ is 2-term, then $\mu_{\pi(N)}(\pi(M))=\pi(\mu^-_N(M))$.

\end{proposition}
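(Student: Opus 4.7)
The plan is to check that the additive equivalence $\pi:\cf_{\Gamma}\to\cc_{(Q,W)}$ restricts to a bijection on the classes of interest and then verify the mutation compatibility via the fact that $\pi$ is a triangle functor that is fully faithful on $\cf_\Gamma$. Write $n=|Q_0|$, so that the canonical silting object $\Gamma$ has $|\Gamma|=n$ and the canonical cluster-tilting object $\pi(\Gamma)$ has $|\pi(\Gamma)|=n$.

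First I would show that for $M\in\twosilt(\Gamma)$ the object $\pi(M)$ lies in $\cto(\cc_{(Q,W)})$. Applying the short exact sequence of Theorem~\ref{t:generalised-cluster-category}(c) with $X=Y=M$ gives
\[
0\to\Hom_{\per(\Gamma)}(M,\Sigma M)\to\Hom_{\cc_{(Q,W)}}(\pi(M),\Sigma\pi(M))\to D\Hom_{\per(\Gamma)}(M,\Sigma M)\to 0,
\]
so silting of $M$ forces rigidity of $\pi(M)$. Since $\pi|_{\cf_\Gamma}$ is an additive equivalence, $|\pi(M)|=|M|=n$, and Proposition~\ref{p:cto-and-number-of-summands} applied to the reference cluster-tilting object $\pi(\Gamma)$ upgrades rigid to cluster-tilting.

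For the converse, given $T\in\cto(\cc_{(Q,W)})$, I would lift to a basic $M\in\cf_\Gamma$ with $\pi(M)\cong T$. The same short exact sequence gives $\Hom_{\per(\Gamma)}(M,\Sigma M)=0$. For $i\geq 2$, I use the equality $\cf_\Gamma=\Sigma\cp^{\std}_{\geq 0}\cap\cp^{\std}_{\leq 0}$ (already verified in the proof of Corollary~\ref{c:knky-for-intermediate}) and the co-$t$-structure orthogonality to conclude
\[
\Hom_{\per(\Gamma)}(M,\Sigma^i M)\subseteq\Hom(\Sigma\cp^{\std}_{\geq 0},\Sigma^i\cp^{\std}_{\leq 0})=0,
\]
so $M$ is a 2-term presilting object in $\per(\Gamma)$. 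Using Proposition~\ref{p:cto-and-number-of-summands} again, $|M|=|\pi(M)|=|T|=n=|\Gamma|$, and Proposition~\ref{l:partial-silting} (applied to $\Gamma$ as silting object in the Hom-finite Krull--Schmidt category $\per(\Gamma)$) upgrades $M$ from 2-term presilting to silting. Thus $\pi|_{\cf_\Gamma}$ and a quasi-inverse exchange $\twosilt(\Gamma)$ and $\cto(\cc_{(Q,W)})$.

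For the mutation compatibility, write $M=N\oplus M'$ and let $f:N\to E$ be the minimal left $\add(M')$-approximation in $\per(\Gamma)$ whose cone fits in the triangle $N\to E\to N^*\to\Sigma N$ defining $\mu_N^-(M)=N^*\oplus M'$. The hypothesis that $\mu_N^-(M)$ is 2-term places $N^*\in\cf_\Gamma$, so $E,N,N^*,M'$ all lie in $\cf_\Gamma$. Applying the triangle functor $\pi$ gives
\[
\pi(N)\to\pi(E)\to\pi(N^*)\to\Sigma\pi(N)
\]
in $\cc_{(Q,W)}$. Because $\pi$ is fully faithful on $\cf_\Gamma$, the map
\[
\Hom_{\per(\Gamma)}(E,M')\to\Hom_{\per(\Gamma)}(N,M')
\]
induced by $f$ is identified with the map
\[
\Hom_{\cc_{(Q,W)}}(\pi(E),\pi(M'))\to\Hom_{\cc_{(Q,W)}}(\pi(N),\pi(M'))
\]
induced by $\pi(f)$, so surjectivity and minimality transfer. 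Hence $\pi(f)$ is a minimal left $\add(\pi(M'))$-approximation, and the displayed triangle is the exchange triangle defining $\mu_{\pi(N)}(\pi(M))$, giving $\mu_{\pi(N)}(\pi(M))=\pi(N^*)\oplus\pi(M')=\pi(\mu_N^-(M))$.

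The main obstacle is promoting the lift $M\in\cf_\Gamma$ of a cluster-tilting object to a silting object: the co-$t$-structure orthogonality handles $\Ext^{\geq 2}$, the Amiot short exact sequence of Theorem~\ref{t:generalised-cluster-category}(c) controls $\Ext^1$, and the 2-term completion criterion via counting summands upgrades presilting to silting. Everything else is a careful bookkeeping of the fact that $\pi$ is a fully faithful triangle functor from $\cf_\Gamma$ that restricts to an additive equivalence on the relevant $\add$-subcategories.
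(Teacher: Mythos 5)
Your proof is correct and takes essentially the same route as the paper: use the Amiot short exact sequence (Theorem~\ref{t:generalised-cluster-category}(c)) to relate rigidity of $\pi(X)$ in $\cc_{(Q,W)}$ to vanishing of $\Hom_{\per(\Gamma)}(X,\Sigma X)$, combine with the summand-counting characterizations (Proposition~\ref{p:cto-and-number-of-summands} on one side, Proposition~\ref{l:partial-silting} on the other), and then observe that the additive equivalence $\pi|_{\cf_\Gamma}$ transports minimal left approximations, hence exchange triangles. The only cosmetic difference is that you re-derive the vanishing of $\Hom(M,\Sigma^{\geq 2}M)$ from the standard co-$t$-structure orthogonality where the paper simply cites Lemma~\ref{l:2-term-partial-silting-and-vanishing-of-ext1}, and you spell out the transfer of the approximation property in more detail than the paper's one-line remark.
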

\begin{proof}
Let $X$ be an object of $\cf_\Gamma$. Looking at the short exact sequence in Theorem~\ref{t:generalised-cluster-category} (c) with $Y=X$, we obtain that $\Hom_{\cc_{(Q,W)}}(\pi(X),\Sigma \pi(X))=0$ if and only if $\Hom_{\per(\Gamma)}(X,\Sigma X)=0$. Thus we have
\begin{align*}
\lefteqn{\text{$\pi(X)$ is a cluster tilting object in $\cc_A$}}\\
&\Leftrightarrow \text{$\Hom_{\cc_{(Q,W)}}(\pi(X),\Sigma \pi(X))=0$ and $|\pi(X)|=|\pi(\Gamma)|$} \qquad \text{by Proposition~\ref{p:cto-and-number-of-summands}}\\
&\Leftrightarrow \text{$\Hom_{\per(\Gamma)}(X,\Sigma X)=0$ and $|X|=|\Gamma|$}\\
&\Leftrightarrow \text{$X$ is a presilting object and $|X|=|\Gamma|$}\qquad\text{by Lemma~\ref{l:2-term-partial-silting-and-vanishing-of-ext1}}\\
&\Leftrightarrow \text{$X$ is a silting object}  \qquad \text{by Proposition~\ref{l:partial-silting}}.
\end{align*}
This proves the first statement.

The second statement holds because $f$ is a minimal approximation in $\cf_\Gamma$ if and only if $\pi(f)$ is a minimal approximation in $\cc_{(Q,W)}$.  
\end{proof}

The commutativity of the left triangle of the diagram follows from the definitions of the three maps and the fact that $\pi:\cf_\Gamma\rightarrow\cc_{(Q,W)}$ is an equivalence.

$\cto(\cc_{(Q,W)})\longrightarrow\twosilt(J)$: The functor $\Hom_{\cc_{(Q,W)}}(\pi(\Gamma),?):\cc_{(Q,W)}\rightarrow \mod J$ restricts to an additive equivalence $\add_{\cc_{(Q,W)}}(\pi(\Gamma))\rightarrow \proj J$. Let $M$ be a cluster-tilting object of $\cc_{(Q,W)}$. Then there is a triangle
\begin{align}
\xymatrix{Q^{-1}\ar[r]^f & Q^0\ar[r] & M\ar[r] &\Sigma Q^{-1}}\label{eq:triangle-cto->silting}
\end{align}
with $Q^{-1}$ and $Q^0$ in $\add_{\cc_{(Q,W)}}(\pi(\Gamma))$. The desired map takes $M$ to the cone of the morphism $\Hom_{\cc_{(Q,W)}}(\pi(\Gamma),f)$ in $\ch^b(\proj J)$, see~\cite[Theorem 4.7]{AdachiIyamaReiten12}. By applying $\Hom_{\cc_{(Q,W)}}(\pi(\Gamma),?)$ to the triangle (\ref{eq:triangle-cto->silting}), we see that $\Hom_{\cc_{(Q,W)}}(\pi(\Gamma),M)$ is the cokernel of $\Hom_{\cc_{(Q,W)}}(\pi(\Gamma),f)$, \ie the $0$-th cohomology of its cone.  The commutativity of the right triangle of the diagram follows.

\begin{remark}
Reduction techniques analogous to $2$-Calabi--Yau reduction (Section~\ref{ss:cluster-tilting}), \emph{silting reduction} and \emph{$\tau$-tilting reduction}, are respectively introduced by Aihara and Iyama in \cite{AiharaIyama12} and by Jasso in \cite{Jasso13}. The compatibility of these reductions are studied in \cite{Keller11,IyamaYang13,Jasso13}.
\end{remark}


\subsection{The correspondences between silting objects, $t$-structures, torsion pairs and support $\tau$-tilting modules}

Let $(Q,W)$ be a Jacobi-finite quiver with potential, $\Gamma=\hat{\Gamma}(Q,W)$ and $J=\hat{J}(Q,W)$.

\begin{theorem}\label{t:torsion-t-str}
There is a commutative diagram of bijections which commute with mutations and preserve partial orders
{\rm
\[\xymatrix@C=2pc@R=1.5pc{
\twosilt(\Gamma)\ar[rr] \ar@{-->}[rdd]\ar[dddd]
& &
\intertstr(\Gamma)
\ar[ll]
\\
\\
&\twosilt(J)\ar@{-->}[rr]\ar@{-->}[ldd] & &
\intertstr(J)\ar@{-->}[ll]\ar[uul]
\\
\\
\sttilt(J)\ar[rr] && \fftor(J) \ar[uur]\ar[uuuu]
}
\]
}
\end{theorem}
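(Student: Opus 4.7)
The plan is to assemble the diagram from results already at hand, adding two new bijections along the bottom. The upper square and its diagonals, together with the vertical arrow $\intertstr(J)\to\intertstr(\Gamma)$, come from Theorem~\ref{t:the-top-two-floor} applied to the pair $(\Gamma,J)$. The horizontal arrows $\twosilt(J)\leftrightarrow\intertstr(J)$ in the middle row come from Corollary~\ref{c:knky-for-intermediate} applied to $J$ viewed as a dg algebra concentrated in degree zero. The composite $\twosilt(\Gamma)\to\twosilt(J)\to\sttilt(J)$, $M\mapsto H^{0}(p_{*}(M))$, is the bijection of Theorem~\ref{t:silt-support-cluster}.

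For the two new bijections along the bottom, I would define $\sttilt(J)\to\fftor(J)$ by sending a basic support $\tau$-tilting module $T$ to the torsion pair $(\Fac(T),T^{\perp})$, which is a bijection commuting with mutations and preserving partial orders by \cite[Theorem~2.7]{AdachiIyamaReiten12}. The map $\fftor(J)\to\intertstr(J)$ sends a functorially finite torsion pair $(\ct,\cf)$ to the Happel--Reiten--Smal\o\ tilt of the standard $t$-structure on $\cd^{b}(\mod J)\cong\cd_{fd}(J)$ at $(\ct,\cf)$, as described in Section~\ref{ss:t-str}; its heart is the extension closure $\ca'$ of $\ct$ and $\Sigma\cf$. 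Since $(\Sigma\cf,\ct)$ is a torsion pair of $\ca'$ and both $\ct$ and $\cf$ are length categories (as sub- and quotient-closed subcategories of the length category $\mod J$), so is $\ca'$; intermediacy is built into the construction; and the partial order is preserved. The inverse sends an intermediate heart $\ca'$ to $(\ca'\cap\mod J,\Sigma^{-1}\ca'\cap\mod J)$: by Lemma~\ref{l:intermediate-t-str} every object of $\ca'$ has standard cohomology concentrated in degrees $-1$ and $0$, and a long-exact-sequence argument shows that the resulting pair is a torsion pair of $\mod J$ whose HRS tilt recovers $\ca'$.

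The main obstacle is closing the outer diagram and simultaneously deducing the functorial finiteness of the torsion class produced by the inverse HRS construction. For $N\in\twosilt(J)$ written as a $2$-term complex of projectives with cohomologies $H^{-1}(N)$ and $H^{0}(N)$, Theorem~\ref{t:knky} identifies the heart $\ca'$ of the associated intermediate $t$-structure with the extension closure of $H^{0}(N)$ and $\Sigma H^{-1}(N)$ in $\cd^{b}(\mod J)$. A short cohomology computation then identifies $\ca'\cap\mod J$ with $\Fac(H^{0}(N))$, which is precisely the torsion class produced along the other route via $\sttilt(J)$; this is essentially the content of \cite[Section~3]{AdachiIyamaReiten12}. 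This identification proves commutativity of both triangles involving $\sttilt(J)$ and $\fftor(J)$, and shows that the image of $\fftor(J)\to\intertstr(J)$ covers the image of $\twosilt(J)\to\intertstr(J)$, which by Corollary~\ref{c:knky-for-intermediate} is all of $\intertstr(J)$; cardinality matching along the already-proven bijections then upgrades injectivity to bijectivity and yields functorial finiteness of the torsion class. Compatibility with mutations on the new bottom arrows is inherited from the $\twosilt$ side via the same identification.
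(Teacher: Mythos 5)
Your overall plan matches the paper's: glue the upper square and the $\twosilt(\Gamma)\to\twosilt(J)\to\sttilt(J)$ arrows from the earlier theorems, use the Adachi--Iyama--Reiten bijection $T\mapsto\Fac(T)$ for $\sttilt(J)\to\fftor(J)$, take the HRS tilt for $\fftor(J)\to\intertstr(J)$, and then establish commutativity of the lower square by showing that for a $2$-term silting object $N$ with associated aisle $\cd^{\leq 0}$ one has $\Fac(H^0(N))=\cd^{\leq 0}\cap\mod J$, which forces bijectivity and well-definedness of the bottom horizontal arrow by going around the square. That last diagram-chasing step is exactly what the paper does, and it is the crux.

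However, two of your supporting claims do not hold. First, you assert that because $\ct$ and $\cf$ sit inside the length category $\mod J$, the HRS heart $\ca'$ is automatically a length category. Neither $\ct$ nor $\cf$ is an abelian subcategory of $\mod J$, so ``length category'' is not even a priori meaningful for them, and the inference that a torsion pair $(\Sigma\cf,\ct)$ with pieces of bounded cohomological dimension forces $\ca'$ to be length is precisely the non-trivial point the paper flags: it says ``it is difficult to directly prove that this map is well-defined, namely, a functorially finite torsion class is taken to a bounded $t$-structure with length heart,'' and the authors deliberately avoid a direct argument by routing through the closed square. Your assertion, if it were that easy, would make their workaround superfluous; since you anyway fall back on closing the square, the assertion is redundant but also unjustified. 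Second, your sketch of the key equality rests on the claim that the heart $\ca'$ associated to $N$ is ``the extension closure of $H^0(N)$ and $\Sigma H^{-1}(N)$.'' This is false: take $N=J$ with $J$ hereditary of type $\mathbb A_2$. Then $H^0(N)=J$, $H^{-1}(N)=0$, and the heart is all of $\mod J$, but the extension closure of $J=P_1\oplus P_2$ in $\cd^b(\mod J)$ is just $\add(J)$, since extensions of projectives by projectives split. The correct statement is that $\ca'$ is the extension closure of $\ct=\Fac(H^0(N))$ and $\Sigma\cf$ with $\cf=H^0(N)^{\perp_1}$, but identifying $\ct$ with $\Fac(H^0(N))$ is exactly what needs to be proved, so the reasoning is circular. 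The paper instead proves both inclusions $\Fac(H^0(M))\subseteq\cd^{\leq 0}\cap\mod J$ and $\cd^{\leq 0}\cap\mod J\subseteq\Fac(H^0(M))$ by explicit triangle arguments (constructing a map $M^{\oplus m}\to X$ and analysing its cone via long exact sequences), and you would need to supply such an argument, or a precise citation to it, to close the gap.
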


The maps in the upper square were defined in Theorem~\ref{t:the-top-two-floor} and the maps in the left triangle were defined in Theorem~\ref{t:silt-support-cluster}.

$\sttilt(J)\longrightarrow\fftor(J)$: 
The assignment $T\mapsto \Fac(T)$ defines a bijection from $\sttilt(J)$ to $\fftor(J)$, see~\cite[Theorem 2.6]{AdachiIyamaReiten12}. The module $T$ can be characterised as an additive generator of the subcategory
\[\{N\in\mod J\mid \Ext^1_J(N,M)=0\text{ for all } M\in\Fac(T)\}.\]

$\fftor(J)\longrightarrow \intertstr(J)$: This map is defined as the Happel--Reiten--Smal{\o} tilt (see Section~\ref{ss:t-str}). 
There is the following general result.
\begin{theorem}\label{t:torsion-pair-and-int-t-str}{(\cite[Theorem 3.1]{BeligiannisReiten07} and \cite[Proposition 2.1]{Woolf10})} Let $(\cc^{\leq 0},\cc^{\geq 0})$ be a bounded $t$-structure on $\cc$ with heart $\ca$. The Happel--Reiten--Smal{\o} tilt induces a bijective map from the set of torsion pairs of $\ca$ to intermediate $t$-structures with respect to $(\cc^{\leq 0},\cc^{\geq 0})$. Its inverse takes $(\cc'^{\leq 0},\cc'^{\geq 0})$ to $(\cc'^{\leq 0}\cap\ca,(\Sigma^{-1}\cc'^{\geq 0})\cap\ca)$.
\end{theorem}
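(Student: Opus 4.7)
The plan is to verify the claim in two main directions: first that the proposed assignment $(\cc'^{\le 0},\cc'^{\ge 0})\mapsto (\cc'^{\le 0}\cap\ca,\,\Sigma^{-1}\cc'^{\ge 0}\cap\ca)$ actually lands in torsion pairs of $\ca$, and then that this assignment and the HRS tilt are mutual inverses. The well-definedness of the HRS tilt as a map into intermediate $t$-structures is already recalled in the paragraph preceding Theorem~\ref{t:torsion-pair-and-int-t-str}, so only the other three checks are needed.

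First I would verify that, given an intermediate $t$-structure $(\cc'^{\le 0},\cc'^{\ge 0})$, the pair $(\ct,\cf):=(\cc'^{\le 0}\cap\ca,\,\Sigma^{-1}\cc'^{\ge 0}\cap\ca)$ is a torsion pair of $\ca$. The Hom-vanishing $\Hom_{\ca}(\ct,\cf)=0$ is immediate: if $T\in\ct$ and $F\in\cf$, then $T\in\cc'^{\le 0}$ and $F\in\Sigma^{-1}\cc'^{\ge 0}$, so $\Hom(T,F)=0$ by the defining vanishing axiom of $(\cc'^{\le 0},\cc'^{\ge 0})$. For the decomposition, take $M\in\ca$ and consider its truncation triangle $\sigma'^{\le 0}M\to M\to\sigma'^{\ge 1}M\to\Sigma\sigma'^{\le 0}M$ with respect to the new $t$-structure. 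Using the intermediate inclusions $\cc'^{\le 0}\subseteq\cc^{\le 0}$ and $\Sigma^{-1}\cc'^{\ge 0}\subseteq\cc^{\ge 0}$, both $\sigma'^{\le 0}M$ and $\sigma'^{\ge 1}M$ automatically lie in one half of the old $t$-structure. A diagram chase on the long exact sequence of old cohomologies (where $H^i(M)=0$ for $i\ne 0$) then forces the missing vanishings: for instance $H^i(\sigma'^{\ge 1}M)\cong H^{i+1}(\sigma'^{\le 0}M)=0$ for $i\ge 1$, and symmetrically for $\sigma'^{\le 0}M$ in nonpositive degrees. Hence both terms lie in $\ca$, the triangle becomes a short exact sequence in $\ca$, and the two outer terms belong to $\ct$ and $\cf$ respectively.

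Next I would show the two maps are mutually inverse. Starting from a torsion pair $(\ct,\cf)$ of $\ca$, the definition of the HRS aisle $\cc'^{\le 0}=\{X\mid H^{>0}(X)=0,\ H^0(X)\in\ct\}$ gives, for $X\in\ca$, the equivalence $X\in\cc'^{\le 0}\Leftrightarrow X\in\ct$, and similarly $\Sigma^{-1}\cc'^{\ge 0}\cap\ca=\cf$; so this composition is the identity. For the reverse composition, start with an intermediate $t$-structure $(\cc'^{\le 0},\cc'^{\ge 0})$, form $(\ct,\cf)$ as above, and form the HRS tilt $(\cc''^{\le 0},\cc''^{\ge 0})$. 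Because both $t$-structures are bounded (an intermediate $t$-structure is bounded by construction, and the HRS tilt is bounded since its heart equals $\ct*\Sigma\cf$), it suffices to show the two hearts coincide. The inclusion $\ct*\Sigma\cf\subseteq\cc'^{\le 0}\cap\cc'^{\ge 0}$ is formal: $\ct\subseteq\cc'^{\le 0}$ and $\ct\subseteq\ca\subseteq\cc^{\ge 0}\subseteq\cc'^{\ge 0}$, while $\Sigma\cf\subseteq\Sigma\ca\subseteq\Sigma\cc^{\le 0}\subseteq\cc'^{\le 0}$ and $\Sigma\cf\subseteq\cc'^{\ge 0}$ by definition of $\cf$; extension-closedness of the new heart does the rest.

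The hard part, and the main obstacle, is the opposite inclusion $\cc'^{\le 0}\cap\cc'^{\ge 0}\subseteq\ct*\Sigma\cf$. Here I would use Lemma~\ref{l:intermediate-t-str}, which places any $X$ in the new heart inside $\cc^{\le 0}\cap\Sigma\cc^{\ge 0}$; hence $X$ has only two possibly nonzero old cohomologies, yielding the triangle
\[
\Sigma H^{-1}(X)\longrightarrow X\longrightarrow H^0(X)\longrightarrow \Sigma^2 H^{-1}(X)
\]
with $H^{-1}(X),H^0(X)\in\ca$. One must then verify that $H^0(X)\in\ct$ and $H^{-1}(X)\in\cf$. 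For $H^0(X)\in\ct$: from $H^{-1}(X)\in\ca\subseteq\cc^{\le 0}$ we get $\Sigma H^{-1}(X)\in\Sigma\cc^{\le 0}\subseteq\cc'^{\le 0}$, so the triangle together with $X\in\cc'^{\le 0}$ and the closure of $\cc'^{\le 0}$ under extensions forces $H^0(X)\in\cc'^{\le 0}$, hence $H^0(X)\in\ca\cap\cc'^{\le 0}=\ct$. Symmetrically, $H^0(X)\in\ca\subseteq\cc^{\ge 0}\subseteq\cc'^{\ge 0}$ gives $\Sigma^{-1}H^0(X)\in\Sigma^{-1}\cc'^{\ge 0}$, so closure of $\cc'^{\ge 0}$ under extensions combined with $X\in\cc'^{\ge 0}$ forces $\Sigma H^{-1}(X)\in\cc'^{\ge 0}$, i.e. $H^{-1}(X)\in\Sigma^{-1}\cc'^{\ge 0}\cap\ca=\cf$. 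This completes the identification of hearts and hence of the two $t$-structures.
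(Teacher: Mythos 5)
The paper does not actually prove Theorem~\ref{t:torsion-pair-and-int-t-str}; it is stated with citations to Beligiannis--Reiten and Woolf and then used as a black box in the proof of Theorem~\ref{t:torsion-t-str}. So there is no internal argument to compare yours against; you have supplied a self-contained proof where the paper defers to the literature.

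Your argument is correct and reasonably economical. The well-definedness of the inverse assignment is handled cleanly by running the new truncation triangle of $M\in\ca$ through the long exact sequence of old cohomologies, and the identification of hearts in the second composition is exactly the right reduction, with Lemma~\ref{l:intermediate-t-str} doing the work of bounding the old cohomological amplitude of an element of the new heart to $\{-1,0\}$. Two small points of language worth tightening. First, in the final paragraph you invoke ``closure of $\cc'^{\leq 0}$ under extensions'' to conclude $H^0(X)\in\cc'^{\leq 0}$ from the triangle $\Sigma H^{-1}(X)\to X\to H^0(X)\to\Sigma^2 H^{-1}(X)$; what is really being used is closure under \emph{cones}, which follows from extension closure together with $\Sigma\cc'^{\leq 0}\subseteq\cc'^{\leq 0}$ after rotating the triangle (and dually, closure of $\cc'^{\geq 0}$ under cocones from $\Sigma^{-1}$-closure). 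Second, the notation $\ct*\Sigma\cf$ should be read as the full extension closure of $\ct\cup\Sigma\cf$ (which is what the paper asserts the HRS heart to be); your final step in fact lands an object of the heart in $\Sigma\cf*\ct$ in the one-step sense, which is contained in, but not a priori equal to, $\ct*\Sigma\cf$, so ``extension closure'' is the safer phrasing throughout.
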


However, it is difficult to directly prove that this map is well-defined, namely, a functorially finite torsion class is taken to a bounded $t$-structure with length heart. We use the commutativity of the lower square of the diagram. To show the commutativity, let $M$ be a $2$-term silting object and let $(\cd^{\leq 0},\cd^{\geq 0})$ denote the corresponding $t$-structure on $\cd^b(\mod J)$. We need to prove the equality $\Fac(H^0(M))=\cd^{\leq 0}\cap \mod J$.

Let $X\in \Fac(H^0(M))$. Then there is a positive integer $m$ and a surjection $H^0(M)^{\oplus m}\rightarrow X$. Since $M$ is $2$-term, there is a morphism $M\rightarrow H^0(M)$ such that taking $H^0$ we obtain the identity map. Composing the above two maps, we obtain a map $f:M^{\oplus m}\rightarrow X$. Form the triangle
\begin{align}
\xymatrix{
M^{\oplus m}\ar[r]^f & X\ar[r] & N\ar[r] &\Sigma M^{\oplus m}
}\label{eq:triangle-torsion->t-str}
\end{align}
By taking cohomologies we see that $H^i(N)=0$ for $i\geq 0$, and hence $N\in\Sigma\cd_\std^{\leq 0}$, where $(\cd^{\leq 0}_\std,\cd^{\geq 0}_\std)$ denotes the standard $t$-structure on $\cd^b(\mod J)$. Since $(\cd^{\leq 0},\cd^{\geq 0})$ is intermediate, it follows that $N$ belongs to $\cd^{\leq 0}$.
Applying $\Hom(M,?)$ to the above triangle (\ref{eq:triangle-torsion->t-str}), we obtain a long exact sequence
\[
\xymatrix{
\ldots\ar[r]&\Hom(M,\Sigma^i M^{\oplus n})\ar[r] & \Hom(M,\Sigma^i X)\ar[r] & \Hom(M,\Sigma^i N)\ar[r]&\ldots
}
\]
The two outer terms vanish for $i>0$, so the middle term also vanishes for $i>0$, \ie $X\in\cd^{\leq 0}$. This shows the inclusion $\Fac(H^0(M))\subseteq \cd^{\leq 0}\cap \mod J$.

Let $X$ be a non-zero object in $\cd^{\leq 0}\cap \mod J$. We claim that $\Hom(M,X)\neq 0$. Indeed, if $\Hom(M,X)=0$, then $\Hom(M,\Sigma^i X)=0$ for $i\geq 0$, so $\Sigma^{-1}X\in\cd^{\leq 0}$. Thus  $X\in\Sigma \cd^{\leq 0}\subseteq \Sigma\cd_\std^{\leq 0}$ since $(\cd^{\leq 0},\cd^{\geq 0})$ is intermediate. This implies that $X=0$ because $\Sigma\cd_\std^{\leq 0}\cap \mod J=0$. Now take a basis $f_1,\ldots,f_m$ of $\Hom(M,X)$ and form the following triangle
\begin{align}
\xymatrix@C=2.5pc{
N\ar[r] & M^{\oplus m}\ar[r]^{(f_1,\ldots,f_m)} & X\ar[r] &\Sigma N
}. \label{eq:triangle-torsion->t-str-2}
\end{align}
Applying $\Hom(M,?)$ to this triangle we obtain a long exact sequence
\[
\xymatrix@C=0.9pc{
\Hom(M,\Sigma^i M^{\oplus m})\ar[r] & \Hom(M,\Sigma^i X)\ar[r] & \Hom(M,\Sigma^{i+1}N)\ar[r] & \Hom(M,\Sigma^{i+1}M^{\oplus m}).
}
\]
If $i>0$, then $\Hom(M,\Sigma^{i+1}N)$ vanishes because its two neighbours vanish. If $i=0$, then the leftmost map is clearly surjective, and hence $\Hom(M,\Sigma N)=0$. Therefore $N\in\cd^{\leq 0}\subseteq \cd_\std^{\leq 0}$, in particular, $H^1(N)=0$. Now taking cohomologies of the triangle (\ref{eq:triangle-torsion->t-str-2}) gives us an exact sequence
\[\xymatrix{H^0(M)^{\oplus n}\ar[r] & X\ar[r] & H^1(N)=0}.\]
So $X\in \Fac(H^0(M))$. This shows the inclusion $\cd^{\leq 0}\cap \mod J\subseteq \Fac(H^0(M))$.

$\fftor(J)\longrightarrow \intertstr(\Gamma)$: This map is defined as the Happel--Reiten--Smal{\o} tilt, similar to the map $\fftor(J)\rightarrow \intertstr(J)$.

\begin{remark}\label{r:smc-are-stalk}
Let $\{X_1,\ldots,X_n\}$ be a $2$-term simple-minded collection of $\cd_{fd}(\Gamma)$ (respectively, $\cd^b(\mod J)$). In Section~\ref{ss:silting-smc-tstr-cotstr} we constructed a bounded $t$-structure $(\cd^{\leq 0},\cd^{\geq 0})$ whose heart $\ca$ is a length category with simple objects $X_1,\ldots,X_n$. By Theorem~\ref{t:torsion-pair-and-int-t-str}, there is a torsion pair $(\ct,\cf)$ of $\mod J$ such that $(\cd^{\leq 0},\cd^{\geq 0})$ is the Happel--Reiten--Smal{\o} tilt of the standard $t$-structure at $(\ct,\cf)$. In particular, $\ca$ has a torsion pair $(\Sigma\cf,\ct)$. Therefore, for any $i=1,\ldots,n$, the object $X_i$ belongs to either $\Sigma\cf$ or $\ct$.
\end{remark}

\subsection{Ordered exchange graphs and reachable objects}\label{ss:reachable-objects}

Let $(Q,W)$ be a Jacobi-finite quiver with potential, $\Gamma=\hat{\Gamma}(Q,W)$ and $J=\hat{J}(Q,W)$.
Gluing the three diagrams in Theorems~\ref{t:the-top-two-floor}, \ref{t:silt-support-cluster} and~\ref{t:torsion-t-str} we obtain the diagram in Figure~\ref{f:diagram-cat}.

\begin{figure}
\[
\begin{xy} 0;<0.35pt,0pt>:<0pt,-0.35pt>::
(50,0) *+{\twosilt(\Gamma)}="0",
(500,0) *+{\intercotstr(\Gamma)}="1",
(180,100) *+{\intertstr(\Gamma)}="2",
(630,100) *+{\intersmc(\Gamma)}="3",
(50,250) *+{\twosilt(J)}="4",
(500,250) *+{\intercotstr(J)}="5",
(180,350) *+{\intertstr(J)}="6",
(630,350) *+{\intersmc(J)}="7",
(200,490) *+{\sttilt(J)}="8",
(500,490) *+{\fftor(J)}="9",
(400,630) *+{\cto(\cc_{(Q,W)})}="10",
(382,350) *+{}="14",
"0", {\ar "1"}, {\ar "2"}, {\ar "3"}, {\ar "4"}, {\ar@/_90pt/ "10"}, 
"1", {\ar "0"},  {\ar@{-->} "5"}, 
"2", {\ar "3"}, 
"3", {\ar "1"}, {\ar "2"},
"4", {\ar@{-->} "5"}, {\ar "6"}, {\ar@{-->} "7"}, {\ar@/_20pt/ "8"},
"5", {\ar@{-->} "4"},
"6",  {\ar "7"}, {\ar "2"}, 
"7", {\ar@{-->} "5"}, {\ar "6"}, {\ar "3"},
"8", {\ar "9"}, 
"9", {\ar "6"}, {\ar@{-} "14"}, "14", {\ar@{-->} "2"},
"10", {\ar "8"},
\end{xy}
\]
\caption{}
\label{f:diagram-cat}
\end{figure}

Via the bijections each of the sets in the diagram is equipped with a mutation operation and a partial order. Thanks to Theorem~\ref{t:support-tau-tilting-is-an-ordered-exchange-graph}, they all have the structure of ordered exchange graphs and in this way the diagram becomes a commutative diagram of isomorphisms of ordered exchange graphs.  

It is known that these graphs are connected when they have a finite connected component (Proposition~\ref{p:finiteness-and-connectedness-for-support-tau-tilting}) and when $Q$ is acyclic and $W=0$ (\cite[Proposition 3.5]{BuanMarshReinekeReitenTodorov06}). In general they are not connected, for example, when $Q$ is the quiver
$$\xymatrix@=2em{
					& 2 \ar@<+3pt>[rd]|{b_1} \ar@<-3pt>[rd]|{b_2} \\
				1 \ar@<+3pt>[ru]|{a_1} \ar@<-3pt>[ru]|{a_2} && 3 \ar@<+3pt>[ll]|{c_1} \ar@<-3pt>[ll]|{c_2}
			}$$
and $W=c_1b_1a_1+c_2b_2a_2-c_1b_2a_1c_2b_1a_2$, see~\cite[Example 4.3]{Plamondon11c}. Demonet and Iyama informed us that in this example the graph has precisely two connected components.

The distinguished objects 
\[
\begin{array}{l}
\Gamma_\Gamma\in\twosilt(\Gamma),\\[1mm]
\{p^*(S_1),\ldots,p^*(S_n)\}\in\intersmc(\Gamma),\\[1mm]
(\cd_{\std,\Gamma}^{\leq 0},\cd_{\std,\Gamma}^{\geq 0})\in\intertstr(\Gamma),\\[1mm]
(\cp^{\std,\Gamma}_{\geq 0},\cp^{\std,\Gamma}_{\leq 0})\in\intercotstr(\Gamma),\\[1mm]
J_J\in\twosilt(J),\\[1mm]
\{S_1,\ldots,S_n\}\in\intersmc(J),\\[1mm]
(\cd_{\std,J}^{\leq 0},\cd_{\std,J}^{\geq 0})\in\intertstr(J),\\[1mm]
(\cp^{\std,J}_{\geq 0},\cp^{\std,J}_{\leq 0})\in\intercotstr(J),\\[1mm]
J_J\in\sttilt(J),\\[1mm]
(\mod J,0)\fftor(J),\\[1mm]
\pi(\Gamma)\in\cto(\cc_{(Q,W)})
\end{array}
\]
correspond to each other under the above bijections and they are the unique sources, where $\{S_1,\ldots,S_n\}$ is a complete collection of non-isomorphisc simple $J$-modules, $p:\Gamma\rightarrow J$ is the canonical projection homomorphism and $\pi:\per(\Gamma)\rightarrow\cc_{(Q,W)}$ is the canonical projection functor. The objects in the same connected component as these distinguished objects in the ordered exchange graphs are called \emph{reachable} objects. Adding the condition `reachable' to the above sets we obtain subsets $\rtwosilt(\Gamma)$, $\rintersmc(\Gamma)$, $\rintertstr(\Gamma)$, $\rintercotstr(\Gamma)$, $\rtwosilt(J)$, $\rintersmc(J)$, $\rintertstr(J)$, $\rintercotstr(J)$, $\rsttilt(J)$, $\rfftor(J)$ and $\rcto(\cc_{(Q,W)})$ and a commutative diagram of isomorphisms of ordered exchange graphs (Figure~\ref{f:diagram-cat-reachable}), which, by definition, are all connected.

\begin{figure}
\[
\begin{xy} 0;<0.35pt,0pt>:<0pt,-0.35pt>::
(50,0) *+{\rtwosilt(\Gamma)}="0",
(500,0) *+{\rintercotstr(\Gamma)}="1",
(180,100) *+{\rintertstr(\Gamma)}="2",
(630,100) *+{\rintersmc(\Gamma)}="3",
(50,250) *+{\rtwosilt(J)}="4",
(500,250) *+{\rintercotstr(J)}="5",
(180,350) *+{\rintertstr(J)}="6",
(630,350) *+{\rintersmc(J)}="7",
(200,490) *+{\rsttilt(J)}="8",
(500,490) *+{\rfftor(J)}="9",
(400,630) *+{\rcto(\cc_{(Q,W)})}="10",
(382,350) *+{}="14",
"0", {\ar "1"}, {\ar "2"}, {\ar "3"}, {\ar "4"}, {\ar@/_90pt/ "10"}, 
"1", {\ar "0"},  {\ar@{-->} "5"}, 
"2", {\ar "3"}, 
"3", {\ar "1"}, {\ar "2"},
"4", {\ar@{-->} "5"}, {\ar "6"}, {\ar@{-->} "7"}, {\ar@/_20pt/ "8"},
"5", {\ar@{-->} "4"},
"6", {\ar "7"}, {\ar "2"}, 
"7", {\ar@{-->} "5"}, {\ar "6"}, {\ar "3"},
"8", {\ar "9"}, 
"9", {\ar "6"}, {\ar@{-} "14"}, "14", {\ar@{-->} "2"},
"10", {\ar "8"},
\end{xy}
\]
\caption{}
\label{f:diagram-cat-reachable}
\end{figure}

For an acyclic quiver with trivial potential, it follows from~\cite[Theorem 1.1]{Ladkani07} that Bernstein--Gelfand--Ponomarev reflections (\ie mutations at sinks/sources) induce flip-flops of these ordered exchange graphs. 

\subsection{From cluster-tilting objects to clusters}\label{ss:from-cto-to-cluster}

Let $(Q,W)$ be a Jacobi-finite quiver with potential such that $Q$ is a cluster quiver and $W$ is non-degenerate, $\Gamma=\hat{\Gamma}(Q,W)$, $J=\hat{J}(Q,W)$ and let $\cc_{(Q,W)}$ denote the Amiot cluster category of $(Q,W)$ and $\pi:\per(\Gamma)\rightarrow\cc_{(Q,W)}$ denote the canonical projection functor.
Assume $Q_0=\{1,\ldots,n\}$. 

Let $M$ be an object of $\cc_{(Q,W)}$. Recall that there is a triangle
\[
\xymatrix{P^{-1}\ar[r] & P^0\ar[r] & M\ar[r] &\Sigma P^{-1}}
\]
with $P^{-1}$ and $P^0$ in $\add(\pi(\Gamma))$. Define the \emph{index} of $M$ as 
\[\ind(M):=[P^0]-[P^{-1}]\in K_0^{\rm split}(\add(\pi(\Gamma))).\]
The object $\pi(\Gamma)$ has a canonical decomposition $\pi(\Gamma)=\bigoplus_{i=1}^n \pi(\Gamma_i)$, where $\Gamma_i=e_i\Gamma$. Thus $[\pi(\Gamma_1)],\ldots,[\pi(\Gamma_n)]$ form a (standard) basis for $K_0^{\rm split}(\add(\pi(\Gamma)))$. Via this basis we identity $K_0^{\rm split}(\add(\pi(\Gamma)))$ with $\mathbb{Z}^n$ and identify the index $\ind(M)$ of $M$ with the corresponding $n$-tuple of integers $(\ind_1(M),\ldots,\ind_n(M))$. 

\begin{theorem}{(\cite[Theorem 2.3]{DehyKeller08} and \cite[Proposition 3.1]{Plamondon11b})}\label{t:index-determine-rigid-object} Let $M$ and $N$ be two rigid objects of $\cc_{(Q,W)}$. Then $M$ is isomorphic to $N$ if and only if $\ind(M)=\ind(N)$.
\end{theorem}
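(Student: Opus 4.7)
The forward direction follows immediately from the well-definedness of the index on isomorphism classes: any two triangles $P^{-1} \to P^0 \to M \to \Sigma P^{-1}$ with $P^{-1}, P^0 \in \add(\pi(\Gamma))$ differ by adding common summands from $\add(\pi(\Gamma))$ to both outer terms, so the class $[P^0] - [P^{-1}] \in K^{\rm split}_0(\add(\pi(\Gamma)))$ does not depend on the chosen triangle.

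For the converse, the plan is to lift the problem from $\cc_{(Q,W)}$ to $\per(\Gamma)$ and reduce it to a statement about 2-term presilting objects. By Theorem~\ref{t:generalised-cluster-category}(b), there are unique (up to isomorphism) objects $\tilde M, \tilde N \in \cf_\Gamma$ with $\pi(\tilde M) \cong M$ and $\pi(\tilde N) \cong N$. Applying the short exact sequence of Theorem~\ref{t:generalised-cluster-category}(c) with $X = Y = \tilde M$, the hypothesis $\Hom_{\cc_{(Q,W)}}(M, \Sigma M) = 0$ forces $\Hom_{\per(\Gamma)}(\tilde M, \Sigma \tilde M) = 0$, so $\tilde M$ is a 2-term presilting object of $\per(\Gamma)$, and likewise for $\tilde N$. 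A triangle in $\add(\pi(\Gamma))$ computing $\ind(M)$ lifts via $\pi$ to a triangle in $\add(\Gamma)$ computing the index of $\tilde M$ relative to $\Gamma$, so under the canonical identification $K^{\rm split}_0(\add(\pi(\Gamma))) \cong K^{\rm split}_0(\add(\Gamma)) \cong \Z^n$, the assumption $\ind(M) = \ind(N)$ translates into the equality of the corresponding indices of $\tilde M$ and $\tilde N$. It therefore suffices to prove: \emph{two 2-term rigid objects of $\per(\Gamma)$ with the same index in $K^{\rm split}_0(\add(\Gamma))$ are isomorphic.}

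For this reduced statement, I would use Proposition~\ref{l:partial-silting} to complete $\tilde M$ and $\tilde N$ to basic 2-term silting objects $\tilde M \oplus M'$ and $\tilde N \oplus N'$. The key auxiliary fact to establish is that the indices of the indecomposable summands of any basic 2-term silting object $T_1 \oplus \cdots \oplus T_n$ of $\per(\Gamma)$ form a $\Z$-basis of $K^{\rm split}_0(\add(\Gamma))$. This can be derived by applying Theorem~\ref{t:generators-of-grothendieckgroup-silting-case} to both $\Gamma$ and $T=T_1\oplus\cdots\oplus T_n$: both identifications of $K^{\rm split}_0$ of the corresponding additive subcategory with $K_0(\per(\Gamma))$ are compatible, and the index furnishes the change-of-basis matrix between the two bases, which must therefore be unimodular. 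Granting this, any indecomposable 2-term rigid object is determined up to isomorphism by its index vector, and the general case follows by matching indecomposable summands of $\tilde M$ and $\tilde N$.

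The main technical step, which I would regard as the central obstacle, is this last uniqueness statement for indecomposable 2-term rigid objects of prescribed index. A convenient route is to observe that any minimal presentation $\tilde P^{-1} \to \tilde P^0 \to \tilde M \to \Sigma \tilde P^{-1}$ in $\add(\Gamma)$ has its outer terms determined by the positive and negative parts of the index vector; then a geometric argument in the spirit of Dehy--Keller, showing that the locus of morphisms in $\Hom_{\per(\Gamma)}(\tilde P^{-1}, \tilde P^0)$ whose cone is rigid forms a single $\Aut(\tilde P^{-1}) \times \Aut(\tilde P^0)$-orbit (via semicontinuity of $\dim \Hom(-, \Sigma -)$ and a dimension count), yields the required uniqueness and completes the proof.
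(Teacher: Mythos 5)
The paper supplies no proof of this theorem---it is stated with citations to Dehy--Keller \cite{DehyKeller08} and Plamondon \cite{Plamondon11b}---so there is no in-paper argument to compare against. Those sources prove the statement directly inside the $2$-Calabi--Yau category $\cc_{(Q,W)}$, exploiting exchange triangles and an inductive argument; your plan, which transports the problem to $\per(\Gamma)$ via the equivalence $\pi\colon\cf_\Gamma\stackrel{\sim}{\longrightarrow}\cc_{(Q,W)}$ and then reasons on presentation spaces, is a genuinely different route, closer in spirit to Derksen--Fei and to the $\tau$-tilting treatment of Adachi--Iyama--Reiten. The forward direction and the lifting step (using Theorem~\ref{t:generalised-cluster-category}(b)--(c) together with Lemma~\ref{l:2-term-partial-silting-and-vanishing-of-ext1} to see that $\tilde M,\tilde N$ are $2$-term rigid in $\per(\Gamma)$ with the same index) are sound.

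The middle of your argument, however, has a real gap. The ``key auxiliary fact''---that the indices of the summands of a basic $2$-term silting object form a $\Z$-basis---is true, and your derivation from Theorem~\ref{t:generators-of-grothendieckgroup-silting-case} is fine, but it does \emph{not} imply that an indecomposable $2$-term rigid object is determined up to isomorphism by its index; that is a separate statement and no logical bridge is offered. And even granting it for indecomposables, ``the general case follows by matching indecomposable summands'' is another non-sequitur: equality of the \emph{total} indices $\ind(\tilde M)=\ind(\tilde N)$ does not allow you to match the multisets of summand indices, since distinct multisets of vectors can have the same sum. The indecomposable reduction is a false start. What would actually work is the geometric argument you sketch in your last paragraph, applied directly to $\tilde M$ and $\tilde N$. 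For that you first need the lemma that a minimal presentation $\tilde P^{-1}\to\tilde P^0$ of a $2$-term rigid object of $\per(\Gamma)$ satisfies $\add(\tilde P^{-1})\cap\add(\tilde P^0)=0$, so that the index alone determines $\tilde P^0$ and $\tilde P^{-1}$ up to isomorphism --- a nontrivial fact you are currently taking for granted and should flag explicitly, since the whole argument hinges on it. Once that is in place, the cokernel of $\mathrm{ad}_f\colon\End(\tilde P^{-1})\oplus\End(\tilde P^0)\to\Hom(\tilde P^{-1},\tilde P^0)$, $(a,b)\mapsto bf-fa$, identifies with $\Hom(\cone f,\Sigma\cone f)$; hence rigidity of the cone forces the $\Aut(\tilde P^{-1})\times\Aut(\tilde P^0)$-orbit of $f$ to be open, the rigid locus in the irreducible affine space $\Hom(\tilde P^{-1},\tilde P^0)$ is a nonempty union of open orbits, and therefore a single orbit. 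This closes the converse without ever invoking indecomposability or the silting-summand basis.
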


Define the  \emph{F-polynomial} of $M$ as
\[F_M(y_1,\ldots,y_n):=\sum_{d}\chi(\mathrm{Gr}_d(\Hom(\pi(\Gamma),\Sigma M)))y_1^{d_1}\cdots y_n^{d_n},\]
where $d$ runs over all $n$-tuples of non-negative integers,  $\mathrm{Gr}_d$ denotes the variety of submodules with dimension vector $d$ and $\chi$ is the Euler--Poincar\'e characteristic.

Now we define a map, the \emph{Caldero--Chapoton map}:
\[\xymatrix@R=0.5pc@C=0.2pc{\mathrm{CC}&:&\cc_{(Q,W)}\ar[rrr]&&&\mathbb{Z}[x_1^{\pm 1},\ldots,x_n^{\pm 1},x_{n+1},\ldots,x_{2n}]\\
&&M\ar@{|->}[rrr]& && x_1^{\ind_1(M)}\cdots x_n^{\ind_n(M)}F_M(\hat{y}_1,\ldots,\hat{y}_n)}\]
where $\hat{y}_i=x_{n+i}\prod_{j=1}^n x_j^{b_{ji}}$ for $i=1,\ldots,n$, and $B(Q)=(b_{ji})_{1\leq j,i\leq n}$ is the matrix of $Q$.
This map plays a central role in the theory of additive categorification of cluster algebras, which has proved powerful in understanding cluster algebras, for example in proving a number of Fomin and Zelevinsky's conjectures. It was originally defined by Caldero and Chapoton~\cite{CalderoChapoton06} to use quiver representations to categorify cluster algebras (without coefficients) with defining quiver being of Dynkin type.  This work was generalised to all acyclic quivers by Caldero and Keller~\cite{CalderoKeller06,CalderoKeller08} (see also Hubery~\cite{Hubery06}) and further to 2-Calabi--Yau triangulated categories by Fu and Keller~\cite{FuKeller10} and by Palu~\cite{Palu08},  and to Amiot cluster categories of (not necessarily Jacobi-finite) quivers with potential by Plamondon~\cite{Plamondon11,Plamondon11b}. In parallel, instead of objects in $\cc_{(Q,W)}$, Derksen, Weyman and Zelevinsky constructed in~\cite{DerksenWeymanZelevinsky10} the Caldero--Chapoton map for decorated representations over the Jacobian algebra $J$ and gave the first complete proof of some of Fomin and Zelevinsky's conjectures in the skew-symmetric case; Nagao constructed in~\cite{Nagao10} the Caldero--Chapoton map for certain objects of $\per(\Gamma)$ and related it to Donaldson--Thomas invariants. Geiss, Leclerc and Schr\"oer took a different approach for stably 2-Calabi--Yau Frobenius categories arising from preprojective algebras in \cite{GeissLeclercSchroer05,GeissLeclercSchroer06,GeissLeclercSchroer11} and later they proved in~\cite{GeissLeclercSchroer10} that the two approaches are closely related.

Let $\Cl(Q)$ denote the set of clusters of the cluster algebra $\ca_Q$ with principal coefficients (Section~\ref{ss:cluster}). An important feature of the Caldero--Chapoton map is

\begin{theorem}{(\cite{Plamondon11b})}\label{t:from-cto-to-clusters}
The map $\mathrm{CC}$ induces a bijection
\[\xymatrix@R=0.5pc{\rcto(\cc_{(Q,W)})\ar[r]&\Cl(Q)\\
M\ar@{|->}[r] & \{\mathrm{CC}(M_1),\ldots,\mathrm{CC}(M_n),x_{n+1},\ldots,x_{2n}\},}\]
which commutes with mutations.
Here $M=M_1\oplus\ldots\oplus M_n$ is a decomposition of $M$ into the direct sum of indecomposable objects.
\end{theorem}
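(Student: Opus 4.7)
The plan is to establish the bijection in three stages: first compute $\mathrm{CC}$ on the initial cluster-tilting object $\pi(\Gamma)$; next show $\mathrm{CC}$ commutes with mutations; finally deduce bijectivity using the characterizations of rigid objects by their indices (Theorem~\ref{t:index-determine-rigid-object}) and of seeds by their clusters (Theorem~\ref{t:cluster-determine-seed}).

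For the initial step, take $M=\pi(\Gamma_i)$. Choosing $P^{-1}=0$ and $P^0=\pi(\Gamma_i)$ in the triangle defining the index, we obtain $\ind(\pi(\Gamma_i))=e_i$. Moreover $\Hom_{\cc_{(Q,W)}}(\pi(\Gamma),\Sigma\pi(\Gamma_i))=0$ since $\pi(\Gamma)$ is cluster-tilting, so the only submodule Grassmannian contributing to $F_{\pi(\Gamma_i)}$ is the point associated with the zero module, and $F_{\pi(\Gamma_i)}=1$. Therefore $\mathrm{CC}(\pi(\Gamma_i))=x_i$, and the image of $\pi(\Gamma)$ is the initial cluster $\{x_1,\ldots,x_{2n}\}$ of $\ca_Q$.

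The crux of the proof, and its main obstacle, is mutation compatibility. Given a reachable basic cluster-tilting object $M=M_1\oplus\ldots\oplus M_n$ and an index $i$, denote by $M_i^*$ the mutation of $M_i$, fitting in the exchange triangles
\[ M_i \to E \to M_i^* \to \Sigma M_i, \qquad M_i^* \to E' \to M_i \to \Sigma M_i^*. \]
One needs to prove the multiplication formula
\[ \mathrm{CC}(M_i)\cdot \mathrm{CC}(M_i^*)=\mathrm{CC}(E)+\mathrm{CC}(E'), \]
and to identify the two summands on the right with the two monomials $\prod_{\alpha:i\to j}u_j$ and $\prod_{\beta:l\to i}u_l$ appearing in the Fomin--Zelevinsky exchange relation for the seed attached to $M$. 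The multiplication formula combines an additivity statement for the index on exchange triangles with a stratification of the submodule Grassmannians appearing in the $F$-polynomials; this formula is due in various generalities to Caldero--Chapoton, Caldero--Keller, Palu, and in the Amiot setting we need to Plamondon. The matching with the Fomin--Zelevinsky relation additionally requires identifying the Gabriel quiver of $\End_{\cc_{(Q,W)}}(M)$ (enriched with frozen vertices recording indices) with the ice quiver of the corresponding seed, which follows from the Derksen--Weyman--Zelevinsky compatibility of mutations of quivers with potential.

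With these two ingredients, surjectivity is immediate: every cluster of $\ca_Q$ is mutation-equivalent to the initial one, and each such mutation lifts by compatibility to a mutation of cluster-tilting objects starting at $\pi(\Gamma)$, so the preimage is by construction a reachable object. For injectivity, observe that the $\gg$-vector of $\mathrm{CC}(M_i)$ (its degree under the $\Z^n$-grading of Section~\ref{ss:gmatrix}) coincides with $\ind(M_i)$: indeed the $\hat y_j$ are homogeneous of degree $0$, so the degree of $\mathrm{CC}(M)$ is encoded in the leading monomial $x_1^{\ind_1(M)}\cdots x_n^{\ind_n(M)}$. Hence the cluster $\{\mathrm{CC}(M_1),\ldots,\mathrm{CC}(M_n),x_{n+1},\ldots,x_{2n}\}$ recovers the tuple $(\ind(M_1),\ldots,\ind(M_n))$, and Theorem~\ref{t:index-determine-rigid-object} then recovers each $M_i$, hence $M$, up to isomorphism.
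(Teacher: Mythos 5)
Your outline is correct, but it takes a genuinely different route from the paper. You sketch a direct proof in the style of Caldero--Chapoton, Caldero--Keller, Fu--Keller and Palu: base case $\mathrm{CC}(\pi(\Gamma_i))=x_i$, mutation compatibility via the multiplication formula plus the DWZ compatibility of quivers of endomorphism algebras with mutation, and bijectivity from the index-determination (Theorem~\ref{t:index-determine-rigid-object}) and the fact that $\gg$-vectors are read off from the leading monomial (your observation that each $\hat y_j$ is homogeneous of degree $0$ is exactly right). The paper does something else: it invokes Plamondon's Theorem~3.7 directly, but stated for the Amiot cluster category of the \emph{framed} quiver with potential $(\hat Q,W)$ (Theorem~\ref{t:from-cto-to-cluster-with-ice-quiver}), and then transfers the statement to $\cc_{(Q,W)}$ via the $2$-Calabi--Yau reduction isomorphism~(\ref{eq:cy-reduction}) together with Proposition~\ref{p:ind-and-Fpoly-in-cy-reduction}, which shows that indices and $F$-polynomials are preserved under the reduction functor $\Phi$. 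The trade-off is essentially this: in the framed category $\cc_{(\hat Q,W)}$ the coefficient variables $x_{n+1},\dots,x_{2n}$ are realised by genuine objects $\pi'(\Gamma'_{n+i})$, so the multiplication-formula-versus-exchange-relation matching that you defer to the literature takes place without coefficients and is already done by Plamondon; the price is having to prove the compatibility of $\ind$ and $F_M$ with the CY reduction. Your direct approach avoids the reduction but must carry the coefficient bookkeeping (the identification of $\mathrm{CC}(E)$ and $\mathrm{CC}(E')$ with the full monomials $\prod_{\alpha:i\to j}u_j$, $\prod_{\beta:l\to i}u_l$, \emph{including} their frozen-variable factors, which do not come from honest objects of $\cc_{(Q,W)}$ but only from the $\hat y$-substitution) inside the multiplication formula itself; this is exactly the delicate step that motivates the paper's "a priori one has to work with $(\hat Q,W)$" and its reduction strategy.
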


\begin{remark}
\begin{itemize}
\item[(a)] With $\cc_{(Q,W)}$ being replaced by a suitable subcategory, Theorem~\ref{t:from-cto-to-clusters} holds for any cluster quiver, see \cite{Plamondon11b}.
\item[(b)] Thanks to~\cite[Corollary 5.5]{CerulliKellerLabardiniPlamondon12}, with suitable modification of the CC map we can replace in the above theorem the cluster algebra $\ca_Q$  by a cluster algebra with arbitrary coefficients with defining quiver $Q$.
\end{itemize}
\end{remark}

Theorem~\ref{t:from-cto-to-clusters} is a consequence of the following Theorem~\ref{t:from-cto-to-cluster-with-ice-quiver}, isomorphism (\ref{eq:cy-reduction}) and Proposition~\ref{p:ind-and-Fpoly-in-cy-reduction}.

A priori one has to work with the quiver with potential $(Q',W)$, where $Q'=\hat{Q}$ is the framed quiver associated to $Q$ with frozen vertices $\{n+1,\ldots,2n\}$ (Section~\ref{ss:mut}). Let $\Gamma'=\hat{\Gamma}(Q',W)$ and $J'=\hat{J}(Q',W)$. They are related to $\Gamma$ and $J$ by $\Gamma=\Gamma'/(e_{n+1}\oplus\ldots\oplus e_{2n})$ and $J=J'/(e_{n+1}\oplus\ldots\oplus e_{2n})$.
  Let $\cc_{(Q',W)}$ be the Amiot cluster category of $(\hat{Q},W)$ and $\pi':\per(\Gamma')\rightarrow\cc_{(Q',W)}$ be the canonical projection functor. Then $\Gamma'=\bigoplus_{i=1}^{2n} \Gamma'_i$ with $\Gamma'_i=e_i\Gamma'$. 
Consider the full subcategory $\cu$ of $\cc_{(Q',W)}$ which consists of objects $M$ such that $\Hom(\bigoplus_{i=1}^n\pi'(\Gamma'_{n+i}),\Sigma M)$ vanishes. Denote by $\rcto_{\bigoplus_{i=1}^n \pi'(\Gamma_{n+i})}(\cc_{(Q',W)})$ the set of isomorphism classes of reachable basic cluster-tilting objects of $\cc_{(Q',W)}$ which have $\bigoplus_{i=1}^n \pi'(\Gamma'_{n+i})$ as a direct summand.

\begin{theorem}(\cite[Theorem 3.7 and its proof]{Plamondon11b})\label{t:from-cto-to-cluster-with-ice-quiver}
The map $\mathrm{CC}$ induces a bijection
\[\xymatrix@R=0.5pc{\rcto_{\bigoplus_{i=1}^n\pi'(\Gamma'_{n+i})}(\cc_{(Q',W)})\ar[r]&\Cl(Q)\\
M\ar@{|->}[r] & \{\mathrm{CC}(M_1),\ldots,\mathrm{CC}(M_n),x_{n+1},\ldots,x_{2n}\},}\]
which commutes with mutations. Here $M=M_1\oplus\ldots\oplus M_n\oplus\pi'(\Gamma'_{n+1})\oplus\ldots\oplus \pi'(\Gamma'_{2n})$ is a decomposition of $M$ into the direct sum of indecomposable objects.
\end{theorem}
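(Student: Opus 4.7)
The plan is to verify the theorem by induction on mutation distance from the initial reachable cluster-tilting object $\pi'(\Gamma')=\bigoplus_{i=1}^{2n}\pi'(\Gamma'_i)$, leveraging two complementary rigidity principles: clusters determine their seeds (Theorem~\ref{t:cluster-determine-seed}) and indices determine rigid objects (Theorem~\ref{t:index-determine-rigid-object}).

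First I would establish the base case by computing $\mathrm{CC}(\pi'(\Gamma'_i))$ for $i=1,\ldots,2n$. Since $\pi'(\Gamma'_i)$ is a direct summand of the canonical cluster-tilting object, its resolution by $\add(\pi'(\Gamma'))$ is trivial, so $\ind(\pi'(\Gamma'_i))=e_i$; moreover $\Hom(\pi'(\Gamma'),\Sigma\pi'(\Gamma'_i))=0$ forces the F-polynomial to equal $1$. Hence $\mathrm{CC}(\pi'(\Gamma'_i))=x_i$, so the initial cluster-tilting object (together with its frozen summands $\pi'(\Gamma'_{n+1}),\ldots,\pi'(\Gamma'_{2n})$) is sent to the initial cluster $\{x_1,\ldots,x_{2n}\}$.

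Next, the inductive step: if $M=M_1\oplus\cdots\oplus M_n\oplus\bigoplus_{i=1}^n\pi'(\Gamma'_{n+i})$ is a reachable cluster-tilting object already known to map to a cluster $\underline{u}=\{\mathrm{CC}(M_1),\ldots,\mathrm{CC}(M_n),x_{n+1},\ldots,x_{2n}\}$, and $M_k^*$ is obtained by mutation at a non-frozen summand $M_k$ via the exchange triangles
\[\xymatrix{M_k\ar[r] & E\ar[r] & M_k^*\ar[r] & \Sigma M_k} \text{ and } \xymatrix{M_k^*\ar[r] & E'\ar[r] & M_k\ar[r] & \Sigma M_k^*},\]
then one must prove the exchange relation
\[\mathrm{CC}(M_k)\cdot\mathrm{CC}(M_k^*)=\prod_{\alpha\colon k\to j} \mathrm{CC}(M_j)+\prod_{\beta\colon j\to k}\mathrm{CC}(M_j),\]
where the products range over the arrows in the quiver of the seed $(\underline{u},R)$ and we include the frozen contributions $x_{n+1},\ldots,x_{2n}$. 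This is the Caldero--Chapoton multiplication formula in the framed setting, established by Plamondon by analysing the indices and F-polynomials of the two middle terms $E$ and $E'$; combined with Theorem~\ref{t:cluster-determine-seed}, it guarantees that the mutated object $\mu_{M_k}(M)$ is mapped to the mutated cluster $\mu_k(\underline{u})$. This step is the main technical obstacle and is where the non-degeneracy of $W$ and the Jacobi-finiteness enter in an essential way, via the two-term approximation structure of the cluster category and the interaction between $\mathrm{Gr}_d$-strata.

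Finally, bijectivity is obtained as follows. Surjectivity follows because every cluster of $\ca_Q$ is reachable from the initial one, and the inductive step provides a preimage by mutating $\pi'(\Gamma')$ along the same sequence. For injectivity, suppose two reachable cluster-tilting objects $M$ and $N$ (each containing all the frozen summands) are mapped to the same cluster. By Theorem~\ref{t:cluster-determine-seed} the associated seeds coincide, so in particular their $\gg$-matrices agree. The columns of the $\gg$-matrix encode precisely the indices $\ind(M_j)$ and $\ind(N_j)$ with respect to the current cluster-tilting object, hence by Theorem~\ref{t:index-determine-rigid-object} we get $M_j\cong N_j$ for each $j$, giving $M\cong N$. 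This yields the bijection and, by construction of the inductive step, commutativity with mutations.
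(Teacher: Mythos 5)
The paper does not prove this theorem: it is stated as a citation of Plamondon's Theorem~3.7 and its proof, so there is no internal argument for you to be measured against. Your sketch tracks Plamondon's strategy faithfully --- verify that the initial object maps to the initial cluster by an index/$F$-polynomial computation, propagate along mutation sequences, and close bijectivity with Theorems~\ref{t:cluster-determine-seed} and~\ref{t:index-determine-rigid-object}. The difficulty is that the Caldero--Chapoton multiplication formula you invoke for the inductive step \emph{is} the entire technical content of the theorem; deferring it to ``established by Plamondon'' turns the proposal into an outline that presupposes precisely the assertion being proved. It is reasonable to flag it as the main obstacle, but as written nothing in the proposal would actually establish the exchange relation, which is where the analysis of $\mathrm{Gr}_d$-strata, the indices and $F$-polynomials of $E$ and $E'$, the $2$-Calabi--Yau property and the non-degeneracy of $W$ enter in an essential and non-routine way.

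The injectivity step also has a concrete gap. You pass from equal clusters to equal $\gg$-matrices via Theorem~\ref{t:cluster-determine-seed}, and then assert that ``the columns of the $\gg$-matrix encode precisely the indices.'' Two things need to be made explicit here: first, that each $\hat y_i=x_{n+i}\prod_{j}x_j^{b_{ji}}$ is homogeneous of degree zero, so that $\mathrm{CC}(M_j)=x^{\ind(M_j)}F_{M_j}(\hat y)$ is homogeneous of degree $\ind(M_j)$ and the $\gg$-vector of $\mathrm{CC}(M_j)$ is computed by its index; and second, since the index of an object of $\cc_{(Q',W)}$ lives in a free abelian group of rank $2n$ while the $\gg$-matrix is only $n\times n$, you must know that the coefficients of $[\Gamma'_{n+i}]$ in $\ind(M_j)$ vanish for every $M_j\in\cu$. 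That latter fact is exactly Proposition~\ref{p:ind-and-Fpoly-in-cy-reduction}(a), which the paper records \emph{after} this theorem and does not regard as obvious; without it you cannot recover the full index in $K_0^{\rm split}(\add\pi'(\Gamma'))$ from the $\gg$-vector, and hence cannot apply Theorem~\ref{t:index-determine-rigid-object} to conclude $M_j\cong N_j$.
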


It follows from Theorem~\ref{t:cy-reduction} that $\cu/\add(\bigoplus_{i=1}^n\pi'(\Gamma'_{n+i}))$ is naturally a $2$-Calabi--Yau triangulated category and the additive quotient functor 
\[
\xymatrix{\cu\ar[r] & \cu/\add(\bigoplus_{i=1}^n\pi'(\Gamma'_{n+i}))}
\] 
induces an isomorphism of exchange graphs 
\[
\xymatrix{\cto_{\bigoplus_{i=1}^n\pi'(\Gamma'_{n+i})}(\cc_{(Q',W)})\ar[r]&\cto(\cu/\add(\bigoplus_{i=1}^n\pi'(\Gamma'_{n+i}))}.
\] 
In particular, $\pi'(\Gamma')$ is a cluster-tilting object of $\cu/\add(\bigoplus_{i=1}^n\pi'(\Gamma'_{n+i}))$. In fact, Keller shows that $\cu/\add(\bigoplus_{i=1}^n\pi'(\Gamma'_{n+i}))$ is triangle equivalent to $\cc_{(Q,W)}$ and the image of $\pi'(\Gamma')$ is $\Gamma$, see \cite[Theorem 7.4]{Keller11}. So we obtain an additive quotient functor
\[
\Phi:\xymatrix{\cu\ar[r] & \cc_{(Q,W)}}
\]
which induces an isomorphism of exchange graphs
\begin{align}
\xymatrix{\rcto_{\bigoplus_{i=1}^n\pi'(\Gamma'_{n+i})}(\cc_{(Q',W)})\ar[r] &\rcto_{(Q,W)}}.\label{eq:cy-reduction}
\end{align}
Recall that for an object in $\cc_{(Q',W)}$, the index is an element in $K_0(\add(\pi'(\Gamma')))$ which is a free abelian group of rank $2n$ and the F-polynomial is a polynomial in $2n$ variables $y_1,\ldots,y_{2n}$. Part (a) of the following result is a combination of the proof of \cite[Theorem 3.13]{Plamondon11b} and \cite[Proposition 3.14]{Plamondon11c}.

\begin{proposition}\label{p:ind-and-Fpoly-in-cy-reduction}
Let $M$ be an indecomposable object of $\cu$ which is not isomorphic to $\pi'(\Gamma'_{n+i})$ for any $i=1,\ldots,n$.
\begin{itemize}
\item[(a)] For any $i=1,\ldots,n$, the coefficient of $[\Gamma'_{n+i}]$ in $\ind(M)$ is trivial. Moreover, if we identify $[\Gamma'_i]$ with $[\Gamma_i]$ for $1\leq i\leq n$, then $\ind(M)=\ind(\Phi M)$.
\item[(b)] The polynomial $F_M(y_1,\ldots,y_{n},y_{n+1},\ldots,y_{2n})$ is constant in $y_{n+1},\ldots,y_{2n}$. Moreover, $F_M(y_1,\ldots,y_n,y_{n+1},\ldots,y_{2n})=F_{\Phi M}(y_1,\ldots,y_n)$.
\end{itemize}
\end{proposition}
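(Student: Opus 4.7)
The plan is to deduce both parts from Keller's $2$-Calabi--Yau reduction equivalence $\Phi\colon \cu/\add(R) \stackrel{\sim}{\longrightarrow} \cc_{(Q,W)}$ of \cite[Theorem 7.4]{Keller11}, where I write $R := \bigoplus_{i=1}^n \pi'(\Gamma'_{n+i})$. The starting observation is the Hom vanishing
\[
\Hom_{\cc_{(Q',W)}}(\pi'(\Gamma'_i), \pi'(\Gamma'_{n+j})) = 0 \quad \text{for } 1 \le i \le n,\; 1 \le j \le n,
\]
which follows by identifying this space with $e_{n+j} J' e_i$ and noting that the frozen vertex $n+j$ is a source of $\hat Q$, so no non-trivial path in $Q'$ ends at $n+j$. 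Combined with $2$-Calabi--Yau duality, the hypothesis $M \in \cu$ yields both $\Hom(R, \Sigma M) = 0$ and $\Hom(M, \Sigma R) = 0$, while rigidity of $\pi'(\Gamma')$ provides $\Hom(R, \Sigma \pi'(\Gamma')) = 0$. These three vanishings are the main ingredients used below.

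For part (a), I would start with a minimal $\add(\pi(\Gamma))$-presentation
\[
\xymatrix{Q^{-1} \ar[r]^g & Q^0 \ar[r]^h & \Phi M \ar[r] & \Sigma Q^{-1}}
\]
of $\Phi M$ in $\cc_{(Q,W)}$ and lift it through $\Phi$: the objects lift to $\hat Q^{-1}, \hat Q^0 \in \add(\pi'(\Gamma'_1) \oplus \cdots \oplus \pi'(\Gamma'_n))$ by matching multiplicities under the identification $\pi(\Gamma_i) \leftrightarrow \pi'(\Gamma'_i)$, and the morphisms $g, h$ lift uniquely to $\hat g, \hat h$ in $\cc_{(Q',W)}$, since the preliminary Hom vanishing prevents any factoring through $\add(R)$. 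Completing $\hat h$ to a triangle $C \to \hat Q^0 \to M \to \Sigma C$ in $\cc_{(Q',W)}$ and applying $\Phi$ shows $\Phi C \cong Q^{-1}$, hence $C \cong \hat Q^{-1} \oplus R'$ for some $R' \in \add(R)$. The main obstacle is to rule out the extra summand $R'$: I would do this by applying $\Hom_{\cc_{(Q',W)}}(R, -)$ and $\Hom_{\cc_{(Q',W)}}(-, \Sigma R)$ to the candidate triangle and combining the three vanishings to control the resulting exact sequences, forcing $R'$ to vanish (this is the core technical step and parallels the approach of Plamondon \cite[Proposition 3.14]{Plamondon11c}). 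Once $R' = 0$, the triangle $\hat Q^{-1} \to \hat Q^0 \to M \to \Sigma \hat Q^{-1}$ in $\cc_{(Q',W)}$ yields $\ind(M) = [\hat Q^0] - [\hat Q^{-1}] = \ind(\Phi M)$ under the identification $[\pi'(\Gamma'_i)] = [\pi(\Gamma_i)]$, and in particular the coefficient at each $[\pi'(\Gamma'_{n+i})]$ vanishes.

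For part (b), set $V := \Hom_{\cc_{(Q',W)}}(\pi'(\Gamma'), \Sigma M)$, regarded as a right $J'$-module with dimension vector $d$ where $d_i = \dim_k \Hom(\pi'(\Gamma'_i), \Sigma M)$. The condition $M \in \cu$ forces $d_{n+j} = 0$ for $j = 1, \ldots, n$, so $V$ is annihilated by $e_{n+1}, \ldots, e_{2n}$ and descends to a module over $J = J'/\langle e_{n+1}, \ldots, e_{2n}\rangle$. Every submodule of $V$ therefore has dimension vector supported in the first $n$ coordinates, so the sum defining $F_M$ is a polynomial in $y_1, \ldots, y_n$ only, which proves the constancy in $y_{n+1}, \ldots, y_{2n}$. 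For the identification $F_M = F_{\Phi M}$, I would observe that $\Hom(R, \Sigma M) = 0$ prevents any morphism $\pi'(\Gamma') \to \Sigma M$ from factoring through $\add(R)$, so Keller's equivalence induces a $J$-linear isomorphism $V \cong \Hom_{\cc_{(Q,W)}}(\pi(\Gamma), \Sigma \Phi M)$. Submodule Grassmannians correspond bijectively dimension vector by dimension vector, yielding $F_M(y_1, \ldots, y_{2n}) = F_{\Phi M}(y_1, \ldots, y_n)$.
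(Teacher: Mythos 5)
Your treatment of part (b) is essentially identical to the paper's: you establish (b1) exactly as the paper does (the $J'$-module $\Hom_{\cc_{(Q',W)}}(\pi'(\Gamma'),\Sigma M)$ is supported on $J$ because $M\in\cu$), and your identification of that module with $\Hom_{\cc_{(Q,W)}}(\pi(\Gamma),\Sigma\Phi M)$ is the content of the cite to \cite[Lemma 4.8]{IyamaYoshino08} that the paper uses for (b2).

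For part (a), however, there is a genuine gap. The paper delegates the argument entirely to \cite[Theorem 3.13]{Plamondon11b} and \cite[Proposition 3.14]{Plamondon11c}; you instead sketch the argument, and two of your steps do not hold as stated. First, after completing $\hat h$ to a triangle $C\to\hat Q^0\to M\to\Sigma C$ in $\cc_{(Q',W)}$, you claim that ``applying $\Phi$ shows $\Phi C\cong Q^{-1}$.'' But $\Phi$ is the composite of the \emph{additive} quotient $\cu\to\cu/\add(R)$ with Keller's equivalence; the quotient is not a triangle functor for the ambient suspension (the shift on $\cu/\add(R)$ in the Iyama--Yoshino sense is $\bar\Sigma X = \cone(X\to R_X)$, not $\Sigma X$), so one cannot simply push the triangle forward. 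You would need to invoke the specific statement in \cite{IyamaYoshino08} describing how distinguished triangles in $\cc_{(Q',W)}$ with vertices in $\cu$ descend to $\bar\cc$ (or compare directly with the index computation in $\bar\cc$), and this is not automatic. Second, the crucial point — ruling out the extra summand $R'\in\add(R)$ of $C$ — is left as ``I would do this by applying $\Hom(R,-)$ and $\Hom(-,\Sigma R)$\ldots forcing $R'$ to vanish,'' i.e.\ as an intention rather than an argument. In fact, one should be careful here: even with the three vanishings you list, it is not a priori clear that the minimal $\add(\pi'(\Gamma'))$-presentation of $M$ avoids summands in $\add(R)$; what one can show is that such summands occur with equal multiplicity in $T^{-1}$ and $T^0$ and hence cancel in $\ind(M)$, which is a slightly different (and slightly more delicate) statement. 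As it stands your part (a) is a plausible outline of Plamondon's proof, but the two steps above need to be made precise (or replaced by direct citations to the relevant results of \cite{Plamondon11b,Plamondon11c}) before it is a proof.
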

\begin{proof} (b) This proof is due to Plamondon. It suffices to show: (b1) As a $J'$-module, $\Hom_{\cc_{(Q',W)}}(\pi'(\Gamma'),\Sigma M)$ is supported on $J$; (b2) $\Hom_{\cc_{(Q',W)}}(\pi'(\Gamma'),\Sigma M)$, as a $J$-module, is isomorphic to $\Hom_{\cc_{(Q,W)}}(\pi(\Gamma),\Sigma\Phi M)$.

(b1) is clear because by the definition of $\cu$ we have $\Hom_{\cc_{(Q',W)}}(\pi'(\Gamma'_{n+i}),\Sigma M)=0$ for any $i=1,\ldots,n$.

(b2) follows from \cite[Lemma 4.8]{IyamaYoshino08} since the functor $\Phi$ is a Calabi--Yau reduction and we have $\Phi(\pi'(\Gamma'))=\pi(\Gamma)$.
\end{proof}

\subsection{The correspondences between silting objects, simple-minded collections, cluster-tilting objects, clusters, ice quivers, g-matrices and c-matrices}

Let $(Q,W)$ be a Jacobi-finite quiver with potential such that $Q$ is a cluster quiver and $W$ is non-degenerate. Let $\Gamma=\hat{\Gamma}(Q,W)$, $J=\hat{J}(Q,W)$ and let $\cc_{(Q,W)}$ denote the Amiot cluster category and $\Cl(Q)$ denote the clusters of the cluster algebra $\ca_Q$ with principal coefficients (Section~\ref{ss:cluster}).

\begin{theorem}\label{t:silt-smc-cto-cluster-mut-gmat-cmat}
There is a commutative diagram of bijections which commute with mutations
\[
\xymatrix{
\rtwosilt(\Gamma)\ar[rrr]\ar[ddd]\ar[dr] & & & \rintersmc(\Gamma)\ar[lll]\ar[ddd]\\
&\rcto(\cc_{(Q,W)})\ar[ddl]\ar[d]&&\\
&\Cl(Q)\ar[ld]\ar[r]&\mut(Q)\ar[dr]\\
\gmat(Q)\ar[rrr]&&&\cmat(Q)\ar[lll]
}
\]
\end{theorem}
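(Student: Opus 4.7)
The plan is to define each new map in the diagram, verify commutativity and mutation-compatibility, and deduce bijectivity of the new arrows from commutativity together with the bijections already established. Reachability is essential: each vertex in the lower half of the diagram is by definition obtained by iterated mutations from a distinguished initial element, and reachability on top ensures that the resulting maps land in the correct mutation orbit.

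First I would recall the already-defined arrows. The horizontal bijection $\rtwosilt(\Gamma)\leftrightarrow\rintersmc(\Gamma)$ is the reachable restriction of Corollary~\ref{c:knky-for-intermediate}; the map $\rtwosilt(\Gamma)\to\rcto(\cc_{(Q,W)})$ is $M\mapsto\pi(M)$ from Theorem~\ref{t:silt-support-cluster}; and $\rcto(\cc_{(Q,W)})\to\Cl(Q)$ is the Caldero--Chapoton bijection of Theorem~\ref{t:from-cto-to-clusters}. The assignment $\Cl(Q)\to\mut(Q)$ sends a cluster to the ice quiver of its unique seed, which is well-defined by Theorem~\ref{t:cluster-determine-seed}; $\Cl(Q)\to\gmat(Q)$ collects $\gg$-vectors of non-frozen variables as in Section~\ref{ss:gmatrix}; and $\mut(Q)\to\cmat(Q)$ is the lower-half-matrix assignment of Section~\ref{ss:cmatrix}. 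For the new arrows, to a basic $2$-term silting object $M=M_1\oplus\cdots\oplus M_n$ with $M_i$ indecomposable I assign the matrix whose $i$-th column is $\ind(M_i)\in K^{\rm split}_0(\add\Gamma)\cong\Z^n$, computed from any triangle $P_i^{-1}\to P_i^0\to M_i\to\Sigma P_i^{-1}$ in $\per(\Gamma)$ with terms in $\add\Gamma$. To $\{X_1,\ldots,X_n\}\in\rintersmc(\Gamma)$ I assign the matrix whose $i$-th column is $[X_i]\in K_0(\cd_{fd}(\Gamma))\cong\Z^n$, expressed in the basis $\{[p^*(S_j)]\}$; sign-coherence of the columns follows from Remark~\ref{r:smc-are-stalk}, since each $X_i$ lies in either $\ct$ or $\Sigma\cf$. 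Finally, $\gmat(Q)\to\cmat(Q)$ is the tropical duality isomorphism $C=(G^\top)^{-1}$ of Nakanishi--Zelevinsky~\cite{NZ:tropicalduality}.

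Commutativity then reduces to several known identities. On the $\gg$-side, the index of $M_i$ in $\per(\Gamma)$ equals the index of $\pi(M_i)$ in $\cc_{(Q,W)}$ (by Proposition~\ref{p:ind-and-Fpoly-in-cy-reduction}(a), since $\pi\colon\cf_\Gamma\to\cc_{(Q,W)}$ is an additive equivalence), and this equals the $\gg$-vector of the cluster variable $\mathrm{CC}(\pi(M_i))$ by Plamondon's leading-term computation~\cite{Plamondon11b}; this closes the upper-left portion of the diagram. On the $\mut(Q)$-side, the ice quiver of the seed associated to $\pi(M)$ is the framed Gabriel quiver of $\End_{\cc_{(Q,W)}}(\pi(M))$ endowed with its induced Jacobian structure, which agrees with the mutated quiver of $(\hat{Q},W)$ by the Keller--Yang compatibility~\cite{KellerYang11} between categorical mutation and mutation of quivers with potential. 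The bottom square then closes by tropical duality.

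The main obstacle is to identify the map $\rintersmc(\Gamma)\to\cmat(Q)$ defined via dimension vectors with the $\ccc$-matrix coming from $\mut(Q)$: concretely, one must show that the dimension vectors of a reachable $2$-term simple-minded collection in $\cd_{fd}(\Gamma)$ coincide, up to column permutation, with the $\ccc$-vectors of the associated seed. This is a deep identification originating in Nagao's work~\cite{Nagao10} relating simple-minded collections to $\ccc$-vectors via Donaldson--Thomas theory, which I would invoke. Once this is in place, every remaining arrow in the diagram sits in a commutative triangle or square in which two of the three arrows are already known to be bijections commuting with mutations, so the third arrow inherits these properties automatically, completing the proof.
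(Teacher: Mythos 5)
Your proposal follows essentially the same strategy as the paper: define $\rtwosilt(\Gamma)\to\gmat(Q)$ via the change-of-basis/index matrix and $\rintersmc(\Gamma)\to\cmat(Q)$ via dimension vectors, invoke Nakanishi--Zelevinsky tropical duality for the bottom arrow, lean on Nagao/Keller/Plamondon for the well-definedness of the index and dimension-vector maps, and close commutativity and bijectivity by combining reachability with the fact that all arrows commute with mutations and send the distinguished initial element to the distinguished initial element. The one thing the paper does that you omit is an alternative explanation of the outer square's commutativity via Euler-form duality between the bases $\{[M_i]\}\subset K_0(\per\Gamma)$ and $\{[X_i]\}\subset K_0(\cd_{fd}\Gamma)$, which directly produces the inverse-transpose relation $\underline{\dim}(X_1,\ldots,X_n)=T_{M,\Gamma}^{-\mathrm{tr}}$ and so gives a conceptual, mutation-free reason for the identity; your detour through framed Gabriel quivers of endomorphism algebras and Keller--Yang compatibility, on the other hand, is correct but unnecessary for this diagram.
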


The four maps $\rtwosilt(\Gamma)\leftrightarrow\rintersmc(\Gamma)$, $\rtwosilt(\Gamma)\rightarrow\rcto(\cc_{(Q,W)})$ and $\rcto(\cc_{(Q,W)})\rightarrow\Cl(Q)$ were defined in Sections~\ref{ss:silting-smc-tstr-cotstr},~\ref{ss:silting-sttilting-cto} and~\ref{ss:from-cto-to-cluster}, respectively. We will define the other maps and check the commutativity of the diagram. Again we will state the bijectivity only for some maps.

$\Cl(Q)\longrightarrow\gmat(Q)$:
This map is part of the definition of $\gg$-matrices, see Section~\ref{ss:gmatrix}. By definition it commutes with mutations.

$\rtwosilt(\Gamma)\longrightarrow\gmat(Q)$: Assume that $Q_0=\{1,\ldots,n\}$. Put $\Gamma_i=e_i\Gamma$. Then each $\Gamma_i$ is indecomposable in $\per(\Gamma)$ and $\Gamma=\Gamma_1\oplus\ldots\oplus\Gamma_n$ is a silting object in $\per(\Gamma)$. Hence, by Theorem~\ref{t:generators-of-grothendieckgroup-silting-case}, the Grothendieck group $K_0(\per(\Gamma))$ is free of rank $n$ and the isomorphism classes $[\Gamma_1],\ldots,[\Gamma_n]$ form a basis of $K_0(\per(\Gamma))$. 

Let $M=M_1\oplus\ldots\oplus M_n$ be a basic 2-term silting object of $\per(\Gamma)$ with $M_i$ indecomposable for each $i$. Then the isomorphism classes $[M_1],\ldots,[M_n]$ form a basis of $K_0(\per(\Gamma))$ for the same reason as above. Let $T_{M,\Gamma}$ be the invertible matrix representing a change of basis from $\{[\Gamma_1],\ldots,[\Gamma_n]\}$ to $\{[M_1],\ldots,[M_n]\}$, \ie
\[
([M_1],\ldots,[M_n])=([\Gamma_1],\ldots,[\Gamma_n])T_{M,\Gamma}.
\]

\begin{theorem}{(\cite[Theorem 6.18]{Nagao10})} The assignment $M\mapsto T_{M,\Gamma}$ defines a map $\rtwosilt(\Gamma)\rightarrow\gmat(Q)$, which commutes with mutations.
\end{theorem}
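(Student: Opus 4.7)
The plan is to factor the map $M\mapsto T_{M,\Gamma}$ through the chain of bijections already set up in the paper, namely
\[
\rtwosilt(\Gamma)\xrightarrow{\pi}\rcto(\cc_{(Q,W)})\xrightarrow{\mathrm{CC}}\Cl(Q)\xrightarrow{}\gmat(Q),
\]
and to identify the columns of $T_{M,\Gamma}$ with the indices of the summands of $\pi(M)$. First I would unpack the definition: writing $M=M_1\oplus\ldots\oplus M_n$ and picking for each $M_i$ a triangle $P_i^{-1}\to P_i^0\to M_i\to\Sigma P_i^{-1}$ with $P_i^{-1},P_i^0\in\add(\Gamma)$ (which exists since $M$ is $2$-term), one sees that $[M_i]=[P_i^0]-[P_i^{-1}]$ in $K_0(\per(\Gamma))$, so the $i$-th column of $T_{M,\Gamma}$ is precisely the integer vector representing $[P_i^0]-[P_i^{-1}]$ in the basis $\{[\Gamma_1],\ldots,[\Gamma_n]\}$. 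Applying $\pi$ to the same triangle gives a triangle in $\cc_{(Q,W)}$ with terms in $\add(\pi(\Gamma))$, and therefore the $i$-th column of $T_{M,\Gamma}$ coincides with $\ind(\pi(M_i))$ under the standard identification of $K_0^{\mathrm{split}}(\add(\pi(\Gamma)))$ with $\Z^n$.

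Next I would check that $\ind(\pi(M_i))$ is the $\gg$-vector of the cluster variable $\mathrm{CC}(\pi(M_i))$. This is immediate from the formula
\[
\mathrm{CC}(\pi(M_i))=x_1^{\ind_1(\pi(M_i))}\cdots x_n^{\ind_n(\pi(M_i))}\,F_{\pi(M_i)}(\hat y_1,\ldots,\hat y_n),
\]
once one notes that each $\hat y_j=x_{n+j}\prod_l x_l^{b_{lj}}$ is homogeneous of degree $0$ with respect to the $\Z^n$-grading defined in Section~\ref{ss:gmatrix} (since $\deg(x_{n+j})=-B(Q)e_j$ cancels $\sum_l b_{lj}e_j=B(Q)e_j$), so the whole grading is carried by the monomial $x_1^{\ind_1}\cdots x_n^{\ind_n}$. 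Combined with Plamondon's Theorem~\ref{t:from-cto-to-clusters}, which asserts that $\{\mathrm{CC}(\pi(M_1)),\ldots,\mathrm{CC}(\pi(M_n)),x_{n+1},\ldots,x_{2n}\}$ is a cluster of $\ca_Q$, this shows that the matrix of columns $(\ind(\pi(M_1)),\ldots,\ind(\pi(M_n)))=T_{M,\Gamma}$ is exactly the $\gg$-matrix of that cluster; in particular $T_{M,\Gamma}\in\gmat(Q)$, so the map is well-defined.

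For compatibility with mutations, I would observe that each arrow in the factorisation commutes with mutations: the bijection $\rtwosilt(\Gamma)\to\rcto(\cc_{(Q,W)})$ does so by the proposition in Section~\ref{ss:silting-sttilting-cto}, the CC-map does so by Theorem~\ref{t:from-cto-to-clusters}, and the passage from a cluster to its $\gg$-matrix does so essentially by construction. Hence the composite $M\mapsto T_{M,\Gamma}$ commutes with mutations. The main technical point, and the one I would spend most care on, is the identification of the $i$-th column of $T_{M,\Gamma}$ with $\ind(\pi(M_i))$: one must verify that $\pi$ sends the $2$-term presentation of $M_i$ in $\per(\Gamma)$ to a genuine index triangle in $\cc_{(Q,W)}$ (using Theorem~\ref{t:generalised-cluster-category}(b) together with the fact that reachable $2$-term silting objects lie in $\cf_\Gamma$), and that the identification of $K_0^{\mathrm{split}}(\add(\pi(\Gamma)))$ with $\Z^n$ via the basis $[\pi(\Gamma_i)]$ is compatible with the identification of $K_0(\per(\Gamma))$ with $\Z^n$ via $[\Gamma_i]$ used to define $T_{M,\Gamma}$. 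Once this bookkeeping is in place the theorem follows from the results already assembled in the paper.
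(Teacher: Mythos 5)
Your proof is correct, and it reconstructs the argument that the paper does not actually spell out: the paper states this theorem by citing Nagao as a black box, and it is only in the \emph{following} paragraph (the discussion of $\rcto(\cc_{(Q,W)})\longrightarrow\gmat(Q)$) that the paper observes the Nagao map is the composite $\rtwosilt(\Gamma)\to\rcto(\cc_{(Q,W)})\to\Cl(Q)\to\gmat(Q)$, deferring the well-definedness of the middle arrow to Plamondon. Your route is thus the argument the paper alludes to, made explicit. The computational point you supply --- that each $\hat y_j$ is homogeneous of degree $0$ because $\deg(x_{n+j})=-B(Q)e_j$ cancels $\sum_l b_{lj}e_l=B(Q)e_j$, so the $\gg$-grading of $\mathrm{CC}(\pi(M_i))$ is carried entirely by $x^{\ind(\pi(M_i))}$ --- is precisely the content hidden in the Plamondon citation, and you are right to flag the bookkeeping between the two Grothendieck groups: the $i$-th column of $T_{M,\Gamma}$ (computed in $K_0(\per\Gamma)$) matches $\ind(\pi(M_i))$ (computed in $K_0^{\mathrm{split}}(\add\pi(\Gamma))$) because the canonical map $K_0^{\mathrm{split}}(\add\Gamma)\to K_0(\per\Gamma)$ is an isomorphism by Theorem~\ref{t:generators-of-grothendieckgroup-silting-case} and $\pi$ restricts to an additive equivalence $\add_{\per(\Gamma)}(\Gamma)\to\add(\pi(\Gamma))$ by Theorem~\ref{t:generalised-cluster-category}(b). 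The approach therefore buys a self-contained derivation of the theorem inside the paper's own framework, at no extra cost beyond what is already assembled for Theorem~\ref{t:silt-smc-cto-cluster-mut-gmat-cmat}.
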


$\rcto(\cc_{(Q,W)})\longrightarrow\gmat(Q)$:  Let $M=M_1\oplus \ldots\oplus M_n$ be a basic cluster-tilting object of $\cc_{(Q,W)}$ with $M_i$ indecomposable for each $i$. Applying the map $\ind$ of taking indices to each $M_i$, we obtain a matrix $T_M$ such that 
\[
(\ind(M_1),\ldots,\ind(M_n))=([\pi(\Gamma_1)],\ldots,[\pi(\Gamma_n)])T_M.
\]
By \cite[Proposition 3.6 and the proof of Theorem 3.7]{Plamondon11b} and Proposition~\ref{p:ind-and-Fpoly-in-cy-reduction}, the assignment $M\mapsto T_M$ defines a map $\rcto(\cc_{(Q,W)})\rightarrow\gmat(Q)$, which is the composition
\[\rcto(\cc_{(Q,W)})\rightarrow\Cl(Q)\rightarrow\gmat(Q).\]
This map is bijective by Theorem~\ref{t:index-determine-rigid-object}. 
That  the composition
\[\rtwosilt(\Gamma)\rightarrow\rcto(\cc_{(Q,W)})\rightarrow\gmat(Q)\]
is $\rtwosilt(\Gamma)\rightarrow\gmat(Q)$
follows immediately from the definitions of these three maps. 

$\Cl(Q)\longrightarrow \mut(Q)$: Let $\underline{u}$ be a cluster of $\ca_Q$. According to Theorem~\ref{t:cluster-determine-seed}, there is a unique seed $(\underline{u},R)$. Then the assignment $\underline{u}\mapsto R$ defines a map from $\Cl(Q)$ to $\mut(Q)$ which commutes with mutations.

$\mut(Q)\longrightarrow\cmat(Q)$: This map is part of the definition of $\ccc$-matrices, see Section~\ref{ss:cmatrix}. By definition it commutes with mutations.

$\gmat(Q)\longrightarrow\cmat(Q)$: According to \cite[Theorem 1.2]{NZ:tropicalduality}, taking inverse transpose $T\mapsto T^{-\mathrm{tr}}$ defines a bijection between the set of $\gg$-matrices and the set of $\ccc$-matrices which commutes with mutations. One checks the commutativity of the lower square of the desired diagram by using the fact that all the four maps commute with mutations. The bijectivity of $\Cl(Q)\rightarrow \mut(Q)$ and $\mut(Q)\rightarrow\cmat(Q)$ is a consequence of the fact that these two maps are surjective and the other two maps in the square are bijective.

$\rintersmc(\Gamma)\longrightarrow\cmat(Q)$: Let $X$ be an object of $\cd_{fd}(\Gamma)$. Then the cohomologies of $X$ are finite-dimensional modules over $J=H^0(\Gamma)$. Define the dimension vector $\underline{\dim}(X)$ of $X$ as
\[\underline{\dim}(X):=\sum_{i\in\mathbb{Z}}(-1)^i\underline{\dim}(H^i(X)).\]
This yields a map
\[
\xymatrix@C=1pc@R=0.5pc{\underline{\dim}\hspace{5pt}:\hspace{-7pt}&\rintersmc(\Gamma)\ar[rr]&&\cm_n(\mathbb{Z})\\
&\{X_1,\ldots,X_n\}\ar@{|->}[rr]&&(\underline{\dim}(X_1),\ldots,\underline{\dim}(X_n))}
\]
where $\cm_n(\mathbb{Z})$ is the set of $n\times n$ matrices with integer entries. This map has range $\cmat(Q)$ and commutes with mutations, see~\cite[Theorem 7.9 and Section 7.10]{Keller:derivedcluster}. In conjunction with Remark~\ref{r:smc-are-stalk} this implies the sign-coherence of $c$-vectors, as observed by Keller in~\cite[Theorem 7.9]{Keller:derivedcluster}. The commutativity of the outer square of the desired diagram follows from the fact that all four maps commute with mutations. But next we give an explanation from a different point of view.

Let $\{S_1,\ldots,S_n\}$ be a complete set of pairwise non-isomorphic simple $J$-modules, viewed as a collection of objects in $\cd_{fd}(\Gamma)$ via the restriction functor $p^*$. Recall that both $\{[S_1],\ldots,[S_n]\}$ and $\{[X_1],\ldots,[X_n]\}$ are bases of the Grothendieck group $K_0(\cd_{fd}(\Gamma))$. In fact, the matrix $\underline{\dim}(X_1,\ldots,X_n)$ is precisely the matrix representing a change of basis from $\{[S_1],\ldots,[S_n]\}$ to $\{[X_1],\ldots,[X_n]\}$.

Observe that there is the \emph{Euler form}
\[
\xymatrix@C=1.5pc@R=0.5pc{
K_0(\per(\Gamma))\times K_0(\cd_{fd}(\Gamma))\ar[r]&\mathbb{Z}\\
(M,X)\ar@{|->}[r]&\sum_{i\in\mathbb{Z}}(-1)^i\dim \Hom(M,\Sigma^i X)
}
\]
which is non-degenerate.

Let $M=M_1\oplus\ldots\oplus M_n$ be a basic 2-term silting object in $\per(\Gamma)$ with $M_i$ indecomposable for each $i$ and let $\{X_1,\ldots,X_n\}$ be the corresponding 2-term simple-minded collection of $\cd_{fd}(\Gamma)$. Thanks to (\ref{eq:silting-smc-are-dual}), the sets $\{[M_1],\ldots,[M_n]\}$ and $\{[X_1],\ldots,[X_n]\}$ form dual bases of $K_0(\per(\Gamma))$ and $K_0(\cd_{fd}(\Gamma))$ with respect to the Euler form. As a consequence, the matrices representing changes of bases from $\{[\Gamma_1],\ldots,[\Gamma_n]\}$ to $\{[M_1],\ldots,[M_n]\}$ and from $\{[S_1],\ldots,[S_n]\}$ to $\{[X_1],\ldots,[X_n]\}$ are related by taking the inverse of the transpose. Namely, $\underline{\dim}([X_1],\ldots,[X_n])=T_{M,\Gamma}^{-\mathrm{tr}}$, so the outer square of the diagram is commutative.

\begin{remark} 
\begin{itemize}
\item[(a)]
Recall from Section~\ref{ss:silting-smc-tstr-cotstr} that the bijection $\rintersmc(J)\rightarrow\rintersmc(\Gamma)$ is induced by the restriction $p^*$ along the morphism $p:\Gamma\rightarrow J$, and hence preserves dimension vectors. Thus, combining it with the bijection $\rintersmc(\Gamma)\rightarrow\cmat(Q)$, we obtain a bijection
\[\underline{\dim}: \rintersmc(J)\longrightarrow\cmat(Q).\]
This map is expected to help to understand $\cmat(Q)$ using representation theory over the finite-dimensional algebra $J$.

\item[(b)]
Let $C=(c_{ij})_{1\leq i,j\leq n}$ be a $\ccc$-matrix and $M=M_1\oplus\ldots\oplus M_n$ be the corresponding reachable basic cluster-tilting object of $\cc_{(Q,W)}$. Then there is a triangle
\[
\xymatrix{p(\Gamma_i)\ar[r]& \bigoplus_{j:c_{ij}>0}M_j^{\oplus c_{ij}}\ar[r]&\bigoplus_{j:c_{ij}<0}M_j^{\oplus (-c_{ij})}\ar[r] &\Sigma p(\Gamma_i)}\]
for each $1\leq i\leq n$.

\item[(c)] As a consequence of Theorem~\ref{t:silt-smc-cto-cluster-mut-gmat-cmat}, the graph $\twosilt(\Gamma)$ (respectively, $\intersmc(\Gamma)$, $\intertstr(\Gamma)$, $\intercotstr(\Gamma)$, $\twosilt(J)$, $\intersmc(J)$, $\intertstr(J)$, $\intercotstr(J)$, $\sttilt(J)$, $\fftor(J)$ and $\cto(\cc_{(Q,W)})$) is connected if and only if it is isomorphic to $\mut(Q)$. In particular, if $\twosilt(\Gamma)$ is connected, then $\mut(Q)$ has a sink.
\end{itemize}
\end{remark}

\bigskip

\appendix

\section{Non-positive dg algebras}\label{a:non-positive-dg-alg}

Let $k$ be an algebraically closed field.



 \subsection{Non-positive dg algebras}\label{ss:non-pos-dg}
 A dg algebra $A$ is said to be \emph{non-positive} if the degree $i$ component $A^i$ vanishes for $i>0$.
 
 Let $A$ be a non-positive dg algebra.  
 Then $A$ is a silting object of $\per(A)$ because
\[\Hom_{\per(A)}(A,\Sigma^i A)=H^i(A)\]
vanishes for $i>0$.
Conversely, let $\cc$ be an idempotent complete algebraic triangulated category which has a silting object $P$. Here a triangulated category is said to be \emph{algebraic} if it is triangle equivalent to the stable category of a Frobenius category. Then by Keller's Morita theorem for triangulated categories (\cite[Theorem 3.8 (b)]{Keller06d}), there is a non-positive dg algebra $A$ such that there is a triangle equivalence $\cc\rightarrow \per(A)$ which takes $P$ to $A$ (see for example~\cite[Lemma 4.1]{KoenigYang12} for a detailed proof). Part (a) of the following theorem is obtained by combining Proposition 6.2.1 and Proposition 5.2.2 of \cite{Bondarko10}, part (b) implicitly appears in \cite[Section 2.1]{Amiot09} (see for example~\cite[Seciton 2.4]{KalckYang12} for a detailed proof) and part (c) is easy to prove by using (b).

\begin{theorem} Let $A$ be a non-positive dg algebra.
\begin{itemize}
\item[(a)] The pair $(\cp^\std_{\geq 0},\cp^\std_{\leq 0})$ is a bounded co-$t$-structure on $\per(A)$ with co-heart $\add_{\per(A)}(A)$, where $\cp^\std_{\geq 0}$ (respecitively, $\cp^\std_{\leq 0}$) is the smallest full subcategory of $\per(A)$ which contains $\Sigma^i A$ for $i\leq 0$ (respectively, $i\geq 0$) and which is closed under extensions and direct summands. We will refer to this co-$t$-structure as the \emph{standard} co-$t$-structure on $\per(A)$.
\item[(b)] The pair $(\cd_{\std}^{\leq 0},\cd_{\std}^{\geq 0})$ is a bounded $t$-structure on $\cd_{fd}(A)$ with heart being equivalent to $\mod H^0(A)$, where $\cd_{\std}^{\leq 0}$ (respectively, $\cd_{\std}^{\geq 0}$) is the full subcategory of $\cd_{fd}(A)$ containing those dg $A$-modules whose cohomologies are concentrated in non-positive degrees (respectively, non-negative degrees). We will refer to this $t$-structure as the \emph{standard} $t$-structure on $\cd_{fd}(A)$.
\item[(c)] Suppose that $H^0(A)$ is finite-dimensional. Then a complete collection of pairwise non-isomorphic simple $H^0(A)$ modules, considered as a collection of objects in $\cd_{fd}(A)$, is simple-minded.
\end{itemize}
\end{theorem}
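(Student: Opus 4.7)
The plan is to address the three parts of the theorem in sequence. Parts (a) and (b) are axiom verifications made feasible by the non-positivity of $A$, and part (c) is a short deduction from (b) using standard properties of simples in an abelian heart.

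For (a), closure of $\cp^\std_{\geq 0}$ under $\Sigma^{-1}$ and of $\cp^\std_{\leq 0}$ under $\Sigma$ is part of the definition. The orthogonality $\Hom_{\per(A)}(M,\Sigma N)=0$ for $M\in\cp^\std_{\geq 0}$ and $N\in\cp^\std_{\leq 0}$ reduces, by the closure of both sides under extensions and direct summands, to
\[
\Hom_{\per(A)}(\Sigma^i A,\Sigma^{j+1} A)=H^{j+1-i}(A)=0 \qquad (i\leq 0,\ j\geq 0),
\]
which is immediate from the non-positivity of $A$ since $j+1-i\geq 1$. For the decomposition axiom I would argue by induction on the length of a presentation of $M\in\per(A)=\thick(A)$ as an iterated cone of shifts of $A$, using the orthogonality above to peel off the degree-$\leq 0$ and degree-$>0$ parts. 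Boundedness follows from $\thick(A)=\per(A)$. For the co-heart, the inclusion $\add_{\per(A)}(A)\subseteq \cp^\std_{\geq 0}\cap \cp^\std_{\leq 0}$ is obvious; the reverse inclusion follows from the general fact that the co-heart of the co-$t$-structure generated by a silting object is the additive closure of that object, which is essentially the content of the Bondarko results cited by the authors.

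For (b), the key observation is that non-positivity of $A$ guarantees that the classical smart truncations $\tau^{\leq 0}$ and $\tau^{\geq 1}$ on complexes of $k$-vector spaces respect the dg $A$-module structure: for $x\in Z^0(M)$ and $a\in A^0$ one has $d(xa)=x\,d(a)=0$ because $d(a)\in A^1=0$, and the negative-degree parts go through automatically. These truncations provide the required decomposition triangle; orthogonality is routine; boundedness is immediate since objects of $\cd_{fd}(A)$ have cohomology supported in finitely many degrees. An object $X$ in the heart $\cd^{\leq 0}_\std\cap\cd^{\geq 0}_\std$ has cohomology concentrated in degree $0$, so its canonical map to $H^0(X)$ viewed as a dg $A$-module concentrated in degree $0$ is a quasi-isomorphism, and the assignment $X\mapsto H^0(X)$ yields the desired equivalence of the heart with $\mod H^0(A)$.

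For (c), assume $H^0(A)$ is finite-dimensional and let $S_1,\ldots,S_n$ be a complete set of pairwise non-isomorphic simple $H^0(A)$-modules, viewed as objects of $\cd_{fd}(A)$ via (b). The negative $\Hom$-vanishing $\Hom(S_i,\Sigma^p S_j)=0$ for $p<0$ follows at once from the $t$-structure axiom because $S_i\in\cd^{\geq 0}_\std$ while $\Sigma^p S_j\in\cd^{\leq -p}_\std\subseteq \cd^{\leq -1}_\std$. The formula $\Hom(S_i,S_j)=k\delta_{ij}$ is Schur's lemma in the length heart. Generation of $\cd_{fd}(A)$ follows from boundedness of the $t$-structure combined with the fact that every object of the length heart is a finite iterated extension of the $S_i$. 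The main obstacle in this programme lies in the decomposition axiom of (a): unlike the $t$-structure case, co-$t$-structure truncations are neither canonical nor functorial, so no single formula produces them and the induction must be set up carefully, exploiting the silting nature of $A$ together with the orthogonality established beforehand.
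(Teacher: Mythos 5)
Your sketch is correct and matches the proofs that the paper outsources to the literature: the paper proves (a) by citing Bondarko (Propositions 6.2.1 and 5.2.2, which establish the weight structure generated by a silting/negative subcategory exactly via the orthogonality and induction you describe), proves (b) by citing Amiot and Kalck--Yang (whose argument is the smart-truncation compatibility with the non-positive dg structure that you write out), and dismisses (c) as easy from (b) (which is precisely your Schur's lemma plus boundedness argument). Your honest caveat about the non-functorial co-$t$-structure decomposition in (a) is exactly the reason the paper defers to Bondarko rather than giving the induction inline.
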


Let $A$ be a non-positive dg algebra. Set
\begin{align*}
\cf_A&=\{\cone(f)\mid f \text{ is a morphism in }\add_{\per(A)}(A)\}\subseteq \per(A).
\end{align*}
Objects of $\cf_A$ will be called \emph{2-term} objects of $\per(A)$ for obvious reasons. The following is an easy observation.

\begin{lemma}\label{l:2-term-partial-silting-and-vanishing-of-ext1}
Let $X$ be an object of $\cf_A$. Then $\Hom(X,\Sigma^p X)=0$ for $p\geq 2$. In particular $X$ is a presilting object if and only if $\Hom(X,\Sigma X)=0$.
\end{lemma}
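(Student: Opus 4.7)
The plan is a two-step triangle chase. The only input is the non-positivity of $A$, which gives
\[
\Hom_{\per(A)}(P,\Sigma^i Q)=0 \quad \text{for all } P,Q\in\add_{\per(A)}(A) \text{ and all } i>0,
\]
since this reduces by additivity to $\Hom(A,\Sigma^i A)=H^i(A)=0$ for $i>0$.

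Write $X=\cone(f)$ for some morphism $f\colon P\to Q$ in $\add_{\per(A)}(A)$, and fix the defining triangle
\[
\xymatrix{P\ar[r]^f & Q\ar[r] & X\ar[r] & \Sigma P.}
\]
First I would apply $\Hom(-,\Sigma^p X)$ to this triangle. The resulting long exact sequence places $\Hom(X,\Sigma^p X)$ between $\Hom(\Sigma P,\Sigma^p X)=\Hom(P,\Sigma^{p-1}X)$ and $\Hom(Q,\Sigma^p X)$, so it suffices to show that these two flanking groups vanish whenever $p\ge 2$.

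To handle each of them I would apply $\Hom(P,-)$ (respectively $\Hom(Q,-)$) to suitable shifts of the same triangle. For the flank $\Hom(P,\Sigma^{p-1}X)$ one obtains the exact sequence
\[
\Hom(P,\Sigma^{p-1}Q)\fl \Hom(P,\Sigma^{p-1}X)\fl \Hom(P,\Sigma^{p}P),
\]
whose outer terms vanish by the displayed input since $p-1\ge 1$. For the flank $\Hom(Q,\Sigma^p X)$ one uses
\[
\Hom(Q,\Sigma^p Q)\fl \Hom(Q,\Sigma^p X)\fl \Hom(Q,\Sigma^{p+1}P),
\]
whose outer terms vanish for $p\ge 1$. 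Combining these with the first triangle chase gives $\Hom(X,\Sigma^p X)=0$ for $p\ge 2$.

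The second assertion is then immediate: a presilting object satisfies $\Hom(X,\Sigma^p X)=0$ for all $p>0$, and conversely the only case not covered by what we just proved is $p=1$. There is no real obstacle in the argument; the only point to keep straight is which triangle to apply $\Hom(-,-)$ to at each step so that one actually lands in $\add_{\per(A)}(A)\times\add_{\per(A)}(A)$ where non-positivity can be invoked.
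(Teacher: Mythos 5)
Your proof is correct. The paper gives no argument for this lemma (it is flagged as "an easy observation"), so there is no authorial proof to compare against; the triangle chase you describe is precisely the expected reconstruction, and every step checks out: both flanks $\Hom(P,\Sigma^{p-1}X)$ and $\Hom(Q,\Sigma^p X)$ sit in long exact sequences whose outer terms are $\Hom$-groups between objects of $\add_{\per(A)}(A)$ with a strictly positive shift, so they vanish by non-positivity once $p\geq 2$, and the two-step sandwich then kills $\Hom(X,\Sigma^p X)$.
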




\subsection{The induction functor}
Let $A$ be a non-positive dg algebra. Let $\bar{A}=H^0(A)$. Denote by $p$ the canonical projection $A\rightarrow\bar{A}$. Then the induction functor $p_*:\per(A)\rightarrow\per(\bar{A})$ restricts to an additive equivalence $\add_{\per(A)}(A)\stackrel{\sim}{\rightarrow}\add_{\per(\bar{A})}(\bar{A})$ and induces a canonical isomorphism $K_0(\per(A))\rightarrow K_0(\per(\bar{A}))$ of Grothendieck groups. 

\subsubsection{The bijection}

\begin{proposition}\label{p:induction-silting}
The induction functor $p_*:\per(A)\rightarrow\per(\bar{A})$ induces a bijection between the sets of isomorphism classes of $2$-term silting objects.
\end{proposition}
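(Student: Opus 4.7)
The plan is to exhibit $p_*$ as an additive equivalence between the subcategories $\cf_A$ and $\cf_{\bar{A}}$ of 2-term objects, and then to verify that this equivalence both preserves and reflects the silting property. The proposition then falls out on restricting to iso classes of basic silting objects.

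For well-definedness, let $M\in\cf_A$ be a 2-term silting object, given by a triangle $P\xrightarrow{f} Q\to M\to\Sigma P$ with $P,Q\in\add_{\per(A)}(A)$. Applying the triangulated functor $p_*$ gives a triangle $p_*(P)\to p_*(Q)\to p_*(M)\to\Sigma p_*(P)$ with $p_*(P),p_*(Q)\in\proj\bar{A}$, so $p_*(M)\in\cf_{\bar{A}}$. The key technical step is to see that $p_*\colon\cf_A\to\cf_{\bar{A}}$ is an additive equivalence. Because $A$ is non-positive, any 2-term complex $(P^{-1}\xrightarrow{d}P^0)$ in $\add(A)$ is (the total dg module of) a semi-free, hence K-projective, dg $A$-module whose $\Hom$-spaces in $\per(A)$ are computed by the naive $\Hom$-complex in $\add(A)$. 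Therefore $\cf_A$ is equivalent to the homotopy category $K^{[-1,0]}(\add(A))$ of 2-term complexes in $\add(A)$, and similarly $\cf_{\bar{A}}\simeq K^{[-1,0]}(\proj\bar{A})$. The hypothesis that $p_*\colon\add(A)\xrightarrow{\sim}\proj\bar{A}$ is an additive equivalence applies termwise to give an equivalence of these homotopy categories, intertwined with $p_*$ on 2-term objects. In particular $p_*$ is bijective on iso classes of objects in $\cf_A$, preserves direct-sum decompositions and indecomposables, takes basic objects to basic objects, and satisfies $|M|=|p_*(M)|$ and $|A|=|\bar{A}|$.

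It remains to check that $p_*$ preserves and reflects the silting property on 2-term objects. For the presilting condition, by Lemma~\ref{l:2-term-partial-silting-and-vanishing-of-ext1} one need only examine $\Hom(M,\Sigma M)$. Applying $\Hom(-,M)$ and $\Hom(-,\Sigma M)$ to the defining triangle of $M$ and using the vanishings $\Hom_{\per(A)}(R,\Sigma^i R')=H^i(A)\text{-valued}=0$ for $R,R'\in\add(A)$ and $i\ge 1$ (a consequence of non-positivity), a direct long-exact-sequence argument identifies $\Hom_{\per(A)}(M,\Sigma M)$ with the cokernel of
\[
\Hom(P,P)\oplus\Hom(Q,Q)\ \xrightarrow{(g,h)\mapsto fg+hf}\ \Hom(P,Q).
\]
The identical computation downstairs identifies $\Hom_{\per(\bar{A})}(p_*M,\Sigma p_*M)$ with the analogous cokernel in $\proj\bar{A}$, and the two correspond under the equivalence $\add(A)\simeq\proj\bar{A}$. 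Hence $M$ is presilting iff $p_*M$ is. For the generation condition, invoke Proposition~\ref{l:partial-silting}: a 2-term presilting object is silting precisely when its number of indecomposable summands matches that of the canonical silting object; this count is preserved by $p_*$, so $M$ is silting iff $p_*M$ is.

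The main obstacle will be justifying the equivalence $\cf_A\simeq K^{[-1,0]}(\add(A))$, which rests on proving that 2-term complexes in $\add(A)$ are K-projective dg $A$-modules for any non-positive $A$; this is done by exhibiting them as twisted extensions of shifts of direct summands of $A$ and invoking the standard semi-free filtration criterion. Once this is in place, the remainder of the argument is essentially bookkeeping: tracking that the identifications of $\Hom$-spaces and indecomposable decompositions are all functorial under $p_*$.
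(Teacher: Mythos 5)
Your approach of restricting $p_*$ to the subcategories $\cf_A$ and $\cf_{\bar A}$ and then checking the silting condition through a cokernel computation is the right framework, and your identification
\[
\Hom_{\per(A)}(M,\Sigma M)\;\cong\;\cok\bigl(\Hom(P,P)\oplus\Hom(Q,Q)\to\Hom(P,Q)\bigr)
\]
is correct and matches what the paper establishes (it does so via repeated applications of a four-lemma rather than unwinding the long exact sequences directly). The appeal to Proposition~\ref{l:partial-silting} for the generation condition is also the same step the paper takes.

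However, there is a genuine gap in the object-level bijection. You assert that $\cf_A\simeq K^{[-1,0]}(\add(A))$, so that $p_*\colon\cf_A\to\cf_{\bar A}$ is an additive \emph{equivalence}. This is false whenever $H^{<0}(A)\neq 0$, which is the generic situation for a non-positive dg algebra and is essential in the Ginzburg case. Concretely, for $R,R'\in\add_{\per(A)}(A)$ one has $\Hom_{\per(A)}(\Sigma R,R')\cong\Hom_{\per(A)}(R,\Sigma^{-1}R')$, and for $R=R'=A$ this is $H^{-1}(A)$, which need not vanish. In the naive homotopy category $K^{[-1,0]}(\add(A))$ the corresponding $\Hom$-space between the complexes $(A\to 0)$ and $(0\to A)$ is zero. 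So $\cf_A$ carries morphisms with no counterpart in $K^{[-1,0]}(\add(A))$, and $p_*$ on $\cf_A$ is \emph{not} faithful: it kills precisely the morphisms that factor through some $\Sigma X\to Y$ with $X,Y\in\add(A)$. (Your side claim that 2-term complexes are K-projective is correct, but K-projectivity gives you the derived $\Hom$-\emph{complex}, which sees $H^{<0}(A)$, not the naive one.)

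The paper's proof is organised exactly to circumvent this: it introduces the ideal $\ci$ of morphisms in $\cf_A$ factoring through $\Sigma X\to Y$ with $X,Y\in\add(A)$, shows $\ci^2=0$, proves that $p_*$ factors through $\cf_A/\ci$ and induces an equivalence $\cf_A/\ci\xrightarrow{\sim}\cf_{\bar A}$, and then deduces that $p_*$ is full, detects isomorphisms, preserves indecomposability and hence is a bijection on isomorphism classes of objects. Your argument would go through if you replaced the claimed equivalence with this weaker statement (an equivalence modulo a square-zero ideal still gives the bijection on isoclasses you need), but as written the proof of the object bijection rests on a false identification.
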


We need the following two $4$-lemmas.
\begin{lemma}\label{l:four-lemma}
Consider the following commutative diagram of abelian groups with exact rows
\[
\begin{xy}
\SelectTips{cm}{10}\xymatrix{M^{-1}\ar[r]\ar[d]^{f} & M^0 \ar[r]\ar[d]^g & M^1 \ar[r]\ar[d]^h & M^2 \ar[d]^i\\
N^{-1} \ar[r] & N^0 \ar[r] & N^1 \ar[r] & N^2}
\end{xy}\]
\begin{itemize}
\item[(a)] If $f$ is surjective, $g$ is injective and $i$ is injective, then $h$ is injective.
\item[(b)] If $f$ is surjective, $h$ is surjective and $i$ is injective, then $g$ is surjective.
\end{itemize}
\end{lemma}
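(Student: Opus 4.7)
The plan is to prove both parts by standard diagram chasing, exploiting exactness of the two rows and the hypotheses on $f$, $g$, $h$, $i$. To keep notation light, I would denote all horizontal maps in the top row by $\alpha^i : M^i \to M^{i+1}$ and in the bottom row by $\beta^i : N^i \to N^{i+1}$, so that the commutativity relations $\beta^{-1} f = g \alpha^{-1}$, $\beta^0 g = h \alpha^0$, $\beta^1 h = i \alpha^1$ are at hand.

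For part (a), I would start with $x \in M^1$ satisfying $h(x) = 0$ and aim to conclude $x = 0$. First, applying injectivity of $i$ to the relation $i \alpha^1(x) = \beta^1 h(x) = 0$ forces $\alpha^1(x) = 0$, so exactness at $M^1$ gives $y \in M^0$ with $\alpha^0(y) = x$. Next, $\beta^0 g(y) = h \alpha^0(y) = h(x) = 0$, so exactness at $N^0$ produces $z \in N^{-1}$ with $\beta^{-1}(z) = g(y)$. Surjectivity of $f$ then supplies $w \in M^{-1}$ with $f(w) = z$, and the commutativity relation $g \alpha^{-1}(w) = \beta^{-1} f(w) = g(y)$ combined with injectivity of $g$ yields $\alpha^{-1}(w) = y$. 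Finally, exactness at $M^0$ gives $x = \alpha^0(y) = \alpha^0 \alpha^{-1}(w) = 0$.

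For part (b), the chase is dual in spirit. Given $v \in N^0$, I want to produce $u \in M^0$ with $g(u) = v$. The first step uses surjectivity of $h$: since $\beta^0(v) \in N^1$, pick $x \in M^1$ with $h(x) = \beta^0(v)$. Then $i \alpha^1(x) = \beta^1 h(x) = \beta^1 \beta^0(v) = 0$, and injectivity of $i$ gives $\alpha^1(x) = 0$, so exactness at $M^1$ yields $y \in M^0$ with $\alpha^0(y) = x$. Now consider $v - g(y) \in N^0$: since $\beta^0(v - g(y)) = \beta^0(v) - h \alpha^0(y) = \beta^0(v) - h(x) = 0$, exactness at $N^0$ produces $z \in N^{-1}$ with $\beta^{-1}(z) = v - g(y)$. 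Surjectivity of $f$ gives $w \in M^{-1}$ with $f(w) = z$, and then $u := y + \alpha^{-1}(w)$ satisfies $g(u) = g(y) + g\alpha^{-1}(w) = g(y) + \beta^{-1} f(w) = g(y) + (v - g(y)) = v$, as required.

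No step is really an obstacle here; the only minor care is to carry out the chases in a clean order so that each hypothesis is invoked exactly where it is needed (surjectivity of $f$ to lift across the leftmost square, injectivity of $i$ to kill images in $M^2$, injectivity/surjectivity of the relevant middle map to produce the conclusion). Since the statement of the lemma is symmetric between (a) and (b) in the sense of being the two halves of the classical ``sharp four lemma'', the two chases are essentially mirror images of each other, and a reader familiar with the five lemma will recognize them immediately.
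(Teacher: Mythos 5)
Your diagram chases in both parts are correct and carefully ordered; each hypothesis is invoked exactly once and exactly where needed, and the final identities $x=\alpha^0\alpha^{-1}(w)=0$ in~(a) and $g(u)=v$ in~(b) follow. Note that the paper does not actually supply a proof of this lemma — it is stated as a standalone auxiliary fact and cited in the proof of the proposition that follows — so there is no argument to compare against; what you have written is the standard proof of the sharp four lemma, which is precisely what the authors are implicitly relying on.
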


\begin{proof}[Proof of Proposition~\ref{p:induction-silting}]
We first show that if $X$ is an object of $\cf_A$ then $X$ is a silting object if and only if $p_*(X)$ is a silting object. There is a triangle
\[\xymatrix{X'\ar[r] & X\ar[r] & X''\ar[r] &\Sigma X'}\]
with $X'\in\add_{\per(A)}(A)$ and $X''\in\Sigma\add_{\per(A)}(A)$.
Applying the two functors $\Hom_{\per(A)}(?,X'')$ (respectively, $\Hom_{\per(\bar{A})}(?, p_*(X''))$) and $\Hom_{\per(A)}(?,X')$ (respectively, $\Hom_{\per(\bar{A})}(?,p_*(X'))$) to this triangle (respectively, its image under $p_*$), we obtain two commutative diagrams
\[{\small 
\begin{xy}
\SelectTips{cm}{10}
\xymatrix@C=0.5pc{(X',\Sigma^{-1}X'')\ar[r]\ar[d]^{f_1} & (X'',X'')\ar[r]\ar[d]^{f_2} & (X,X'')\ar[r]\ar[d]^{f_3} & (X',X'')=0\ar[d]\\
(p_*(X'),\Sigma^{-1}p_*(X''))\ar[r] & (p_*(X''),p_*(X''))\ar[r] & (p_*(X),p_*(X''))\ar[r] & (p_*(X'),p_*(X''))=0}
\end{xy}}\]
\[{\small 
\begin{xy}
\SelectTips{cm}{10}
\xymatrix@C=0.5pc{(X',X')\ar[r]\ar[d]^{g_1} & (X'',\Sigma X')\ar[r]\ar[d]^{g_2} & (X,\Sigma X')\ar[r]\ar[d]^{g_3} & (X',\Sigma X')=0\ar[d]\\
(p_*(X'),p_*(X'))\ar[r] & (p_*(X''),\Sigma p_*(X'))\ar[r] & (p_*(X),\Sigma p_*(X'))\ar[r] & (p_*(X'),\Sigma p_*(X'))=0}
\end{xy}}\]
Since $p_*:\add_{\per(A)}(A)\rightarrow\add_{\per(\bar{A})}(\bar{A})$ is an equivalence, it follows that $f_1$, $f_2$, $g_1$ and $g_2$ are bijective. Therefore by Lemma~\ref{l:four-lemma}, $f_3$ and $g_3$ are bijective. Applying $\Hom_{\per(A)}(X,?)$ (respectively, $\Hom_{\per(\bar{A})}(p_*(X),?)$) to the above triangle (respectively, its image under $p_*$), we obtain the following commutative diagram
\[{\small 
\begin{xy}
\SelectTips{cm}{10}
\xymatrix@C=0.5pc{(X,X'')\ar[r]\ar[d]^{f_3} & (X,\Sigma X')\ar[r]\ar[d]^{g_3} & (X,\Sigma X)\ar[r]\ar[d]^h & (X,\Sigma X'')=0\ar[d]\\
(p_*(X),p_*(X''))\ar[r] & (p_*(X),\Sigma p_*(X'))\ar[r] & (p_*(X),\Sigma p_*(X))\ar[r] & (p_*(X),\Sigma p_*(X''))=0}
\end{xy}}\]
By Lemma~\ref{l:four-lemma}, $h$ is bijective. Therefore $X$ is a presilting object if and only if $p_*(X)$ is a presilting object\footnote{In the published version the rest of this proof is missing except the sentence starting with `The bijectivity'.}. If $X$ is a silting object, then $X$ is presilting and $A\in\thick(X)$. This implies that $p_*(X)$ is presilting and $\bar{A}\in\thick(p_*(X))$, so $p_*(X)$ is a silting object. Conversely, if $p_*(X)$ is a silting object, then $X$ is presilting, and by \cite[Corollary 2.4]{IyamaJorgensenYang14} there is a triangle in $\per(\bar{A})$:
\[
\xymatrix{
\bar{Y}_1\ar[r] & \bar{Y}_0\ar[r] & \Sigma \bar{A}\ar[r]^{\bar{f}} & \Sigma \bar{Y}_1,
}
\]
where $\bar{Y}_1,\bar{Y}_0\in\add_{\per(\bar{A})}(p_*(X))$. Let $Y_1\in\add_{\per(A)}(X)$ be such that $p_*(Y_1)\cong \bar{Y}_1$. We claim that the functor $p_*$ induces an isomorphism $\Hom_{\per(A)}(A,Y_1)\cong\Hom_{\per(\bar{A})}(\bar{A},p_*(Y_1))$. Then there exists a morphism $f\colon\Sigma A\to\Sigma Y_1$ such that $p_*(f)$ is isomorphic to $\bar{f}$.
Complete this morphism to a triangle in $\per(A)$:
\[
\xymatrix{
Y_1\ar[r] & Y_0 \ar[r] & \Sigma A\ar[r]^f & \Sigma Y_1.
}
\]
Then $Y_0\in\cf_A$ and the image under $p_*$ of this triangle is isomorphic to the above triangle in $\per(\bar{A})$, in particular, $p_*(Y_0)\cong\bar{Y}_0$. By Proposition~\ref{prop:kernel-of-push-out} below, $Y_0\in\add_{\per(A)}(X)$ and hence $A\in\thick_{\per(A)}(X)$, so $X$ is a silting object. 
This shows that $X$ is a silting object if and only if $p_*(X)$ is a silting object.
The bijectivity is a consequence of the following Proposition~\ref{prop:kernel-of-push-out}.

To prove the claim, we take a triangle 
\[
\xymatrix{
P^{-1}\ar[r] & P^0\ar[r] & Y_1\ar[r] & \Sigma P^{-1}
}
\]
with $P^{-1},P^0\in\add_{\per(A)}(A)$. Applying $\Hom_{\per(A)}(A,?)$ to this triangle and $\Hom_{\per(\bar{A})}(\bar{A},?)$ to its image under $p_*$, we obtain a commutative diagram
\[{\small 
\begin{xy}
\SelectTips{cm}{10}
\xymatrix@C=0.5pc{(A,P^{-1})\ar[r]\ar[d]^{\cong} & (A,P^0)\ar[r]\ar[d]^{\cong} & (A,Y^{-1})\ar[r]\ar[d] & (A,\Sigma P^{-1})=0\ar[d]\\
(\bar{A},p_*(P^{-1}))\ar[r] & (\bar{A},p_*(P^0))\ar[r] & (\bar{A},p_*(Y_1))\ar[r] & (\bar{A},\Sigma p_*(P^{-1}))=0}
\end{xy}}\]
The claim follows from Lemma~\ref{l:four-lemma}. 
\end{proof}

\begin{proposition}\label{prop:kernel-of-push-out}
Let $\ci$ be the ideal of $\cf_A$ consisting of morphisms factoring through morphisms $\Sigma X\rightarrow Y$ with $X,Y\in\add_{\per(A)}(A)$. Then $\ci^2=0$ and $p_*$ induces an equivalence $\cf_A/\ci\rightarrow\cf_{\bar{A}}$. In particular, $p_*$ is full, detects isomorphisms, preserves indecomposability and induces a bijection between isomorphism classes of objects of $\cf_A$ and those of $\cf_{\bar{A}}$.
\end{proposition}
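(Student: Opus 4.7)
The plan is to verify $\ci^2 = 0$, that $p_*$ factors through a functor $\bar p_*: \cf_A/\ci \to \cf_{\bar A}$, that $\bar p_*$ is an equivalence, and then derive the consequences. The key computational inputs used throughout are the two vanishings $\Hom_{\per(A)}(W_1, \Sigma W_2) = 0$ for $W_i \in \add_{\per(A)}(A)$ (non-positivity of $A$) and $\Hom_{\per(\bar A)}(\Sigma \bar W_1, \bar W_2) = 0$ for $\bar W_i \in \add_{\per(\bar A)}(\bar A)$ (since $\bar A$ is concentrated in degree zero), together with the equivalence $p_*: \add_{\per(A)}(A) \iso \add_{\per(\bar A)}(\bar A)$. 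The first vanishing makes $\ci^2 = 0$ immediate: any composite of two elements of $\ci$ passes through some $Y_1 \to \Sigma X_2$ in $\add_{\per(A)}(A)$. The second yields $p_*(\alpha) = 0$ for every $\alpha: \Sigma X \to Y$, so $p_*$ descends to $\bar p_*$. Essential surjectivity of $\bar p_*$ is clear: lift $\bar f$ along $p_*|_{\add_{\per(A)}(A)}$ and take the cone.

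For full faithfulness, fix $M = \cone(f: X \to Y)$ and $M' = \cone(f': X' \to Y')$ in $\cf_A$. Applying $\Hom(-, M')$ and its analogue after $p_*$ to the defining triangles, the two vanishings and the full faithfulness of $p_*$ on $\add_{\per(A)}(A)$ combine to show that $p_*$ induces isomorphisms on $\Hom(X, M')$ and $\Hom(Y, M')$, and a surjection from $\Hom(\Sigma X, M')$ onto $\Hom(\Sigma p_*X, p_*M')$. Fullness follows: given $\bar g: p_*M \to p_*M'$, restrict along $p_*Y \to p_*M$ and lift via the $\Hom(Y, M')$-isomorphism to $h: Y \to M'$; then $hf = 0$ because $p_*(hf) = \bar g \circ p_*(uf) = 0$ (as $uf = 0$ in the triangle) and the map on $\Hom(X, M')$ is injective. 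Hence $h = g' \circ u$ for some $g': M \to M'$, and the difference $\bar g - p_*(g')$ lifts via the $\Hom(\Sigma X, M')$-surjection to a correction term, yielding a preimage for $\bar g$.

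Faithfulness is the main technical obstacle. If $p_*(g) = 0$, the $\Hom(Y, M')$-isomorphism gives $g \circ u = 0$, so $g = \gamma \circ w$ for the connecting morphism $w: M \to \Sigma X$ and some $\gamma: \Sigma X \to M'$. Composing $\gamma$ with $w': M' \to \Sigma X'$ yields $\Sigma \eta$ for some $\eta: X \to X'$. From $p_*(\gamma) \circ p_*(w) = 0$ and the long exact sequence we can write $p_*(\gamma) = \bar\delta \circ \Sigma p_*(f)$ for some $\bar\delta: \Sigma p_*Y \to p_*M'$; composing further with $w'$ and using that $\Sigma f' \circ w' = 0$ (two consecutive maps in the triangle for $M'$) produces $\rho': Y \to X'$ with $f' \rho' = 0$ and $\eta = \rho' f$. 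Consequently, an $\epsilon: \Sigma Y \to M'$ with $w' \circ \epsilon = -\Sigma \rho'$ exists (its obstruction being exactly $f' \rho' = 0$), and the modified morphism $\tilde \gamma := \gamma + \epsilon \circ \Sigma f$ satisfies $\tilde\gamma \circ w = g$ (since $\Sigma f \circ w = 0$) while also being killed by $w'$, so it factors through $Y' \to M'$, displaying $g \in \ci(M, M')$. The remaining consequences are standard: fullness of $p_*$ follows from that of $\bar p_*$; since $\ci(M, M)$ is nilpotent it lies in the Jacobson radical of $\End(M)$, so $p_*$ detects isomorphisms via the identity $(1+\iota)^{-1} = 1-\iota$ for $\iota^2 = 0$; the quotient $\End(M)/\ci(M, M) \cong \End(p_*M)$ is local whenever $\End(M)$ is, giving preservation of indecomposability; and bijection on isomorphism classes combines essential surjectivity with iso-detection.
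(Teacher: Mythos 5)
Your argument is correct, and its broad outline---$\ci^2=0$, essential surjectivity via lifting two-term presentations, then full faithfulness of the induced functor $\cf_A/\ci\to\cf_{\bar A}$ by means of long exact $\Hom$-sequences from the presentation triangles, the equivalence $p_*:\add_{\per(A)}(A)\to\add_{\per(\bar A)}(\bar A)$, and the two degree vanishings---matches the paper's. The execution of the full faithfulness step is genuinely different, though. The paper applies $\Hom(-,M')$ to the presentation triangle of $M$, deduces surjectivity of $\Hom(M,M')\to\Hom(p_*M,p_*M')$ from a four-lemma together with two auxiliary diagrams, and then performs a single diagram chase to identify the kernel with the image of the composite $\Hom(\Sigma X,Y')\to\Hom(\Sigma X,M')\to\Hom(M,M')$, which manifestly lies in $\ci$; fullness and faithfulness both fall out of one chase. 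You instead prove fullness by explicit lifting and correction (using the iso on $\Hom(Y,M')$ and the surjection on $\Hom(\Sigma X,M')$), and then prove faithfulness by a separate construction: given $g$ with $p_*g=0$, you factor $g=\gamma w$ and add a correction $\epsilon\circ\Sigma f$ so that $\tilde\gamma=\gamma+\epsilon\circ\Sigma f$ still composes with $w$ to give $g$ while also satisfying $w'\tilde\gamma=0$, hence $\tilde\gamma=v'\rho$ and $g=v'\rho w$ visibly lies in $\ci$. This is a more hands-on route to the same kernel description. Your treatment of the remaining consequences is fine (the paper declares them immediate); the only small caveat is that the appeal to local endomorphism rings tacitly assumes a Krull--Schmidt setting, whereas lifting idempotents modulo the nilpotent ideal $\ci$ would give preservation of indecomposability in full generality.
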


\begin{proof} 
That $\ci^2=0$ holds is because $\Hom(A,\Sigma A)$ vanishes. Next we show that $p_*$ induces an equivalence $\cf_A/\ci\rightarrow\cf_{\bar{A}}$. Then the last statement follows immediately.

Let $Y\in\cf_{\bar{A}}$. Then $Y\cong\cone(g)$ for a morphism $g$ in $\add_{\per(\bar{A})}(\bar{A})$. Since $p_*:\add_{\per(A)}(A)\rightarrow\add_{\per(\bar{A})}(\bar{A})$ is an equivalence, it follows that there is a morphism $f$ in $\add_{\per(A)}(A)$ such that $g=p_*(f)$. Take $X=\cone(f)$. Then $p_*(X)\cong \cone(p_*(f))=\cone(g)=Y$. This shows that $p_*:\cf_A\rightarrow\cf_{\bar{A}}$ is dense.

Since there is no non-trivial morphism from $\Sigma\bar{A}$ to $\bar{A}$ in $\per(\bar{A})$, it follows that the image of a morphism in $\ci$ under $p_*$ is zero.

Let $X,Y\in\cf_A$. There are triangles in $\per(A)$
\[\xymatrix@R=0.5pc{P_X^{-1}\ar[r]^u & P_X^0\ar[r]^v &X\ar[r]^w &\Sigma P_X^{-1}\\
P_Y^{-1}\ar[r]^{u'} & P_Y^0\ar[r]^{v'} &Y\ar[r]^{w'} &\Sigma P_Y^{-1}}\]
with $P_X^{-1},P_X^0,P_Y^{-1},P_Y^0\in\add_{\per(A)}(A)$. 
Applying $\Hom_{\per(A)}(P_X^0,?)$ respectively $\Hom_{\per(\bar{A})}(p_*(P_X^0),?)$ to the triangle containing $Y$ respectively its image under $p_*$, we obtain the following commutative diagram with exact rows
\[{\small \hspace{-10pt}
\begin{xy}
\SelectTips{cm}{10}
\xymatrix@C=0.5pc{(P_X^0,P_Y^{-1})\ar[r]\ar[d]^{f_1} & (P_X^0,P_Y^0)\ar[r]\ar[d]^{f_2} & (P_X^0,Y)\ar[r]\ar[d]^{f_3} & (P_X^0,\Sigma Y^{-1})=0\ar[d]\\
(p_*(P_X^0),p_*(P_Y^{-1})) \ar[r] & (p_*(P_X^0),p_*(P_Y^0)) \ar[r] & (p_*(P_X^0),p_*(Y))\ar[r] & (p_*(P_X^0),\Sigma p_*(P_Y^{-1}))=0}
\end{xy}}\]
The maps $f_1$ and $f_2$ are bijective, implying that $f_3$ is bijective too.

Applying $\Hom_{\per(A)}(\Sigma P_X^{-1},?)$ respectively $\Hom_{\per(\bar{A})}(p_*(\Sigma P_X^{-1}),?)$ to the triangle containing $Y$ respectively its image under $p_*$, we obtain the following commutative diagram with exact rows
\[{\fontsize{8pt}{5pt}\hspace{-3pt}
\begin{xy}
\SelectTips{cm}{10}
\xymatrix@C=0.5pc{ (\Sigma P_X^{-1},P_Y^0)\ar[r]^{g_4}\ar[d] & (\Sigma P_X^{-1},Y)\ar[r]\ar[d]^{g_1} & (\Sigma P_X^{-1},\Sigma P_Y^{-1})\ar[r]\ar[d]^{g_2} & (\Sigma P_X^{0},\Sigma P_Y^{-1})\ar[d]^{g_3}\\
 (\Sigma p_*( P_X^{-1}),p_*(P_Y^0)) \ar[r] & (\Sigma p_*( P_X^{-1}),p_*(Y))\ar[r] & (\Sigma p_*(P_X^{-1}),\Sigma p_*(P_Y^{-1}))\ar[r] & (\Sigma p_*(P_X^{0}),\Sigma p_*(P_Y^{-1}))}
\end{xy}}\]
The vector space $(\Sigma p_*( P_X^{-1}),p_*(P_Y^0))$ is trivial and the maps $g_2$ and $g_3$ are bijective. By Lemma~\ref{l:four-lemma} (b), $g_1$ is surjective. A straightforward diagram chasing shows that $\Ker(g_1)=\Im(g_4)$.

Applying $\Hom_{\per(A)}(?,Y)$ respectively $\Hom_{\per(\bar{A})}(?,p_*(Y))$ to this triangle respectively its image under $p_*$, we obtain the following commutative diagram with exact rows
\[{\fontsize{7pt}{5pt}\hspace{-3pt}
\begin{xy}
\SelectTips{cm}{10}
\xymatrix@C=0.45pc{(\Sigma P_X^{0},Y)\ar[r]^\alpha\ar[d]^{h_1} & (\Sigma P_X^{-1},Y)\ar[r]^(0.55)\beta\ar[d]^{g_1} & (X,Y)\ar[r]^\gamma\ar[d]^{h_3} & (P_X^0,Y)\ar[r]^\delta \ar[d]^{f_3} & (P_X^{-1},Y)\ar[d]^{h_5}\\
(\Sigma p_*(P_X^0),p_*(Y))\ar[r]^{\alpha'} & (\Sigma p_*(P_X^{-1}),p_*(Y))\ar[r]^(0.55){\beta'} & (p_*(X),p_*(Y))\ar[r]^{\gamma'} & (p_*(P_X^0),p_*(Y))\ar[r]^{\delta'} & (p_*(P_X^{-1}),p_*(Y))}
\end{xy}}\]
Recall that $g_1$ is surjective and $f_3$ is bijective, and similarly one shows that $h_1$ is surjective and  $h_5$ is bijective. It follows from Lemma~\ref{l:four-lemma} (b) that $h_3$ is surjective.

We claim that $\Ker(h_3)=\Im(\beta\circ g_4)$. Notice that for $\psi\in \Hom_{\per(A)}(\Sigma P_X^{-1},P_Y^0)$ we have that $\beta\circ g_4(\psi)=v'\circ \psi\circ w$ belongs to $\ci$. So the proof is complete.

To prove the claim, take $\varphi\in \Ker(h_3)$. Then $h_4\circ\gamma(\varphi)=\gamma'\circ h_3(\varphi)=0$, implying that $\gamma(\varphi)=0$. So there is $\varphi_1\in \Hom_{\per(A)}(P_X^{-1},Y)$ such that $\varphi=\beta(\varphi_1)$. Then $\beta\circ h_2(\varphi_1)=h_3\circ\beta(\varphi_1)=0$. So there is $\varphi_2\in \Hom_{\per(\bar{A})}(\Sigma p_*(P_X^0),p_*(Y))$ such that $g_1(\varphi_1)=\alpha'(\varphi_2)$. Since $h_1$ is surjective, there is $\varphi_3\in \Hom_{\per(A)}(\Sigma P_X^0,Y)$ such that $\varphi_2=h_1(\varphi_3)$. Let $\varphi'_1=\varphi_1-\alpha(\varphi_3)$. Then
$g_1(\varphi'_1)=0$ and $\beta(\varphi'_1)=\varphi$. Because $\Ker(g_1)=\Im(g_4)$, this finishes the proof of the claim.
\end{proof}

\subsubsection{Compatibility with mutations}

Assume further that $\per(A)$ is Hom-finite.

\begin{proposition}\label{p:induction-silting-mutation}
Let $M$ be a $2$-term silting object of $\per(A)$ and $N$ be an indecomposable direct summand of $M$ such that $\mu^-_N(M)$ is again $2$-term. Then $\mu^-_{p_*(N)}(p_*(M))=p_*(\mu^-_N(M))$.
\end{proposition}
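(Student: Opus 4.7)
Write $M = N \oplus M'$ with $N$ indecomposable, and let
\[
N \xrightarrow{\,f\,} E \longrightarrow N^* \longrightarrow \Sigma N
\]
be the defining triangle of $\mu^-_N(M)$, where $f$ is a minimal left $\add(M')$-approximation and, by hypothesis, $N^* \in \cf_A$. Applying the triangle functor $p_*$ yields a triangle
\[
p_*(N) \xrightarrow{\,p_*(f)\,} p_*(E) \longrightarrow p_*(N^*) \longrightarrow \Sigma p_*(N)
\]
in $\per(\bar A)$ with $p_*(E) \in \add(p_*(M'))$. The plan is to identify this triangle, up to a split trivial summand, with the one defining $\mu^-_{p_*(N)}(p_*(M))$.

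The first step is to check that $p_*(f)$ is a (not necessarily minimal) left $\add(p_*(M'))$-approximation. Given $h \colon p_*(N) \to p_*(M'')$ with $M'' \in \add(M')$, use that $p_*\colon \cf_A \to \cf_{\bar A}$ is full (this was established while proving that $p_*\colon \cf_A/\ci \to \cf_{\bar A}$ is an equivalence) to lift $h$ to some $h' \colon N \to M''$. Since $f$ is a left $\add(M')$-approximation, $h' = g \circ f$, whence $h = p_*(g) \circ p_*(f)$, as required.

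Next, since $\per(A)$ is Hom-finite, so is $\bar A = H^0(A) = \Hom_{\per(A)}(A,A)$, hence $\per(\bar A)$ is Hom-finite and Krull--Schmidt. Let $g\colon p_*(N) \to E''$ be the minimal left $\add(p_*(M'))$-approximation, with cone $N^{**}$, so that $\mu^-_{p_*(N)}(p_*(M)) = N^{**} \oplus p_*(M')$. By the standard lemma comparing an arbitrary left approximation with the minimal one, there is an isomorphism $p_*(E) \cong E'' \oplus F$ in $\add(p_*(M'))$ under which $p_*(f)$ corresponds to $(g,0)^{\mathrm{tr}}$. Taking cones of both sides yields
\[
p_*(N^*) \;\cong\; N^{**} \oplus F.
\]

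It remains to show that $F = 0$. On the one hand, $N^* \in \cf_A$ is indecomposable, and the equivalence $\cf_A/\ci \simeq \cf_{\bar A}$ (together with $\ci^2 = 0$) implies that $p_*$ preserves indecomposability on $\cf_A$, so $p_*(N^*)$ is indecomposable. On the other hand, $N^{**} \neq 0$: indeed $N^{**} = 0$ would force $g$ to be an isomorphism, so that $p_*(N)$ would belong to $\add(p_*(M'))$, and then the injectivity of the map on isomorphism classes of indecomposables in $\cf_A$ would give $N \in \add(M')$, contradicting the basicness of $M = N \oplus M'$. Therefore $F = 0$ and $p_*(N^*) \cong N^{**}$, which yields
\[
\mu^-_{p_*(N)}(p_*(M)) \;=\; N^{**} \oplus p_*(M') \;\cong\; p_*(N^*) \oplus p_*(M') \;=\; p_*(\mu^-_N(M)).
\]
The main technical point is the fullness of $p_*\colon \cf_A \to \cf_{\bar A}$, used to transfer the approximation property; everything else is a formal comparison between approximations in a Krull--Schmidt Hom-finite setting.
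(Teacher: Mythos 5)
Your proof is correct and follows the same overall strategy as the paper: apply $p_*$ to the defining triangle, compare $p_*(f)$ with the minimal left $\add(p_*(M'))$-approximation, and conclude by indecomposability. The two technical implementations differ, however, in a way worth noting. The paper merely asserts that $p_*(f)$ is a left approximation, whereas you supply the (correct) justification via the fullness of $p_* \colon \cf_A \to \cf_{\bar A}$. More substantially, where the paper forms an octahedron from the factorisation $p_*(f)$ through the minimal approximation $g$, deduces that the resulting map $h$ splits, and then invokes the silting property of $p_*(M') \oplus p_*(N)^-$ to kill the connecting morphism of the residual triangle, you instead invoke the standard Krull--Schmidt lemma identifying any left approximation with a minimal one plus a zero summand, and simply take cones of the direct-sum decomposition $p_*(f) \cong (g,0)^{\mathrm{tr}}$. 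Your route avoids both the octahedron and the silting-vanishing argument, so it is slightly more elementary; the paper's route avoids invoking the approximation-comparison lemma explicitly. Both arguments then reduce to the same final step: $p_*(N^*)$ is indecomposable (both proofs implicitly use that $N^*$ is indecomposable, which follows since $\mu^-_N(M)$ is again a basic silting object with $|M|$ summands), and the complementary summand ($F$, resp. $E''$) must therefore vanish. One small point: you could make explicit that the cone of $(g,0)^{\mathrm{tr}} \colon p_*(N) \to E'' \oplus F$ is $N^{**} \oplus F$ by writing the triangle as the direct sum of the triangle over $g$ and the split triangle $0 \to F \xrightarrow{\mathrm{id}} F \to 0$, but this is routine.
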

\begin{proof} By definition, $\mu_N^-(M)=N^-\oplus L$, where $N^-$ is given by the triangle in $\per(A)$
\[\xymatrix{N\ar[r]^f & E\ar[r] & N^-\ar[r] & \Sigma N},\]
where the morphism $f$ is a minimal left $\add(L)$-approximation. Applying the triangle functor $p_*$ we obtain a triangle in $\per(\bar{A})$
\[\xymatrix{p_*(N)\ar[r]^{p_*(f)} & p_*(E)\ar[r] & p_*(N^-)\ar[r] & \Sigma p_*(N)},\]
where $p_*(f)$ is a left $\add(p_*(L))$-approximation.
On the other hand, $p_*(N)$ is indecomposable and $p_*(M)=p_*(N)\oplus p_*(L)$. By definition, $\mu_{p_*(N)}(p_*(M))=p_*(N)^-\oplus p_*(L)$, where $p_*(N)^-$ is given by the triangle in $\per(\bar{A})$
\[\xymatrix{p_*(N)\ar[r]^g & E'\ar[r] & p_*(N)^-\ar[r]\ar[r] & \Sigma p_*(N)},\]
where the morphism $g$ is a minimal left $\add(p_*(L))$-approximation. Therefore $p_*(f)$ factors through $g$ and we have the following octahedron
\[{\scriptsize
\begin{xy} 0;<0.35pt,0pt>:<0pt,-0.35pt>::
(300,0) *+{p_*(N^-)} ="0",
(0,160) *+{p_*(N)^-}="1",
(420,160) *+{E''}="2",
(180,240) *+{p_*(N)}="3",
(600,240) *+{p_*(E)}="4",
(300,400) *+{E'}="5",
"0", {\ar@{->>} "3"}, "0", {\ar@{.>} "2"},
"1", {\ar "0"},
"1", {\ar@{->>} "3"}, "2", {\ar@{.>>} "1"},
"2", {\ar@{.>>} "5"}, "3", {\ar|g "5"},
"3", {\ar|{p_*(f)} "4"}, "4", {\ar "0"},
"4", {\ar@{.>} "2"}, "5", {\ar "1"},
"5", {\ar|h "4"},
\end{xy}}
\]
The morphism $h$ splits, and hence $E''$ belongs to $\add(p_*(L))$. Consider the triangle 
\[\xymatrix{p_*(N)^-\ar[r] & p_*(N^-)\ar[r] & E''\ar[r] & \Sigma p_*(N)^-}.\]
Since $p_*(L)\oplus p_*(N)^-$ is a silting object, the last morphism in the above triangle vanishes. Therefore the triangle splits. Because both $p_*(N)^-$ and $p_*(N^-)$ are indecomposable, they must be isomorphic and the desired result follows.
\end{proof}

\subsection{A summary in terms of abstract categories}

We summarise the main results of this appendix in terms of abstract categories.

Let $\cc$ be an idempotent complete algebraic triangulated category and let $T$ be a silting object of $\cc$ with endomorphism algebra $E=\End_\cc(T)$.
Put 
\begin{align*}
\cf_T&=\{\cone(f)\mid f \text{ is a morphism in }\add_{\cc}(T)\}\subseteq \cc,\\
\cf_E&=\{\cone(f)\mid f \text{ is a morphism in }\proj E\}\subseteq K^b(\proj E).
\end{align*}

\begin{theorem}
\begin{itemize}
\item[(a)] There is a triangle functor $Q:\cc\rightarrow K^b(\proj E)$ which takes $T$ to $E$ and which takes the aisle (respectively, co-aisle) of the co-$t$-structure of $\cc$ associated to $T$ to the aisle (respectively, co-aisle) of the standard co-$t$-structure of $K^b(\proj E)$.

\item[(b)]
Let $\ci$ be the ideal of $\cf_T$ consisting of morphisms factoring through morphisms $\Sigma X\rightarrow Y$ with $X,Y\in\add_{\cc}(T)$. Then $\ci^2=0$ and $Q$ induces an equivalence $\cf_T/\ci\rightarrow\cf_{E}$. In particular, $Q|_{\cf_T}:\cf_T\to\cf_E$ is full, detects isomorphisms, preserves indecomposability and induces a bijection between isomorphism classes of objects of $\cf_T$ and those of $\cf_{E}$.
\item[(c)]
The triangle functor $Q$ induces a bijection from the set of $2$-term silting objects in $\cc$ with respect to $T$ and the set of $2$-term silting objects in $K^b(\proj E)$.
If $\cc$ is Hom-finite, this bijection commutes with mutations.
\end{itemize}
\end{theorem}


\def\cprime{$'$}
\providecommand{\bysame}{\leavevmode\hbox to3em{\hrulefill}\thinspace}
\providecommand{\MR}{\relax\ifhmode\unskip\space\fi MR }
\providecommand{\MRhref}[2]{%
  \href{http://www.ams.org/mathscinet-getitem?mr=#1}{#2}
}
\providecommand{\href}[2]{#2}

\end{document}